\documentclass[svgnames]{amsart}
\usepackage[english]{babel}
\usepackage[utf8]{inputenc}

\usepackage[margin=3cm]{geometry}

\usepackage[colorlinks=true, pdfstartview=FitV, linkcolor=black, citecolor=black, urlcolor=black]{hyperref}
\usepackage[nameinlink]{cleveref}

\usepackage{amssymb, amsfonts, amsmath, amsthm, amscd, amsxtra, latexsym, mathabx, mathtools, enumitem}
\usepackage{tikz}\usetikzlibrary{cd}
\usepackage{epigraph}

\theoremstyle{plain}
\newtheorem{thm}{Theorem}[section] 
\newtheorem{lem}[thm]{Lemma}
\newtheorem{cor}[thm]{Corollary}
\newtheorem{prop}[thm]{Proposition}
\newtheorem{thmintro}{Theorem}

\newtheorem{corintro}[thmintro]{Corollary}
\newtheorem{claim}[thm]{Claim}
\newenvironment{claimproof}{\begin{proof}}{\end{proof}}

\theoremstyle{definition}
\newtheorem{defn}[thm]{Definition}
\newtheorem{rem}[thm]{Remark}
\newtheorem{remintro}[thmintro]{Remark}

\newtheorem{notation}[thm]{Notation}

\newcounter{gcomments}

\newcounter{acomments}

\newcommand{\frakS}{\mathfrak S}
\newcommand{\C}[1]{\mathcal{C}(#1)}
\newcommand{\dist}{\mathrm{d}}
\newcommand{\diam}{\mathrm{diam}}
\newcommand{\propnest}{\sqsubsetneq}
\newcommand{\orth}{\bot}
\newcommand{\transverse}{\pitchfork}
\newcommand{\nest}{\sqsubseteq}

\newcommand{\link}[2]{\operatorname{Link}_{#1}\left(#2\right)}
\newcommand{\W}{\mathcal{W}}
\newcommand{\duaug}[2]{{#1}^{+{#2}}}
\newcommand{\Sat}{\operatorname{Sat}}
\newcommand{\CStar}[1]{\operatorname{Star}\left(#1\right)}

\newcommand{\p}{\mathfrak{p}}
\newcommand{\Cone}[1]{\operatorname{Cone}\left(#1\right)}
\newcommand{\Hor}[1]{\operatorname{Hor}\left(#1\right)}
\newcommand{\hatC}[1]{\hat{\mathcal{C}}(#1)}
\newcommand{\depth}[1]{\operatorname{Depth}\left(#1\right)}
\newcommand{\down}[1]{{#1}^{\downarrow}}
\newcommand{\squid}[1]{squid}
\newcommand{\Squid}[1]{Squid}

\newcommand{\N}{\mathbb{N}}
\newcommand{\Z}{\mathbb{Z}}
\newcommand{\R}{\mathbb{R}}

\newcommand{\MCG}{\mathcal{MCG}}
\newcommand{\Cay}[2]{\operatorname{Cay}\left(#1,#2\right)}
\newcommand{\Stab}[2]{\operatorname{Stab}_{#1}\left(#2\right)}
\newcommand{\inj}[2]{\operatorname{inj}_{#2}{#1}}
\newcommand{\Aut}[1]{\operatorname{Aut}\left(#1\right)}

\newcommand{\Teich}[1]{\operatorname{Teich}\left(#1\right)}

\renewcommand{\hat}{\widehat}

\newcommand{\ov}[1]{\overline{#1}}

\title[Cusped spaces for HHG]{Cusped spaces for hierarchically hyperbolic groups, and applications to Dehn filling quotients}

\author[G. Mangioni]{Giorgio Mangioni}
    \address{(Giorgio Mangioni) Maxwell Institute and Department of Mathematics, Heriot-Watt University, Edinburgh, UK}
    \email{gm2070@hw.ac.uk}

\author[A. Sisto]{Alessandro Sisto}
    \address{(Alessandro Sisto) Maxwell Institute and Department of Mathematics, Heriot-Watt University, Edinburgh, UK}
    \email{a.sisto@hw.ac.uk}

\begin{document}

\begin{abstract}
    We introduce a construction that simultaneously yields cusped spaces of relatively hyperbolic groups, and spaces quasi-isometric to Teichm\"{u}ller metrics. We use this to study Dehn-filling-like quotients of various groups, among which mapping class groups $\MCG(S_n)$ of $n$-punctured spheres. In particular, we show that $\MCG(S_5)$ (resp. the braid group on four strands) has infinite hyperbolic quotients (strongly) not isomorphic to hyperbolic quotients of any other given $\MCG(S_n)$ (resp. any other braid group). These quotients are obtained from $\MCG(S_5)$ by modding out suitable large powers of Dehn twists, and we further argue that the corresponding quotients of the extended mapping class group have trivial outer automorphism groups. We obtain these results by studying torsion elements in the relevant quotients.
\end{abstract}

\maketitle


\section*{Introduction}
 Cusped spaces, as introduced in \cite{GrovesManning, Bow-relhyp}, are fundamental tools in the study of relatively hyperbolic groups. For instance, relevantly to this paper, their interplay with algebraic Dehn fillings has produced countless applications; a non-exhaustive list of references includes \cite{GrovesManning,AGM_dehnfill,virtual_Haken,  dahmaniguirardel, GM-elementary, isomorphy, GMS,  quasiconvexity_and_Dehn_filling, Wang}. Cusped spaces share similarities with Teichm\"{u}ller space, regarded as a coarse geometric object when endowed with the Teichm\"{u}ller metric, in that its thin part, where the mapping class group action is not cocompact, can be described in terms of products of horoballs \cite{Minsky_productregion,Durham_augmented_markings}. We unify these perspectives by introducing a construction of cusped spaces for relatively hierarchically hyperbolic groups (HHGs), as defined in \cite{HHS_I, HHS_II}, that simultaneously yields cusped spaces and Teichm\"{u}ller spaces. 
 
 The starting point is a relatively HHG admitting a combinatorial model analogous to Masur-Minsky's marking complexes \cite{Masur_Minsky_2}, for instance a relatively hyperbolic group or a mapping class group. The precise requirements on the group, which we defer to \Cref{defn:rel_squid_HHG}, are not very restrictive in view of results from \cite{converse}, see discussion below. The construction of the cusped space is then essentially a generalisation of Durham's augmented marking complex for the Teichm\"{u}ller metric \cite{Durham_augmented_markings}, as we explain in more detail in \Cref{rem:augmented_markings}. For the reader familiar with hierarchical hyperbolicity, we roughly replace every hyperbolic space at the bottom of the hierarchy with the combinatorial horoball over it; this is described in more detail below, and implemented in \Cref{defn:cusped_space}. With this in mind, the first properties of our construction are:

\begin{thmintro}
\label{thmintro:first}
    Let $G$ be a relative HHG satisfying \Cref{defn:rel_squid_HHG}, and let $\mathcal Z$ be its cusped space as in \Cref{defn:cusped_space}. Then:
    \begin{enumerate}
        \item $\mathcal Z$ is a hierarchically hyperbolic space;
        \item $G$ acts on $\mathcal Z$ and the orbit maps are coarse embeddings;
        \item for $G$ relatively hyperbolic, $\mathcal Z$ is $G$-equivariantly quasi-isometric to the cusped space of $G$;
        \item for $G$ a mapping class group of a finite-type surface, $\mathcal Z$ is $G$-equivariantly quasi-isometric to the corresponding Teichm\"{u}ller metric. 
    \end{enumerate}
\end{thmintro}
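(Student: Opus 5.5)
This is an introductory theorem summarising results proved later, so I plan to assemble it from four separate arguments in the body. Throughout I work with the combinatorial model of \Cref{defn:rel_squid_HHG}: a simplicial complex $X$ with a $G$-action, a $G$-graph $\W$ of maximal simplices of $X$, and the relative HHG structure, whose $\nest$-minimal domains $U$ have associated spaces $\C U$ that need not be hyperbolic (for instance, peripheral subgroups). The cusped space $\mathcal Z$ of \Cref{defn:cusped_space} replaces each such bottom-level $\C U$ with the combinatorial horoball $\Hor{\C U}$.

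\textbf{Item (1).} I will not verify the HHS axioms directly. Instead I will exhibit $\mathcal Z$ as the maximal simplices graph of an augmented combinatorial HHS, and then apply the combinatorial HHS criterion of \cite{converse}. The structure has three parts:
\begin{itemize}
\item the index set is that of $G$;
\item the hyperbolic spaces are unchanged above the minimal level, and the minimal ones become $\Hor{\C U}$, which is hyperbolic by Groves--Manning;
\item the projections to $\Hor{\C U}$ are the old projections composed with the inclusion at height $0$.
\end{itemize}
The conditions to check are the standard ones, namely hyperbolicity of links and the quasi-isometric embedding of links into augmented links. They reduce to the original structure together with one extra fact: horoballs quasi-isometrically embed the geodesics of their base along the vertical direction. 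I expect this to be the main obstacle. The difficulty is the interaction between the vertical horoball coordinates attached to different minimal domains that are pairwise orthogonal, which is the product-region phenomenon. To handle it, I will first treat each $\Hor{\C U}$ as a join with its $0$-level and check that the resulting links are products of horoballs.

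\textbf{Item (2).} An orbit map $G\to\mathcal Z$ is Lipschitz, since $G$ acts on $\W\subseteq\mathcal Z$ at height $0$. To get a coarse embedding, I use the distance formula in both $G$ and $\mathcal Z$: each term $\dist_{\C U}$ gets replaced by $\dist_{\Hor{\C U}}$, which is at least of order $\log\dist_{\C U}$. If $\dist_G$ is large, then some term in the distance formula for $G$ is large, and the corresponding horoball distance is therefore large too. This gives a uniform lower control function, which is exactly what a coarse embedding requires.

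\textbf{Items (3) and (4).} For a relatively hyperbolic $G$, the HHS structure has the coned-off Cayley graph at the top and the peripheral cosets as minimal domains. In this case $\mathcal Z$ is the Cayley graph with horoballs glued along peripheral cosets, which is the Groves--Manning cusped space up to an equivariant quasi-isometry, given on vertices by the identity. For mapping class groups, I will check that $\mathcal Z$ coincides with Durham's augmented marking complex (\Cref{rem:augmented_markings}), whose annular curve graphs are replaced by horoballs. Durham showed that this complex is equivariantly quasi-isometric to Teichmüller space, so the claim follows. As an alternative check, I can compare the two distance formulas (Rafi's formula for the Teichmüller metric against the HHS distance formula for $\mathcal Z$), using item (1).
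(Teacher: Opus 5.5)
Items (1) and (3) follow the paper's overall strategy. For (1) that means verifying the combinatorial criterion for $(\hat X,\hat\W)$; the criterion is \cite[Theorem 1.18]{BHMS}, not \cite{converse}. For (3) it means recognising the augmented space for the structure of \Cref{lem:relhyp_is_squid}.

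\textbf{Item (4) has a real gap.} The squid structure on $\MCG(S)$ is the one from \cite[Theorem 9.8]{converse}. Its $\W$-edges come from \cite[Definition 4.10]{converse}, not from flip and twist moves, and its maximal simplices include non-clean markings. So $\mathcal Z$ does not coincide with Durham's $AM(S)$. \Cref{rem:augmented_markings} says that upgrading this analogy to a quasi-isometry is exactly the technical route the paper avoids, and your plan gives no argument for it.

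Your fallback, matching Rafi's formula against the distance formula for $\mathcal Z$, has the same hole. The coordinate projections of $\mathcal Z$ are coarse closest-point projections in the graphs $Y_\Sigma$. Nothing in your plan relates them to subsurface projections, nor produces a map $\mathcal Z\to\Teich{S}$ along which to compare. The missing ingredient is \Cref{prop:different_proj_in_converse_are_equal}: for a curve $v\notin\Sat(\Sigma)$, $p(v)$ coarsely equals $\phi_\Sigma^{-1}(\rho^{A_v}_{U_\Sigma})$. It is proved by induction along geodesics in links, using bounded geodesic image in both structures. This yields \Cref{cor:consistency_is_the_same}. Hence the coordinate tuple of any point of $\hat\W$, transported by the maps $\hat\phi_\Sigma$, is consistent for the structure on $\Teich{S}$ from \cite[Theorem G]{HHS_I}. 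Realisation then gives an equivariant map $\hat\W\to\Teich{S}$, uniqueness makes it coarsely Lipschitz, and the symmetric construction gives a quasi-inverse.

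\textbf{Item (2) is circular as written.} ``If $\dist_G$ is large then some term is large'' is the uniqueness axiom for a relative HHS structure on $\W$. That structure must have the augmented links $\C{\Sigma}$ as coordinate spaces, with projections compatible with those of $\hat\W$. In the paper this is \Cref{prop:W_is_relHHS}, and its uniqueness axiom is derived from uniqueness in $\hat\W$ \emph{together with} the coarse embedding $\W\hookrightarrow\hat\W$, which is what you are trying to prove. The paper's proof (\Cref{prop_coarseemb}) is direct and explicit. Push a $\hat\W$-geodesic of length $D$ down via $\down{(\cdot)}$:
\begin{itemize}
\item edges of $\W$-type become $\W$-edges;
\item vertical cusp edges become trivial;
\item a horizontal cusp edge at depth $n\le D$ joins simplices whose $L_v$-entries are at $\C{\Delta_v}$-distance at most $2^n$, hence at $\W$-distance at most $2^n$ by axiom \ref{CHHS_def_edge_in_link} of \Cref{def:combHHS}.
\end{itemize}
So $\dist_{\W}\le D2^D$.

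\textbf{Item (1) needs two corrections.} First, taking the projection to $\Hor{\C U}$ to be the old projection placed at height $0$ is wrong for points at positive depth; it would break uniqueness. In the combinatorial framework the projections are automatic anyway. Second, the product-of-horoballs obstacle never arises. A simplex whose link contains the bases of two adjacent cones has a non-trivial join as its link, so it is of bounded type (\Cref{cor:bounded_links}) and imposes no condition. The real work is elsewhere:
\begin{itemize}
\item showing that $\hatC{\Delta_v}$ is exactly $\Hor{\C{\down{\Delta_v}}}$, because edges of $\W$-type only join depth-$0$ points of $L_v\times\N$ that were already adjacent;
\item the cone-type embedding of \Cref{claim:cone-type}. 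There $\hat\W$-edges leave $L_v\times\N$ only at depth $0$, so $\C{\down{\Delta_v}}$ separates the horoball from the rest of (a retract of) $Y_{\Delta_v}$. Excursions are then shortcut using axiom \ref{CHHS_def_qi_emb} for $(X,\W)$.
\end{itemize}
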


\noindent  We expect our cusped spaces to serve as a general-purpose tool in the future. For instance, it might be possible to use them to prove the Farrell-Jones Conjecture for a large class of relative HHGs, combining techniques from \cite{relative_FJC} and \cite{durham2025asymptoticallycat0metricszstructures} (which in turn relies on the criterion from \cite{FJC_MCG}). In this paper we focus on applications to the study of ``Dehn filling like" quotients, of the type considered in \cite{dfdt, BHMS,short_HHG:II}. In particular, we prove the following, where $S_n$ denotes the $n$-punctured sphere and $B_n$ is the braid group on $n$ strands.

\begin{thmintro}\label{thmintro:distinguish}
    \begin{enumerate}
    \item Let $n\geq 5$ be an integer. Then there exists a non-elementary hyperbolic quotient $Q$ of $B_4$ such that any homomorphism $\phi\colon B_n\to Q$ has virtually cyclic image.
    \item Let $n\geq 6$ be an integer. Then there exists a non-elementary hyperbolic quotient $Q$ of $\MCG(S_5)$ such that any homomorphism $\phi\colon\MCG(S_n)\to Q$ has finite image.
    \end{enumerate}
\end{thmintro}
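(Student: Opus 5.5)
The plan is to build $Q$ as a Dehn-filling-type quotient of $\MCG(S_5)$ (respectively of $B_4$, via $B_4/Z(B_4)\cong$ a finite-index subgroup of $\MCG(S_5)$), by modding out a large power $T^{N}$ of all Dehn twists. I then rule out homomorphisms from $\MCG(S_n)$ by combining a torsion obstruction with the structure of centralisers.

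The first step is hyperbolicity of the quotient. Use the cusped space $\mathcal Z$ from \Cref{thmintro:first} for $\MCG(S_5)$. This is quasi-isometric to Teichm\"uller space, and for $S_5$ the thin part consists of products of at most two horoballs. By the results of \cite{dfdt, BHMS, short_HHG:II}, suitable large powers $N$ make $\MCG(S_5)/\langle\langle T_c^N\rangle\rangle$ a hierarchically hyperbolic group. In this quotient the annular domains are killed, so the quotient of $\mathcal Z$ becomes hyperbolic. Since $S_5$ has complexity $2$, the only remaining domains are the whole surface and the $S_4$-subsurfaces cut off by a curve. The latter are essentially Farey graphs, quotiented by the twist powers. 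I would argue that after filling there are no orthogonal pairs of infinite-diameter domains left, so $Q$ is hyperbolic. It is non-elementary because pseudo-Anosovs survive: they act loxodromically on the curve graph quotient.

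The second step, which is the heart of the argument, is to classify the torsion in $Q$. I expect it to consist of the images of finite-order elements of $\MCG(S_5)$, together with new torsion coming from the images of the twists and of stabilisers of curves, namely $(\Z/N)\times$ (the finite-index image of $\MCG(S_4)$-like pieces). The key claim is that every finite subgroup of $Q$ is conjugate into the image of a curve stabiliser or is the image of a finite subgroup of $\MCG(S_5)$. I would prove this by a fixed-point argument for finite groups acting on the hyperbolic (or HHS) quotient space: a finite group has a quasi-centre, and one then lifts it via the Greendlinger-type / rotating-family description of the filled cusped space. The image of each horoball's stabiliser embeds as $\Stab{}{c}/\langle T_c^N\rangle$.

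The third step is the homomorphism obstruction. Let $\phi\colon \MCG(S_n)\to Q$ with $n\ge 6$. The group $\MCG(S_n)$ contains Dehn twists about disjoint curves; these twists commute, and there are many of them (a $\Z^{n-3}$ with $n-3\ge3$). Moreover, all twists about curves of a given topological type are conjugate, and chains of curves give braid relations. I would show that $\phi$ of a twist must have finite order. The alternative is that it is loxodromic, but then its centraliser, which is virtually cyclic in a hyperbolic group, would have to contain the image of a large abelian subgroup, together with braid-relation partners. This forces many conjugate elements to commute with their braid partners, and the braid relation with commuting elements forces them to be equal. Since $\MCG(S_n)$ is normally generated by a twist, the image is then generated by conjugates of a single finite-order element that satisfy the braid relations. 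I would then use the torsion classification from the second step to show that there is no room for such a configuration unless the image is finite. For instance, a chain of $n-1\ge 5$ twists satisfying braid relations with finite-order images would have to sit inside a finite subgroup of $Q$, hence in a curve-stabiliser image or in a finite subgroup of $\MCG(S_5)$, neither of which supports such a chain for $n\ge6$. The braid case is identical, except that the center of $B_n$ may map to an infinite cyclic group, which is why the conclusion there is virtually cyclic.

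The main obstacle I anticipate is the torsion classification in the second step. In particular, it requires ruling out exotic finite subgroups in $Q$ that do not come from stabilisers, and that bound depends on the choice of $N$.
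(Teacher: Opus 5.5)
\textbf{Main gap: the chain of torsion images.} In your third step you claim that a chain of torsion images satisfying the braid relations ``would have to sit inside a finite subgroup of $Q$''. Since $\phi(\tau_1),\ldots,\phi(\tau_{n-1})$ generate the whole image, this is exactly the conclusion to be proved. In a hyperbolic group a few torsion elements typically generate an infinite subgroup, and a classification of \emph{finite} subgroups says nothing about it. Curve-stabiliser images also do carry such chains, for instance every $\tau_i$ sent to one generator of $Z_{v_N}$.

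The paper applies its torsion control only to subgroups that are finite for free: the abelian groups $\langle\phi(\tau_i),\phi(\tau_j)\rangle$ with $|i-j|\ge 2$. It then splits into cases according to what $\phi(\tau)$ stabilises in $\ov X_N$.
\begin{itemize}
\item Nothing: these abelian groups lift isomorphically to finite abelian subgroups of $\MCG(S_5)$, which are cyclic, so all $\phi(\tau_i)$ generate one cyclic group.
\item An edge, with $\phi(\tau)$ not in a single cyclic direction: commutation rigidity of multitwist images (\Cref{cor:commuting_multitwist}) traps the images of the twists generating $PB_n$ in the finite group $\langle Z_{v_N},Z_{w_N}\rangle$.
\item A vertex only: the argument is carried out in $H_{v_N}$.
\end{itemize}
Your outline has no tool for the hardest case, $\phi(\tau)\in Z_{v_N}$. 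There the $\phi(\tau_i)$ are images of half-twist powers about possibly different curves; such elements generally generate infinite subgroups, and one must show the relations force them into one cyclic direction. This is where the fact that finite-index subgroups of $B_n$ surject onto $B_4$ causes trouble. The paper first bounds the order of $\phi(T)$. It then chooses $N=N'(5n)!$ with $N'=M((n-1)^2)!$, so that the Chen--Kordek--Margalit normal generation result gives $\phi(\tau)\in MZ_{v_N}$. Finally it uses the composite rotating family structure of $\langle MZ_{u_N}\rangle_{u_N\in\ov X_N^{(0)}}$ together with property (FA) of $\phi(H)$ to get $\phi(H)\le MZ_{v_N}$.

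\textbf{The torsion classification is also open.} The mechanism you suggest cannot give it, because the fixed-point constant must be independent of $N$: it has to be beaten by the injectivity radius of $G\to G_N$. $Q$ is not uniformly hyperbolic, since it contains the groups $Z_{v_N}\cong\Z/N\Z$. Contrary to your first step, $\mathcal Z/DT_N$ is not hyperbolic at all: the cusps survive as horoballs over circles of length about $N$, and adjacent ones are orthogonal. The paper gets uniformity by showing that the quotient cusped spaces are uniformly HHS (\Cref{thmintro:uniform}) and then using uniform coarse injectivity. A uniformly bounded orbit is then either deep in some cusps, giving a stabilised simplex of $\ov X_N$, or close to $\W_N$, giving a lift. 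The stabilised simplex may be an edge with swapped endpoints, a case your classification omits.

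\textbf{The case $n=5$ of part (1).} Your chain argument is stated for $n\ge 6$, but part (1) includes $n=5$, and that case is not ``identical''. $B_5$ surjects onto $\MCG(S_5)$ by capping and forgetting, so you must exploit the difference between $\MCG(S_5,*)$ and $\MCG(S_5)$. The paper works with $\MCG(S_5,*)/DT_N$ and uses that torsion in $\MCG(S_5,*)$ has order at most $4$, whereas the $5$-th root of $T$ maps to an order-$5$ rotation in $\MCG(S_5)$.
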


\noindent \Cref{thmintro:distinguish} gives a strong way to distinguish $B_4$ (resp. $\MCG(S_5)$) among all other braid groups (resp. mapping class groups of spheres) using infinite hyperbolic quotients; this is in analogy with profinite rigidity results, where finite quotients are used instead. We emphasise that \Cref{thmintro:distinguish} is extremely delicate: for instance, a \emph{finite-index subgroup} of $B_n$ for $n\geq 5$ surjects onto $B_4$ (by forgetting strands), whence onto any quotient of it. This subtlety is reflected in the proof of Theorem \ref{thm:ubertheorem}, which implies both statements of \Cref{thmintro:distinguish} and crucially uses properties of finite subgroups of $\MCG(S_n)$.

The quotients $Q$ arising in \Cref{thmintro:distinguish} are in fact all virtually of the form $\MCG^\pm(S_5)/DT_N$, where $DT_N$ is the normal subgroup generated by all $N$-th powers of Dehn twists. For suitable $N$, these were shown to be hyperbolic in \cite{dfdt}. Understanding torsion in these groups is the key to proving the theorem above, and we can also exploit this knowledge to control their automorphism groups.

\begin{thmintro}\label{thmintro:automorphisms}
    There exists $N_0>0$ such that, for all multiples $N$ of $N_0$, the group $G_N=\MCG^\pm(S_5)/DT_N$ satisfies $\Aut{G_N}\cong G_N$.
\end{thmintro}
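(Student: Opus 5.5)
The natural strategy is the standard one for rigidity of automorphism groups: show that $G_N$ has trivial centre, so that $G_N\to\Aut{G_N}$ by conjugation is injective, and then show that every automorphism is inner. For the centre, $G_N$ is non-elementary hyperbolic by \cite{dfdt}, and its image is an infinite normal subgroup. So the centre is a finite normal subgroup. I would rule out nontrivial finite normal subgroups by studying torsion in $G_N$: the plan is to prove, for $N$ a multiple of a suitable $N_0$, that every finite subgroup of $G_N$ is conjugate to the image of a finite subgroup of $\MCG^\pm(S_5)$, and that this image is injective. A finite normal subgroup $F$ would then be the image of a finite subgroup $H\le\MCG^\pm(S_5)$ whose conjugates all lie in finitely many conjugates of $H$. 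Pseudo-Anosov elements survive in $G_N$ with loxodromic action, and a finite subgroup normalised by them is trivial because $\MCG^\pm(S_5)$ has trivial centre and finite subgroups fix points of Teichmüller space. I expect this to follow from a fixed-point argument on the cusped space $\mathcal Z/DT_N$.

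For the second step, let $\alpha\in\Aut{G_N}$. Torsion elements go to torsion elements of the same order, so $\alpha$ permutes conjugacy classes of finite subgroups. I would identify the images of Dehn twists, or better the half-twist-type torsion configurations such as rotations of $S_5$ fixing two punctures, by intrinsic algebraic properties. The key properties are the orders of the elements and the structure of their centralisers, which in $G_N$ should be virtually the image of the centraliser in $\MCG^\pm(S_5)$, for instance virtually $\Z$ or finite.

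Using the torsion classification, $\alpha$ should then preserve a finite "curve-like" combinatorial structure: the set of images of finite subgroups stabilising curves, together with their commutation pattern. This induces an automorphism of the curve graph of $S_5$ modulo $DT_N$, or more concretely a simplicial automorphism of a quotient complex. Ivanov/Korkmaz rigidity, lifted appropriately, would show that $\alpha$ agrees with conjugation by some $g\in G_N$ on these torsion elements. Since $\MCG^\pm(S_5)$ is generated by finitely many finite-order elements (e.g.\ the images of $S_5$'s symmetries together with a reflection), $\alpha\circ c_g^{-1}$ fixes a generating set up to the ambiguity of the centraliser, and hence is trivial by the trivial-centre step.

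The main obstacle is the torsion classification in $G_N$ for large divisible $N$, together with centraliser control. I would approach it through the cusped/Teichmüller space $\mathcal Z$ of \Cref{thmintro:first} and the rotating-family machinery of \cite{dfdt}. A finite subgroup of $G_N$ fixes a bounded set in the hyperbolic quotient, and one then lifts it to a finite subgroup of $\MCG^\pm(S_5)$ by a Greendlinger-type lemma. The divisibility of $N$ by $N_0$ is needed so that the images of finite-order elements commuting with twist powers do not create new torsion.
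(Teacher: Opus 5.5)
\textbf{The torsion classification you plan to prove is false.} In $G_N$ the image $Z_{v_N}$ of the half-twist group of a curve $v$ is cyclic of order $N$ (\Cref{lem:order_of_dt_in_shortquot}). Finite subgroups of $\MCG^\pm(S_5)$ have uniformly bounded order. So for large $N$ these subgroups are not images of finite subgroups of $\MCG^\pm(S_5)$: they are exactly the new torsion that $DT_N$ creates. Your own second step, which looks for images of twists among torsion elements, already presupposes this.

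The fixed-point argument on the cusped space that you anticipate gives a genuine dichotomy instead (\Cref{cor:hyp_satisfied}). A finite subgroup of $G_N$ either lifts isomorphically to $\MCG^\pm(S_5)$, or stabilises a vertex or an edge of $\ov X_N=\C{S_5}/DT_N$. Your argument for the centre only handles the first alternative. More importantly, the second alternative is precisely the tool needed for the rigidity step, which your proposal leaves vague. Finite subgroups of $\MCG^\pm(S_5)$, such as rotations, do not detect curves.

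\textbf{The missing mechanism} is carried out in \Cref{thm:extract}. Take $\alpha\in\Aut{G_N}$ and let $g$ generate $\alpha(Z_{v_N})\cong\Z/N\Z$.
\begin{enumerate}
\item The order $N$ is too large for $\langle g\rangle$ to lift, so it stabilises a vertex or an edge of $\ov X_N$.
\item Torsion control in the vertex groups (\Cref{cor:torsion_of_hv}) puts a uniform power $g^M$ in some $\langle Z_{v'_N},Z_{w'_N}\rangle$ with $v',w'$ adjacent.
\item The centraliser criterion (\Cref{lem:recognise_(multi)twist}) says elements of a single cyclic direction have infinite centraliser, while genuine multitwists have finite centraliser. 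This forces $g^M$ into one cyclic direction, giving $\alpha(MZ_{v_N})=MZ_{\alpha_{\#}(v_N)}$.
\item \Cref{cor:commuting_multitwist} turns commutation into adjacency, so $\alpha_{\#}$ is a simplicial automorphism of $\ov X_N$.
\end{enumerate}

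\textbf{Your final step also uses the wrong input.} Suppose $\beta=\alpha\circ c_g^{-1}$ preserves every $MZ_{v_N}$. This only shows that $f^{-1}\beta(f)$ acts trivially on $\ov X_N$, not that it is central, so trivial centre does not finish the argument. What you need is that $G_N\to\Aut{\ov X_N}$ is an isomorphism. This is \cite[Theorem 5.11]{rigidity_mcg_mod_dt}, a genuine theorem rather than a formal lift of Ivanov's or Korkmaz's result. It supplies both surjectivity and faithfulness. Trivial centre then follows for free: an element commuting with every $Z_{v_N}$ fixes every vertex of $\ov X_N$. So your separate centre argument via pseudo-Anosov elements and Teichm\"uller space is unnecessary.
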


In \cite{rigidity_mcg_mod_dt}, we proved an analogue of \Cref{thmintro:automorphisms} for $\MCG^\pm(S_n)/DT_N$ where $n\ge 7$, but the proof there is completely different in nature (it first establishes quasi-isometric rigidity, relying on a sufficiently rich pattern of quasiflats and results from \cite{quasiflats}, and then uses that automorphisms give quasi-isometries). Though the remaining case $n=6$ cannot be tackled with the tools from \cite{rigidity_mcg_mod_dt}, one can likely use methods similar to the ones we use here to settle it, provided that a certain technical improvement on \cite{BHMS} can be achieved. We believe it can, and we explain this further in \Cref{rmk:Giorgios_homework}.

We also point out that, by combining existing results in the literature and using completely different tools, one could already deduce that the outer automorphism group of $G_N$ is \emph{finite} (see \Cref{rem:out_is_finite}). Roughly, property (FA) for $\MCG^\pm(S_5)$ implies that the $G_N$ are rigid in the sense of JSJ theory, but this type of argument cannot prove that outer automorphism groups are trivial rather than finite.

In the case of relatively hyperbolic groups, sequences of peripheral quotients can be studied via the corresponding quotients of cusped spaces, which turn out to be uniformly hyperbolic; this is exploited for instance in \cite{dahmaniguirardel}. Inspired by this, we consider sequences of Dehn filling quotients of \emph{short HHGs}, a particularly simple class of HHG introduced in \cite{short_HHG:I}. Notably, $\MCG^\pm(S_5)$ is a short HHG, and the quotients $G_N$ from \Cref{thmintro:automorphisms} are examples of Dehn filling quotients (see \Cref{notation:short_G_and_DT}). The following result, which is the starting point towards all the aforementioned applications, establishes that the corresponding quotients of cusped spaces are ``uniform", in analogy with the relatively hyperbolic case. We refer to \Cref{rem:uniform_CHHS} for details of the notion of ``uniformly HHS''. 

\begin{thmintro}\label{thmintro:uniform}
    Let $G$ be a short HHG, with cusped space $\mathcal Z$. There exists $N_0>0$, such that the spaces $\mathcal Z/DT_N$ for all multiples $N$ of $N_0$ are uniformly HHS.
\end{thmintro}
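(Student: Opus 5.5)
The plan is to reduce \Cref{thmintro:uniform} to the known result that the quotients $G/DT_N$ of a short HHG are uniformly HHG (from \cite{short_HHG:II}, building on \cite{BHMS, dfdt}), and then run the cusped space construction of \Cref{defn:cusped_space} on the quotient. The point is to show that ``taking the cusped space'' commutes with ``quotienting by $DT_N$''.

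First I will recall the structure of a short HHG. There is a top-level hyperbolic space $\C{S}$ on which $G$ acts, and the domains just below it are pairwise-orthogonal cyclic directions. Each such direction is associated to a ``Dehn twist'' subgroup $\langle g_v\rangle$, whose hyperbolic space is a quasi-line. The cusped space $\mathcal Z$ replaces each of these quasi-lines by the combinatorial horoball over it.

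Quotienting by $DT_N$ wraps each quasi-line into a cycle of length roughly $N$ times the translation length. The results on Dehn filling of short HHGs (\Cref{notation:short_G_and_DT} and \cite{short_HHG:II}) give, for multiples $N$ of some $N_0$, the following structure on $G/DT_N$:
\begin{itemize}
\item an HHS structure whose top space is $\C{S}/DT_N$, hyperbolic with uniform constant (the rotating-family/composite-rotation input);
\item bottom spaces that are the images of the quasi-lines, namely cycles of length proportional to $N$;
\item uniform constants, with the exception that the bottom spaces are uniformly hyperbolic only with constant depending on $N$.
\end{itemize}

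The key observation is that the horoball over a cycle of length $L$ is uniformly hyperbolic, independently of $L$. It is quasi-isometric, with uniform constants, to a horoball truncated at depth about $\log L$ with a bounded-size cap, and is in fact hyperbolic with a universal constant. Hence $\mathcal Z/DT_N$ can be built from the same combinatorial HHS data as $\mathcal Z$, with each horoball replaced by its quotient horoball. Concretely, I will check that $\mathcal Z/DT_N$ coincides, up to uniform quasi-isometry, with the cusped space construction applied directly to the quotient relative HHG structure on $G/DT_N$. The HHS axioms are then verified by the same argument as in \Cref{thmintro:first}(1), which only uses constants of the relative structure and of the horoballs, all of which are now uniform.

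The main obstacle is verifying that the quotient of the combinatorial cusped space is well behaved. Three things must hold uniformly in $N$:
\begin{enumerate}
\item links and projections descend correctly;
\item no new short loops appear;
\item the large-links and bounded geodesic image axioms hold with uniform constants.
\end{enumerate}
My first attempt will be to use that $DT_N$ acts on $\C{S}$ as a very rotating family, via the Greendlinger-type lemma of \cite{dfdt}. That lemma lets one lift bounded-size configurations of $\mathcal Z/DT_N$ isometrically to $\mathcal Z$. Combined with the fact that $\C{S}/DT_N$ is uniformly hyperbolic, this should let every local axiom be checked upstairs in $\mathcal Z$.

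The remaining work is uniform hyperbolicity of the quotient horoballs and of the spaces in their orthogonal complements. I expect both to reduce to standard facts about horoballs over cyclic graphs.
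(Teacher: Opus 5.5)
Your reduction in the third and fourth paragraphs is the paper's proof, and it already finishes the argument. In the paper, \Cref{cor:uniform_teich_for_quot} is immediate from three ingredients:
\begin{itemize}
\item \Cref{lem:shortquot_uniform}: by \cite[Remark 4.8]{short_HHG:II}, $(X_N,\W_N,n,\delta)$ is a \emph{relative} squid HHS with $n,\delta$ independent of $N$. The only non-uniform coordinate spaces are the circles $\C{\Delta_v}/NZ_v$, and the relative definition allows these.
\item \Cref{prop:teich_is_comb}: its output constant $\delta'$ depends only on $\delta$, because horoballs over arbitrary graphs are $\delta_0$-hyperbolic (\Cref{thm:horgamma_hyp}, used in \Cref{claim:hyp3}).
\item \Cref{rem:uniform_CHHS}.
\end{itemize}
You propose to identify $\mathcal Z/DT_N$ with the cusped space of $(X_N,\W_N)$; the paper treats this as immediate. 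It rests on $\C{\Delta_{v_N}}=\C{\Delta_v}/(DT_N\cap\Stab{G}{v})$ and $\Hor{\Gamma}/H=\Hor{\Gamma/H}$. Note also that, contrary to your first sentence, the $G_N$ are not uniformly HHG; you correct this later.

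What you call the main obstacle is not one, and the method you propose for it would fail.
\begin{itemize}
\item Items (1)--(3) are already packaged in the uniform combinatorial structure on $(X_N,\W_N)$. That is where the rotating-family arguments of \cite{short_HHG:II} are used, at a level where the injectivity radius really does grow (\Cref{lem_injrad_of_shortquot}). \Cref{prop:teich_is_comb} then handles the cusped space without looking upstairs.
\item In $\mathcal Z$ itself, item (2) is false. If $z$ generates $NZ_v$, then $p^k$ and $z\cdot p^k=(zp)^k$ are adjacent in $\Hor{\C{\Delta_v}}$ once $2^k\ge \dist_{\C{\Delta_v}}(p,zp)$, i.e.\ at depth roughly $\log_2 N$. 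So $DT_N$ has elements of uniformly bounded displacement deep in the cusps, and no ball of fixed radius there injects into $\mathcal Z/DT_N$.
\item Consequently, Greendlinger-type lifting of bounded configurations cannot be used to check the axioms in $\mathcal Z$. This is exactly why one builds the cusped space from the quotient combinatorial structure instead.
\end{itemize}

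A smaller inaccuracy: in a short HHG, the domains just below the top are not the cyclic directions. There is an intermediate level of domains supported on single vertices $v$, whose coordinate spaces are the hyperbolic spaces for $H_v$. These are orthogonal to $[\Delta_v]$ and contain $[\Delta_w]$ for $w\in\link{\ov X}{v}$. Their uniform hyperbolicity is part of \cite[Remark 4.8]{short_HHG:II} and passes to the cusped space by \Cref{claim:hyp2}, so it is not remaining work.
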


\noindent It was proven in \cite{HHP:coarse}, using injective spaces, that all HHGs have bounded torsion. The proofs in fact give a bound depending on the underlying hierarchical data, so if a family of groups acts uniformly properly on a uniform family of HHS (such as the collection $\mathcal Z/DT_N$ from \Cref{thmintro:uniform}) then the bound on torsion will be uniform. Because of this, we are able to show the following, which we state for $\MCG(S_5)$ only for simplicity even though an analogue holds for all short HHGs.

\begin{corintro}
\label{corintro:torsion-in-quotients}
    There exists $N_0>0$ such that, for all multiples $N$ of $N_0$, any finite-order element of $\MCG(S_5)/DT_N$ is the image of an element of $\MCG(S_5)$ which is either finite-order, stabilises a curve, or swaps two disjoint curves.
\end{corintro}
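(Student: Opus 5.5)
The plan is to combine \Cref{thmintro:uniform} with the bounded-torsion results of \cite{HHP:coarse}, and then lift torsion back to $\MCG(S_5)$ by a covering argument. Write $G=\MCG(S_5)$, $\mathcal Z$ for its cusped space (quasi-isometric to the Teichm\"uller metric by \Cref{thmintro:first}), and $\mathcal Z_N=\mathcal Z/DT_N$. By \Cref{thmintro:uniform}, for $N$ a multiple of $N_0$ the spaces $\mathcal Z_N$ are uniformly HHS. Let $\bar g\in G/DT_N$ have finite order. The first step is to produce a coarse fixed point. Following \cite{HHP:coarse}, the injective-hull (coarse barycentre) construction for the finite orbit $\langle \bar g\rangle\cdot z$ gives a point $\bar p\in\mathcal Z_N$ moved by every element of $\langle\bar g\rangle$ by at most a constant $R$. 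The key point is that $R$ depends only on the HHS constants, so it is uniform in $N$.

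Next we lift. Pick $p\in\mathcal Z$ over $\bar p$ and a lift $g\in G$ of $\bar g$. Then $gp$ and $k p$ lie within $R$ of each other for some $k\in DT_N$. Replacing $g$ by $k^{-1}g$, we get $\dist(gp,p)\le R$, and similarly for the powers of $g$ modulo $DT_N$. We want to conclude that $g$ itself has one of the three listed forms. I would use the geometry of the quotient map near $p$: choose $N_0$ large enough that $\mathcal Z\to\mathcal Z_N$ is injective on $R'$-balls in the thick part, for a suitable $R'\gg R$ (a "local isometry" statement, which should come out of the proof of \Cref{thmintro:uniform}). Then we split into cases according to the location of $p$.

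\textbf{Case 1: $p$ in the thick part.} The order of $\bar g$ is some $m$, so $g^m\in DT_N$ and $g^m p$ is close to $p$. Injectivity on large balls forces the action of $\langle g\rangle$ near $p$ to agree with that of $\langle\bar g\rangle$, so $g^m$ moves $p$ boundedly. By properness of the $G$-action on the thick part, together with the fact that a nontrivial element of $DT_N$ moves thick points far (for $N$ large), we get $g^m=1$. Thus $g$ is finite-order.

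\textbf{Case 2: $p$ deep in a horoball (the thin part) associated to a curve $\gamma$, or to a pair of disjoint curves.} In $S_5$ the maximal multicurves have two components, and the thin regions are products of at most two horoballs. Here $g$ coarsely preserves the product region, so $g$ permutes the short curves at $p$. That is, $g$ stabilises one curve, or stabilises the pair, or swaps the two curves. If $g$ stabilises the pair $\{\gamma,\delta\}$ without swapping them, then it stabilises each curve, which falls under the second alternative. If it swaps them, that is the third alternative.

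The main obstacle is the transition in Case 2. We must show that when $\bar p$ lies in the image of a horoball, the element $\bar g$ (after modifying by $DT_N$) genuinely stabilises the corresponding curves in $G$. We do not want this merely up to elements of $DT_N$, which could a priori twist around. I would handle this using the structure of $\mathcal Z_N$ from \Cref{thmintro:uniform}: the image of a horoball over $\gamma$ is the horoball over the annular/curve data of $\gamma$ modulo $\langle T_\gamma^N\rangle$. Hence the stabiliser in $G/DT_N$ of that image is the image of $\Stab{G}{\gamma}$, and a coarse fixed point deep inside it forces $\bar g$ into that image. Establishing this stabiliser identification, essentially that $\Stab{G}{\gamma}DT_N/DT_N$ is the full stabiliser of the projected horoball, is where the Dehn filling machinery of \cite{dfdt,BHMS} must be invoked, and it is the step I expect to be delicate.
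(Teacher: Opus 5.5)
Your outline is essentially the paper's proof of \Cref{thm:torsion-of_quot}, applied as in \Cref{cor:hyp_satisfied}, except that you lift to $\mathcal Z$ before splitting into cases: the paper splits deep/shallow coarse fixed points inside the quotient cusped space and only lifts at the end (an invariant simplex of $\ov X_N$ in the deep case, the whole finite subgroup via the injectivity radius of $G\to G_N$ in the shallow case), and your ``local injectivity near thick points'' is really \Cref{lem_injrad_of_shortquot} combined with \Cref{prop_coarseemb} rather than something extracted from \Cref{thmintro:uniform}. The step you flag as delicate is not needed, since once $g\in G$ satisfies $\dist_{\mathcal Z}(gp,p)\le R$ everything happens upstairs, whereas the real content of your Case~2, which you assert rather than prove, is the following: by \eqref{eq:dec_of_Ydelta} a maximal simplex whose support avoids $\CStar{\gamma}$ projects uniformly close to the base of $\hatC{\Delta_\gamma}$, so each sufficiently deep curve of $p$ lies in the support of $gp$ and vice versa, which for $S_5$ (supports being pairs of disjoint curves) forces $g$ to fix a support curve of $p$ or swap the two (and if you argue downstairs as the paper does, vertex stabilisers of $\ov X_N=\ov X/DT_N$ lift trivially while setwise edge stabilisers lift by \cite[Corollary 3.18]{short_HHG:II}).
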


\noindent We expect that a similar result can be obtained for all quotients of mapping class groups by suitable powers of Dehn twists, as well as the further quotients constructed in \cite{BHMS}. We explain the currently missing technical statement in \Cref{rmk:Giorgios_homework} below.

\subsection*{Outline and strategies}
In \Cref{sec:background} we recall necessary background, in particular about the combinatorial approach to hierarchical hyperbolicity from \cite{BHMS}.  Very roughly, one can extract a hierarchical structure from hyperbolic simplicial complexes where all links are also hyperbolic. Inspired by \cite{converse}, we consider simplicial complexes obtained via the following ``blowup" procedure: starting from a ``supporting" simplicial complex $\ov X$, every vertex $v\in \ov X^{(0)}$ is replaced by the cone over a certain graph $L_v$, thus ensuring that $L_v$ appears as a link in the blown-up graph. This is a fairly broad setting, as a lot of naturally occurring HHSs admit a combinatorial structure of this type by results from \cite{converse}. The associated hierarchically hyperbolic space (HHS) is then a graph whose vertices are maximal simplices of the blown-up graph.  For mapping class groups, the supporting complex is the curve complex and the $L_v$'s are annular curve graphs, so that maximal simplices of the blown-up complex correspond to markings.

In \Cref{sec:squid} we describe the (relative) combinatorial HHSs that we will consider, which we call (relative) \squid{} HHS and as mentioned above are blow-ups. See Figure \ref{fig:HHS_plus_adjectives} to visualise where this notion fits among other version of hierarchical hyperbolicity. \Cref{sec:squid} also contains the construction of cusped spaces, which is done by replacing each $L_v$ with the combinatorial horoball over it. \Cref{thm:teich_is_hhs} summarises the main properties of this construction, and in particular proves the first two items of \Cref{thmintro:first}. It is possible to extend the construction beyond (relative) \squid{} HHGs, but this requires different techniques and will be done in \cite{HPZ_in_progress}.

In \Cref{sec:examples} we verify that our construction indeed applies to many (relative) HHSs, and that it yields cusped spaces of relatively hyperbolic groups and Teichm\"{u}ller space (the last two items of \Cref{thmintro:first}). There we also prove \Cref{prop:different_proj_in_converse_are_equal}, which is of independent interest and could prove useful elsewhere as well. Roughly, the Proposition says that, when starting from a HHS structure and passing to a combinatorial version of it, as in \cite{converse}, the associated projections between domains remain coarsely the same.

In \Cref{sec:control_torsion} we establish \Cref{thm:torsion-of_quot}, which allows us to control torsion elements in quotients of (relative) HHGs with controlled geometry. As mentioned above, this uses arguments from \cite{HHP:coarse}, applied to the action on our cusped spaces.

We leverage our control on torsion in \Cref{sec:Aut}, where we study automorphisms of quotients of short HHGs. Towards this, we firstly show \Cref{thmintro:uniform}(=\Cref{cor:uniform_teich_for_quot}); the latter, combined with \Cref{thm:torsion-of_quot}, immediately yields \Cref{cor:hyp_satisfied}, which is a more general version of \Cref{corintro:torsion-in-quotients} about finite subgroups in quotients. 

Having established this, the goal of \Cref{thm:extract} is to extract a simplicial automorphism of a certain graph from a group automorphism; for $\MCG(S_5)/DT_N$, this graph is the quotient of the curve graph by $DT_N$. To this extent, the strategy (for $\MCG(S_5)$) is roughly to characterise images of Dehn twists in $\MCG(S_5)/DT_N$ as those elements that have a sufficiently large finite order, and a sufficiently large centraliser (we need the latter to rule out images of multi-twists). 

In turn, every automorphism of the quotient of the curve graph is induced by a mapping class, by an analogue of Ivanov's theorem we proved in \cite{rigidity_mcg_mod_dt}, so \Cref{thmintro:automorphisms}(=\Cref{cor:autmcgs5}) follows from \Cref{thm:extract}.

In \Cref{sec:spheres} we prove \Cref{thm:ubertheorem} and both parts of \Cref{thmintro:distinguish} as corollaries, again exploiting knowledge about torsion, and in particular using that torsion elements have ``very large'' normal closure in mapping class groups of spheres. This should be compared to several results in the literature where torsion is used to constrain homomorphisms between mapping class groups \cite{HarveyKorkmaz,ChenLanier}.


\begin{remintro}[\Squid{} structures for Dehn twist quotients]
    \label{rmk:Giorgios_homework}
Let $S$ be any finite-type surface, and let $\MCG(S)$ be its mapping class group. It is shown in \cite{BHMS} that $\MCG(S)/DT_N$, for $N$ deep enough, is a combinatorial HHG. However, the structure found there is not a \squid{} HHG structure, meaning a structure as in \Cref{sec:squid}: this is because the underlying graph $X$, which is the corresponding quotient of the curve complex, is not a blowup. Nonetheless, one can blow up each vertex of $X$ to a cone over a quotient of an annular curve graph. We expect this should yield a relative \squid{} HHG structure with uniform constants over all $N$ large enough; this would almost immediately give generalisations of \Cref{thmintro:uniform} (on cusped spaces being uniformly HHS) and \Cref{corintro:torsion-in-quotients} (about torsion elements in quotients). Most likely, this would also allow one to extend \Cref{thmintro:automorphisms} (on automorphisms of quotients) to the six-punctured sphere; recall that the cases with more punctures are covered by \cite{rigidity_mcg_mod_dt}. Finally, it would also be a starting point for more general versions of \Cref{thmintro:distinguish} (on different mapping class groups having different hyperbolic quotients); however, since quotients by large powers of Dehn twists are in general not hyperbolic, a one-to-one analogue of \Cref{thmintro:distinguish} would probably require uniform relative \squid{} HHG structures also for the further quotients studied in \cite{BHMS}.
\end{remintro}




\subsection*{Acknowledgements}
We thank Arthur Bartels and R\'{e}mi Coulon for suggesting possible future applications of cusped spaces, Alex Wright for several insightful questions, and Matt Durham for many useful suggestions. We are also grateful to Mark Hagen, Harry Petyt, and Abdul Zalloum for discussions on the generalisation of cusped spaces beyond combinatorial HHG.

\section{Background}
\label{sec:background}
In this Section, we gather all definitions and tools from the world of (combinatorial hierarchical) hyperbolicity that we shall need. We first recall here a construction of Groves-Manning \cite{GrovesManning}, see also \cite{Bow-relhyp}, inspired by horoballs in real hyperbolic spaces.

\begin{defn}\label{def:horoball}
Let $\Gamma$ be a simplicial graph. The \emph{combinatorial horoball} $\Hor{\Gamma}$ with base $\Gamma$ is the simplicial graph whose vertex set is $\Gamma^{(0)}\times \N$, and with the following two types of edges:
\begin{itemize}
\item For every $p\in \Gamma^{(0)}$ and $n\in \N$, $(p,n)$ is adjacent to $(p, n+1)$;
\item For every $p,q\in \Gamma^{(0)}$ and $n\in \N$, if $\dist_{\Gamma}(p,q)\le 2^n$ then $(p,n)$ is adjacent to $(q,n)$.
\end{itemize}
We often denote $(p,n)\in \Gamma^{(0)}\times \N$ simply by $p^n$. 
\end{defn}

\noindent The base $\Gamma$ is naturally contained in the corresponding combinatorial horoball, but it is not quasi-isometrically embedded into it. Rather, it is coarsely embedded, in the sense of the following definition:

\begin{defn}[Coarse embedding]\label{def:coarse_emb}
    Given two metric spaces $(X,\dist_X)$ and $(Y,\dist_Y)$ and positive real functions $m,M\coloneq \mathbb{R}_{\ge 0}\to \mathbb{R}_{\ge 0}$ such that $m(t)\xrightarrow[t\to +\infty]{}+\infty$, a map $f\colon X\to Y$ is a \emph{coarse embedding} if for every $x,x'\in X$, $$m\left(\dist_X(x,x')\right)\le \dist_Y(f(x),f(x'))\le M\left(\dist_X(x,x')\right).$$
    $M$ and $m$ are called the \emph{coarse embedding functions} of $f$.
\end{defn}

\noindent In real hyperbolic spaces every horosphere is a coarsely embedded copy of the Euclidean space inside the corresponding horoball. Analogously, combinatorial horoballs satisfy the following:

\begin{lem}[{See proof of \cite[Lemma 3.10]{GrovesManning}}]\label{lem:coarseemb_in_hor}
    The inclusion $\Gamma\hookrightarrow \Hor{\Gamma}$ is a uniform coarse embedding. More precisely, for every $x,y\in \Gamma^{(0)}$, $\dist_{\Hor{\Gamma}}(x,y)\ge \frac{2}{3}\log_{2}(\dist_{\Gamma}(x,y))+1$.
\end{lem}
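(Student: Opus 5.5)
The plan is to prove the inequality directly from \Cref{def:horoball}, by bounding how much $\Gamma$-distance a short path in $\Hor{\Gamma}$ can cover. The coarse embedding statement will then follow: the upper bound $\dist_{\Hor{\Gamma}}(x,y)\le\dist_\Gamma(x,y)$ is immediate because $\Gamma$ sits at level $0$, and the lower bound supplies the function $m(t)=\frac23\log_2 t+1$, which tends to infinity. The inequality only makes sense for $x\neq y$, so I assume $\dist_\Gamma(x,y)=d\ge 1$.

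Let $\gamma$ be a path in $\Hor{\Gamma}$ of length $L$ from $x=x^0$ to $y=y^0$, and let $m$ be the maximal level $\gamma$ visits. I would record three facts.
\begin{itemize}
\item Every edge is either vertical, changing the level by one and not moving the $\Gamma$-coordinate, or horizontal at some level $j$, moving the $\Gamma$-coordinate by at most $2^j\le 2^m$.
\item Since $\gamma$ starts and ends at level $0$ and reaches level $m$, it uses at least $2m$ vertical edges. Hence it has at most $L-2m$ horizontal edges.
\item Since $x\neq y$, $\gamma$ uses at least one horizontal edge. Hence $L-2m\ge 1$, that is, $m\le (L-1)/2$.
\end{itemize}

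By the triangle inequality in $\Gamma$, applied along the projection of $\gamma$ to $\Gamma$, these facts give
$$d\le (L-2m)\,2^m\le L\cdot 2^{(L-1)/2}.$$
It therefore suffices to show $L\,2^{(L-1)/2}\le 2^{3(L-1)/2}$, which is equivalent to $L\le 2^{L-1}$. This holds for every integer $L\ge1$, by a trivial induction. Taking $\gamma$ to be a geodesic gives $L=\dist_{\Hor{\Gamma}}(x,y)$ and $d\le 2^{\frac32(L-1)}$, which rearranges to $L\ge\frac23\log_2 d+1$.

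There is no real obstacle here. The only step requiring care is the bookkeeping that separates vertical from horizontal edges and caps the level at $(L-1)/2$. I would avoid the finer normal form for geodesics in Groves--Manning, which allows at most three horizontal edges, since the crude count above already gives the stated constant.
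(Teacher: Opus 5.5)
Your argument is correct and gives exactly the stated constants, but it is not the route the paper points to.

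The paper has no proof of its own and defers to the proof of Groves--Manning's Lemma 3.10. That lemma establishes a normal form for geodesics in $\Hor{\Gamma}$: between distinct vertices there is a geodesic made of at most two vertical segments and a single horizontal segment of length at most $3$. For $x,y$ at level $0$, a geodesic therefore rises to some level $m$, takes $h\in\{1,2,3\}$ horizontal steps there, and descends. Hence $L=2m+h$ and $\dist_\Gamma(x,y)\le h2^m$. This gives the sharp asymptotics $\dist_{\Hor{\Gamma}}(x,y)=2\log_2\dist_\Gamma(x,y)+O(1)$. The stated bound also follows, since $h2^m\le 2^{3m+\frac32(h-1)}=2^{\frac32(L-1)}$ for $h\le 3$.

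Your edge count works for arbitrary paths and so bypasses the normal form. The normal form needs a shortcutting argument, replacing long horizontal runs at level $n$ by shorter detours through level $n+1$. Your route is more elementary and self-contained. The price is the weaker multiplicative constant $\frac23$ in place of $2$. That is exactly what the lemma claims, and all the paper ever uses is that the bases $\C{\Delta_v}$ are uniformly coarsely embedded in their horoballs.

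Two cosmetic points. You use $m$ both for the coarse embedding function and for the maximal level of the path. Also, to fit the definition of coarse embedding, the lower function should be extended to $t\in[0,1)$, for instance by $0$.
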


\noindent  Geodesics in combinatorial horoballs admit a similar description as those in real horoballs, and one can use this to show:

\begin{thm}[{\cite{GrovesManning}}]\label{thm:horgamma_hyp}
There exists $\delta_0>0$ such that, for every $\Gamma$, $\Hor{\Gamma}$ is $\delta_0$-hyperbolic.
\end{thm}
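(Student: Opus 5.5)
We prove that $\Hor{\Gamma}$ is uniformly hyperbolic by exhibiting explicit uniformly thin \emph{preferred paths} between any two vertices, and then appealing to a thin-bigon/thin-triangle criterion for hyperbolicity, such as Bowditch's or Papasoglu's. The point is to keep every estimate in terms of the explicit edge rules of \Cref{def:horoball}, so that no constant ever depends on $\Gamma$. We may assume $\Gamma$ is connected; otherwise we treat each component separately, and distinct components only meet through arbitrarily high levels.

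\textbf{Step 1: preferred paths.} Given $p^m$ and $q^n$, let $k$ be the least integer with $k\ge \max(m,n)$ and $\dist_\Gamma(p,q)\le 2^{k}$. The path from $p^m$ to $q^n$ goes vertically from $p^m$ up to $p^k$, crosses at most a bounded number of horizontal edges at level $k$, and then goes vertically down from $q^k$ to $q^n$. To check that this is a uniform quasi-geodesic, take any path $\gamma$ between the two points, of length $\ell$, with highest level $h$. At level $j$ a horizontal edge moves the $\Gamma$-projection by at most $2^j$, so we get $\dist_\Gamma(p,q)\le \sum_j (\#\text{horizontal edges at level } j)\,2^j$. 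It follows that either $h\ge k-O(1)$, or $\gamma$ spends exponentially many horizontal edges at low levels. Going down one level doubles the number of horizontal edges needed, while climbing one level and then descending costs only $2$. A standard compression argument then gives $\ell\ge (k-m)+(k-n)-C$ for a universal constant $C$. This is the same mechanism as in \Cref{lem:coarseemb_in_hor}, and proves that geodesics look like ``up, across boundedly, down''.

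\textbf{Step 2: uniform thin triangles.} Consider three vertices and the three preferred paths between them. Their vertical segments based at a common vertex coincide. The top horizontal pieces sit at heights $k_{ab}, k_{bc}, k_{ac}$, and the inequality $\dist_\Gamma(p,r)\le \dist_\Gamma(p,q)+\dist_\Gamma(q,r)\le 2^{\max+1}$ shows that the largest of the three heights exceeds the second largest by at most $1$. Using this, one checks directly that each preferred path is contained in a uniformly bounded neighbourhood of the other two, with the constant independent of $\Gamma$. Combined with Step 1, and by the Morse lemma applied in the form of a guessing-geodesics criterion (for instance, Bowditch's: a family of paths that is uniformly thin on triangles and has bounded-diameter paths between close points implies hyperbolicity), this yields a universal $\delta_0$.

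\textbf{Main obstacle.} The delicate step is Step 1. The issue is not proving that preferred paths are quasi-geodesic for a given $\Gamma$, but proving that the constants are uniform. Paths may wander both horizontally and vertically in a complicated way, and the base graph $\Gamma$ is arbitrary. I would handle this by induction on the level structure: repeatedly replace two consecutive horizontal edges at level $j$ by a detour of length $3$ through level $j+1$, which is shorter than or equal to the original modulo constants. Iterating this replacement shows that any geodesic has at most $3$ consecutive horizontal edges at any level and is ``monotone up then down'' up to bounded error. This normal form for geodesics gives Step 1 and also makes Step 2 essentially immediate.
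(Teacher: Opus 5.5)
Your argument is correct and is essentially the argument behind the cited result of Groves--Manning, which the paper invokes rather than reproves: up to bounded error, geodesics in $\Hor{\Gamma}$ go vertically up, cross boundedly many horizontal edges, and go vertically down, and triangles of such paths are uniformly thin because the three top heights differ by at most one (and once you use Bowditch's guessing-geodesics criterion, your Step 1 is not even needed). Two side remarks are inaccurate but harmless: horoballs over distinct components of $\Gamma$ are never joined by any edge, and the normal form in your last paragraph fails as stated (a length-$3$ detour does not shorten two horizontal edges, and four horizontal edges at level $j$ between points at $\Gamma$-distance $2^{j+2}$ already form a geodesic), but the estimate via $\dist_\Gamma(p,q)\le\sum_j(\#\text{horizontal edges at level } j)\,2^j$ from your Step 1 already gives the quasi-geodesic bound directly.
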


\noindent To summarise, combinatorial horoballs give a way to coarsely embed any graph into a hyperbolic space.

\subsection{Pills of combinatorial hierarchical hyperbolicity} We recall here the notion of a \emph{combinatorial HHS} from \cite{BHMS}. For the whole section, let $X$ be a simplicial graph, with vertex set $\ov X^{(0)}$, and let $\W$ be an \emph{$X$-graph}, that is, a simplicial graph whose vertex set is the set of all maximal simplices of $X$. We conflate $X$ with the flag simplicial complex that has $X$ as the 1-skeleton, and in particular often refer to cliques in $X$ as simplices of $X$. The main theorem of \cite{BHMS} describes conditions on a pair $(X,\W)$ to ensure that $\W$ is a hierarchically hyperbolic space. We now gather the required terminology to phrase these conditions.

Given two subgraphs $A,B$ of $X$ (by which we always mean \emph{full} subgraphs), if $B\subseteq \link{X}{A}$ we write $A\star B$ to denote the full subgraph spanned by $A^{(0)}\cup B^{(0)}$. Moreover, we set $\CStar{A}\coloneq \link{X}{A}\star A$.

Links of simplices will correspond to hyperbolic spaces of an HHS structure. Because of this, we are interested in simplices that have the same link.

\begin{defn}[Saturation]\label{defn:simplex_equivalence}
For $\Delta,\Delta'$ simplices of $X$, we write $\Delta\sim\Delta'$ to mean $\link{X}{\Delta}=\link{X}{\Delta'}$. Let $[\Delta]$ be the $\sim$--equivalence class of $\Delta$, and let 
$$\Sat(\Delta)=\left(\bigcup_{\Delta'\in[\Delta]}\Delta'\right)^{(0)}.$$
Let $\frakS$ the set of $\sim$--classes of non-maximal simplices in $X$. 
\end{defn}

The hyperbolic spaces of the HHS structures we will be interested in are not exactly links in $X$ (in most cases there are discrete links), but rather we have to add certain edges, as specified below.

\begin{defn}[Augmented links]\label{defn:complement}
Let $\duaug{X}{\mathcal{W}}$ be the simplicial graph such that:
\begin{itemize}
     \item the $0$--skeleton of $\duaug{X}{\mathcal{W}}$ is $X^{(0)}$;
     \item if $v,w\in X^{(0)}$ are adjacent in $X$, then they are adjacent in $\duaug{X}{\mathcal W}$; 	
     \item if two vertices in $\mathcal W$ are adjacent, then we consider $\sigma,\rho$, the associated maximal simplices of $X$, and in $\duaug{X}{\mathcal W}$ we connect each vertex of $\sigma$ to each vertex of $\rho$.
\end{itemize}
 For each simplex $\Delta$ of $X$, let $Y_\Delta$ be the subgraph of $\duaug{X}{\mathcal W}$ spanned by $(\duaug{X}{\mathcal W})^{(0)}-\Sat(\Delta)$. The \emph{augmented link} of $\Delta$ is the induced subgraph $\C{\Delta}$ of $Y_\Delta$ spanned by $\link{X}{\Delta}^{(0)}$ (we emphasise that we are taking links in $X$, not in $\duaug{X}{\mathcal W}$, and then considering the subgraphs of $Y_\Delta$ induced by those links). Note that, if $\Delta\sim\Delta'$, then $Y_{\Delta}=Y_{\Delta'}$ and  $\C{\Delta}=\C{\Delta'}$ . 
\end{defn}

We are ready to define combinatorial HHS.

\begin{defn}[Combinatorial HHS]\label{def:combHHS}
A \emph{combinatorial hierarchically hyperbolic space} is the data of a simplicial graph $X$, an $X$-graph $\W$, and two constants $n,\delta\ge 0$, satisfying the following:
\begin{enumerate}[label=(\arabic*)]
\item \label{CHHS_def_complexity} Any chain $\link{X}{\Delta_1}\subsetneq\dots\subsetneq\link{X}{\Delta_k}$ has length at most $n$, which is called the \emph{complexity} of $(X,\W)$.
\item \label{CHHS_def_hyperbolicity} For each $[\Sigma]\in \frakS$,  $\C{\Sigma}$ is $\delta$--hyperbolic.
\item \label{CHHS_def_qi_emb} For each $[\Sigma]\in \frakS$, $\C{\Sigma}$ is $\delta$--quasi-isometrically embedded in $Y_\Sigma$.
\item \label{CHHS_def_lk_cap_lk} For every $[\Delta],[\Sigma]\in \frakS$ for which there exists $[\Gamma]\in\frakS$ such that $\link{X}{\Gamma}\subseteq \link{X}{\Delta}\cap \link{X}{\Sigma}$ and $\diam(\C{\Gamma})\geq \delta$, there exists a non-maximal simplex $\Pi$ containing $\Sigma$ such that $\link{X}{\Pi}\subseteq \link{X}{\Delta}$, and all $\Gamma$ as above satisfy $\link{X}{\Gamma}\subseteq \link{X}{\Pi}$.
\item \label{CHHS_def_edge_in_link} For each $[\Delta]\in \frakS$ and $v\neq w\in \link{X}{\Delta}$, if $v,w$ are adjacent in $\C{\Delta}$ but not in $\link{X}{\Delta}$, then there exist $\mathcal W$-adjacent maximal simplices $\Pi_v,\Pi_w$ such that $\Delta\star \{v\}\subseteq \Pi_v$ and $\Delta\star \{w\}\subseteq \Pi_w$.
\end{enumerate}
\end{defn}

\noindent \cite[Theorem 1.18]{BHMS} states that, if $(X,\W,n,\delta)$ is a combinatorial HHS, then $\W$ is a \emph{hierarchically hyperbolic space}, in the sense of \cite[Definition 1.1]{HHS_II}. More precisely, the structure on $\W$ has $\frakS$ as domain set and augmented links as coordinate spaces, while relations and projections between domains are encoded by the following additional definitions.

\begin{defn}[Relations between domains]\label{defn:nest_orth} Let $[\Delta],[\Delta']\in\frakS$. Then:
\begin{itemize}
     \item $[\Delta]\nest[\Delta']$ if $\link{X}{\Delta}\subseteq\link{X}{\Delta'}$;
     \item $[\Delta]\orth[\Delta']$ if $\link{X}{\Delta'}\subseteq \link{X}{\link{X}{\Delta}}$.
\end{itemize}
If $[\Delta]$ and $[\Delta']$ are neither $\orth$--related nor $\nest$--related, we write 
$[\Delta]\transverse[\Delta']$.
\end{defn}

\begin{defn}[Projections]\label{defn:projections}
Fix $[\Delta]\in\frakS$ and define a map $\pi_{[\Delta]}\colon \W\to 2^{\C{\Delta}}$ as follows. Let $p\colon Y_\Delta\to2^{\C{\Delta}}$ be the coarse closest point projection, i.e. 
$$p(x)=\{y\in\C{\Delta}\colon \dist_{Y_\Delta}(x,y)\le\dist_{Y_\Delta}(x,\C{\Delta})+1\}.$$
If $w$ is a maximal simplex of $X$, the intersection $w\cap Y_\Delta$ is non-empty and has diameter at most $1$ by \cite[Lemma 1.15]{BHMS}. Hence set $$\pi_{[\Delta]}(w)=p(w\cap Y_\Delta),$$
and for every $w,w'\in \W$ set $$\dist_{[\Delta]}(w,w')\coloneq\dist_{\C{\Delta}}\left(\pi_{[\Delta]}(w),\pi_{[\Delta]}(w')\right).$$
If $[\Delta],[\Delta']\in\frakS$ satisfy $[\Delta]\transverse[\Delta']$ or $[\Delta']\propnest [\Delta]$, set $$\rho^{[\Delta']}_{[\Delta]}=p(\Sat(\Delta')\cap Y_\Delta).$$ 
If $[\Delta]\propnest [\Delta']$, let $\rho^{[\Delta']}_{[\Delta]}\colon \C{\Delta'}\to \C{\Delta}$ be defined as follows.  On $\C{\Delta'}\cap Y_\Delta$, it is the restriction of $p$; otherwise, it takes the value $\emptyset$.
\end{defn}

\begin{rem}[Uniform combinatorial HHSs]\label{rem:uniform_CHHS}
    It can be extracted from the proof of \cite[Theorem 1.18]{BHMS} that, for a combinatorial HHS $(X,W,n,\delta)$, all constants and functions involved in the hierarchically hyperbolic structure depend only on $n$ and $\delta$ (in particular, the function appearing in the Uniqueness axiom is explicitly constructed at \cite[page 50]{BHMS}). This is relevant as every coarse object naturally associated to a hierarchically hyperbolic space, such as its coarse median structure \cite{HHS_II} or its coarsely injective structure \cite{HHP:coarse}, ultimately only depends on these constants and functions. With this in mind, we say that a family of combinatorial HHSs is \emph{uniform} if they satisfy \Cref{def:combHHS} with respect to uniform constants $n$ and $\delta$.
\end{rem}

\subsection{Blowup graphs}

All the combinatorial HHSs that we consider in this paper will be blowups, in a sense that we now describe. We note that the combinatorial HHSs constructed in \cite{converse}, where a combinatorial structure is given for many HHSs, are also all blowups: this will be explored further in \Cref{sec:converse_are_squid} below.

\begin{defn}\label{defn:blowup_data}
Given a simplicial graph $\ov X$ and, for every $v\in \ov X^{(0)}$, a set $L_v$, the \emph{blowup} of $\ov X$ with respect to $\{L_v\}_{v\in \ov X^{(0)}}$ is the graph $X$ defined as follows. For every $v\in \ov X^{(0)}$, $X$ contains the \emph{cone} $\Cone{v}\coloneq \{v\}\star\{L_v\}$. Given $v,w\in \ov X^{(0)}$, $\Cone{v}$ and $\Cone{w}$ span a join in $X$ if their \emph{apices} $v$ and $w$ are adjacent in $\ov X$, and are disjoint otherwise. See \Cref{fig:blowup}. 

We call $\ov X$ the \emph{support graph}, $L_v$ the \emph{base} of the cone below $v\in \ov X^{(0)}$, and the pair $(\ov X, \{L_v\}_{v\in \ov X^{(0)}})$ the \emph{blowup data}. When the blowup data is not relevant or clear from context, we say that $X$ is a \emph{blowup graph}.
\end{defn}

\begin{figure}[htp]
    \centering
    \includegraphics[width=\textwidth, alt={The cones of two adjacent vertices span a join}]{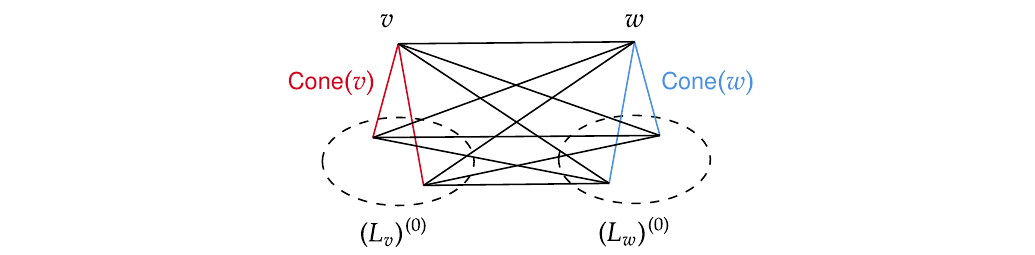}
    \caption{The blowup of two adjacent vertices of $\ov{X}$.}
    \label{fig:blowup}
\end{figure}

\begin{notation}
If $X$ is the blowup of $(\ov X, \{L_v\}_{v\in \ov X^{(0)}})$, we identify $\ov X$ with the subgraph of $X$ spanned by the apices of the cones. Let $\p\colon X\to \ov X$ be the $1$-Lipschitz retraction mapping every cone to its apex. If $\Sigma$ is a simplex of $X$, we denote $\p(\Sigma)$ by $\ov \Sigma$, which is the \emph{support} of $\Sigma$.

Given a simplex $\ov \Theta$ of $\ov X$, together with a point $p\in L_v$ for every $v\in \ov \Theta^{(0)}$, we denote the $\hat X$-simplex spanned by $\{v,p\}_{v\in \ov\Theta}$ by $\Theta\coloneq (v,p)_{v\in \ov\Theta}$. If $\ov\Theta$ is maximal in $\ov X$ then $\Theta$ is maximal in $X$, and all maximal simplices of $X$ are of this form.

For every $v\in \ov X^{(0)}$ we denote by $\Delta_v$ any simplex of $X$ whose link is $L_v$. Notice that $L_v$ is discrete, so $[\Delta_v]$ is $\nest$-minimal.
\end{notation}

Links in a blowup admit a natural decomposition in terms of blowups of links and parts of cones, as described in the following lemma.

\begin{lem}[{Decomposition of links \cite[Lemma 4.3]{converse}}]\label{lem:decomposition_of_links}
Let $\Sigma$ be a simplex of $X$. Then
$$\link{X}{\Sigma}=\p^{-1}\left(\link{\ov X}{\ov \Sigma}\right)\star \left(\bigstar_{v\in\ov\Sigma^{(0)}}\link{\Cone{v}}{\Sigma\cap\Cone{v}}\right).$$
\end{lem}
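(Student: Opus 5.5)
We prove the equality by double inclusion, directly from the definition of the blowup (\Cref{defn:blowup_data}). The approach is to classify each vertex $x$ of $X$ by whether it is an apex or a base point, and by where $\p(x)$ sits relative to $\ov\Sigma$. First we record the adjacency rules in $X$. Two apices $v,w$ are adjacent if and only if $v\sim w$ in $\ov X$. An apex $v$ and a base point $q\in L_w$ are adjacent if and only if either $w=v$ or $w\sim v$ in $\ov X$. Two base points $p\in L_v$ and $q\in L_w$ are adjacent if and only if $v\sim w$ in $\ov X$, since $L_v$ is discrete. In short, for vertices $x\neq y$ we have $x\sim y$ if and only if either $\p(x)\sim\p(y)$ in $\ov X$, or $\p(x)=\p(y)$ and $\{x,y\}$ is an edge of $\Cone{\p(x)}$ (that is, one of them is the apex).

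Now let $x\notin\Sigma$, and suppose first that $\p(x)\notin\ov\Sigma$. Then $x$ is adjacent to every $y\in\Sigma$ if and only if $\p(x)$ is adjacent to every vertex of $\ov\Sigma$. This holds because every vertex of $\ov\Sigma$ is $\p(y)$ for some $y\in\Sigma$, and the condition does not depend on which point of the cone $y$ is. So $x\in\link{X}{\Sigma}$ if and only if $x\in\p^{-1}(\link{\ov X}{\ov\Sigma})$.

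Suppose instead that $\p(x)=v\in\ov\Sigma$. For $y\in\Sigma$ with $\p(y)=w\neq v$ we have $w\in\ov\Sigma$, which is a simplex of $\ov X$, so $v\sim w$ and hence $x\sim y$ automatically. The only constraint therefore comes from the vertices of $\Sigma\cap\Cone{v}$, and $x\in\link{X}{\Sigma}$ if and only if $x\in\link{\Cone{v}}{\Sigma\cap\Cone{v}}$. Note that $\Sigma\cap\Cone{v}\neq\emptyset$ because $v\in\ov\Sigma$.

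Together these two cases show that the vertex sets of the two sides coincide. Both sides are full subgraphs of $X$: the link is full because $X$ is flag, and $\star$ denotes the full subgraph on the union of the vertex sets. So it remains to check that the pieces really are pairwise joined, so that writing the right-hand side with $\star$ is legitimate. The check is as follows.

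\begin{itemize}
\item Take a vertex of $\p^{-1}(\link{\ov X}{\ov\Sigma})$ and a vertex of the piece for $v\in\ov\Sigma$. Their projections are $\ov X$-adjacent, so the two vertices are adjacent.
\item Take vertices of the pieces for distinct $v,w\in\ov\Sigma$. Again $v\sim w$ in $\ov X$, so the vertices are adjacent.
\end{itemize}

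The only step needing care is the bookkeeping in the case $\p(x)\in\ov\Sigma$, namely that $\Sigma$ meets each relevant cone and that no vertex is counted in two pieces. The pieces are disjoint because the projections land in the disjoint sets $\link{\ov X}{\ov\Sigma}$ and the singletons $\{v\}$ for $v\in\ov\Sigma$. No deeper obstacle is expected.
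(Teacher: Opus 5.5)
Your argument is correct and complete. The paper gives no proof of this lemma and simply imports it from \cite[Lemma 4.3]{converse}; your case analysis via the adjacency rule ($x\neq y$ are adjacent iff $\p(x)\sim\p(y)$ in $\ov X$, or $\p(x)=\p(y)$ and one of them is the apex) is the routine verification behind it, and it uses the same characterisation the paper invokes in the proof of \Cref{lem:links_in_cusped}. Your one implicit step, that $\ov\Sigma=\p(\Sigma)$ is a simplex of $\ov X$, follows immediately from that rule, since distinct vertices of $\Sigma$ have equal or adjacent projections.
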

\begin{cor}\label{cor:bounded_links}
Any $X$-simplex $\Sigma$ is of one of the following types:
\begin{itemize}
 \item \label{cor:bounded_links1} \emph{Bounded type}: $\link{X}{\Sigma}$ is either a single vertex or a non-trivial join.
 \item \label{cor:bounded_links2} \emph{Blowup type}: for each $v\in \ov\Sigma^{(0)}$, we have that $\Sigma\cap \Cone{v}$ is an edge.
 \item \label{cor:bounded_links3} \emph{Cone type}: $\Sigma=\Delta_v$ for some $v\in \ov X^{(0)}$.
\end{itemize}
\end{cor}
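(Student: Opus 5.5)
The plan is a case analysis on the structure of $\Sigma$ given by \Cref{lem:decomposition_of_links}, which writes
$$\link{X}{\Sigma}=\p^{-1}\left(\link{\ov X}{\ov \Sigma}\right)\star \left(\bigstar_{v\in\ov\Sigma^{(0)}}\link{\Cone{v}}{\Sigma\cap\Cone{v}}\right).$$
The first step is to determine $\Sigma\cap\Cone{v}$ for each $v\in\ov\Sigma^{(0)}$. Since $\Cone{v}=\{v\}\star L_v$ with $L_v$ discrete, a simplex inside $\Cone{v}$ has at most two vertices, and at most one of them lies in $L_v$. Hence $\Sigma\cap\Cone{v}$ has exactly one of three forms: the apex $\{v\}$ alone, a single base point $\{p\}$ with $p\in L_v$, or an edge $\{v,p\}$. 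Note that $v\in\ov\Sigma^{(0)}$ guarantees $\Sigma\cap\Cone{v}\neq\emptyset$.

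The second step computes the link of each piece inside $\Cone{v}$:
\begin{itemize}
\item if $\Sigma\cap\Cone{v}=\{v\}$, then $\link{\Cone{v}}{\{v\}}=L_v$;
\item if $\Sigma\cap\Cone{v}=\{p\}$, then $\link{\Cone{v}}{\{p\}}=\{v\}$, a single vertex;
\item if $\Sigma\cap\Cone{v}=\{v,p\}$, then $\link{\Cone{v}}{\{v,p\}}$ is empty.
\end{itemize}

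The third step sorts $\Sigma$ into types. If every $\Sigma\cap\Cone{v}$ is an edge, $\Sigma$ is of blowup type by definition. Otherwise some factor in the decomposition is non-empty, and the argument splits by counting non-empty factors. If at least two factors are non-empty, the decomposition exhibits $\link{X}{\Sigma}$ as a non-trivial join, so $\Sigma$ is of bounded type. Here we use that $\p^{-1}(\link{\ov X}{\ov\Sigma})$ is non-empty exactly when $\link{\ov X}{\ov\Sigma}$ is, since $\p$ is surjective on apices.

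If exactly one factor is non-empty, there are three sub-cases:
\begin{itemize}
\item The non-empty factor comes from a single base point $\{p\}$. Then the link is the single vertex $\{v\}$, which is of bounded type.
\item The non-empty factor comes from a lone apex $\{v\}$. Then every other $w\in\ov\Sigma^{(0)}$ contributes an edge and $\link{\ov X}{\ov\Sigma}=\emptyset$, so the link equals $L_v$. Such a $\Sigma$ is by definition a simplex whose link is $L_v$, that is, some $\Delta_v$, and so is of cone type. The Notation says "any simplex whose link is $L_v$", so the equality $\Sigma=\Delta_v$ should be read up to this convention.
\item The non-empty factor is $\p^{-1}(\link{\ov X}{\ov\Sigma})$ itself, with all cone pieces being edges. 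Then $\Sigma$ is of blowup type.
\end{itemize}

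The fourth step handles the degenerate case where every factor is empty. Then $\link{X}{\Sigma}=\emptyset$ and $\Sigma$ is maximal; this falls under blowup type provided all cone pieces are edges. It cannot happen with a lone apex, since that piece has non-empty link $L_v$. It also cannot happen with a lone base point $p$, since $\Sigma\cup\{v\}$ would then be a larger simplex.

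The main (mild) obstacle is bookkeeping. First, one must observe that "blowup type" places no constraint on $\link{\ov X}{\ov\Sigma}$, so the only genuinely restrictive case is a single non-empty cone factor. Second, one must check in that case that $L_v$ is not itself a non-trivial join. This holds because $L_v$ is discrete, and the types are not claimed to be exclusive anyway.
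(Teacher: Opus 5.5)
Your proof is correct and follows the intended route. The paper states this corollary without proof as an immediate consequence of \Cref{lem:decomposition_of_links}, and your case analysis on the three possible shapes of $\Sigma\cap\Cone{v}$, together with counting the non-empty join factors, is exactly that inspection. It tacitly uses that each $L_v$ is non-empty.

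The last sub-case of your third step and your whole fourth step are redundant. Any $\Sigma$ all of whose cone pieces are edges was already placed in blowup type at the start of your third step.
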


\noindent The following feature of intersections of links is a common property for combinatorial HHSs (for example, it holds for curve graphs of surfaces, as explained in \cite[Remark 6.6]{BHMS}).

\newcommand{\cleanish}[1]{cleanish intersections}
\begin{defn}
    A simplicial graph $\Gamma$ has \emph{\cleanish{}} if, for every two simplices $\Sigma,\Phi$ of $\Gamma$, there exist two simplices $\Pi,\Psi$ such that $\Sigma\subseteq \Pi$ and 
    $$\link{\Gamma}{\Sigma}\cap \link{\Gamma}{\Phi}=\link{\Gamma}{\Pi}\star \Psi.$$
\end{defn}

\noindent We conclude this section with a technical lemma saying that blowups preserve \cleanish{}.
\begin{lem}\label{lem:cleanish_for_blowup}
    Let $X$ be a blowup of a graph $\ov X$. If $\ov X$ has \cleanish{} then $X$ has \cleanish{}.
\end{lem}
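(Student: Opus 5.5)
The plan is to compute both links with the decomposition of \Cref{lem:decomposition_of_links}, intersect them cone by cone, and then use \cleanish{} of $\ov X$ on the supports $\ov\Sigma,\ov\Phi$.

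\textbf{Local description of links.} First I would record what \Cref{lem:decomposition_of_links} says for a single cone. For a simplex $\Sigma$ and $v\in\ov\Sigma^{(0)}$, the set $\Sigma\cap\Cone{v}$ is one of $\{v\}$, $\{p\}$ or $\{v,p\}$ with $p\in L_v$. Accordingly, $\link{\Cone{v}}{\Sigma\cap\Cone{v}}$ is $L_v$, $\{v\}$, or empty. Hence a vertex $x$ of $X$ with $\p(x)=u$ lies in $\link{X}{\Sigma}$ exactly when one of the following holds:
\begin{itemize}
\item $u\in\link{\ov X}{\ov\Sigma}$, in which case the whole of $\Cone{u}$ lies in the link;
\item $u\in\ov\Sigma$ and $x$ lies in the local cone link just described.
\end{itemize}
Doing this for $\Sigma$ and $\Phi$ at once shows that $\link{X}{\Sigma}\cap\link{X}{\Phi}$ is a join of pieces, one for each vertex $u\in\ov X$:
\begin{itemize}
\item if $u\in\link{\ov X}{\ov\Sigma}\cap\link{\ov X}{\ov\Phi}$, the piece is the full cone $\Cone{u}$;
\item if $u\in\ov\Sigma\cap\link{\ov X}{\ov\Phi}$, or symmetrically, the piece is a local cone link ($L_u$, $\{u\}$, or empty);
\item if $u\in\ov\Sigma\cap\ov\Phi$, the piece is the intersection of two local cone links, again one of $L_u$, $\{u\}$, or empty.
\end{itemize}
That these pieces span a join uses that cones over adjacent apices span joins, and that all apices involved lie in $\ov\Sigma\star\link{\ov X}{\ov\Sigma}$.

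\textbf{Input from $\ov X$.} Next I apply \cleanish{} in $\ov X$ to get simplices $\ov\Pi\supseteq\ov\Sigma$ and $\ov\Psi$ with $\link{\ov X}{\ov\Sigma}\cap\link{\ov X}{\ov\Phi}=\link{\ov X}{\ov\Pi}\star\ov\Psi$. Taking preimages under $\p$ gives $\p^{-1}(\link{\ov X}{\ov\Pi})\star\p^{-1}(\ov\Psi)$. The factor $\p^{-1}(\link{\ov X}{\ov\Pi})$ is exactly the "$\p^{-1}$'' part of $\link{X}{\Pi}$ for any lift $\Pi$ of $\ov\Pi$. The lift I would use is $\Pi=\Sigma\cup\{u: u\in\ov\Pi\smallsetminus\ov\Sigma\}$, i.e.\ $\Sigma$ together with the apices over the new vertices of $\ov\Pi$.

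\textbf{Main obstacle: the cones over $\ov\Psi$.} The full cones $\Cone{u}$ for $u\in\ov\Psi$ are not simplices, so they cannot go into $\Psi$ as they stand. The key observation is that a join $A\star B$ only requires $B\subseteq\link{X}{A}$, so $\Psi$ is allowed to meet $\Pi$. Concretely, put the apex $u$ both in $\Pi$ and in $\Psi$. Then $L_u$ appears in $\link{X}{\Pi}$ as the local cone link of $\{u\}$, and $u\in\Psi$ is adjacent to everything in $\link{X}{\Pi}$. This reconstructs the join $\{u\}\star L_u=\Cone{u}$.

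For this to work, $\ov\Pi\cup\ov\Psi$ must be a simplex of $\ov X$. This is the delicate point. I would first try to show it directly from $\ov\Psi\subseteq\link{\ov X}{\ov\Sigma}$ together with $\ov\Psi$ joining $\link{\ov X}{\ov\Pi}$. If that fails, I would modify the output of \cleanish{} in $\ov X$: since $\link{\ov X}{\ov\Pi}$ is determined only up to $\sim$, replace $\ov\Pi$ by a simplex in its saturation that is adjacent to $\ov\Psi$, or discard from $\ov\Psi$ the vertices already accounted for by $\link{\ov X}{\ov\Pi}$.

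\textbf{Remaining vertices.} Finally, the vertices $u\in(\ov\Sigma\cup\ov\Phi)\smallsetminus(\link{\ov X}{\ov\Sigma}\cap\link{\ov X}{\ov\Phi})$ contribute only $L_u$, $\{u\}$, or nothing. Each of these is realised in the same way:
\begin{itemize}
\item a contribution $\{u\}$ goes into $\Psi$;
\item a contribution $L_u$ is obtained by keeping $u\in\Pi$ and $p\notin\Pi$ (adjusting $\Pi$ only over $\ov\Pi\smallsetminus\ov\Sigma$, so that $\Sigma\subseteq\Pi$ is preserved), together with $u\in\Psi$ when the whole cone is needed.
\end{itemize}
The last step is a case-by-case check that $\link{X}{\Pi}\star\Psi$ coincides with the piecewise description of the intersection obtained in the first step.
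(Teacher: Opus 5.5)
Your treatment of the cones over $\ov\Psi$ (put the apex $u$ both in $\Pi$, with $\Pi\cap\Cone{u}=\{u\}$, and in $\Psi$, so that $\{u\}\star L_u=\Cone{u}$ is recovered) is exactly the paper's. However, the point you call delicate is the heart of the lemma, and you leave it open.

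\textbf{First gap: getting $\ov\Psi$ inside $\ov\Pi$.} This does not follow directly, because cleanish intersections constrain only $\link{\ov X}{\ov\Pi}$, not $\ov\Pi$ itself. For instance, take the path with edges $\{d,c\}$, $\{c,a\}$, $\{a,b\}$, with $\ov\Sigma=\emptyset$ and $\ov\Phi=\{b\}$. Then $\ov\Pi=\{c,d\}$, $\ov\Psi=\{a\}$ is a valid output, and $a$ is not adjacent to $d$. Your suggested replacements are not justified in general. The missing idea is the paper's maximality trick. Write $I$ for $\link{\ov X}{\ov\Sigma}\cap\link{\ov X}{\ov\Phi}$, and choose a simplex $\ov\Psi\subseteq I$ maximal subject to $I\subseteq\CStar{\ov\Psi}$. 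Then $I=\bigl(\link{\ov X}{\ov\Sigma\star\ov\Psi}\cap\link{\ov X}{\ov\Phi}\bigr)\star\ov\Psi$. Only now apply cleanish intersections, to the pair $(\ov\Sigma\star\ov\Psi,\ov\Phi)$. This writes the first factor as $\link{\ov X}{\ov\Sigma\star\ov\Psi\star\ov\Theta}\star\ov\Psi'$. If $\ov\Psi'$ were non-empty, $\ov\Psi\star\ov\Psi'$ would contradict maximality. Hence $I=\link{\ov X}{\ov\Pi}\star\ov\Psi$ with $\ov\Psi\subseteq\ov\Pi=\ov\Sigma\star\ov\Psi\star\ov\Theta$, which is exactly what your construction needs.

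\textbf{Second gap: vertices on the $\Phi$ side.} This one is independent of the first. A vertex $u\in\ov\Phi\cap\link{\ov X}{\ov\Sigma}$ with $\Phi\cap\Cone{u}=\{u\}$ contributes all of $L_u$, but not $u$, to the intersection. If $|L_u|\ge 2$, this forces $u\in\ov\Pi$ with $\Pi\cap\Cone{u}=\{u\}$, since a simplex $\Psi$ contains at most one point of $L_u$. Your rule ``keep $u\in\Pi$'' presupposes this, but nothing in your choice of $\ov\Pi$ provides it. In the same path, take $\Sigma=\{a\}$, $\Phi=\{b\}$ and $|L_b|\ge 2$. Then $\link{X}{\Sigma}\cap\link{X}{\Phi}=L_a\star L_b$. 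Yet $\ov\Pi=\{a,c\}$, $\ov\Psi=\emptyset$ is a valid output in $\ov X$, and no simplex supported on $\{a,c\}$, joined with any $\Psi$, can produce $L_b$; the maximality trick does not help here either. The paper's fix is to first replace $\Sigma$ by $\Sigma'=\Sigma\star(\Phi\cap\link{X}{\Sigma})$. This leaves $\link{X}{\Sigma}\cap\link{X}{\Phi}$ unchanged but moves such $u$, with the right cone data, into the support. Afterwards, $\Pi\cap\Cone{v}=\Sigma'\cap\Cone{v}$ is automatically correct for $v\in\ov\Sigma'\cap\CStar{\ov\Phi}$, and the maximality argument is run with $\ov\Sigma'$. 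Finally, contrary to your parenthetical, you must also enlarge $\Pi$ over $\ov\Sigma'$ itself. When $v\in\ov\Sigma'\smallsetminus\CStar{\ov\Phi}$, complete $\Sigma'\cap\Cone{v}$ to an edge; otherwise, e.g.\ when $\Sigma'\cap\Cone{v}=\{v\}$, the set $L_v$ survives in $\link{X}{\Pi}$. Adding vertices does not break $\Sigma\subseteq\Pi$.
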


\begin{proof}
    Let $\Sigma, \Phi$ be simplices of $X$. We want to find two simplices $\Pi,\Psi$ such that $\Sigma\subseteq \Pi$ and
    $\link{X}{\Sigma}\cap \link{X}{\Phi}=\link{X}{\Pi}\star \Psi$. 
    Since $$\link{X}{\Sigma}\cap \link{X}{\Phi}=\link{X}{\Sigma\star(\Phi\cap\link{X}{\Sigma})}\cap \link{X}{\Phi},$$ 
    we first replace $\Sigma$ by $\Sigma'\coloneq \Sigma\star(\Phi\cap\link{X}{\Sigma})$. Now let $\ov\Psi$ be a (possibly trivial) $\ov X$-simplex inside $\link{\ov X}{\ov\Sigma'}\cap \link{\ov X}{\ov\Phi}$ such that $$\link{\ov X}{\ov \Sigma'}\cap \link{\ov X}{\ov \Phi}\subseteq \CStar{\ov \Psi},$$
    and which is maximal with this property. Then
    $$\link{\ov X}{\ov \Sigma'}\cap \link{\ov X}{\ov \Phi}=\left( \link{\ov X}{\ov \Sigma'\star\ov\Psi}\cap \link{\ov X}{\ov \Phi}\right)\star\ov\Psi.$$
    Since $\ov X$ has \cleanish{}, there exist two simplices $\ov\Theta,\ov\Psi'\subseteq \link{\ov X}{\ov\Sigma'\star\ov \Psi}$ such that 
    $$\link{\ov X}{\ov \Sigma'\star\ov\Psi}\cap \link{\ov X}{\ov \Phi}=\link{\ov X}{\ov \Sigma'\star\ov\Psi\star\ov \Theta}\star\ov \Psi',$$
    and by maximality of $\ov\Psi$ we must have that $\Psi'=\emptyset$. Summing up, we proved that
    \begin{equation}\label{eq:lk_cap_lk_in_ovX}
        \link{\ov X}{\ov \Sigma'}\cap \link{\ov X}{\ov \Phi}=\link{\ov X}{\ov \Sigma'\star\ov\Psi\star\ov\Theta}\star\ov\Psi.
    \end{equation}
    We now construct the simplices $\Pi$ and $\Psi$ that we need. Let $\Psi=\ov\Psi$, seen as a simplex of $X$. Let $\ov\Pi=\ov \Sigma'\star\ov\Psi\star\ov\Theta$, and define a simplex $\Pi$ with support $\ov\Pi$ as follows:
    \begin{itemize}
        \item If $v\in \ov\Sigma'-\CStar{\ov\Phi}$, let $\Pi\cap \Cone{v}$ be an edge containing $\Sigma'\cap \Cone{v}$, so that $\link{\Cone{v}}{\Pi\cap \Cone{v}}=\emptyset$;
        \item If $v\in \ov\Sigma'\cap\CStar{\ov\Phi}$, let $\Pi\cap \Cone{v}=\Sigma'\cap \Cone{v}$. In particular, $\link{\Cone{v}}{\Pi\cap \Cone{v}}$ and $\Cone{v}\cap\link{X}{\Sigma'}\cap\link{X}{\Phi}$ coincide;
        \item If $v\in \ov\Theta$, let $\Pi\cap \Cone{v}$ be an edge;
        \item If $v\in \ov\Psi$, let $\Pi\cap \Cone{v}=\{v\}$, so that $\link{\Cone{v}}{\Pi\cap \Cone{v}}=L_v$.
    \end{itemize}

\begin{figure}[htp]
\centering
\renewcommand{\arraystretch}{1.5}
\begin{tabular}{c|c|c}
$\Pi\cap \Cone v$&$\ov\Sigma'$&$\link{\ov X}{\ov \Sigma'}$\\
\hline
$\ov\Phi$&  $\Sigma'\cap \Cone v$&/\\
$\link{\ov X}{\ov \Phi}$&$\Sigma'\cap \Cone v$& $v$ whenever $v\in \ov\Psi$\\
$\ov X- \CStar{\ov\Phi}$&complete $\Sigma'\cap \Cone v$ to an edge& choose an edge whenever $v\in \ov\Theta$
\end{tabular}
    \caption{Schematic representation of $\Pi$. Each cell describes how $\Pi\cap \Cone v$ is defined whenever $v\in\ov X^{(0)}$ belongs to the area given by the intersection between the row label and the column label: for example, if $v\in\ov\Sigma'\cap \ov\Phi$ we have that $\Pi\cap \Cone v=\Sigma'\cap \Cone v$.}
    \label{tab:Sigma_star_Pi}
\end{figure}

\noindent It is clear by the first two bullets that $\Pi$ extends $\Sigma'$, so we are left to show that $$\link{X}{\Phi}\cap\link{X}{\Sigma'}=\link{X}{\Pi}\star\Psi.$$ Firstly $\Psi\subseteq\link{X}{\Phi}\cap\link{X}{\Sigma'}$ by construction. Next, let $u\in \link{X}{\Pi}\subseteq \link{X}{\Sigma'}$, and we claim that $u\in \link{X}{\Phi}$. By inspection of \Cref{lem:decomposition_of_links}, we must have that either $\p(u)\in \ov\Sigma'$, or $\p(u)\in \link{\ov X}{\ov\Sigma'\star\ov \Theta}$. In the first case, a careful inspection of how we defined $\Pi$ shows that $u\in \link{ X}{\Phi}$, and moreover $u\in \link{X}{\Psi}$ since $\ov\Sigma'\in \link{\ov X}{\ov \Psi}$. Otherwise $\p(u)\in \link{\ov X}{\ov\Sigma'\star\ov \Theta}\subset\link{\ov X}{\ov\Phi}$, and therefore $u\in\link{X}{\Phi}$; this also shows that $u\in \link{\ov X}{\ov\Sigma'}\cap\link{\ov X}{\ov\Phi}\subseteq\CStar{\ov\Psi}$, so that again $u\in \link{X}{\Psi}$. We just showed that $\link{X}{\Pi}$ and $\Psi$ span a join, and moreover that $\link{X}{\Phi}\cap\link{X}{\Sigma'}\supseteq \link{X}{\Pi}\star\Psi$.

For the converse inclusion, let $u\in \link{X}{\Phi}\cap\link{X}{\Sigma'}$, so that $\p(u)$ belongs to $\CStar { \ov \Sigma'}\cap \CStar { \ov \Phi}$. There are two possible cases to consider:
\begin{itemize}
    \item If $\p(u)\in \ov \Sigma'$ then $\link{\Cone{v}}{\Pi\cap \Cone{v}}=\Cone{v}\cap\link{X}{\Sigma'}\cap\link{X}{\Phi}$ by how we constructed $\Pi$, so $u\in \link{\Pi}{\Psi}$.
    \item Otherwise $\p(u)\in  \link{\ov X}{\ov \Sigma'}\cap \link{\ov X}{\ov \Phi}=\link{\ov X}{\ov \Pi}\star\ov\Psi$. If $\p(u)\in \link{\ov X}{\ov \Pi}$ then $u\in \link{X}{\Pi}$. Otherwise $\p(u)\in\ov\Psi$, so either $u\in \ov\Psi=\Psi$, or $u\in L_{\p(u)}$, which coincides with $\link{\Cone{\p(u)}}{\Pi\cap \Cone{\p(u)}}$ by construction. In both cases $u\in \link{X}{\Pi}\star \Psi$, as required. \qedhere
\end{itemize}
\end{proof}

\section{Cusped spaces for \squid{} HHSs}
\label{sec:squid}
We will be able to construct a cusped space for any relative HHS with a nice combinatorial structure. We make this precise in the next definition.

\subsection{\Squid{} HHS}
\begin{defn}\label{defn:rel_squid_HHS}
A \emph{\squid{} HHS} is a combinatorial HHS $(X,\W,n,\delta)$, where $X$ is a blowup of a graph $\ov X$ with \cleanish{}. A \emph{relative \squid{} HHS} is defined analogously, except that \Cref{def:combHHS}.\ref{CHHS_def_hyperbolicity} is replaced by:
\begin{enumerate}[start=2, label = (\arabic*{}')]
\item \label{CHHS_def_rel_hyperbolicity} For each non-maximal simplex $\Sigma$, either $\C{\Sigma}$ is $\delta$-hyperbolic, or $\Sigma$ is of the form $\Delta_v$ for some $v\in \ov X^{(0)}$.
\end{enumerate}
We often drop the constants $n$ and $\delta$ when they are not relevant, and refer to a (relative) \squid{} HHS simply by $(X,\W)$. 
\end{defn}
\begin{rem}\label{rem:dimension_of_ovX}
    Notice that, for every simplex $\ov\Sigma$ of $\ov X$, there exists a simplex $\Theta$ of $X$ such that $\link{X}{\Theta}=\ov\Sigma$. This shows that the dimension of $\ov X$ is bounded above by the constant $n$, since every chain of inclusions of simplices in $\ov X$ induces a chain of inclusions of links of simplices of $X$.
\end{rem}

\noindent There is a natural notion of action on a \squid{} HHS, leading to the definition of a \squid{} HHG:

\begin{defn}\label{defn:rel_squid_HHG}
Let $(X,\W)$ be a (relative) \squid{} HHS, with support graph $\ov X$, and let $G$ be a finitely generated group. A $G$-\emph{action} on $(X,\W)$ is a simplicial action of $G$ on $X$ such that:
\begin{itemize}
    \item The action preserves $\ov X$;
    \item There are finitely many $G$-orbits of simplices in $\ov X$;
    \item The action on $X$ induces a simplicial action on $\W$.
\end{itemize}
If moreover $G$ acts geometrically on $\W$, then we say $G$ is a \emph{(relative) \squid{} HHG}.
\end{defn}

The figure below summarises the relations among the various versions of hierarchical hyperbolicity relevant for this paper.

\begin{figure}[htp]
    \centering
    \includegraphics[width=\linewidth]{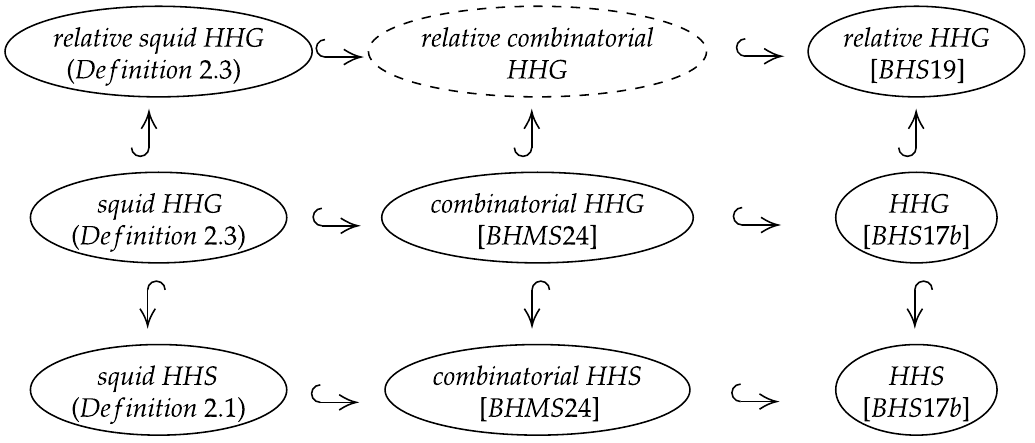}
    \caption{The various notions of hierarchical hyperbolicity at play in this paper, with arrows representing inclusions. Objects from the right column belong to the world of coarse geometry, while those in the middle and left columns are combinatorial in nature. At present, a robust definition of a relative combinatorial HHG has not been devised yet (hence we put this notion in a dashed box). Any reasonable notion should include both relative \squid{} HHGs and combinatorial HHGs, and provide examples of relative HHGs.}
    \label{fig:HHS_plus_adjectives}
\end{figure}

\begin{lem}\label{lem:finman_orbit_of_lk} Let $(X,\W)$ be a relative \squid{} HHS. If $G$ acts on $(X,\W)$ then there are finitely many $G$-orbits of links of simplices in $X$.
\end{lem}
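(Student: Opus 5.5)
The plan is to use the decomposition of links from \Cref{lem:decomposition_of_links} to show that every link of an $X$-simplex is determined, up to the $G$-action, by finitely many combinatorial data. Let $\Sigma$ be a simplex of $X$. By the lemma,
$$\link{X}{\Sigma}=\p^{-1}\left(\link{\ov X}{\ov \Sigma}\right)\star \left(\bigstar_{v\in\ov\Sigma^{(0)}}\link{\Cone{v}}{\Sigma\cap\Cone{v}}\right).$$
So $\link{X}{\Sigma}$ is determined by two pieces of data. The first is the support $\ov\Sigma$, a simplex of $\ov X$. The second is, for each $v\in\ov\Sigma^{(0)}$, the subgraph $\link{\Cone{v}}{\Sigma\cap\Cone{v}}$.

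Since $\Cone{v}=\{v\}\star L_v$ with $L_v$ discrete, the intersection $\Sigma\cap\Cone{v}$ is one of three things: $\{v\}$, a single point $p\in L_v$, or an edge $\{v,p\}$. The corresponding link inside the cone is then $L_v$, $\{v\}$, or $\emptyset$ respectively. Thus $\link{X}{\Sigma}$ is completely determined by the pair consisting of the simplex $\ov\Sigma$ and a labelling function $\ov\Sigma^{(0)}\to\{\text{apex},\text{base},\text{edge}\}$. Crucially, in the "base" case the particular point $p\in L_v$ does not matter, since the link is $\{v\}$ regardless.

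Next I check equivariance. $G$ acts simplicially on $X$ and preserves $\ov X$, so it preserves the set of apices. It therefore maps cones to cones, with $g\Cone{v}=\Cone{gv}$ and $gL_v=L_{gv}$, and the retraction $\p$ is $G$-equivariant. Hence $g\link{X}{\Sigma}=\link{X}{g\Sigma}$, and the labelling of $g\Sigma$ is the labelling of $\Sigma$ transported by $g$.

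Now fix representatives $\ov\Theta_1,\dots,\ov\Theta_k$ of the finitely many $G$-orbits of simplices of $\ov X$. Each $\ov\Theta_i$ has at most $n+1$ vertices by \Cref{rem:dimension_of_ovX}, so it admits at most $3^{n+1}$ labellings. Given any $\Sigma$, pick $g$ with $g\ov\Sigma=\ov\Theta_i$. Then $\link{X}{g\Sigma}$ equals the link determined by $\ov\Theta_i$ together with the transported labelling. Consequently every link lies in the $G$-orbit of one of at most $k\cdot 3^{n+1}$ subgraphs.

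The main obstacle is mild: one must make sure the empty simplex and the case where the link does not see the choice of $p$ are handled. The latter was addressed above. The former contributes a single extra link, namely $X$ itself, which is $G$-invariant.
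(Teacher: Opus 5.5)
Your proof is correct and is essentially the paper's argument: by \Cref{lem:decomposition_of_links}, a link is determined by its support $\ov\Sigma$, which has finitely many $G$-orbits, together with one of the three cone-links $L_v$, $\{v\}$, $\emptyset$ at each $v\in\ov\Sigma^{(0)}$, and you only add the explicit equivariance and the bound $k\cdot 3^{n+1}$. One small point: $g\Cone{v}=\Cone{gv}$ does not formally follow from $G$ preserving $\ov X$ (if adjacent apices $u,u'$ have the same closed star in $\ov X$ and $|L_u|=|L_{u'}|$, swapping $L_u$ with $L_{u'}$ while fixing $\ov X$ is a simplicial automorphism), but the paper tacitly assumes actions respect cones too, e.g.\ when extending actions to the cusped space, so this is a shared convention rather than a gap.
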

\begin{proof}
     Recall that, by \Cref{lem:decomposition_of_links}, the link of a simplex $\Sigma$ is given by
    $$\link{X}{\Sigma}=\p^{-1}\left(\link{\ov X}{\ov \Sigma}\right)\star \left(\bigstar_{v\in\ov\Sigma^{(0)}}\link{\Cone{v}}{\Sigma\cap\Cone{v}}\right);$$
    hence $\link{X}{\Sigma}$ is uniquely determined by the support simplex $\ov\Sigma$, of which there are finitely many $G$-orbits, and by the choice of $\link{\Cone{v}}{\Sigma\cap\Cone{v}}$ for every ${v\in\ov\Sigma^{(0)}}$, which can either be:
    \begin{itemize}
        \item $L_v$ if $\Sigma\cap\Cone{v}=\{v\}$;
        \item $\{v\}$ if $\Sigma\cap\Cone{v}\in L_v$;
        \item empty if $\Sigma\cap\Cone{v}$ is an edge.
    \end{itemize}
    Therefore there are three possible choices for every ${v\in\ov\Sigma^{(0)}}$, and this concludes the proof that $G\circlearrowleft X$ has finitely many orbits of links.
\end{proof}

\noindent As a special case of {\cite[Theorem 1.18]{BHMS}}, we get:
\begin{cor}
    A \squid{} HHG is a hierarchically hyperbolic group.
\end{cor}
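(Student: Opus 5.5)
The corollary says that a (non-relative) \squid{} HHG is a hierarchically hyperbolic group. The plan is to check the hypotheses of \cite[Theorem 1.18]{BHMS} together with its group-theoretic upgrade, namely the statement that if $G$ acts on a combinatorial HHS $(X,\W)$ with finitely many $G$-orbits of links of simplices, and the action on $\W$ is proper and cocompact, then $G$ is an HHG.

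First, by \Cref{defn:rel_squid_HHS}, a \squid{} HHS is in particular a combinatorial HHS. Hence \cite[Theorem 1.18]{BHMS} applies directly and $\W$ is an HHS. Its structure has domain set $\frakS$, coordinate spaces the augmented links $\C{\Delta}$, relations as in \Cref{defn:nest_orth}, and projections as in \Cref{defn:projections}.

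Second, we verify the hypotheses for the $G$-action.
\begin{enumerate}
\item $G$ acts simplicially on $X$ and, by \Cref{defn:rel_squid_HHG}, the induced action on $\W$ is simplicial.
\item The action is geometric on $\W$, since $G$ is a \squid{} HHG.
\item There are finitely many $G$-orbits of links of simplices of $X$, by \Cref{lem:finman_orbit_of_lk}. This is exactly the finiteness of orbits of $\sim$-classes needed, so $\frakS$ has finitely many $G$-orbits.
\end{enumerate}

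Third, we check equivariance of the structure. Because the $G$-action on $X$ is simplicial and preserves the $\W$-adjacencies, it preserves several objects:
\begin{enumerate}
\item the augmented graph $\duaug{X}{\W}$;
\item saturations, links and the sets $Y_\Delta$, with $gY_\Delta = Y_{g\Delta}$;
\item the augmented links, via isometries $\C{\Delta}\to\C{g\Delta}$.
\end{enumerate}
Consequently it commutes with the closest-point projections $p$, and therefore with the maps $\pi_{[\Delta]}$ and $\rho$. The relations $\nest$ and $\orth$ are defined via links, so they are preserved as well.

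This is all formal. The one point needing care is to make sure the version of \cite{BHMS} invoked demands only simplicial actions with finitely many orbits of links plus a geometric action on $\W$. \Cref{lem:finman_orbit_of_lk} is precisely what supplies the orbit-finiteness condition, and this is where the \squid{} (blowup) hypothesis is actually used.
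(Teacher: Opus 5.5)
Your proposal is correct and follows the paper's argument: the corollary is obtained directly from the group version of \cite[Theorem 1.18]{BHMS}, with \Cref{lem:finman_orbit_of_lk} supplying the finitely many $G$-orbits of links and the geometric action on $\W$ supplying the rest. Your separate equivariance checks are already handled inside that theorem, so they are harmless but not needed.
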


\subsection{Cusped space: the underlying graph}

We construct cusped spaces by modifying \squid{} HHSs in a natural way; the idea is simply to replace each minimal coordinate space $\C{\Delta_v}$ with the combinatorial horoball over it. We now describe this procedure, starting with the combinatorial aspects.

\begin{notation}\label{notation:pre_cusp}
Let $(X,\W)$ be a relative \squid{} HHS, with blowup data $(\ov X, \{L_v\}_{v\in \ov X^{(0)}})$. Define $\hat X$ as the blowup of $(\ov X, \{L_v\times \N\}_{v\in \ov X^{(0)}})$. We identify $X$ with the blowup of $(\ov X, \{L_v\times \{0\}\}_{v\in \ov X^{(0)}})$ inside $\hat X$. With a small abuse of notation, we still denote by $\p\colon \hat X\to \ov X$ the retraction mapping every cone to its apex. Let $\down{(\cdot)}\colon \hat X\to X$ map every $v\in \ov X^{(0)}$ to itself, and every $p^n\coloneq (p,n)\in L_v\times \N$ to $p\in L_v$. 

For every maximal simplex $\Theta=(v,{p}^{m_v})_{v\in \ov\Theta}$ of $\hat X$, let $\depth{\Theta}=\max_{v\in \ov\Theta} m_v$. We also set $\Theta^+=(v,{p}^{m_v})_{v\in \ov\Theta\mid m_v>0}$, which might be non-maximal.
\end{notation}

\begin{lem}\label{lem:links_in_cusped}
If $\Delta$ is a $\hat X$-simplex and $x\in \hat X^{(0)}$, then $x\in \link{\hat X}{\Delta}$ if and only if $\down{x}\in \link{X}{\down{\Delta}}$. In particular $\down{\link{\hat X}{\Delta}}= \link{X}{\down{\Delta}}$.
\end{lem}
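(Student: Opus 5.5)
The plan is to reduce adjacency in $\hat X$ to adjacency in $X$ via $\down{(\cdot)}$, and then use that a link is determined vertex-by-vertex by adjacency and inequality. The key claim is that for distinct vertices $x,y\in\hat X^{(0)}$, $x$ and $y$ are adjacent in $\hat X$ if and only if $\down x\neq\down y$ and $\down x,\down y$ are adjacent in $X$. This follows directly from \Cref{defn:blowup_data}, checked case by case on $\p(x),\p(y)$.

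If $x,y$ are both apices, then $\down{(\cdot)}$ is the identity on them, and both $\hat X$ and $X$ have an edge exactly when $x,y$ are adjacent in $\ov X$.

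If $x$ is an apex $v$ and $y=p^n\in L_w\times\N$, then $x\sim y$ in $\hat X$ if and only if $w=v$ or $w$ is adjacent to $v$ in $\ov X$. Since $\down y=p\in L_w$, the same criterion describes adjacency of $v$ and $p$ in $X$.

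If $x=p^n\in L_v\times\N$ and $y=q^m\in L_w\times\N$, then they are never adjacent when $v=w$, because the base of a cone is discrete. When $v\neq w$, they are adjacent exactly when $v\sim w$ in $\ov X$, which is again the criterion for $p,q$ in $X$. The only subtlety is that $\down x=\down y$ can happen with $x\neq y$, namely $x=p^n$ and $y=p^m$. Such vertices are non-adjacent in $\hat X$ and have equal images, which is consistent with the claim.

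For the lemma itself, recall that $x\in\link{\hat X}{\Delta}$ means $x\notin\Delta$ and $x$ is adjacent to every vertex of $\Delta$. By the claim, this forces $\down x\neq\down y$ and $\down x\sim\down y$ for all $y\in\Delta$, so $\down x\notin\down\Delta$ and $\down x$ is adjacent to all of $\down\Delta$. Here $\down\Delta$ is a simplex of $X$: the claim shows $\down{(\cdot)}$ is injective on simplices and maps edges to edges. Hence $\down x\in\link{X}{\down\Delta}$.

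Conversely, suppose $\down x\in\link X{\down\Delta}$. For each $y\in\Delta$ we have $\down x\neq\down y$, so $x\neq y$, and $\down x\sim\down y$, so by the claim $x\sim y$. Hence $x\in\link{\hat X}\Delta$.

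The final assertion $\down{\link{\hat X}{\Delta}}=\link{X}{\down\Delta}$ then follows from surjectivity. Every vertex $z\in X$ equals $\down{z}$ when $z$ is viewed at level $0$ inside $\hat X$ via the identification of $X$ with the level-$0$ blowup, so $z$ lies in the image.

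The main obstacle is only bookkeeping: handling the case $\down x=\down y$ correctly, so that $x\notin\Delta$ translates to $\down x\notin\down\Delta$. This rests on the discreteness of $L_v\times\N$ within each cone.
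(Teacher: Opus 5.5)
Your proof is correct and takes the same route as the paper. Both arguments show that adjacency in $\hat X$ is exactly the pullback under $\down{(\cdot)}$ of adjacency in $X$; the paper phrases this through $\p$ and the identity $\p(\down{x})=\p(x)$, and then intersects over the vertices of $\Delta$. Your explicit handling of the case $\down{x}=\down{y}$ with $x\neq y$, and of surjectivity for the final equality, fills in details the paper leaves implicit.
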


\begin{proof}
By construction, two vertices $x,y\in \hat X^{(0)}$ are $\hat X$-adjacent if and only if either $\p(x)=\p(y)$ and one of $x$ and $y$ coincides with $\p(x)$, or $\p(x)$ and $\p(y)$ are $\ov X$-adjacent. The same characterisation holds for $X$; therefore, since $\p(\down{x})=\p(x)$, and similarly for $y$, we get that $x\in\link{\hat X}{y}$ if and only if $\down{x}\in\link{X}{\down{y}}$. \Cref{lem:links_in_cusped} now follows by taking the intersections over all $y\in  \Delta^{(0)}$.
\end{proof}

\begin{cor}\label{cor:nesting_of_links_in_cusp}
Let $\Delta,\Sigma$ be simplices of $\hat X$. Then $\link{\hat X}{\Delta}\subseteq \link{\hat X}{\Sigma} $ if and only if $\link{X}{\down{\Delta}}\subseteq \link{X}{\down{\Sigma}} $.
\end{cor}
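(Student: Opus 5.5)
The plan is to deduce the corollary directly from \Cref{lem:links_in_cusped}, using the map $\down{(\cdot)}\colon \hat X\to X$ and the fact that it restricts to the identity on $X\subseteq \hat X$.

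For the forward direction, assume $\link{\hat X}{\Delta}\subseteq \link{\hat X}{\Sigma}$ and take $y\in \link{X}{\down{\Delta}}$. We regard $y$ as a vertex of $\hat X$ through the identification of $X$ with the level-$0$ blowup, so that $\down{y}=y$. The criterion of \Cref{lem:links_in_cusped} then gives $y\in\link{\hat X}{\Delta}$, and hence $y\in\link{\hat X}{\Sigma}$. Applying the lemma a second time yields $y=\down{y}\in\link{X}{\down{\Sigma}}$.

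For the converse, assume $\link{X}{\down{\Delta}}\subseteq \link{X}{\down{\Sigma}}$ and take $x\in\link{\hat X}{\Delta}$. By \Cref{lem:links_in_cusped} we have $\down{x}\in\link{X}{\down{\Delta}}$, so $\down{x}\in\link{X}{\down{\Sigma}}$, and the lemma then gives $x\in\link{\hat X}{\Sigma}$.

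The only point that needs care is in the forward direction: one must check that vertices of $X$ really are vertices of $\hat X$ fixed by $\down{(\cdot)}$. This holds because $X$ sits inside $\hat X$ as the blowup over $\{L_v\times\{0\}\}$ (apices included), and $\down{(\cdot)}$ is the identity on apices and sends $p^0$ to $p$. One should also note that $\down{\Delta}$ is genuinely a simplex of $X$. This follows from the adjacency characterisation given in the proof of \Cref{lem:links_in_cusped}: both adjacency in $\hat X$ and adjacency in $X$ depend only on $\p$ and on whether a vertex is an apex, and $\down{(\cdot)}$ preserves both. Apart from these checks, the argument is purely formal.
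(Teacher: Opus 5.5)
Your proof is correct and follows essentially the same route as the paper. The converse direction is identical to the paper's. Your forward direction is the elementwise version of the paper's set-level chain $\link{X}{\down{\Delta}}=\down{(\link{\hat X}{\Delta})}\subseteq\down{(\link{\hat X}{\Sigma})}=\link{X}{\down{\Sigma}}$, which uses the equality in \Cref{lem:links_in_cusped}; your remark that vertices of $X$ are fixed by $\down{(\cdot)}$ is exactly what justifies one half of that equality.
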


\begin{proof} 
If $\link{\hat X}{\Delta}\subseteq \link{\hat X}{\Sigma} $ then $$\link{X}{\down{\Delta}}=\down{(\link{\hat X}{\Delta})}\subseteq \down{(\link{\hat X}{\Sigma})}=\link{X}{\down{\Sigma}},$$
where we invoked \Cref{lem:links_in_cusped} for the equalities. For the converse inclusion, assume that  $\link{X}{\down{\Delta}}\subseteq \link{X}{\down{\Sigma}} $, and let  $x$ be any point in $\link{\hat X}{\Delta}$. Then $\down{x}$ belongs to $\link{X}{\down{\Delta}}\subseteq \link{X}{\down{\Sigma}} $, and again $x\in\link{\hat X}{\Sigma}$ by \Cref{lem:links_in_cusped}.
\end{proof}

\subsection{Cusped space: edges between maximal simplices}

Our goal is now to describe the graph $\hat \W$ that makes $(\hat X,\hat\W)$ into a combinatorial HHS. Roughly, $\hat \W$ ``contains'' the combinatorial horoballs over the $\nest$-minimal domains of $\W$.

\begin{defn}[Cusped space]\label{defn:cusped_space}
In the setting of \Cref{notation:pre_cusp}, let $\hat \W$ be the $\hat X$-graph where two maximal simplices $\Theta,\Xi\subseteq\hat X$ span a $\hat \W$-edge if and only if one of the following happens:
 
\begin{enumerate}[label=(\alph*)]
\item\label{hatW-edge_cusp} \emph{Edge of cusp-type}: There exist $v\in \ov \Theta^{(0)}$ and $p^n,q^m\in L_v\times \N$ such that $\Theta-\Xi=\{p^{n}\}$, $\Xi-\Theta=\{q^{m}\}$, and $\dist_{\Hor{\C{\down{\Delta_v}}}}(p^n,q^m)=1$; 
\item\label{hatW-edge_induced} \emph{Edge of $\W$-type}: $\Theta^+=\Xi^+$, and $\down{\Theta}$ is $\W$-adjacent to $\down{\Xi}$.
\end{enumerate}
The \emph{cusped space} for $(X,\W)$ is the pair $(\hat X,\hat \W)$. To avoid ambiguity, we denote augmented links in $(\hat X, \hat \W)$ by $\hat C(\cdot)$.
\end{defn}

\begin{rem}
Edges of cusp-type will make sure a copy of the combinatorial horoball on every minimal domain embeds inside $\hat\W$. The purpose of the edges of $\W$-type is twofold. On the one hand, they shall ensure $\W$ embeds as a subspace of $\hat \W$; on the other hand, the horoballs will be ``well separated", meaning that if a point of $\hat\W$ projects deep in some horoball then it must project to the base of any horoball on a transverse domain.
\end{rem}

\begin{rem}\label{rem:depth_diff}
   If $\Theta,\Xi\in \hat \W^{(0)}$ are $\hat \W$-adjacent, then $|\depth{\Theta}-\depth{\Xi}|\le 1$ by construction.
\end{rem}

\subsection{Properties of the cusped space}
We summarise the crucial properties of cusped spaces in the following theorem, most importantly that cusped spaces of relative \squid{} HHSs are \squid{} HHSs themselves.

\begin{thm}\label{thm:teich_is_hhs}
Let $(X,\W, n,\delta)$ be a relative \squid{} HHS, and let $(\hat X,\hat \W)$ be its cusped space. Then the following hold:
\begin{enumerate}[label=(\Alph*)]
\item\label{item_coarseemb} The inclusion $X\hookrightarrow \hat X$ induces an injective coarse embedding $i\colon\W\hookrightarrow \hat \W$. Moreover, the coarse embedding functions do not depend on any of the data.
\item\label{item_Teich_is_combhhs} There exists $\delta'$, only depending on $\delta$, such that $(\hat X,\hat \W, n,\delta')$ is a squid HHS.
\item\label{item_G-action_on_teich} Any action of a group $G$ on $(X,\W)$ extends to a $G$-action on $(\hat X,\hat \W)$.
\item\label{item_W_is_relHHS} $\W$ is a hierarchically hyperbolic space relative to $\{\C{\Delta_v}\}_{v\in \ov X^{(0)}}$.
\end{enumerate}
\end{thm}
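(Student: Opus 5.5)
I would prove the four items in the order \ref{item_G-action_on_teich}, \ref{item_coarseemb}, \ref{item_Teich_is_combhhs}, \ref{item_W_is_relHHS}, since the last one follows from the second via the first.

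Item \ref{item_G-action_on_teich} is formal. A simplicial action on $X$ preserving $\ov X$ sends $L_v$ to $L_{gv}$. It therefore extends to $\hat X$ by $g\cdot p^n=(gp)^n$. This extension commutes with $\down{(\cdot)}$, preserves depth and $\Theta^+$, and preserves $\W$-adjacency, so it preserves edges of $\W$-type. It also preserves the augmented links $\C{\Delta_v}$ isometrically, since they are defined from $(X,\W)$, which $G$ preserves. Hence $G$ induces isometries $\Hor{\C{\Delta_v}}\to\Hor{\C{\Delta_{gv}}}$, and these preserve edges of cusp-type.

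For item \ref{item_coarseemb}, the map $i$ sends $\Theta$ to itself at depth $0$. The upper bound is immediate: a $\W$-edge between depth-$0$ simplices is an edge of $\W$-type, because $\Theta^+=\Xi^+=\emptyset$. So $i$ is $1$-Lipschitz. For the lower bound I would build a coarse Lipschitz retraction-like map $r\colon\hat\W\to\W$ with $r(\Theta)=\down{\Theta}$, and estimate how far $\down{(\cdot)}$ moves along each edge type.
\begin{itemize}
\item An edge of $\W$-type maps to a $\W$-edge.
\item An edge of cusp-type at depth $n$ changes the $L_v$-coordinate by two points at $\C{\Delta_v}$-distance at most $2^n$. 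By axiom \ref{CHHS_def_edge_in_link} and the usual realisation argument (a path in $\C{\Delta_v}$ is shadowed by $\W$-paths that move only inside $\CStar{\Delta_v}$), this is a $\W$-distance of roughly $2^n$.
\end{itemize}
A $\hat\W$-path of length $\ell$ starting at depth $0$ stays at depth at most $\ell$ by \Cref{rem:depth_diff}. Its image under $r$ therefore has length at most $\ell\cdot C2^{\ell}$, so that $\dist_\W\le M'(\dist_{\hat\W})$ for a universal exponential function $M'$. This gives the lower coarse embedding function $m\approx\log$, as in \Cref{lem:coarseemb_in_hor}. Injectivity is clear.

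The core of the argument is item \ref{item_Teich_is_combhhs}, where I would check the conditions of \Cref{def:combHHS} for $(\hat X,\hat\W)$.
\begin{itemize}
\item $\hat X$ is the blowup of $\ov X$, which has \cleanish{}, so $\hat X$ is a squid blowup.
\item Axiom \ref{CHHS_def_complexity} and the nesting, orthogonality and saturation data transfer from $X$ via \Cref{lem:links_in_cusped} and \Cref{cor:nesting_of_links_in_cusp}. In particular the domain sets are identified, and the complexity is again $n$.
\item Axiom \ref{CHHS_def_lk_cap_lk} follows from the corresponding axiom for $X$ by the same lemmas, once I check that diameters of augmented links are comparable.
\end{itemize}
The real work is in computing the augmented links $\hatC{\Sigma}$. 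I expect two cases, which together give \ref{CHHS_def_hyperbolicity} and \ref{CHHS_def_qi_emb}.
\begin{itemize}
\item For $\Sigma$ of cone type $\Delta_v$, $\hatC{\Delta_v}$ should be quasi-isometric to $\Hor{\C{\Delta_v}}$. This is exactly what the cusp-type edges encode, together with $\W$-type edges between points of $L_v\times\{0\}$. It is then uniformly hyperbolic by \Cref{thm:horgamma_hyp}.
\item For every other $\Sigma$, $\hatC{\Sigma}$ should be quasi-isometric to $\C{\down\Sigma}$, via $\down{(\cdot)}$. The key claim is that the new edges add only bounded shortcuts. Cusp edges stay inside a single cone, and inside $Y_\Sigma$ a cone is already of bounded diameter because its apex is present. $\W$-type edges are lifts of $\W$-edges.
\end{itemize}
The quasi-isometric embedding into $\hat Y_\Sigma$ is then obtained by projecting $\hat Y_\Sigma$ onto $Y_{\down\Sigma}$ by $\down{(\cdot)}$ and using \ref{CHHS_def_qi_emb} for $(X,\W)$. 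For the Teichmüller-like case the $\C{\Delta_v}$ are only relatively hyperbolic, but they do not enter this part, by \ref{CHHS_def_rel_hyperbolicity}. Axiom \ref{CHHS_def_edge_in_link} I would verify edge by edge: edges in $\hatC{\Delta}$ come either from $\W$-type edges, which lift, or from cusp edges, which are witnessed by the cusp-edge pair itself.

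Item \ref{item_W_is_relHHS} then follows. Restrict the hierarchy structure of $\hat\W$ to $i(\W)$ and replace $\hat C(\Delta_v)$ by its base $\C{\Delta_v}$; since $\W$ sits at depth $0$, all projections land in the base. The relative HHS axioms are checked from the $\hat\W$ ones, using \Cref{lem:coarseemb_in_hor} for uniqueness.

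I expect the main obstacle to be the identification $\hatC{\Sigma}\simeq\C{\down\Sigma}$ for non-cone $\Sigma$. One must rule out that deep simplices, joined by $\W$-type edges with $\Theta^+$ fixed, create shortcuts in $\hat Y_\Sigma$ that are not present in $Y_{\down\Sigma}$. I would first try to show that $\down{(\cdot)}\colon\hat Y_\Sigma\to Y_{\down\Sigma}$ is $1$-Lipschitz on edges not of cusp-type. Cusp-type edges, in turn, join vertices whose apex is already adjacent to both endpoints in $Y$.
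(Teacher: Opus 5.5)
Your items (A) and (C), the two hyperbolicity computations, the verification of axiom \ref{CHHS_def_edge_in_link}, and the outline of (D) all follow the paper. However, your treatment of axiom \ref{CHHS_def_qi_emb} in (B) has a genuine gap at cone-type simplices.

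\textbf{The cone-type case of axiom \ref{CHHS_def_qi_emb}.} Your mechanism is to push $\hat Y_\Sigma$ to $Y_{\down{\Sigma}}$ via $\down{(\cdot)}$ and invoke \ref{CHHS_def_qi_emb} for $(X,\W)$. This only works when $\down{(\cdot)}$ is a quasi-isometry $\hatC{\Sigma}\to\C{\down{\Sigma}}$, that is, at blowup type. For $\Sigma=\Delta_v$ the coordinate space is the horoball. There $\down{(\cdot)}$ sends $p^0,\dots,p^k$, which are at distance $k$ in $\hatC{\Delta_v}$, to a single point, so no lower bound on $\dist_{\hat Y_{\Delta_v}}$ can come from it.

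This case cannot be skipped. Axiom \ref{CHHS_def_rel_hyperbolicity} only relaxes hyperbolicity for $(X,\W)$. In $(\hat X,\hat\W)$ the classes $[\Delta_v]$ are genuine domains whose horoballs must be uniformly quasi-isometrically embedded. This is the real content of (B), more so than the blowup-type identification you single out, which is routine because the saturation of a blowup-type simplex is a union of whole cones.

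The missing observation concerns $\hat Y_{\Delta_v}$, which is spanned by $L_v\times\N$ and the cones over $\ov X-\CStar{v}$. In it, a vertex $p^n$ with $n>0$ is adjacent only to vertices of $L_v\times\N$. To see this, let $\Theta\ni p^n$ be $\hat\W$-adjacent to $\Xi$.
\begin{itemize}
\item If the edge is of $\W$-type, then $\Xi^+=\Theta^+$ contains $(v,p^n)$.
\item If the edge is of cusp-type, then $\ov\Xi=\ov\Theta\ni v$.
\end{itemize}
In either case every vertex of $\Xi$ other than its $L_v\times\N$-coordinate lies in $\Sat(\Delta_v)=\{v\}\cup\p^{-1}(\link{\ov X}{v})$. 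The $\hat X$-neighbours of $p^n$ lie there too.

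Hence, after collapsing the other cones to their apices, $\hat Y_{\Delta_v}$ is the horoball glued along its base $\C{\Delta_v}$ to the corresponding graph for $(X,\W)$, using \Cref{lem:embedding_of_duaug}. A geodesic between horoball points then splits into horoball segments and excursions with endpoints on the base. Axiom \ref{CHHS_def_qi_emb} for $(X,\W)$, which does hold at cone type, bounds the base distance of each excursion linearly in its length. Finally, horoball distance is at most base distance, which gives the quasi-isometric embedding.

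The same separation property is what justifies your claim in (D) that projections of $\W$ to $\hatC{\Delta_v}$ meet $\C{\Delta_v}$. Being at depth $0$ is not by itself enough.

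\textbf{The transfer of axiom \ref{CHHS_def_lk_cap_lk}.} This also fails as stated, because diameters are not comparable at cone type. The space $\hatC{\Delta_v}$ contains the ray $p^0,p^1,\dots$, so it is unbounded even when $\C{\Delta_v}$ is bounded. Consequently, in $\hat X$ the axiom applies to more simplices $\Gamma$ than in $X$, and the $\Pi$ supplied by $X$ need not contain their links.

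Instead, deduce the axiom directly from the cleanish intersections of $\hat X$ (\Cref{lem:cleanish_for_blowup}), as in the proof of \cite[Theorem 6.4]{BHMS}. Write $\link{\hat X}{\Sigma}\cap\link{\hat X}{\Delta}=\link{\hat X}{\Pi}\star\Psi$ with $\Sigma\subseteq\Pi$. If a link contained in this intersection met $\Psi$, it would be a join or a single vertex. Therefore every $\Gamma$ whose link lies in the intersection and is neither a join nor a point satisfies $\link{\hat X}{\Gamma}\subseteq\link{\hat X}{\Pi}$. This argument does not depend on diameters at all.
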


\noindent \Cref{item_W_is_relHHS}, combined with \Cref{defn:rel_squid_HHG}, readily implies the following:
\begin{cor}
    A relative \squid{} HHG is a relative hierarchically hyperbolic group.
\end{cor}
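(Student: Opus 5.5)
The plan is to verify directly the definition of a relative hierarchically hyperbolic group from \cite{HHS_II}, using the structure supplied by \Cref{thm:teich_is_hhs}. Concretely, a relative HHG is a group acting properly and coboundedly on a relative HHS by automorphisms of that structure, with finitely many orbits of domains. Let $G$ be a relative \squid{} HHG acting on $(X,\W)$. By \Cref{thm:teich_is_hhs}\ref{item_W_is_relHHS}, $\W$ is a relative HHS. Its domain set is $\frakS$, its coordinate spaces are the augmented links $\C{\Delta}$ (the $\C{\Delta_v}$ being the non-hyperbolic ones allowed to sit at the bottom), and its projections and relations are those of \Cref{defn:nest_orth} and \Cref{defn:projections}. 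By \Cref{defn:rel_squid_HHG}, $G$ already acts geometrically on $\W$. So two things remain: the action must be by HHS automorphisms, and there must be finitely many $G$-orbits of domains.

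For the automorphism part, I would use that every ingredient of the structure is defined combinatorially from the pair $(X,\W)$. A simplicial automorphism $g$ of $X$ preserves links, so it preserves the relation $\sim$ and hence acts on $\frakS$ by $[\Delta]\mapsto[g\Delta]$. Since $g$ also induces a simplicial automorphism of $\W$, it preserves the edges of $\duaug{X}{\W}$ (\Cref{defn:complement}). Because it also preserves saturations, it gives isometries $Y_\Delta\to Y_{g\Delta}$ and $\C{\Delta}\to\C{g\Delta}$. These maps preserve nesting and orthogonality, since both relations are defined through inclusions of links. They also commute with coarse closest-point projections, with the projections $\pi_{[\Delta]}$ and with the maps $\rho$ of \Cref{defn:projections}, because all of these are defined metrically and combinatorially in terms of objects that $g$ permutes. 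The composition rules for these maps are automatic, since they come from a genuine action of $G$ on $X$. Hence $G$ acts by HHS automorphisms, equivariantly on the nose rather than just coarsely.

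For finiteness of orbits of domains, domains are $\sim$-classes, and each class is determined by the corresponding link. So \Cref{lem:finman_orbit_of_lk}, which is where the hypothesis of finitely many $G$-orbits of simplices of $\ov X$ is used, gives finitely many $G$-orbits in $\frakS$. I do not expect a real obstacle, as the statement essentially repackages \Cref{thm:teich_is_hhs}\ref{item_W_is_relHHS}. The only point needing care is matching the definition of a relative HHG in \cite{HHS_II}: one must check that the peripheral (non-hyperbolic) coordinate spaces are exactly the $\nest$-minimal ones $\C{\Delta_v}$. This follows from \Cref{defn:rel_squid_HHS}\ref{CHHS_def_rel_hyperbolicity} together with the observation that $[\Delta_v]$ is $\nest$-minimal, since $L_v$ is discrete.
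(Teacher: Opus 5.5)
Your proposal is correct and follows the paper, which derives the corollary in one line from \Cref{thm:teich_is_hhs}\ref{item_W_is_relHHS} and \Cref{defn:rel_squid_HHG}; your equivariance check, and your use of \Cref{lem:finman_orbit_of_lk} for finitely many orbits of domains, spell out what the paper leaves implicit. One small inaccuracy: the relative structure built in \Cref{prop:W_is_relHHS} uses the projections $\down{\pi}$ obtained from the cusped space, not those of \Cref{defn:projections} applied directly to $(X,\W)$, but your ``everything is defined combinatorially'' argument works just as well for them, using the action from \Cref{thm:teich_is_hhs}\ref{item_G-action_on_teich} and equivariantly chosen identifications $\hatC{\Sigma}\to\C{\Sigma}$.
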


\noindent We break the proof of \Cref{thm:teich_is_hhs} into a series of propositions. Firstly, we prove a quantitative version of \Cref{thm:teich_is_hhs}.\ref{item_coarseemb}:
\begin{prop}\label{prop_coarseemb}
\ref{item_coarseemb} Let $(X,\W)$ be a relative \squid{} HHS. The inclusion $X\hookrightarrow \hat X$ induces an injective coarse embedding $\W\hookrightarrow \hat \W$. More precisely, for every two maximal simplices $\Theta,\Xi\in \W^{(0)}$, 
$$m(\dist_{\W}(\Theta,\Xi))\le \dist_{\hat \W}(\Theta,\Xi)\le \dist_{\W}(\Theta,\Xi),$$
where $m$ is the inverse\footnote{This function is known as the Lambert $W$ function, and diverges at infinity. See e.g. \href{https://mathworld.wolfram.com/LambertW-Function.html}{https://mathworld.wolfram.com/LambertW-Function.html} for more details.} of the function $n\to n2^n$.
\end{prop}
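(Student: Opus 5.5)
The plan is to prove the two inequalities separately, treating $\W$ as the depth-zero part of $\hat\W$.

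\emph{Upper bound and injectivity.} Injectivity is immediate, since $X$ sits inside $\hat X$ as the level-$0$ blowup and a maximal simplex of $X$ is a maximal simplex of $\hat X$ of depth $0$. For the upper bound it is enough to check that $\W$-adjacent vertices are $\hat\W$-adjacent. If $\Theta,\Xi\in\W^{(0)}$ are $\W$-adjacent, then $\Theta^+=\Xi^+=\emptyset$ because both have depth $0$. Also $\down\Theta=\Theta$ and $\down\Xi=\Xi$ are $\W$-adjacent, so this is an edge of $\W$-type. Hence $\dist_{\hat\W}\le\dist_\W$.

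\emph{Lower bound.} Take a $\hat\W$-geodesic $\Theta=\Theta_0,\dots,\Theta_k=\Xi$ with $k=\dist_{\hat\W}(\Theta,\Xi)$. By \Cref{rem:depth_diff}, every $\Theta_i$ has depth at most $k$. I will project the path to $\W$ using $\Theta_i\mapsto\down{\Theta_i}$ and bound $\dist_\W(\down{\Theta_i},\down{\Theta_{i+1}})$ for each step.

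An edge of $\W$-type projects to a $\W$-edge, so it costs $1$.

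An edge of cusp-type swaps $p^n$ for $q^m$ in $L_v\times\N$, with $p^n,q^m$ adjacent in $\Hor{\C{\Delta_v}}$. In that case $\dist_{\C{\Delta_v}}(p,q)\le 2^{\min(n,m)}\le 2^k$. The key step is to convert a $\C{\Delta_v}$-path of length $\ell$ between $p$ and $q$ into a $\W$-path of length $\le\ell$ between the two maximal simplices that differ only in the $L_v$-coordinate. I will use \Cref{def:combHHS}.\ref{CHHS_def_edge_in_link} for this. $\C{\Delta_v}$ is the augmented link of $\Delta_v$, whose link $L_v$ is discrete, so every edge $p_j p_{j+1}$ of $\C{\Delta_v}$ comes from $\W$-adjacent maximal simplices $\Pi_j\ni p_j$ and $\Pi_{j+1}'\ni p_{j+1}$, both containing $\Delta_v$.

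The obstacle is that these $\Pi_j$ need not agree with $\down{\Theta_i}$ outside the cone over $v$. Consecutive simplices $\Pi'_j$ and $\Pi_{j+1}$ both contain $\Delta_v\star\{p_{j+1}\}$, so they differ only in the part of the simplex lying in the link of $\Delta_v\star\{p_{j+1}\}$, which is $\p^{-1}$ of a link in $\ov X$. I need to bound the $\W$-distance between two maximal simplices sharing this star. The first idea is to choose the $\Pi_j$ compatibly; if that fails, I will bound the distance by a constant $C$ depending only on $n$ and $\delta$ (ideally $C=1$ after refining the choice). The fallback for this is the product region structure of the HHS $\W$, combined with a finiteness argument from \Cref{rem:dimension_of_ovX}. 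This is the step I expect to be the hardest, since the stated bound requires $C$ independent of all data.

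Granting it, each step of the geodesic costs at most $C2^k$ in $\W$, so $\dist_\W(\Theta,\Xi)\le Ck2^k$. Inverting gives $\dist_{\hat\W}(\Theta,\Xi)\ge m(\dist_\W(\Theta,\Xi)/C)$, and $m(t)=k$ exactly when $t=k2^k$. With $C=1$, obtained by letting the path through $\C{\Delta_v}$ interleave directly with the original simplex, this is the claimed bound.
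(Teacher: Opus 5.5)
Your skeleton is the same as the paper's. Depth-zero simplices give the injection and, via $\W$-type edges, the upper bound. You push a $\hat\W$-geodesic of length $k$ down with $\down{(\cdot)}$, bound all depths by $k$ using \Cref{rem:depth_diff}, and let $\W$-type edges cost $1$. The gap is the step you explicitly leave open: you never show that a horizontal cusp-type edge costs at most $2^k$ in $\W$, you only ``grant'' it. Your fallbacks (product regions, a constant $C=C(n,\delta)$) would not give the stated inequality with $m$ the inverse of $t\mapsto t2^t$, nor coarse embedding functions independent of the data. Moreover, the obstacle you describe comes from a miscomputation. You assert that the link of $\Delta_v\star\{p_{j+1}\}$ is a nontrivial $\p$-preimage, but that link is empty.

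The missing observation is the following. By \Cref{lem:decomposition_of_links}, a simplex $\Sigma$ with $\link{X}{\Sigma}=L_v$ must have $\ov\Sigma$ maximal in $\ov X$, $\Sigma\cap\Cone{v}=\{v\}$, and $\Sigma\cap\Cone{u}$ an edge for every other $u\in\ov\Sigma^{(0)}$. So every representative of $[\Delta_v]$ is a codimension-one face of a maximal simplex, and $\Sigma\star\{p\}$ is maximal for each $p\in L_v$. Take the representative $\Delta$ to be $\down{(\Theta_i\cap\Theta_{i+1})}$, so that $\down{\Theta_i}=\Delta\star\{p\}$, $\down{\Theta_{i+1}}=\Delta\star\{q\}$, and $\C{\Delta}=\C{\Delta_v}$. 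Let $p=p_0,\dots,p_\ell=q$ be a $\C{\Delta_v}$-geodesic with $\ell\le 2^n\le 2^k$. Since $L_v$ is discrete, item \ref{CHHS_def_edge_in_link} of \Cref{def:combHHS} applied to this particular $\Delta$ gives $\W$-adjacent maximal simplices containing $\Delta\star\{p_j\}$ and $\Delta\star\{p_{j+1}\}$. By maximality these are exactly $\Delta\star\{p_j\}$ and $\Delta\star\{p_{j+1}\}$. Hence $\Delta\star\{p_0\},\dots,\Delta\star\{p_\ell\}$ is a $\W$-path of length at most $2^k$ from $\down{\Theta_i}$ to $\down{\Theta_{i+1}}$. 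This gives $C=1$ with nothing to coordinate, and summing over the $k$ steps yields $\dist_\W(\Theta,\Xi)\le k2^k$, which is how the paper concludes.
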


\begin{proof}
    Let $\Theta$ be a maximal simplex of $X$, which we can also see as a maximal simplex of $\hat X$ with $\Theta^+=\emptyset$ and $\down{\Theta}=\Theta$. Thus there is an injection $\W\hookrightarrow \hat \W$, which maps $\W$-edges to $\hat \W$-edges of $\W$-type. Since $\W$ is an induced subgraph of $\hat \W$, the inclusion $\W\hookrightarrow\hat \W$ is $1$-Lipschitz. 
    
    We are left to prove that, if $\Theta,\Xi\in \W^{(0)}$ and $D=\dist_{\hat \W}(\Theta,\Xi)$, then $\dist_{\W}(\Theta, \Xi)\le D2^D.$ To this extent, let $\{\Sigma_0=\Theta, \Sigma_1,\ldots, \Sigma_D=\Xi\}$ be a $\hat \W$-geodesic, and consider the path $\{\down{\Sigma_0}=\Theta, \down{\Sigma_1},\ldots, \down{\Sigma_D}=\Xi\}$ in $\W$.
    
    \begin{claim}\label{claim:coarseemb}
        For every $i=0,\ldots, D-1$, $\dist_{W}(\down{\Sigma_i}, \down{\Sigma_{i+1}})\le 2^D$.
    \end{claim}

    \begin{claimproof}[Proof of \Cref{claim:coarseemb}]
        If $\Sigma_i$ and $\Sigma_{i+1}$ span an edge of $\W$-type then $\down{\Sigma_i}$ and $\down{\Sigma_{i+1}}$ are $\W$-adjacent, and therefore  $\dist_{W}(\down{\Sigma_i}, \down{\Sigma_{i+1}})\le 1$. Thus suppose that $\Sigma_i$ and $\Sigma_{i+1}$ are joined by an edge of cusp-type, so that there exist $v\in \ov X^{(0)}$ and $\Hor{\C{\down{\Delta_v}}}$-adjacent points $p^n,q^m\in L_v\times \N$ such that $\Sigma_i-\Sigma_{i+1}=\{p^n\}$ and $\Sigma_{i+1}-\Sigma_{i}=\{q^m\}$. By inspection of \Cref{def:horoball} there are two sub-cases to analyse. If $q^m=p^{n+1}$ then $\down{\Sigma_{i}}= \down{\Sigma_{i+1}}$, and we are done. Otherwise we must have that $m=n$ and $\dist_{\C{\Delta_v}}(p,q)\le 2^n$, so there is a $\C{\Delta_v}$-geodesic with vertices $p_0=p,p_1,\ldots,p_k=q$ for some $k\le 2^n$. Since $L_v\times \N=\link{X}{\Sigma_i\cap\Sigma_{i+1}}$, \Cref{CHHS_def_edge_in_link} for $(X,\W)$ then implies that $\Sigma_i$ and $\Sigma_{i+1}$ are at distance at most $k\le 2^n$ in $\W$. But now $n\le \depth{\Sigma_i}$, which by \Cref{rem:depth_diff} is at most $D+\depth{\Sigma_0}=D$, as required.
    \end{claimproof}
    \noindent \Cref{prop_coarseemb} now follows from the Claim, since 
    \begin{align*}\dist_{W}(\Theta, \Xi)\le \sum_{i+0}^{D-1}\dist_{W}(\Sigma_i, \Sigma_{i+1})\le D2^D. & \qedhere \end{align*}
\end{proof}

\noindent Before proceeding with the proof of \Cref{thm:teich_is_hhs}, we point out the following:
\begin{lem}\label{lem:embedding_of_duaug}
    The inclusion $X\hookrightarrow\hat X$ induces an embedding $\duaug{X}{\W}\to \duaug{\hat X}{\hat \W}$.
\end{lem}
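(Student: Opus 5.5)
Both graphs have vertex set the vertices of the underlying complex, and $X^{(0)}\subseteq \hat X^{(0)}$ via the identification of $L_v$ with $L_v\times\{0\}$. So the plan is to show that the inclusion maps $\duaug{X}{\W}$-edges to $\duaug{\hat X}{\hat \W}$-edges, and that any $\duaug{\hat X}{\hat\W}$-edge between two vertices of $X$ is already a $\duaug{X}{\W}$-edge. This says $\duaug{X}{\W}$ is an induced subgraph of $\duaug{\hat X}{\hat\W}$. By \Cref{defn:complement}, edges of the augmented graphs come in two kinds: edges of the underlying complex, and edges joining every vertex of $\sigma$ to every vertex of $\rho$ for $\W$-adjacent maximal simplices $\sigma,\rho$. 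We treat each kind in both directions.

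For edges of the first kind, $X$ is an induced subgraph of $\hat X$. Indeed, by the description in the proof of \Cref{lem:links_in_cusped}, two vertices of $\hat X$ are adjacent if and only if either they lie in the same cone with one of them the apex, or their supports are $\ov X$-adjacent. For vertices of $X$ this is exactly adjacency in $X$. For edges of the second kind, suppose $\sigma,\rho$ are $\W$-adjacent maximal simplices of $X$. Viewed in $\hat X$ they are maximal, they satisfy $\sigma^+=\rho^+=\emptyset$, and $\down{\sigma}=\sigma$, $\down{\rho}=\rho$. So they span a $\hat\W$-edge of $\W$-type, as already observed in the proof of \Cref{prop_coarseemb}, and the corresponding join edges appear in $\duaug{\hat X}{\hat\W}$. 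This gives the forward direction.

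The converse is the main step. Let $x,y\in X^{(0)}$ be adjacent in $\duaug{\hat X}{\hat\W}$ but not in $\hat X$. Then there are $\hat\W$-adjacent maximal simplices $\Theta\ni x$ and $\Xi\ni y$ of $\hat X$. The plan is to show that $\down{\Theta}$ and $\down{\Xi}$, which are maximal simplices of $X$ containing $x=\down{x}$ and $y=\down{y}$, are either equal or $\W$-adjacent. If they are equal, then $x$ and $y$ lie in a common simplex of $X$, so they are $X$-adjacent, contradicting our assumption. If they are $\W$-adjacent, then $x,y$ are adjacent in $\duaug{X}{\W}$, as desired.

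If $\Theta$ and $\Xi$ span an edge of $\W$-type, then $\down{\Theta}$ and $\down{\Xi}$ are $\W$-adjacent by definition. If they span an edge of cusp-type, they differ only in $p^n\in\Theta$ versus $q^m\in\Xi$ inside a single $L_v\times\N$, and we argue as in \Cref{claim:coarseemb}. If $q^m=p^{n\pm1}$, then $\down{\Theta}=\down{\Xi}$. Otherwise $x,y\in\Theta\cap\Xi$ unless one of them equals $p^n$ or $q^m$. Since $x,y\in X$ means depth $0$, this happens only when $n=m=0$ and $\dist_{\C{\Delta_v}}(p,q)\le 1$. Two subcases remain.
\begin{itemize}
\item If $x,y\in\Theta\cap\Xi$, they are $X$-adjacent, which is excluded.
\item If $x=p$ and $y=q$ with $d(p,q)= 1$ in $\C{\Delta_v}$ (but not in the discrete link $L_v$), then \Cref{def:combHHS}.\ref{CHHS_def_edge_in_link} provides $\W$-adjacent maximal simplices containing $p$ and $q$ respectively, so $x,y$ are adjacent in $\duaug{X}{\W}$.
\end{itemize}
The step needing the most care is this cusp-type case. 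One must track which vertices can have depth $0$, and note that a depth-$0$ horizontal edge of the horoball corresponds precisely to an edge of the augmented link $\C{\Delta_v}$.
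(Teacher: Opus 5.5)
Your proof is correct and follows the same route as the paper: $\W$-type edges give the forward direction, and for the converse you split according to whether the $\hat\W$-edge is of $\W$-type or of cusp-type, where a depth-$0$ horizontal horoball edge is exactly a $\C{\Delta_v}$-edge. Your cusp-type case analysis is more careful than the paper's, which tacitly takes $x,y$ to be the two vertices where the simplices differ. The appeal to \Cref{def:combHHS}.\ref{CHHS_def_edge_in_link} is harmless but unnecessary: $\C{\Delta_v}$ is an induced subgraph of $\duaug{X}{\W}$, so a $\C{\Delta_v}$-edge is already a $\duaug{X}{\W}$-edge.
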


\begin{proof} Firstly, as mentioned at the beginning of the proof of \Cref{prop_coarseemb}, maximal simplices of $X$ are also maximal simplices of $\hat X$. Moreover, if $p,q\in X$ belong to $\W$-adjacent maximal simplices $\Phi,\Psi$ of $X$, then $\Phi$ and $\Psi$ are joined by an edge of $\W$-type when seen as simplices of $\hat X$ (recall \Cref{defn:cusped_space}.\ref{hatW-edge_induced}). This proves that the inclusion $X\hookrightarrow \hat X$ induces a simplicial map $\duaug{X}{\W}\to \duaug{\hat X}{\hat \W}$, which is injective at the level of vertices.
    
    To prove that the above map is an embedding, we are left to show that, if two vertices $p,q$ belong to $\hat \W$-adjacent maximal simplices $\Sigma,\Theta$ of $\hat X$, respectively, then there exist $\W$-adjacent maximal simplices of $X$ containing $p$ and $q$, respectively. 
    
    If $\Sigma$ and $\Theta$ are joined by an edge of cusp-type, then $\Sigma-\Theta=\{p\}$, $\Theta-\Sigma=\{q\}$, and $\dist_{\Hor{\C{\Delta_{\p(p)}}}}(p,q)=1$. Since both $p$ and $q$ belong to $X$, this means that $\dist_{{\C{\Delta_{\p(p)}}}}(p,q)=1$, so $p$ and $q$ belong to $\W$-adjacent maximal simplices of $X$.

    Otherwise $\Sigma$ and $\Theta$ are joined by an edge of $\W$-type, which means that $\down{\Sigma}$ and $\down{\Theta}$ are $\W$-adjacent. Notice moreover that $p\in \down{\Sigma}$ and $q\in \down{\Theta}$ because they lied in $X$, and we are done.
\end{proof}

\noindent We now prove that the cusped space is a \squid{} HHS:
\begin{prop}\label{prop:teich_is_comb}
\ref{item_Teich_is_combhhs} Let $(X,\W)$ be a relative \squid{} HHS. There exists $\delta'$, only depending on $\delta$, such that $(\hat X,\hat \W, n,\delta')$ is a \squid{} HHS.
\end{prop}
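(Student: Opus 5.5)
We verify the five conditions of \Cref{def:combHHS} for $(\hat X,\hat\W)$, together with the \squid{} requirements. The latter are immediate: $\hat X$ is by construction the blowup of $(\ov X,\{L_v\times\N\})$, and $\ov X$ has \cleanish{} by hypothesis. The guiding principle is \Cref{lem:links_in_cusped} and \Cref{cor:nesting_of_links_in_cusp}: the map $\Delta\mapsto\down{\Delta}$ identifies the poset of links of $\hat X$ with that of $X$, inflating only the bases of cones. This gives condition \ref{CHHS_def_complexity} with the same constant $n$, since a chain of links in $\hat X$ maps to a chain of the same length in $X$. It also reduces condition \ref{CHHS_def_lk_cap_lk} to the corresponding condition for $X$. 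Given $\Delta,\Sigma,\Gamma$ in $\hat X$, we apply the condition to $\down\Delta,\down\Sigma,\down\Gamma$ to obtain $\Pi'\supseteq\down\Sigma$, and lift $\Pi'$ to $\Pi\supseteq\Sigma$ by keeping the levels of $\Sigma$ and choosing level $0$ elsewhere. The diameter hypothesis needs a comparison between $\hat C(\Gamma)$ and $\C{\down\Gamma}$; here we would use \Cref{lem:coarseemb_in_hor} and increase $\delta'$ so that large diameter downstairs is forced.

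The core step is to describe the augmented links $\hat C(\Sigma)$ via \Cref{cor:bounded_links}.
\begin{itemize}
\item For cone type, $\Sigma=\Delta_v$: the link is $L_v\times\N$, and cusp-type edges together with \ref{CHHS_def_edge_in_link} should show that $\hat C(\Delta_v)$ is exactly $\Hor{\C{\Delta_v}}$. This is $\delta_0$-hyperbolic by \Cref{thm:horgamma_hyp}, which is why the relative hypothesis \ref{CHHS_def_rel_hyperbolicity} upgrades to genuine hyperbolicity.
\item For bounded type: the link is a point or a join, so $\hat C(\Sigma)$ has diameter at most $2$.
\item For blowup type: every cone in $\ov\Sigma$ is saturated, so $\link{\hat X}{\Sigma}=\p^{-1}(\link{\ov X}{\ov\Sigma})$. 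We must compare $\hat C(\Sigma)$ with $\C{\down\Sigma}$, and the plan is to show that the map $\down{(\cdot)}$ is a quasi-isometry $\hat C(\Sigma)\to\C{\down\Sigma}$ with universal constants. Every vertex $p^n$ is $\hat X$-adjacent to its apex, and apices lie in $X$. Hence $\C{\down\Sigma}\subseteq\hat C(\Sigma)$ is $1$-dense, which yields a coarse inverse. Edges of $\W$-type project to $\W$-edges, so the map is coarsely Lipschitz.
\end{itemize}

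The main obstacle is cusp-type edges in the blowup-type case. Such an edge joins $p^n,q^n$ with $\dist_{\C{\Delta_v}}(p,q)\le 2^n$, and this could a priori shortcut $\hat C(\Sigma)$ relative to $\C{\down\Sigma}$. I expect no shortcut, because both vertices lie over the same apex $v$ and are therefore at distance at most $2$ via $v$ in both graphs. The same remark controls $Y_\Sigma$ versus its hatted version, and that is needed for condition \ref{CHHS_def_qi_emb}. The worry is that in $\hat Y_\Sigma$ the removed saturation $\Sat(\Sigma)$ might disconnect the apex route. To handle this I would use \Cref{lem:embedding_of_duaug} and argue case by case on whether $v\in\Sat$:
\begin{itemize}
\item if $v\notin\Sat$, we reroute through $v$;
\item if $v\in\Sat$, then all of $\Cone{v}$ is saturated and the edge is irrelevant.
\end{itemize}
Granting this, $\hat Y_\Sigma$ retracts coarsely onto $Y_{\down\Sigma}$, and hyperbolicity and quasi-isometric embeddedness transfer with constants depending only on $\delta$, producing $\delta'$.

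Finally, condition \ref{CHHS_def_edge_in_link}: an edge of $\hat C(\Delta)$ not in the link comes from a $\hat\W$-edge, and for cusp-type edges we take the two simplices themselves as $\Pi_v,\Pi_w$. For $\W$-type edges, we apply \ref{CHHS_def_edge_in_link} in $X$ and lift the resulting simplices by reinstating the common deep part $\Theta^+$.
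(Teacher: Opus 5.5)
Your treatment of complexity, of hyperbolicity (the horoball for cone type, the map $\down{(\cdot)}$ for blowup type), and of condition~\ref{CHHS_def_qi_emb} for blowup-type simplices is sound and close to the paper's. There is, however, a genuine gap: you never verify condition~\ref{CHHS_def_qi_emb} for cone-type simplices $\Delta_v$.

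Your dichotomy ``if $v\in\Sat$ then all of $\Cone{v}$ is saturated'' holds for blowup-type $\Sigma$. It fails for $\Sigma=\Delta_v$: there $v\in\Sat(\Delta_v)$, while $L_v\times\N$ is the link itself. So $Y_{\Delta_v}\subseteq\duaug{\hat X}{\hat\W}$ contains the entire horoball, whose deep vertices are arbitrarily far from $X$. Hence $Y_{\Delta_v}$ does not coarsely retract onto $Y_{\down{\Delta_v}}$. A Lipschitz map such as $\down{(\cdot)}$ collapses $\hatC{\Delta_v}$ onto its base, so it cannot transfer a quasi-isometric embedding. This is the one place where the new horoballs interact with the rest of the space, so it is the heart of the proposition. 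Two ingredients are missing.
\begin{itemize}
\item[(i)] If $p^n\in L_v\times\N$ is adjacent to a vertex lying over some $w\in\ov X-\CStar{v}$, then $n=0$. A cusp-type edge preserves the support. A $\W$-type edge preserves $\Theta^+$, which for $n>0$ would put $v$ and $w$ in a common simplex of $\ov X$. Hence, after collapsing the cones over such $w$ to their apices, $Y_{\Delta_v}$ becomes $\Hor{\C{\down{\Delta_v}}}$ glued along its base to the subgraph $\ov Y_{\down{\Delta_v}}$ of $\duaug{X}{\W}$ spanned by $L_v\cup(\ov X-\CStar{v})$.
\item[(ii)] Split a geodesic of this glued space between horoball points into horoball segments and excursions through $\ov Y_{\down{\Delta_v}}$ with endpoints on the base. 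Condition~\ref{CHHS_def_qi_emb} for $(X,\W)$ at $\Delta_v$ still holds in the relative setting. It bounds the $\C{\Delta_v}$-distance between the endpoints of an excursion of length $\ell$ by $D\ell+D$. The base includes $1$-Lipschitz into the horoball, and summing gives the linear lower bound.
\end{itemize}

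Your reduction of condition~\ref{CHHS_def_lk_cap_lk} to the one for $X$ also fails as stated. \Cref{lem:coarseemb_in_hor} cannot force $\diam\C{\down\Gamma}\ge\delta$ from $\diam\hatC{\Gamma}\ge\delta'$ when $\Gamma=\Delta_v$. Indeed $\hatC{\Delta_v}\cong\Hor{\C{\down{\Delta_v}}}$ has infinite diameter (its vertical rays) even when $\C{\Delta_v}$ is bounded, e.g.\ when $L_v$ is finite. For such $\Gamma$ the hypothesis holds in $\hat X$, but condition~\ref{CHHS_def_lk_cap_lk} for $X$ says nothing, so your lifted $\Pi$ need not satisfy $\link{\hat X}{\Gamma}\subseteq\link{\hat X}{\Pi}$.

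The paper avoids any diameter comparison. By \Cref{lem:cleanish_for_blowup}, $\hat X$ has \cleanish{}, so $\link{\hat X}{\Sigma}\cap\link{\hat X}{\Delta}=\link{\hat X}{\Pi}\star\Psi$ with $\Pi\supseteq\Sigma$. Any $\Gamma$ whose link lies in this join and meets $\Psi$ has a cone point, hence $\diam\hatC{\Gamma}\le2$. So every $\Gamma$ with $\diam\hatC{\Gamma}\ge\delta'>2$ has link inside $\link{\hat X}{\Pi}$.

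A minor point on condition~\ref{CHHS_def_edge_in_link}: the two cusp-adjacent simplices need not contain $\Delta$. Instead, extend $\Delta\star\{x\}$ to a maximal simplex $\Phi\star\{x\}$ and pair it with $\Phi\star\{y\}$.
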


\begin{proof} In what follows, we say that a constant is \emph{uniform} if it only depends on $n$ and $\delta$. We check that each axiom from \Cref{def:combHHS} holds for some uniform constant $\delta_i$, and we shall then set $\delta'=\max_i\delta_i$. 

\Cref{CHHS_def_complexity} follows from \Cref{cor:nesting_of_links_in_cusp} and the corresponding axiom for $(X,\W)$, with the same constant $n$. Regarding \Cref{CHHS_def_hyperbolicity}, by the description of links in a blowup from \Cref{cor:bounded_links}, it is enough to restrict to simplices of either blowup type or cone type. We analyse the two cases separately in \Cref{claim:hyp2} and \Cref{claim:hyp3} below.
\begin{claim}[Blowup type]\label{claim:hyp2}
Let $\Sigma$ be a simplex of blowup type. There exists a $2$-quasi-isometry $\C{\down{\Sigma}}\to \hatC{\Sigma}$. In particular, $\hatC{\Sigma}$ is $\delta_1$-hyperbolic for some $\delta_1$ only depending on $\delta$.
\end{claim}
\begin{claimproof}[Proof of \Cref{claim:hyp2}]
    Let $\duaug{\link{\ov X}{\ov\Sigma}}{\W}$ be the graph obtained from $\link{\ov X}{\ov\Sigma}$ by connecting two vertices whenever they belong to $\W$-adjacent simplices, and define $\duaug{\link{\ov X}{\ov\Sigma}}{\hat\W}$ analogously. By how we defined $\hat\W$-edges in \Cref{defn:cusped_space}, we see that two vertices in $\link{\ov X}{\ov\Sigma}$ are $\W$-adjacent if and only if they are $\hat\W$-adjacent, so $\duaug{\link{\ov X}{\ov\Sigma}}{\W}$ and $\duaug{\link{\ov X}{\ov\Sigma}}{\hat\W}$ coincide. Now, since $X$ is a blowup of $\ov X$ and $\Sigma$ is of blowup type, the retraction $\p$ induces a quasi-isometry $\C{\down{\Sigma}}\to \duaug{\link{\ov X}{\ov\Sigma}}{\W}$, and similarly the inclusion $\duaug{\link{\ov X}{\ov\Sigma}}{\W}\to \hatC{\Sigma}$ is an isometric, $2$-surjective embedding. By keeping track of the quasi-isometry constants, one sees that the composition $\C{\down{\Sigma}}\to \hatC{\Sigma}$ is a $2$-quasi-isometry, as required. 
\end{claimproof}
\begin{claim}[Cone type]\label{claim:hyp3} 
Let $\Delta_v$ be a simplex of cone type, for some $v\in \ov X^{(0)}$. Then $\hatC{\Delta_v}\cong\Hor{\C{\down{\Delta_v}}}$, and is therefore $\delta_0$ hyperbolic by \Cref{thm:horgamma_hyp}.
\end{claim}
\begin{claimproof}[Proof of \Cref{claim:hyp3}]
    The vertex set of $\hatC{\Delta_v}$ is $L_v\times \N$. By inspection of \Cref{defn:cusped_space}, two distinct vertices $p^n,q^m\in L_v\times \N$ are adjacent in $\Hor{\C{\down{\Delta_v}}}$ if and only if they are connected in $\hatC{\Delta_v}$ by an edge of cusp-type. Hence we are left to prove that, if $p^n\in\Theta$, $q^n\in \Xi$, where $\Theta,\Xi\in \hat \W$ span an edge of $\W$-type, then $p^n$ and $q^n$ were already joined by an edge of cusp-type. Indeed, since $\Theta^+=\Xi^+$ and $p^n$ is not equal to $q^m$, we must have that $n=m=0$. But then $p^0\in \down{\Theta}$ and $q^0\in \down{\Xi}$ are $\W$-adjacent, meaning that $\dist_{\C{\down{\Delta_v}}}(p,q)=1$ and therefore $p^0,q^0$ were already joined by an edge of cusp-type.
\end{claimproof}

\noindent \Cref{CHHS_def_qi_emb} is also proven by considering two cases, according to \Cref{cor:bounded_links}:
\begin{claim}[Blowup type]\label{claim:blowuptype}
There exists $\delta_2$, only depending on $\delta$, such that, if $\Sigma$ is a simplex of blowup type, then $\hatC{\Sigma}$ $\delta_2$-quasi-isometrically embeds inside $Y_\Sigma$.
\end{claim}
\begin{claimproof}[Proof of \Cref{claim:blowuptype}]
    Notice first that every $\hat X$-simplex with the same link as $\Sigma$ must itself be of edge type. This implies that $Y_\Sigma$ is the $\p$-preimage of $\p(Y_\Sigma)$ in $\hat X$, so $\p$ restricts to a $2$-quasi-isometry $Y_\Sigma\to \p(Y_\Sigma)$. The same arguments apply to $\down{\Sigma}$ inside $X$, and notice that $\p(Y_\Sigma)=\p(Y_{\down{\Sigma}})$. Then the $\delta$-quasi-isometric embedding $\C{\down{\Sigma}}\to Y_{\down{\Sigma}}$ induces a uniform quasi-isometric embedding $$\duaug{\link{\ov X}{\ov \Sigma}}{\hat \W}=\duaug{\link{\ov X}{\ov \Sigma}}{\W}\to \p(Y_{\down{\Sigma}})=\p(Y_\Sigma),$$ with constants only depending on $\delta$, and in turn this induces a $\delta_2$-quasi-isometric embedding $\hatC{\Sigma}\to Y_{\Sigma}$ for some $\delta_2$ only depending on $\delta$.
    \end{claimproof}
\begin{claim}[Cone type]
\label{claim:cone-type}
     There exists $\delta_3$, only depending on $\delta$, such that, if $\Delta_v$ is a simplex of cone type, for some $v\in \ov X^{(0)}$, then $\hatC{\Delta_v}$ $\delta_3$-quasi-isometrically embeds inside $Y_{\Delta_v}$.
\end{claim}
\begin{claimproof}[Proof of \Cref{claim:cone-type}]
    The fact that $\Delta_v$ is of cone type implies that $Y_{\Delta_v}$ is spanned by 
    $$(L_v\times \N) \cup \bigcup_{w\in \ov X-\CStar{v}} \{w\}\star(L_w\times\N).$$
    $Y_{\Delta_v}$ uniformly retracts onto $\ov Y_{\Delta_v}$, defined as the subgraph of $\duaug{\hat X}{\hat \W}$ spanned by 
    $$(L_v\times \N) \cup (\ov X-\CStar{v}).$$
    Let $\ov Y_{\down{\Delta_v}}$ be the subgraph of $\duaug{X}{\W}$ (hence of $\duaug{\hat X}{\hat \W}$ by \Cref{lem:embedding_of_duaug}) spanned by 
    $$L_v\times\{0\} \cup (\ov X-\CStar{v}).$$
    Notice that, if $p^n\in L_v\times \N$ and $w\in \ov X-\CStar{v}$ are $\hat \W$-adjacent, then $n=0$, by how we defined $\hat \W$-edges in \Cref{defn:cusped_space}. Hence
    \begin{equation}\label{eq:dec_of_Ydelta}
        \ov Y_{\Delta_v}=\hatC{\Delta_v}\cup_{\C{\down{\Delta_v}}} \ov Y_{\down{\Delta_v}}.
    \end{equation}
    We shall now prove that $\hatC{\Delta_v}$ is uniformly quasi-isometrically embedded in $\ov Y_{\Delta_v}$, and therefore in $Y_{\Delta_v}$. Fix any $\ov Y_{\Delta_v}$-geodesic $\gamma$ connecting two points $a,b\in L_v\times \N$, and we shall prove that its length $\ell(\gamma)$ is bounded below by a linear function of $\dist_{\hatC{\Delta_v}}(a,b)$. By \Cref{eq:dec_of_Ydelta}, $\gamma$ can be decomposed as a concatenation $\eta_1*\lambda_1*\ldots*\eta_k$, where each $\eta_i$ is a geodesic in $\hatC{\Delta_v}$ and each $\lambda_j$ is a geodesic in $\ov Y_{\down{\Delta_v}}$ with endpoints $x_j^0,y_j^0\in \C{\down{\Delta_v}}$. Furthermore, by \Cref{CHHS_def_qi_emb} applied to $(X,\W)$, there is a constant $D=D(\delta)$ such that $\C{\down{\Delta_v}}$ is $D$-quasi-isometrically embedded in $\ov Y_{\down{\Delta_v}}$. Then
    $$\dist_{\hatC{\Delta_v}}(x_j^0,y_j^0)\le \dist_{\C{\down{\Delta_v}}}(x_j^0,y_j^0)\le D\ell(\lambda_i)+D.$$
    In turn, this means that
        \begin{align*}
        \dist_{\hatC{\Delta_v}}(a,b)&\le \sum_{i=1}^k \ell(\eta_i) +\sum_{i=1}^{k-1} \dist_{\hatC{\Delta_v}}(x_j^0,y_j^0)\\
        &\le \sum_{i=1}^k \ell(\eta_i) +D\sum_{i=1}^{k-1} \ell(\lambda_i) +(k-1)D\\
        &\le D\left(\sum_{i=1}^k \ell(\eta_i) +\sum_{i=1}^{k-1} \ell(\lambda_i)\right)+\ell(\gamma)D=2D\ell(\gamma).\qedhere\\
    \end{align*}
\end{claimproof}
\noindent Moving to the remaining items of \Cref{def:combHHS}, the fact that $\hat X$ has \cleanish{}, which follows from \Cref{lem:cleanish_for_blowup}, implies \Cref{CHHS_def_lk_cap_lk} as in the proof of \cite[Theorem 6.4]{BHMS} (more precisely, at the beginning of the paragraph named “\emph{$(X,W)$ is a combinatorial HHS}”).

We finally address \Cref{CHHS_def_edge_in_link}. Let $\Theta$ be an $\hat X$-simplex and $x,y\in \link{\hat X}{\Theta}$ be distinct vertices which belong to $\W$-adjacent maximal simplices $\Sigma_x$ and $\Sigma_y$. If the latter span an edge of cusp-type then there exists $v\in \ov X^{(0)}$ such that $x,y\in L_v\times \N$ and $\dist_{\Hor{\C{\Delta_v}}}(x,y)=1$. Since $x,y\in \link{\hat X}{\Theta}$, we must have that $v\in \CStar{\ov\Theta}$, so $\Theta$ can be completed to two maximal simplices connected by an edge of cusp-type and containing $x$ and $y$, respectively. Suppose instead that $\Sigma_x$ and $\Sigma_y$ span an edge of $\W$-type. Then $\Sigma_x^+=\Sigma_y^+$, and since $x,y$ are distinct we must have that $x,y\in X$, so $x,y\in \link{X}{\down{\Theta}}$ by \Cref{lem:links_in_cusped}. By \Cref{CHHS_def_edge_in_link} applied to $(X,\W)$, there exist two $\W$-adjacent $X$-maximal simplices $\Pi_x\supseteq x\star \down{\Theta}$ and $\Pi_y\supseteq x\star \down{\Theta}$. If we call $\Psi_x$ the $\hat X$-maximal simplex such that $\Psi_x^+=\Theta^+$ ad $\down{\Psi_x}=\Pi_x$, and we define $\Psi_y$ analogously, then $\Psi_x\supseteq x\star \Theta$, $\Psi_y\supseteq y\star \Theta$, and $\Psi_x,\Psi_y$ span an edge of $\W$-type. This concludes the proof of \Cref{CHHS_def_edge_in_link}, and in turn of \Cref{prop:teich_is_comb}.
\end{proof}

\begin{rem}
    If $(X,\W)$ was a genuine \squid{} HHS, not just a relative one, another approach to proving \Cref{claim:cone-type} would be to use that $\ov Y_{\down{\Delta_v}}$ is hyperbolic by \cite[Proposition 3.3]{BHMS} (which is the key step in the proof that a combinatorial HHS is indeed a HHS). This would give a closest-point projection from $\ov Y_{\down{\Delta_v}}$ to its quasiconvex subset $\C{\down{\Delta_v}}$. For a relative \squid{} HHS we cannot use \cite{BHMS}; however, if $\ov Y_{\down{\Delta_v}}$ was hyperbolic relative to $\C{\down{\Delta_v}}$, we could still use that closest-point projections to peripheral subsets of relatively hyperbolic spaces are coarsely Lipschitz (see \cite{Sisto_proj_relhyp} for general results about these projections). 
    
    It can in fact be extracted a posteriori from our arguments and \cite[Proposition 3.3]{BHMS} that relative hyperbolicity indeed holds. Indeed, $Y_{\Delta_v}$ is quasi-isometric to $\ov Y_{\down{\Delta_v}}$ with a combinatorial horoball glued onto $\C{\down{\Delta_v}}$, and via \cite[Proposition 3.3]{BHMS} we know that $Y_{\Delta_v}$ is hyperbolic. This implies that $\ov Y_{\down{\Delta_v}}$ is hyperbolic relative to $\C{\down{\Delta_v}}$, as this is one of the characterisations of relative hyperbolicity. 
\end{rem}

\begin{prop}\ref{item_G-action_on_teich}
    Let $G$ be a group acting on a relative \squid{} HHS $(X,\W)$. Then the action extends to a $G$-action on $(\hat X,\hat \W)$.
\end{prop}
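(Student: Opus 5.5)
The plan is to write the extended action down explicitly and then check, one definition at a time, that it is compatible with $\hat X$ and $\hat\W$. Let $g\in G$. Since $g$ acts simplicially on $X$ and preserves $\ov X$, it sends each apex $v$ to the apex $gv$. Because adjacency in $X$ is determined by the blowup structure, $g$ maps $\Cone{v}$ onto $\Cone{gv}$, and so restricts to a bijection $L_v\to L_{gv}$. Indeed, $L_v$ is exactly the set of vertices of $X-\ov X$ adjacent to $v$, and $g$ maps $X-\ov X$ to itself. We then define $g$ on $\hat X^{(0)}$ by $g\cdot v=gv$ for $v\in\ov X^{(0)}$ and $g\cdot p^n=(gp)^n$ for $p\in L_v$.

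\textbf{Step 1: the action on $\hat X$.} We first check that this defines an action on $\hat X$. The group law holds because it holds on $X$. To see that $g$ is simplicial, recall the characterisation from the proof of \Cref{lem:links_in_cusped}. Two vertices $x,y$ of $\hat X$ are adjacent exactly when either $\p(x)=\p(y)$ and one of them is the apex, or $\p(x)$ and $\p(y)$ are $\ov X$-adjacent. Our map commutes with $\p$, with $\down{(\cdot)}$ and with the depth coordinate. Since $g$ preserves $\ov X$-adjacency, it therefore preserves adjacency in $\hat X$. By construction the action extends the one on $X$, preserves $\ov X$, and has the same simplices of $\ov X$, so finitely many $G$-orbits of simplices of $\ov X$ is inherited.

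\textbf{Step 2: the action on $\hat\W$.} Next we check that the induced action on maximal simplices preserves $\hat\W$-edges, taking the two edge types separately.

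For an edge of $\W$-type, $\Theta^+=\Xi^+$ gives $(g\Theta)^+=g(\Theta^+)=(g\Xi)^+$, because $g$ preserves depth. Also $\down{(g\Theta)}=g\down{\Theta}$, so $\W$-adjacency of $\down{\Theta},\down{\Xi}$ is carried to $\W$-adjacency, using that $G$ acts on $\W$.

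For an edge of cusp-type, $g$ sends $\Theta-\Xi=\{p^n\}$ and $\Xi-\Theta=\{q^m\}$ to $\{(gp)^n\}$ and $\{(gq)^m\}$ with $gp,gq\in L_{gv}$. It then remains to see that $g$ induces an isometry $\Hor{\C{\Delta_v}}\to\Hor{\C{\Delta_{gv}}}$. This follows once $g$ induces an isometry $\C{\Delta_v}\to\C{g\Delta_v}=\C{\Delta_{gv}}$, since the horoball construction is functorial in graph isometries and $g$ preserves levels. That isometry comes from the action of $g$ on $\duaug{X}{\W}$: this action is simplicial because $G$ acts on both $X$ and $\W$, and $g$ maps $\Sat(\Delta_v)$ to $\Sat(g\Delta_v)$ and $\link{X}{\Delta_v}$ to $\link{X}{g\Delta_v}$.

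\textbf{Main obstacle.} The step needing the most care is the cusp-type check, specifically justifying that $g$ maps the minimal augmented link $\C{\Delta_v}$ isometrically onto $\C{\Delta_{gv}}$. For this we will spell out that $Y_{g\Delta}=gY_\Delta$, using only equivariance of links and saturations. Everything else is bookkeeping.
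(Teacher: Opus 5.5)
Your construction is the same as the paper's: set $g\cdot p^n=(gp)^n$, then check both edge types of \Cref{defn:cusped_space}, which the paper does ``by inspection''. However, the step everything else rests on is justified incorrectly. It is not true that $L_v$ is the set of vertices of $X-\ov X$ adjacent to $v$. Cones over adjacent apices span a join, so $v$ is also adjacent to every vertex of $L_w$ for each $w\in\link{\ov X}{v}$.

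In fact the conclusion $g(L_v)=L_{gv}$ can fail for a simplicial action on $X$ preserving $\ov X$. Suppose a connected component of $\ov X$ is a single edge $\{v,w\}$, so that $\Cone{v}\star\Cone{w}=\{v,w\}\star L_v\star L_w$. If $|L_v|=|L_w|$, a map fixing $v$ and $w$ and exchanging $L_v$ with $L_w$ is a simplicial automorphism preserving $\ov X$. So your claims that $g$ commutes with $\p$ and that $\C{g\Delta_v}=\C{\Delta_{gv}}$ are unjustified as written. The paper tacitly makes the same compatibility assumption when it writes $(g\cdot p)^n\in L_{g\cdot v}\times\N$.

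The repair is short and removes the need for $\p$-equivariance altogether. The decomposition of $X-\ov X$ into bases is intrinsic: two distinct vertices of $X-\ov X$ lie in the same base if and only if they are non-adjacent and have the same neighbours in $\ov X$. For $p\in L_v$ these neighbours are $v$ together with $\link{\ov X}{v}$. Hence $g$ maps each $L_v$ bijectively onto some base $L_u$. Then $\link{X}{g\Delta_v}=gL_v=L_u$, and your argument via $gY_{\Delta}=Y_{g\Delta}$ gives an isometry $\C{\Delta_v}\to\C{\Delta_u}$. With this in hand, each step goes through as follows:
\begin{itemize}
\item \textbf{Step 1.} Use \Cref{lem:links_in_cusped} instead of $\p$. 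Vertices $x,y$ are adjacent in $\hat X$ if and only if $\down{x},\down{y}$ are adjacent in $X$, and $\down{(gx)}=g\down{x}$ by definition.
\item \textbf{Edges of $\W$-type.} $\Theta^+$ is determined by the set of base points of $\Theta$ of positive depth, and $g$ carries this set for $\Theta$ to the one for $g\Theta$.
\item \textbf{Edges of cusp-type.} Run your argument with $u$ in place of $gv$. Note that $u\in\ov{g\Theta}$, because the maximal simplex $g\Theta$ contains $(gp)^n$ with $gp\in L_u$.
\end{itemize}
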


\begin{proof}
    For every $g\in G$, $v\in \ov X^{(0)}$, and $p^n\in L_v\times \N$, set $g\cdot p^n\coloneq (g\cdot p)^n\in L_{g\cdot v}\times \N$, where $g\cdot v$ is the image of $v$ under the action on $\ov X$, seen as a subgraph of $X$. By inspection of \Cref{defn:cusped_space}, this induces an action on $\hat \W$; furthermore,  $G$ still acts on $\ov X$ with finitely many orbits of simplices, and therefore we just defined a $G$-action on $(\hat X,\hat \W)$.
\end{proof}

\begin{prop}\label{prop:W_is_relHHS} 
\ref{item_W_is_relHHS} Let $(X,\W)$ be a relative \squid{} HHS. Then $\W$ is a hierarchically hyperbolic space relative to $\{\C{\Delta_v}\}_{v\in \ov X^{(0)}}$.
\end{prop}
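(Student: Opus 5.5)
The plan is to check the axioms of a relative HHS (in the sense of \cite{HHS_II}) directly for $\W$. The domain set is $\frakS$, the coordinate spaces are the augmented links $\C{\Sigma}$, and relations and projections are those of \Cref{defn:nest_orth} and \Cref{defn:projections}. The relative version only demands hyperbolicity of coordinate spaces for non-$\nest$-minimal domains, and here the minimal domains $[\Delta_v]$ are exactly the ones allowed to be non-hyperbolic by \ref{CHHS_def_rel_hyperbolicity}. Rather than redoing \cite{BHMS} by hand, I would transfer the structure from the cusped space: by \Cref{prop:teich_is_comb} and \cite[Theorem 1.18]{BHMS}, $\hat\W$ is an HHS with domain set $\hat\frakS$. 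By \Cref{cor:nesting_of_links_in_cusp}, the map $[\Sigma]\mapsto[\down{\Sigma}]$ is a bijection $\hat\frakS\to\frakS$ preserving $\nest$ and $\orth$, and therefore also $\transverse$.

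The key steps, in order, are the following.

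\begin{enumerate}
\item Compare coordinate spaces. For blowup-type domains, \Cref{claim:hyp2} gives a uniform quasi-isometry $\C{\down{\Sigma}}\to\hatC{\Sigma}$. For cone-type domains, \Cref{claim:hyp3} identifies $\hatC{\Delta_v}$ with $\Hor{\C{\Delta_v}}$, which contains $\C{\Delta_v}$ as its level-$0$ base. Bounded-type domains have bounded coordinate spaces in both structures.
\item Compare projections. Via \Cref{lem:embedding_of_duaug}, I would show that for $w\in\W$ the projection $\pi_{[\Sigma]}(w)$ in $(\hat X,\hat\W)$ is uniformly close to the image of $\pi_{[\down\Sigma]}(w)$, and similarly for the relative projections $\rho$. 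For cone-type domains I would use the decomposition \eqref{eq:dec_of_Ydelta} together with the fact that $\hat\W$-edges from depth-$0$ cusp points to the outside only occur at level $0$.
\item With this dictionary, most axioms for $\W$ are inherited from $\hat\W$ through the embedding of \Cref{prop_coarseemb}. These are projections being coarsely Lipschitz, consistency, bounded geodesic image, large links, partial realization, orthogonality, finite complexity and the containers. Axioms stated in terms of $\W$-distance only need the coarse-Lipschitz direction of $\W\hookrightarrow\hat\W$, which is $1$-Lipschitz. For the cone-type domains, whose coordinate spaces get replaced by a horoball, I would use that $\C{\Delta_v}\hookrightarrow\Hor{\C{\Delta_v}}$ is a uniform coarse embedding (\Cref{lem:coarseemb_in_hor}). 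Distance bounds in the horoball that are needed in the inequalities would be replaced by the exact distance bounds in $\C{\Delta_v}$ that come out of the construction.
\item Uniqueness. If $\dist_{\W}(x,y)$ is large, I would use the lower coarse bound $m$ from \Cref{prop_coarseemb}, so that $\dist_{\hat\W}(x,y)$ is large. Uniqueness in $\hat\W$ then gives a domain where the projections are far apart, and this is transferred back. For cone-type domains, largeness in the horoball forces largeness in $\C{\Delta_v}$ by \Cref{lem:coarseemb_in_hor}. Here it matters that the projections of points of $\W$ to $\hatC{\Delta_v}$ lie at level $0$.
\end{enumerate}

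The main obstacle is bounded geodesic image and consistency in the cone-type domains. There the relative definition requires statements about $\C{\Delta_v}$, which is not hyperbolic, whereas $\hat\W$ only gives control in the horoball. My first attempt would be to verify these directly in $(X,\W)$ rather than transfer them. For consistency, I would use that $\C{\Delta_v}$ is quasi-isometrically embedded in $Y_{\Delta_v}$ (\ref{CHHS_def_qi_emb}), and that $\rho$-sets are closest-point projections of saturations there, arguing as in \cite{BHMS}; the relevant arguments do not use hyperbolicity of the minimal $\C{\Delta_v}$ since nothing is nested in $[\Delta_v]$. For bounded geodesic image, the only relevant case is $[\Delta_v]\propnest[\Sigma]$ with $\C{\Sigma}$ hyperbolic. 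That case is handled in the hyperbolic space $\hatC{\Sigma}\simeq\C{\down\Sigma}$ exactly as in \cite{BHMS}, and then pulled back to $\C{\Delta_v}$ using the coarse embedding $\C{\Delta_v}\hookrightarrow\Hor{\C{\Delta_v}}$.
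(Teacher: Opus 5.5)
Your overall strategy matches the paper's. You treat the cusped space $(\hat X,\hat\W)$ as a genuine HHS and pull each axiom back to $\W$ along $\W\hookrightarrow\hat\W$ (\Cref{prop_coarseemb}) and the uniform coarse embeddings $\C{\Sigma}\to\hatC{\Sigma}$ (a horoball base for cone type, a quasi-isometry otherwise). The paper avoids your step 2 by \emph{defining} the projections on $\W$ from those of $\hat\W$. For cone type it sets $\down{\pi}_{[\Delta_v]}(w)=\pi_{[\Delta_v]}(w)\cap\C{\Delta_v}$, which is non-empty because \eqref{eq:dec_of_Ydelta} shows that $\C{\Delta_v}$ separates $\hatC{\Delta_v}$ from the rest of $Y_{\Delta_v}$. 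It uses the $\rho$'s of $\hat\W$ unchanged, and checks large links via the passing-up axiom. These differences are harmless.

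One step does not go through as you state it. Partial realization is an \emph{existence} statement. The realization point supplied by $\hat\W$ may be deep in horoballs over domains orthogonal to all the $[\Sigma_i]$, where the axiom imposes no constraint, so it need not lie in or uniformly near $\W$. Neither direction of \Cref{prop_coarseemb} repairs this. The paper's fix uses the explicit witness from \cite{BHMS}:
\begin{itemize}
\item pairwise orthogonality forces the $p_i\in\C{\Sigma_i}$ to span a simplex of $X$;
\item a maximal simplex $w\in\W^{(0)}$ containing them is a realization point in $(\hat X,\hat\W)$, by the argument on \cite[page 43]{BHMS};
\item since the $p_i$ lie at level $0$, $w$ also realizes them in $(X,\W)$.
\end{itemize}

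Conversely, your ``main obstacle'' is not an obstacle. The consistency axioms are upper bounds. An upper bound in $\hatC{\Delta_v}\cong\Hor{\C{\Delta_v}}$ between level-$0$ sets gives an upper bound in $\C{\Delta_v}$ by \Cref{lem:coarseemb_in_hor}, since $\dist_{\C{\Delta_v}}\le 2^{\frac32(\dist_{\hatC{\Delta_v}}-1)}$. So consistency transfers exactly like the other axioms, which is what the paper does.

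You should drop the fallback of re-running \cite{BHMS} directly in $(X,\W)$. $Y_{\Delta_v}$ contains $\C{\Delta_v}$ quasi-isometrically embedded, so it is not hyperbolic whenever $\C{\Delta_v}$ is not. The \cite{BHMS} arguments about closest-point projections rely on hyperbolicity of these spaces; the remark after \Cref{prop:teich_is_comb} notes that \cite{BHMS} cannot be used in the relative setting. Even the uniform boundedness of $\rho^{[\Sigma]}_{[\Delta_v]}$ in $\C{\Delta_v}$ is obtained from $\hat\W$, not natively.
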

\begin{proof}
As in \Cref{defn:simplex_equivalence}, let $\frakS$ be the collection of $\sim$-equivalence classes of non-maximal simplices of $X$, with the relations $\nest$ and $\orth$ as in \Cref{defn:nest_orth}. It follows from \Cref{lem:links_in_cusped} that the relations $\sim$, $\nest$, and $\orth$ are the same if we consider links in $X$ or in $\hat X$. For every $[\Sigma]\in\frakS$ let $\C{\Sigma}$ and $\hatC{\Sigma}$ be the associated augmented links in $(X,\W)$ and in $(\hat X, \hat \W)$, respectively. 

For every $[\Sigma]\in\frakS$, let $\pi_{[\Sigma]}\colon \hat \W\to \hatC{\Sigma}$ be the projection from \Cref{defn:projections}. If $\Sigma=\Delta_v$ for some $v\in\ov X^{(0)}$, then \Cref{eq:dec_of_Ydelta} shows that $\C{\Delta_v}$ separates $\hatC{\Delta_v}$ from the rest of $Y_{\Delta_v}$. As a consequence, given $w\in \W$, its projection $\pi_{[\Delta_v]}(w)$ must intersect $ \C{\Delta_v}$, since it is defined as the coarse closest point projection of $w\cap Y_{\Delta_v}$ to $\hatC{\Delta_v}$. We then define $\down{\pi}_{[\Delta_v]}\colon \W\to \C{\Delta_v}$ by setting $\down{\pi}_{[\Delta_v]}(w)={\pi}_{[\Delta_v]}(w)\cap \C{\Delta_v}$. 

If instead $\Sigma$ is of edge-type, $\hatC{\Sigma}$ and $\C{\Sigma}$ are uniformly quasi-isometric, so we can always replace one with the other (in the presence of a group action, we can also choose the quasi-isometry to be $\Stab{G}{\Sigma}$-equivariant). Therefore we define $\down{\pi}_{[\Sigma]}\colon \W\to \C{\Sigma}$ as the composition of ${\pi}_{[\Sigma]}$, restricted to $\W$, with the fixed identification $\hatC{\Sigma}\to\C{\Sigma}$.

We now prove that the data $(\W,\frakS, \C{\cdot},\down{\pi})$ make $\W$ a relative hierarchically hyperbolic space, by checking the axioms of \cite[Definition~2.8]{Russell_Rel_Hyp} (which is equivalent to the original formulation from \cite{HHS_II}). We shall often use without reference that, since $(\hat X, \hat \W)$ is a combinatorial HHS by \Cref{prop:teich_is_comb}, then $(\hat\W,\frakS, \hatC{\cdot},\pi)$ is a hierarchically hyperbolic space by \cite[Theorem 1.18]{BHMS}, and therefore satisfies all axioms.

All requirements only involving $\nest$ and $\orth$, which only depend on $X$, follow exactly as in the proof of \cite[Theorem 1.18]{BHMS}, so we focus on those involving projections and coordinate spaces.

\begin{enumerate}[label=(\arabic*{})]
        \item\label{axiom:projections} \textbf{(Projections.)} 
        For every $[\Sigma]\in\frakS$, the coarse map $\down{\pi}_{[\Sigma]}\colon \W\to \C{\Sigma}$ is surjective since every vertex of $\C\Sigma$ can be completed to a maximal simplex of $X$. If $\Sigma$ is not of the form $\Delta_v$ for any $v\in \ov X^{(0)}$, then $\down{\pi}_{[\Sigma]}$ is uniformly coarsely Lipschitz, as it is the composition of the coarsely Lipschitz map $\pi_{[\Sigma]}\colon \W\to \hatC{\Sigma}$ with the quasi-isometry $\hatC{\Sigma}\to \C{\Sigma}$. If instead $\Sigma=\Delta_v$ and $w,w'\in \W$ are $\W$-adjacent, then $\dist_{\hatC{\Delta_v}}(\pi_{[\Delta_v]}(w),\pi_{[\Delta_v]}(w'))$ is uniformly bounded; hence $\dist_{\C{\Delta_v}}(\down{\pi}_{[\Delta_v]}(w),\down{\pi}_{[\Delta_v]}(w'))$ is uniformly bounded as well, since $\C{\Delta_v}$ is uniformly coarsely embedded in $\hatC{\Delta_v}$ by \Cref{lem:coarseemb_in_hor}.

        \item\label{axiom:nesting} \textbf{(Nesting and transversality.)} 
        Whenever $[\Sigma],[\Theta]\in\frakS$ are such that $[\Sigma]\propnest [\Theta]$, there exists a uniformly bounded subset $\rho^{[\Sigma]}_{[\Theta]}\in\hatC{\Theta}$, defined as in \Cref{defn:projections}. Since $[\Theta]$ is not $\nest$-minimal, $\hatC{\Theta}$ and $\C{\Theta}$ are quasi-isometric, so $\rho^{[\Sigma]}_{[\Theta]}$ is uniformly bounded in $\C{\Theta}$ as well.

        Similarly, if $[\Sigma]\transverse [\Theta]\in\frakS$ and $\Theta$ is not of the form $\Delta_v$ for any $v\in \ov X^{(0)}$, then $\rho^{[\Sigma]}_{[\Theta]}$ is uniformly bounded in $\C{\Theta}$ as well. If instead $\Theta=\Delta_v$ then $\rho^{[\Sigma]}_{[\Delta_v]}\in \C{\Delta_v}$, as it was defined using the coarse closest point projection $Y_{\Delta_v}\to \hatC{\Delta_v}$. Hence $\rho^{[\Sigma]}_{[\Delta_v]}$ is uniformly bounded in $\C{\Delta_v}$, as the latter is uniformly coarsely embedded in $\hatC{\Delta_v}$.

        \item\label{axiom:consistency} \textbf{(Consistency.)} 
        The consistency inequalities for $(\hat X,\hat \W)$ impose uniform bounds involving certain $\pi$-projections in $\hatC{\cdot}$ and the relative projections between domains. As above, one can deduce analogous uniform bounds involving the corresponding $\down{\pi}$-projections in $\C{\cdot}$, using the uniform coarse embeddings $\C{\cdot}\to \hatC{\cdot}$.
		
        \item\label{axiom:hyperbolicity} \textbf{(Hyperbolicity of non-minimal domains)} Whenever $\Sigma\neq \Delta_v$ for any $v\in \ov X^{(0)}$, $\C{\Sigma}$ is uniformly quasi-isometric to the $\delta'$-hyperbolic space $\hatC{\Sigma}$. 
		
        \item\label{axiom:bounded_geodesic_image} \textbf{(Bounded geodesic image.)} 
        Let $w,w'\in \W$ and  $[\Sigma],[\Theta]\in\frakS$ be such that $[\Sigma]\propnest [\Theta]$. Since $\C{\Sigma}$ is uniformly coarsely embedded in $\hatC{\Sigma}$, there exists a constant $B=B(\delta')$ such that, if $\dist_{\C{\Sigma}}(\down{\pi}_{[\Sigma]}(w),\down{\pi}_{[\Sigma]}(w')) > B$, then $\dist_{\hatC{\Sigma}}(\pi_{[\Sigma]}(w),\pi_{[\Sigma]}(w')) > \delta'$. By the bounded geodesic image axiom for $(\hat X, \hat \W)$, every $\hatC{\Theta}$--geodesic from $\pi_{[\Theta]}(w)$ to $\pi_{[\Theta]}(w)$ must therefore pass $\delta'$-close to $\rho^{[\Sigma]}_{[\Theta]}$.

        Now let $\gamma$ be a $\C{\Theta}$-geodesic from $\down{\pi}_{[\Theta]}(w)$ to $\down{\pi}_{[\Theta]}(w)$, and let $\hat\gamma$ be its image under the quasi-isometry $\hatC{\Theta}\to \C{\Theta}$ (here we are using that $[\Theta]$ is not $\nest$-minimal). Then $\hat\gamma$ is a uniform quality quasi-geodesic, which is within uniform Hausdorff distance from a geodesic by the Morse lemma for hyperbolic spaces Morse lemma (see, e.g., \cite[III.H.1.7]{BH}). Hence $\gamma$ must pass uniformly close to $\rho^{[\Sigma]}_{[\Theta]}$, thus proving the bounded geodesic image property for $(X,\W)$.

        \item\label{axiom:partial_realisation} \textbf{(Partial realization.)}
        Let $\{[\Sigma_i]\}$ be a finite collection of pairwise orthogonal domains, together with a choice of points $p_i\in \C{\Sigma_i}^{(0)}$. Since $\link{X}{\Sigma_i}\subseteq \link{X}{\link{X}{\Sigma_j}}$ for all $i\neq j$, the points $p_i$ span a simplex in $X$, so they belong to a maximal simplex $w\in \W^{(0)}$. As argued in the proof of \cite[Theorem 1.18]{BHMS} (see \cite[page 43]{BHMS}), $w$ is a realisation point for the collection in $(\hat X,\hat \W)$, and therefore in $(X,\W)$ since all coordinates $p_i$ were taken in $\C{\Sigma_i}$.

        \item\label{axiom:large_link_lemma} \textbf{(Large links.)} By the arguments from \cite[Section 4.8]{Durham_Cubulating_Infinity}, it is enough to check the \emph{passing up axiom}, as defined in e.g. \cite[Definition 4.1.(12')]{randomquot_hhg}. The passing up axiom for $(\hat X,\hat \W)$ provides the existence of a function $P\colon \R_{\ge 0}\to \R_{\ge 0}$ with the following property. Let $t\ge 0$, $[\Sigma]\in\frakS$, and $w,w'\in \W$. If $\dist_{\hatC{\Theta_i}}(w,w')>\delta'$ for a collection of domains $\{[\Theta_i]\}_{i=1}^{P(t)}\subseteq \frakS$ nested in $[\Sigma]$, then there exists a non-maximal simplex $\Xi$ of $\hat X$ such that $[\Xi]\nest[\Sigma]$, $[\Xi]$ properly contains some $[\Theta_i]$, and $\dist_{\hatC{\Xi}}(w,w')\ge t$. Since $\link{\hat X}{\Xi}=\link{\hat X}{\down{\Xi}}$, up to replacing $\Xi$ by $\down{\Xi}$ we can assume that $\Xi\subseteq X$.
    
        Now let $E\ge 0$ be such that, for every $[\Theta]\in\frakS$,  if $\dist_{\C{\Theta}}(w,w')>E$, then $\dist_{\hatC{\Theta}}(w,w')> \delta'$, which exists as $\C\Theta$ is uniformly coarsely embedded in $\hatC{\Theta}$. Hence, if $\dist_{\C{\Theta_i}}(w,w')>E$ for a collection $\{[\Theta_i]\}_{i=1}^{P(t)}$ as above, then the passing up axiom for $(\hat X, \hat \W)$ produces a simplex $\Xi$ as above, such that $\dist_{\C{\Xi}}(w,w')\ge\dist_{\hatC{\Xi}}(w,w')\ge t$. This proves that the passing up axiom is satisfied by $(X, \W)$, with the same function $P$ and with $\delta'$ replaced by $E$.

        \item\label{axiom:uniqueness} \textbf{(Uniqueness.)}
        If $w,w'\in \W^{(0)}$ and $r\in \mathbb{R}_{\ge 0}$ are such that $\dist_{\C{\Sigma}}(w,w')\le r$ for every $[\Sigma]\in \frakS$, then $\dist_{\hatC{\Sigma}}(w,w')\le \dist_{\C{\Sigma}}(w,w')\le r$. Then the uniqueness axiom for $(\hat X,\hat \W)$ yields that $\dist_{\hat \W}(w,w')\le \theta(r)$ for some function $\theta\colon \mathbb{R}_{\ge 0}\to \mathbb{R}_{\ge 0}$. In turn, since $\W\hookrightarrow \W$ is a coarse embedding by \Cref{item_coarseemb}, $\dist_{\W}(w,w')$ is also bounded in terms of $r$.\qedhere
    \end{enumerate}
\end{proof}

\section{Some examples}
\label{sec:examples}
We now present some constructions of relative \squid{} HHSs and HHGs, and describe the associated cusped spaces; further examples (namely \emph{short HHG}) will be discussed in \Cref{sec:short_are_squid} below. 

 \subsection{Relatively hyperbolic groups}
    Here we first construct a relative \squid{} HHG structure for relatively hyperbolic groups (subject to natural finiteness conditions); then we recognise the associated cusped space as the \emph{augmented space}, in the sense of e.g. \cite[Definition 4.3]{Hruska}.

    \begin{lem}\label{lem:relhyp_is_squid}
        A finitely generated group $G$, which is hyperbolic relative to a finite collection $\mathcal P$ of finitely generated subgroups, is a relative \squid{} HHG.
    \end{lem}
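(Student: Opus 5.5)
We build an explicit relative \squid{} structure modelled on the coned-off Cayley graph. Fix a finite generating set $S$ of $G$ and let $\mathcal{P}$ consist of $P_1,\dots,P_k$. Take the support graph $\ov X$ to have one vertex for each left coset $gP_i$ and no edges. This is a discrete graph, so it trivially has \cleanish{}: intersections of links are either everything or empty, and both are of the required form. Over each vertex $v=gP_i$ put the base $L_v=gP_i$, viewed as a set of group elements. The blowup $X$ is then a disjoint union of cones $\{v\}\star L_v$. The maximal simplices are exactly the edges $\{gP_i, h\}$ with $h\in gP_i$, i.e. pairs (coset, element of that coset).

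For $\W$, join two maximal simplices $\{gP_i,h\}$ and $\{g'P_j,h'\}$ when $h,h'$ are at distance at most some constant $R$ in $\Cay{G}{S}$. Thus $\W$ is quasi-isometric to $\Cay{G}{S}$, since each element lies in exactly $k$ cosets. $G$ acts by left multiplication, preserves $\ov X$, has finitely many orbits of vertices (there are no higher simplices), and acts geometrically on $\W$.

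Next we identify the domains and check the axioms of \Cref{def:combHHS}. The non-maximal simplices, up to $\sim$, are:
\begin{itemize}
\item the empty simplex, whose link is all of $X$;
\item cone vertices $\Delta_v=\{v\}$, with link $L_v$;
\item single points $h\in L_v$, whose link is $\{v\}$, a single vertex and hence bounded.
\end{itemize}
Complexity is therefore at most $2$. Condition~\ref{CHHS_def_rel_hyperbolicity} only needs hyperbolicity of $\C{\emptyset}$. This space is $\duaug{X}{\W}$: a Cayley graph with each coset coned off (cone points adjacent to their coset elements, and $\W$-edges joining elements of bounded distance together with their apices). It is quasi-isometric to the coned-off Cayley graph, which is hyperbolic by relative hyperbolicity; this is Farb's characterisation, with bounded coset penetration not even needed for this step. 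For \ref{CHHS_def_qi_emb}: $Y_{\Delta_v}$ removes the saturation of $\{v\}$, which is just $\{v\}$. So $Y_{\Delta_v}$ is the coned-off graph with the single cone point $v$ deleted, and we must show $\C{\Delta_v}$ (the coset $gP_i$ with metric induced by $\W$-edges, i.e. roughly the word metric restricted to the coset) is quasi-isometrically embedded in $Y_{\Delta_v}$. This is the main obstacle. It amounts to the statement that a path in the Cayley graph coned off along all peripherals except $gP_i$ cannot shortcut $gP_i$ much. We derive this from bounded coset penetration, or equivalently from the fact that peripheral cosets are uniformly quasiconvex and have bounded projections onto one another. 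The first thing to try is: given a geodesic in $Y_{\Delta_v}$ between two points of $gP_i$, lift each cone-point detour through another coset $g'P_j$ to a path in that coset. Projecting to $gP_i$, each such detour has bounded image, since distinct peripheral cosets have uniformly bounded coarse intersection. Summing then gives a linear bound. The constant $R$ should be chosen large enough that $P_i$ is connected in the induced metric.

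The remaining axioms are routine. For \ref{CHHS_def_lk_cap_lk}: the only domains with large augmented link are $\emptyset$ and the $\Delta_v$, and the links of distinct $\Delta_v$ are disjoint. So when a large $\C\Gamma$ has link in $\link{X}{\Delta}\cap\link{X}{\Sigma}$, we take $\Pi=\Sigma$ if $\Delta=\emptyset$, and otherwise argue that $\Sigma$ extends to a simplex with link $L_v$ or contained in it. For \ref{CHHS_def_edge_in_link}: augmented edges inside $L_v$ come by definition from $\W$-adjacent maximal simplices $\{v,h\}$ and $\{v,h'\}$, which contain $\Delta_v\star\{h\}$ and $\Delta_v\star\{h'\}$. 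Edges in $\C{\emptyset}$ need nothing, since any two vertices joined by an augmented edge lie in $\W$-adjacent maximal simplices.
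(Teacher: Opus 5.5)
Your proposal is correct and follows essentially the same route as the paper. Both use a discrete support graph on peripheral cosets with bases $L_{gP}=gP$ and $\W$-edges given by bounded word distance. Both get hyperbolicity of $\C{\emptyset}$ from the coned-off Cayley graph. Both prove the quasi-isometric embedding of $\C{\Delta_{gP}}$ in $Y_{\Delta_{gP}}$ via the coarse closest-point projection $G\to gP$, which sends the other peripheral cosets to uniformly bounded sets; the only difference is that the paper picks $S$ with $S\cap P$ generating $P$ and uses $R=1$.

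In that key step, the input you actually need is the bounded-projection property for peripheral cosets (as in Sisto's work), together with coarse Lipschitzness of this projection and undistortedness of the peripherals. Bounded coarse intersection of distinct cosets is not enough on its own.
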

    \begin{proof} Let $S$ be a finite generating set for $G$, such that $S\cap P$ generates $P$ for all $P\in \mathcal P$, and let $\dist_S$ be the associated word metric on $G$. We shall prove that $G$ has a relative \squid{} HHG structure $(X,\W)$, where:
        \begin{itemize}
            \item $\ov X$ is the discrete graph whose vertices are cosets of the subgroups in $\mathcal P$;
            \item for every $g\in G$ and $P\in \mathcal P$, $L_{gP}=gP^{(0)}$;
            \item For every $g,g'\in G$, $P,P'\in\mathcal P$, $x\in gP$ and $x'\in g'P'$, the maximal simplices $(gP,x)$ and $(g'P',x')$ span a $\W$-edge if and only if $\dist_S(x,x')\le 1$.
        \end{itemize}
        It is clear that $\ov X$ has \cleanish{}. We now check the axioms from \Cref{def:combHHS}. \Cref{CHHS_def_complexity}  follows from \Cref{lem:decomposition_of_links} and the fact that $\ov X$ is discrete. For \Cref{CHHS_def_hyperbolicity}, $\C{\emptyset}=\duaug{X}{W}$ is isometric to the coned-off graph, which is obtained from the Cayley graph $\Cay{G}{S}$ by adding a cone over each coset of the peripherals, and is hyperbolic by e.g. \cite[Theorem 5.1]{Hruska}. We also notice that, for every $gP\in G/P$, $\C{\Delta_{gP}}\cong g\Cay{P}{S\cap P}$. 
        
        Moving to \Cref{CHHS_def_qi_emb}, $Y_{\Delta_{gP}}=X-\{gP\}$ retracts onto $\C{\Delta_{gP}}$ since there is a coarse closest point projection $G\to gP$, which map other cosets of the peripherals to uniformly bounded sets (see e.g. \cite[Theorem 2.14]{Sisto_proj_relhyp} and the arguments therein). \Cref{CHHS_def_lk_cap_lk} again follows from the fact that $\ov X$ has \cleanish{}. Finally, if $x,y\in X$ belong to the link of a simplex $\Sigma$, then $\Sigma=\{gP\}$ for some $P\in \mathcal P$ and $g\in G$, and \Cref{CHHS_def_edge_in_link} follows.

        We are left to check that the above defines a relative \squid{} \emph{HHG} structure for $G$. This is because $G$ acts cofinitely on the collection of cosets of peripherals; moreover, the action on $X$ naturally extends to an action on $\W$, and the latter is isometric to $\Cay{G}{S}$ by construction.
    \end{proof}

    \noindent Recall from e.g. \cite[Definition 4.3]{Hruska} that the \emph{augmented space} for $(G,\mathcal P)$ is the graph obtained from $\Cay{G}{S}$ by gluing a combinatorial horoball on $gP$ whenever $g\in G$ and $p\in \mathcal P$. 
    \begin{lem}
       In the setting of \Cref{lem:relhyp_is_squid}, the cusped space $\hat \W$ is $G$-equivariantly isometric to the augmented space.
    \end{lem}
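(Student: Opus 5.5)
We will write down an explicit map from the vertices of $\hat\W$ to the vertices of the augmented space, and check that it is a $G$-equivariant bijection that preserves adjacency in both directions. Since $\ov X$ is discrete, a maximal simplex of $\hat X$ has the form $(gP,x^n)$ with $gP$ a peripheral coset, $x\in gP$ and $n\in\N$. Send it to the vertex $(x,n)$ of the horoball glued on $gP$; when $n=0$ this vertex is identified with $x\in\Cay{G}{S}$. The map is surjective. It is not injective if some $x$ lies in more than one coset, i.e.\ in $gP\cap g'P'$; in particular every $g\in G$ lies in a coset of each $P\in\mathcal P$, so the depth-$0$ level is several copies of $G$, indexed by $\mathcal P$, with the copies $\W$-adjacent.

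I see two ways to handle this. The cleanest is to identify the augmented space as built in \cite{Hruska}: there the cosets carrying horoballs are treated as distinct, and the depth-$0$ layer of each horoball is a copy of $gP$ inside $\Cay{G}{S}$. Then the vertex set is $G\sqcup\bigsqcup_{gP}(gP\times\N_{\ge1})$. For $\hat\W$, the vertices of depth $0$ are pairs $(gP,x)$. I would first check whether the paper's $\W$ is really isometric to $\Cay{G}{S}$ or only quasi-isometric to it; the last line of the proof of \Cref{lem:relhyp_is_squid} claims an isometry, so I will follow that convention and read vertices $(gP,x)$ with equal $x$ as coinciding, that is, work with the quotient identifying them. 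If this causes trouble, I will state the result as an equivariant isometry after collapsing these finite-diameter identifications, or as a quasi-isometry.

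Adjacency is checked by cases from \Cref{defn:cusped_space}. First, an edge of cusp-type between $(gP,p^n)$ and $(gP,q^m)$ requires $\dist_{\Hor{\C{\Delta_{gP}}}}(p^n,q^m)=1$. Recall $\C{\Delta_{gP}}\cong g\Cay{P}{S\cap P}$, as shown in the proof of \Cref{lem:relhyp_is_squid}. Hence these edges are exactly the edges of the horoball over $gP$ built with respect to the metric $\dist_{S\cap P}$, which is the metric \cite[Definition 4.3]{Hruska} uses on $gP$. Second, an edge of $\W$-type between $\Theta,\Xi$ requires $\Theta^+=\Xi^+$. Since each maximal simplex has a single vertex of $\ov X$, this forces either $\Theta^+=\Xi^+=\emptyset$, or $\Theta=\Xi$. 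The first case means both simplices have depth $0$ and $\down\Theta,\down\Xi$ are $\W$-adjacent, i.e.\ $\dist_S(x,x')\le1$; these are exactly the Cayley graph edges. So the edges of $\hat\W$ are precisely the Cayley edges together with the horoball edges, which is the augmented space.

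Equivariance is immediate, because $G$ acts by $g'\cdot(gP,x^n)=(g'gP,(g'x)^n)$ and acts on the augmented space by left multiplication on the horoball coordinates. The main obstacle is the bookkeeping of the depth-$0$ identifications. I will settle it by fixing the convention that makes $\W\cong\Cay{G}{S}$ in \Cref{lem:relhyp_is_squid}, and then checking that the identification is compatible with the cusp-type edges at depth $0$. These edges join points of $gP$ at $\dist_{S\cap P}$ distance $1$, hence at $S$-distance $1$, so they are Cayley edges anyway and cause no extra edges.
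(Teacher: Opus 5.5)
Your case analysis of the two edge types in Definition~\ref{defn:cusped_space} is the check the paper compresses into one line (``this readily follows by how we constructed $\hat{\W}$-edges''), and it is correct. Edges of $\W$-type only join depth-$0$ simplices and give the Cayley edges. Edges of cusp-type give the horoball edges over $g\Cay{P}{S\cap P}$, and at depth $0$ these are already Cayley edges.

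The depth-$0$ multiplicity you flag is real. It is a defect of the statement, and of the paper's claim in Lemma~\ref{lem:relhyp_is_squid} that $\W$ is isometric to $\Cay{G}{S}$, rather than of your argument. However, you cannot dispose of it by a ``convention''. The vertices of $\W$ are the pairs $(xP,x)$ with $x\in G$ and $P\in\mathcal P$, and $\W$ is the strong product of $\Cay{G}{S}$ with the complete graph on $\mathcal P$. So for $|\mathcal P|\ge 2$ the vertices $(xP,x^0)$ are distinct and pairwise adjacent in $\hat{\W}$, and the literal isometry fails.

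For instance, take $F_2=\langle a,b\rangle$ relative to $\{\langle a\rangle,\langle b\rangle\}$ with $S=\{a,b\}$. Every depth-$0$ vertex of the augmented space has degree $6$, and all its neighbours have degree $6$. In $\hat{\W}$, the degree-$6$ vertices are exactly those of depth $1$, and each has a depth-$0$ neighbour of degree $10$. So the two graphs are not isomorphic.

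So commit to your fallback:
\begin{itemize}
\item For $|\mathcal P|=1$, your map is a $G$-equivariant isometry.
\item In general, the augmented space is exactly the quotient of $\hat{\W}$ obtained by collapsing each clique $\{(xP,x^0)\}_{P\in\mathcal P}$. This quotient map is $G$-equivariant, $1$-Lipschitz and surjective, and each edge of the augmented space lifts to a path of length at most $2$. Hence it is an equivariant quasi-isometry.
\end{itemize}
This is all that Theorem~\ref{thmintro:first}(3) claims, so nothing downstream is affected.
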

    \begin{proof}
        This readily follows by how we constructed $\hat \W$-edges in \Cref{defn:cusped_space}.
    \end{proof}

\subsection{Most HHSs are \squid{}}\label{sec:converse_are_squid}
In \cite{converse}, together with Mark Hagen, we roughly proved that combinatorial HHSs are abundant among all HHSs: given a hierarchically hyperbolic space $\mathcal Z$, whose hierarchical structure $\frakS$ is subject to certain natural requirements, there is a combinatorial HHS $(X,\W)$ such that $\mathcal Z$ and $\W$ are quasi-isometric (see \cite[Theorem 3.15]{converse} for the statement and \cite[Sections 3.1 to 3.4]{converse} for the requirements). Notably, from the description of the combinatorial structure in \cite[Section 4]{converse}, one can extract that $(X,\W)$ is a \squid{} HHS, where:
\begin{itemize}
    \item the support graph $\ov X$ is the so-called \emph{minimal orthogonality graph}, whose vertices correspond to minimal domains in the structure and edges correspond to orthogonality;
    \item for a minimal domain $V\in \frakS$, seen as a vertex of $\ov X$, the augmented link of $\Delta_V$ is quasi-isometric to the coordinate space $\mathcal C V$. 
\end{itemize} In particular, if $G$ acts on $\mathcal Z$ geometrically, and there is a ``compatible" action on $\frakS$ with finitely many orbits of collections of pairwise orthogonal domains, then $G$ is a \squid{} HHG.

\begin{rem}
    As pointed out in \cite[Remark 3.17]{converse}, in the proof of \cite[Theorem 3.15]{converse}, hyperbolicity of coordinate spaces for minimal domains in $\frakS$ is only ever used to show that certain minimal augmented links are hyperbolic. In particular, the arguments there run verbatim to show that a \emph{relative} hierarchically hyperbolic space (resp. group), subject to the same assumptions on $\frakS$ (and on the group action on it), is a relative \squid{} HHS (resp. HHG).
\end{rem}

\subsubsection{Mapping class group and Teichm\"{u}ller space} Here we specialise the above construction to the mapping class group $\MCG(S)$ of a finite-type surface $S$, and recognise the associated cusped space as the Teichm\"{u}ller space of $S$. To do so, we must first delve into the various HHG structures for $\MCG(S)$, including the combinatorial one obtained from applying \cite[Theorem 3.15]{converse}.

By a \emph{surface} we mean a connected, compact, oriented 2-manifold $S$ with finitely many points removed. Given a subsurface $U$ of $S$ which is not a pair of pants, the associated \emph{curve graph} $\C U$ is the simplicial graph whose vertices are isotopy classes of simple closed curves on $U$, and adjacency corresponds to having disjoint representatives. There are two exceptions to this definition: if $U$ is a sphere with four punctures or a torus with a puncture, we define two curves to be adjacent if they have the minimal intersection number, which is two on the sphere and one on the torus; if instead $U=A_v$ is an annulus with core curve $v\in \C{S}^{(0)}$, we define its \emph{annular curve graph} as the quasiline from \cite[Section 2.4]{Masur_Minsky_2}.

As explained in \cite[Theorem G]{HHS_I}, the mapping class group $\MCG(S)$ admits a HHG structure $\frakS$ where domains correspond to subsurfaces (up to isotopy), and where the coordinate space for a subsurface is its curve graph (resp. annular curve graph if the subsurface is an annulus). Furthermore, \cite[Theorem 9.8]{converse} equips $\MCG(S)$ with a \squid{} HHG structure $(X,\W)$ with blowup data $(\ov X=\C S, \{\C{A_v}\}_{v\in \C S^{(0)}})$. By how $\W$-edges are defined in \cite[Definition 4.10]{converse}, unbounded coordinate spaces correspond to (annular) curve graphs: 
\begin{itemize}
    \item Let $v\in \C S^{(0)}$ be a curve. Then $\C{\Delta_v}$ and the annular curve graph $\C{A_v}$ have the same vertices, and the identity map is a quasi-isometry $\phi_{\Delta_v}\colon \C{\Delta_v}\to \C{A_v}$ with uniform constants. 
    \item Let $\ov \Sigma$ be a non-maximal simplex of $\C S$, and let $\Sigma\subseteq X$ be a simplex of blowup type whose support is $\ov \Sigma$. If we see $\ov\Sigma$ as a collection of disjoint curves on $S$, its complement minus the pants components is an open subsurface $U_\Sigma$. Then there is a $\Stab{\MCG(S)}{[\Sigma]}$-equivariant quasi-isometry $\phi_{\Sigma}\colon \C{\Sigma}\to \C{U_\Sigma}$ with uniform constants, mapping each curve $w\in \link{\C S}{\ov \Sigma}$ to its subsurface projection $\rho^{A_w}_{U_\Sigma}$. 
\end{itemize}
To make notation compatible between the two bullets, for $v\in \C S^{(0)}$ we also denote $A_v$ by $U_{\Delta_v}$.

Though we state them for mapping class groups for clarity of exposition, both \Cref{prop:different_proj_in_converse_are_equal} and \Cref{cor:consistency_is_the_same} below hold in general for any combinatorial HHS structure obtained from \cite[Theorem 3.15]{converse}, as their proofs shall use no specific properties of mapping class groups. For the ease of notation, given a simplex $\Sigma$ of $X$ and two subsets $A,B$ of either $ \C{\Sigma}$ or $\C{U_\Sigma}$, we shall write $A\sim B$ to denote that $\diam(A\cup B)$ is bounded independently on $\Sigma$.
    \begin{prop}\label{prop:different_proj_in_converse_are_equal} Let $\Sigma$ be a simplex not of bounded type, and let $v\in (\C S-\Sat(\Sigma))^{(0)}$. Then
    $$p(v)\sim  \phi_\Sigma^{-1}\left(\rho^{A_v}_{U_\Sigma}\right),$$
    where $p\colon Y_\Sigma\to \C{\Sigma}$ is the coarse closest point projection in $(X,\W)$, while $\rho^{A_v}_{U_\Sigma}$ is the subsurface projection in $\frakS$.
    \end{prop}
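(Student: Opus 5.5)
We argue by splitting according to \Cref{cor:bounded_links}: since $\Sigma$ is not of bounded type, it is either of cone type, $\Sigma=\Delta_w$ with $U_\Sigma=A_w$, or of blowup type with $U_\Sigma$ the complement of the curves $\ov\Sigma$. The strategy in both cases is to exhibit a \emph{short path} in $Y_\Sigma$ from $v$ to a point $x\in\C{\Sigma}$ with $\phi_\Sigma(x)\sim\rho^{A_v}_{U_\Sigma}$, and then to show that the closest point projection $p$ cannot be far from $x$. The second part is where the HHS structure of $\mathcal Z$ (here $\MCG(S)$) enters. We will compare $p$ with the map $\psi\colon Y_\Sigma\to \C{U_\Sigma}$ that sends each vertex $u$ of $Y_\Sigma$ to $\rho^{A_{\p(u)}}_{U_\Sigma}$ if $\p(u)\notin\CStar{\ov\Sigma}$ or $u\in\link{X}{\Sigma}$, and otherwise to $\phi_\Sigma(u)$.

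\textbf{Step 1: $\psi$ is coarsely Lipschitz.} We check edges of $\duaug{X}{\W}$ restricted to $Y_\Sigma$. For $X$-edges, adjacent vertices have disjoint or equal supports, so their subsurface projections to $U_\Sigma$ are uniformly close, using the standard fact that disjoint curves both cutting $U_\Sigma$ have close projections. For $\W$-edges, \cite[Definition 4.10]{converse} shows that $\W$-adjacent maximal simplices correspond to markings differing by a bounded move. Their projections to every domain, in particular $U_\Sigma$, therefore differ boundedly, and this is exactly the consistency built into the construction of \cite{converse}.

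\textbf{Step 2: $\psi$ is a coarse retraction onto $\C{\Sigma}$.} On $\C{\Sigma}$ the map $\psi$ agrees with $\phi_\Sigma$ up to bounded error, and $\phi_\Sigma$ is a uniform quasi-isometry. So $\phi_\Sigma^{-1}\circ\psi$ is a coarsely Lipschitz retraction $Y_\Sigma\to\C{\Sigma}$. By \Cref{def:combHHS}.\ref{CHHS_def_qi_emb}, $\C{\Sigma}$ is quasi-isometrically embedded in $Y_\Sigma$. Hence, if $Y_\Sigma$ is hyperbolic (by \cite[Proposition 3.3]{BHMS}) and $\C{\Sigma}$ is quasiconvex, a coarsely Lipschitz retraction onto a quasiconvex subset is coarsely equal to closest point projection.

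This last assertion is \emph{not} automatic; it is the main obstacle. A Lipschitz retraction need not coincide with nearest point projection in general. I would instead argue directly. Let $y\in p(v)$, and let $\gamma$ be a $Y_\Sigma$-geodesic from $v$ to $y$. Then $\gamma$ avoids a neighbourhood of $\C{\Sigma}$ except near its endpoint, and we want $\psi(\gamma)$ to have bounded diameter. The idea is that vertices of $Y_\Sigma$ far from $\C{\Sigma}$ are non-adjacent to $\link{X}{\Sigma}$, which constrains them. For blowup type, such vertices are curves overlapping $\partial U_\Sigma$ in a controlled way. Along a path that stays away from the curves in $U_\Sigma$, the bounded geodesic image theorem in $\C S$, or in the relevant $\C{U}$ containing $U_\Sigma$, forces the projection to $U_\Sigma$ to move boundedly. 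I would make this precise through the HHS distance formula: a long $\psi$-progress along $\gamma$ would let us shortcut $\gamma$ through $\C{\Sigma}$.

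\textbf{Step 3: conclusion.} With $\psi(\gamma)$ shown to be bounded, we get $\phi_\Sigma(y)\sim\psi(y)\sim\psi(v)=\rho^{A_v}_{U_\Sigma}$. Applying $\phi_\Sigma^{-1}$ then gives the claim.
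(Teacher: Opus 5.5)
\textbf{There is a genuine gap.} It sits exactly at the step you flag as the main obstacle, and that step is in fact the whole content of the proposition.

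Steps 1 and 2 only give a coarsely Lipschitz retraction $\phi_\Sigma^{-1}\circ\psi\colon Y_\Sigma\to\C{\Sigma}$. As you note yourself, nothing forces such a retraction to agree with $p$. Your proposed repair does not go through. You want to show that $\psi$ is coarsely constant along a $Y_\Sigma$-geodesic $\gamma$ from $v$ to $p(v)$, using bounded geodesic image in $\C S$ or in some $\C{U}$ with $U\supseteq U_\Sigma$, or using the distance formula.

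\begin{itemize}
\item Masur--Minsky bounded geodesic image is a statement about geodesics of curve graphs, and $Y_\Sigma$-geodesics are not quasi-geodesics there. Removing $\Sat(\Sigma)$ distorts the metric enormously. Every vertex of $\link{X}{\Sigma}$ is $X$-adjacent to the apices in $\ov\Sigma$. Hence $\C{\Sigma}$ has diameter at most $2$ in $X$ and bounded image in $\C S$ (and in any $\C{U}$ with $U\supsetneq U_\Sigma$). Yet $\C{\Sigma}$ is unbounded and quasi-isometrically embedded in $Y_\Sigma$.
\item The distance formula measures distances in $\W$, not in the hyperbolic space $Y_\Sigma$. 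So it produces no shortcut of $\gamma$ inside $Y_\Sigma$.
\item The ``short path'' announced at the start would need length at most $\dist_{Y_\Sigma}(v,\C{\Sigma})$ plus a constant, and no such path is constructed.
\end{itemize}

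\textbf{What is missing} is a bounded geodesic image statement for the combinatorial structure itself, which controls $p$ directly. It says that a geodesic in an augmented link $\C{\Pi}$, with $[\Sigma]\nest[\Pi]$, that avoids $\Sat(\Sigma)$ has $p$-image of uniformly bounded diameter (\cite[Lemma 1.26]{short_HHG:I}).

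The paper uses it as follows. It picks $\ov\Pi$ whose link contains $v$ and $\link{\ov X}{\ov\Sigma}$. It joins $v$ by a $\C{\Pi}$-geodesic $\gamma$ to a point $w\in\phi_\Sigma^{-1}(\rho^{A_v}_{U_\Sigma})$. It then runs two bounded geodesic image arguments in parallel.
\begin{itemize}
\item The combinatorial one gives $p(v)\sim w$.
\item $\phi_\Pi(\gamma)$ is a uniform quasi-geodesic in $\C{U_\Pi}$. The Morse lemma, bounded geodesic image in $\frakS$, and the bounded change of subsurface projections across $\W$-edges (your Step 1) then give $\rho^{A_v}_{U_\Sigma}\sim\rho^{A_w}_{U_\Sigma}$.
\end{itemize}
If $\gamma$ meets $\Sat(\Sigma)$, it does so in a single curve $x$, and one inducts on $\ov\Pi$. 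In the almost-maximal case $\link{\C S}{\ov\Pi}$ is discrete, so $\gamma$ avoids $\Sat(\Sigma)$ automatically. Otherwise one moves at bounded cost to some $v''\in\link{\C S}{\ov\Pi\star\{x\}}$ and enlarges $\ov\Pi$. The cone-type case is analogous.

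\textbf{A smaller issue:} the clauses in your definition of $\psi$ are swapped. On $\link{X}{\Sigma}$ it should be $\phi_\Sigma$; in the cone case your formula gives $\rho^{A_w}_{A_w}$, which is meaningless. The ``otherwise'' branch is vacuous for blowup type, since $\p^{-1}(\ov\Sigma)\subseteq\Sat(\Sigma)$.
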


    \begin{proof} We split the proof in two cases, depending on whether $\Sigma$ is of blowup type or of cusp type.
    
\par\medskip
    \textbf{Blowup type.} Suppose first that $\Sigma$ is of blowup type. Let $\ov \Pi\subset \C S$ be a (possibly empty) simplex whose link contains both $v$ and $\link{\ov X}{\ov \Sigma}$, let $\Pi$ be a simplex of blowup type supported on $\ov \Pi$, and let $\gamma=\{v=w_0,w_1,\ldots,w_n=w\}$ be a $\C\Pi$-geodesic connecting $v$ to a closest point $w\in \phi_\Sigma^{-1}(\rho^{A_v}_{U_\Sigma})\subseteq \link{X}{\Sigma}$. Since $\Pi$ and $\Sigma$ are both of blowup type, we can assume that the geodesic lies entirely in $\C S$. We proceed by induction on $n=\text{dim}(\C S)-|\ov\Pi|$. 
    \par\medskip 
    \emph{Base case $n=1$.} We first suppose that $\ov\Pi$ is almost-maximal. Therefore $\gamma$ does not intersect the saturation of $\Sigma$, because $\link{\C S}{\ov \Pi}$ is discrete while every point in $\Sat(\Sigma)\cap \link{\C S}{\ov \Pi}$ would be $\C S$-adjacent to $w$. Then the bounded geodesic image for the combinatorial HHS structure (see e.g. \cite[Lemma 1.26]{short_HHG:I}) shows that $p(v)\sim w$. We now bound the projection of $v$ and $w$ to $\C{U_\Sigma}$.
    
    \begin{claim}\label{claim:projections_on_surface_are_close}
        $\rho^{A_v}_{U_\Sigma}\sim \rho^{A_w}_{U_\Sigma}$.
    \end{claim}

    \begin{claimproof}[Proof of \Cref{claim:projections_on_surface_are_close}] The path $\gamma'\coloneq \phi_{\Pi}(\gamma)= \{\rho^{A_{v}}_{U_\Pi},\rho^{A_{w_1}}_{U_\Pi}\ldots,\rho^{A_{w}}_{U_\Pi}\}$ is a quasi-geodesic in $\C{U_\Pi}$ with uniform constants. Let $\eta$ be a geodesic in $\C{U_\Pi}$ with the same endpoints, which by the Morse lemma (see, e.g., \cite[III.H.1.7]{BH}) must lie in a uniform neighbourhood of $\gamma'$. The bounded geodesic image axiom in $\frakS$ implies that one of the following is true:
    \begin{itemize}
        \item $\rho^{A_v}_{U_\Sigma}\sim \rho^{A_w}_{U_\Sigma}$, and we are done;
        \item or $\eta$ intersects a uniform neighbourhood of $\rho^{U_\Sigma}_{U_\Pi}$, and therefore so does $\gamma'$. Let $\{\rho^{A_{w_i}}_{U_\Pi},\ldots,\rho^{A_{w_j}}_{U_\Pi}\}$ be the subsegment of $ \gamma'$ contained in this uniform neighbourhood. Since $\gamma'$ is a quasigeodesic with uniform constants, the difference $j-i$ is uniformly bounded. Furthermore, for every $k=i,\ldots, j$, the projection $\rho^{A_{w_k}}_{U_\Sigma}$ is well-defined, since the curve $w_i$ does not belong to the saturation of $\Sigma$ and must therefore intersect $U_\Sigma$. By how $\W$-edges for the combinatorial structure are defined in \cite[Definition 4.10]{converse}, we have that $\rho^{A_{w_k}}_{U_\Sigma}\sim \rho^{A_{w_{k+1}}}_{U_\Sigma}$ for every $k=i,\ldots, j-1$. Finally, again the bounded geodesic image theorem yields that $\rho^{A_{v}}_{U_\Sigma}\sim \rho^{A_{w_{i}}}_{U_\Sigma}$, because the segment of $\gamma'$ between these two points does not intersect the uniform neighbourhood of $\rho^{U_\Sigma}_{U_\Pi}$; the same argument shows that $\rho^{A_{w_j}}_{U_\Sigma}\sim \rho^{A_{w}}_{U_\Sigma}$. Summing up, we again get that $\rho^{A_v}_{U_\Sigma}\sim \rho^{A_w}_{U_\Sigma}$, as required. See \Cref{fig:bgi_in_parallel} to understand the situation. \qedhere
    \end{itemize} 
    \end{claimproof}
    
    \begin{figure}[htp]
        \centering
        \includegraphics[width=\linewidth]{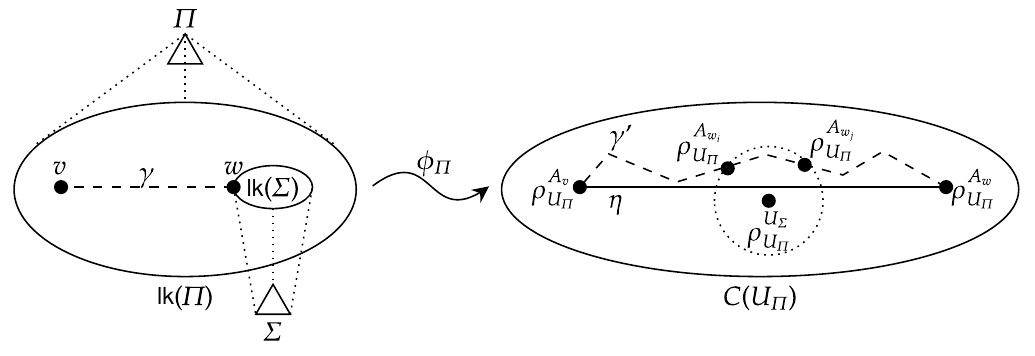}
        \caption{On the left, the situation in $\link{X}{\Pi}$. The geodesic $\gamma$ (the dashed line) only consists of $\W$-edges, because it lies in $\duaug{\link{\C S}{\ov \Pi}}{\W}$ and $\link{\C S}{\ov \Pi}$ is discrete. Furthermore, no point $w_i$ on $\gamma$ belongs to the saturation of $\Sigma$. 
        \\
        On the right, the image of $\gamma$ under the quasi-isometry $\phi_\Pi$ is a quasi-geodesic $\gamma'$, which fellow-travels a geodesic $\eta$ (here, the continuous line). The bounded geodesic image theorem implies that all segments of $\gamma'$ outside a uniform neighbourhood of $\rho^{U_\Sigma}_{U_\Pi}$ (here, the dotted circle) have uniformly bounded projections to $\C{U_\Sigma}$. Furthermore, a uniformly bounded segment of $\gamma'$ can intersect said neighbourhood, and any two points in this segment have close projections to $\C{U_\Sigma}$ by how $\W$-edges are defined in the combinatorial structure.}
        \label{fig:bgi_in_parallel}
    \end{figure}

    \noindent Finally, notice that $w\sim \phi_{\Sigma}^{-1}\left(\rho^{A_w}_{U_\Sigma}\right)$ since the quasi-isometry $\phi_{\Sigma}$ maps $w$ to $\rho^{A_w}_{U_\Sigma}$. Summing everything up, we proved that
    $$p(v)\sim w\sim \phi_{\Sigma}^{-1}\left(\rho^{A_w}_{U_\Sigma}\right)\sim \phi_{\Sigma}^{-1}\left(\rho^{A_v}_{U_\Sigma}\right).$$
    \emph{Inductive step.} Suppose now that $\ov\Pi$ is not almost maximal. If $\gamma$ does not intersect the saturation of $\Sigma$ we conclude exactly as above; otherwise let $\gamma\cap\Sat(\Sigma)$ contain some curve $x$, which must be unique because $\gamma$ is a geodesic and $w\in \link{X}{\Sigma}\subseteq \link{X}{x}$. Let $v'$ be the last vertex of $\gamma$ before $x$. Since the subsegment of $\gamma$ between $v$ and $v'$ does not intersect the saturation of $\Sigma$, the arguments in the base case imply that $p(v)\sim p(v')$ and $\rho^{A_v}_{U_\Sigma}\sim \rho^{A_{v'}}_{U_\Sigma}$. 

    We now produce a vertex $v''\in \link{\C S}{\ov \Pi\star \{x\}}$ such that $p(v')\sim p(v'')$ and $\rho^{A_{v'}}_{U_\Sigma}\sim \rho^{A_{v''}}_{U_\Sigma}$. To this extent, notice that the edge of $\gamma$ connecting $v'$ and $x$ must either comes from $\C S$ or is a $\W$-edge. In the first case $v'\in \link{\C S}{\ov \Pi\star \{x\}}$, so we set $v''=v'$ and we are done. In the second case, there are two $\W$-adjacent maximal simplices $\Psi, \Psi'$ of $X$ such that $x\in \Psi$ and $v'\in \Psi'$. By \Cref{CHHS_def_edge_in_link} we can assume that $\Psi$ and $\Psi'$ both extend $\Pi$. Notice that $\ov\Psi\cap \link{\C S}{\ov \Pi}$ is not entirely contained inside $\Sat(\Sigma)$, since otherwise we would have that $\emptyset \neq\link{\C S}{\ov \Sigma}\subseteq \link{\link{\C S}{\ov \Pi}}{\ov\Psi\cap \link{\C S}{\ov \Pi}}$, and the latter is empty because $\ov\Psi$ is maximal. Hence there exists $v''\in (\ov\Psi\cap \link{\C S}{\ov \Pi})-\Sat(\Sigma)$. Since $v'$ and $v''$ are adjacent in $\C{\Pi}$ and none of them belongs to $\Sat(\Sigma)$, the above arguments, applied to the geodesic $\{v',v''\}$, show that $p(v')\sim p(v'')$ and $\rho^{A_{v'}}_{U_\Sigma}\sim \rho^{A_{v''}}_{U_\Sigma}$.

    Finally, since by construction $v''$ and $\link{X}{\Sigma}$ are both contained in the link of any blowup-type simplex supported on $\ov\Pi\star\{x\}$, the inductive hypothesis gives that $p(v'')\sim \phi_\Sigma^{-1}\left(\rho^{A_{v''}}_{U_\Sigma}\right)$. Summing everything up, we proved that
    \begin{align*}
    p(v)\sim p(v')\sim p(v'')\sim \phi_\Sigma^{-1}\left(\rho^{A_{v''}}_{U_\Sigma}\right) \sim \phi_\Sigma^{-1}\left(\rho^{A_{v'}}_{U_\Sigma}\right)\sim \phi_{\Sigma}^{-1}\left(\rho^{A_v}_{U_\Sigma}\right).\end{align*}
    \textbf{Cusp type.} The case where $\Sigma=\Delta_u$ for some $u\in \C S^{(0)}$ is dealt with similarly, so we focus on highlighting the differences. As above, let $\Pi$ be a simplex of blowup type whose link contains both $v$ and $\link{X}{\Delta_u}$, and let $\gamma$ be a $\C\Pi$-geodesic connecting $v$ to a point $y\in \phi_{\Delta_u}^{-1}(\rho^{A_v}_{A_u})$. We can assume that every vertex of $\gamma$ belongs to $\C S$, excluding $y$ which lies in $\link{X}{\Delta_u}$. Suppose first that $\gamma\cap\Sat(\Delta_u)=\emptyset$, which means that $\gamma$ does not intersect $\link{\C S}{u}$. Let $v'$ be the last vertex of $\gamma$ before $y$. The bounded geodesic image theorem in $(X,\W)$ yields that $p(v)\sim p(v')$. Furthermore, since $v'$ and $y$ are joined by a $\W$-edge, we have that $p(v')\sim y$. Hence
    $$p(v)\sim p(v')\sim y\sim \phi_{\Delta_u}^{-1}(\rho^{A_v}_{A_u}).$$

    Thus suppose that  $\gamma\cap\link{\C S}{u}=\{x\}$. Let $v'$ be the last point on $\gamma$ before $x$, and let $v''\in \link{\C S}{x}$ which is $\W$-adjacent to $v'$ and does not belong to $\link{\C S}{u}$ (again, $v''$ can be found in any maximal simplex containing $x$ which is $\W$-adjacent to a simplex containing $v'$). Applying twice the bounded geodesic image theorem in $(X,\W)$ yields that $p(v)\sim p(v')\sim p(v'')$; on the other hand, the bounded geodesic image theorem in $\frakS$ implies that $\phi_{\Delta_u}^{-1}(\rho^{A_v}_{A_u})\sim \phi_{\Delta_u}^{-1}(\rho^{A_{v'}}_{A_u})\sim \phi_{\Delta_u}^{-1}(\rho^{A_{v''}}_{A_u})$. We then repeat the whole argument for $v''$ inside the link of any blowup type simplex supported on $\ov \Pi\star\{x\}$, and we conclude by induction.
    \end{proof}

    \noindent The following corollary of \Cref{prop:different_proj_in_converse_are_equal} roughly states that relative projections between domains in the combinatorial structure, as defined in \Cref{defn:projections}, coarsely agree with relative projections in the original structure $\frakS$, which are defined using subsurface projections. 
    \begin{cor}\label{cor:consistency_is_the_same} Let $\Sigma,\Theta\subseteq X$ be simplices not of bounded type.
    \begin{enumerate}
        \item\label{item:consistency1} If $[\Theta]\transverse[\Sigma]$ or $[\Theta]\propnest [\Sigma]$, then 
        $$\rho^{[\Theta]}_{[\Sigma]}\sim  \phi_\Sigma^{-1}\left(\rho^{U_\Theta}_{U_\Sigma}\right),$$
        where $\rho^{[\Theta]}_{[\Sigma]}$ is the projection in $(X,\W)$ while $\rho^{U_\Theta}_{U_\Sigma}$ is the subsurface projection in $\frakS$.
        \item\label{item:consistency2}  If $[\Theta]\sqsupsetneq[\Sigma]$ and $x\in \C{\Theta}$, then  
        $$\rho^{[\Theta]}_{[\Sigma]}(x)\sim  \phi_\Sigma^{-1}\left(\rho^{U_\Theta}_{U_\Sigma}(\phi_\Theta(x))\right),$$
        where $\rho^{[\Theta]}_{[\Sigma]}\colon \C{\Theta}\to \C{\Sigma}$ is the projection in $(X,\W)$ while $\rho^{U_\Theta}_{U_\Sigma}\colon \C{U_\Theta}\to \C{U_\Sigma}$ is the subsurface projection in $\frakS$.
    \end{enumerate}
    \end{cor}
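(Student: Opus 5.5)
The plan is to reduce both items to \Cref{prop:different_proj_in_converse_are_equal}, using the definition of the combinatorial relative projections in \Cref{defn:projections} together with the way $\W$-edges and saturations behave.

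\textbf{Item (\ref{item:consistency1}).} By \Cref{defn:projections}, $\rho^{[\Theta]}_{[\Sigma]}=p(\Sat(\Theta)\cap Y_\Sigma)$. First I would check that $\Sat(\Theta)\cap Y_\Sigma$ contains a curve $v\in\C S^{(0)}$, that is, an apex and not only points of cones. If $\Theta$ has blowup type, $\Sat(\Theta)$ contains the support $\ov\Theta$. Since $[\Theta]$ is transverse to or properly nested into $[\Sigma]$, some curve of $\ov\Theta$ lies outside $\Sat(\Sigma)$. If $\Theta=\Delta_u$, then $u$ itself works.

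For such $v$, \Cref{prop:different_proj_in_converse_are_equal} gives $p(v)\sim\phi_\Sigma^{-1}(\rho^{A_v}_{U_\Sigma})$. Since $v$ is a boundary curve of $U_\Theta$ (or its core, when $U_\Theta=A_v$) and $v$ cuts $U_\Sigma$, standard subsurface projection facts give $\rho^{A_v}_{U_\Sigma}\sim\rho^{U_\Theta}_{U_\Sigma}$. Both sets lie in the projection of $\partial U_\Theta$, which has bounded diameter.

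It remains to see that $p(\Sat(\Theta)\cap Y_\Sigma)$ has uniformly bounded diameter, so that the choice of $v$ does not matter. For that I would use that $\Sat(\Theta)\cap Y_\Sigma$ is covered by boundedly many cones over curves in a simplex of $\C S$, and that cone points over $v$ are $\W$-close to $v$ in $\duaug{X}{\W}$. Since $p$ is coarsely Lipschitz on $Y_\Sigma$ (because $\C\Sigma$ is quasiconvex there, by the hyperbolicity of $Y_\Sigma$ from \cite{BHMS}), bounded sets go to bounded sets.

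\textbf{Item (\ref{item:consistency2}).} Here $\Sigma$ is properly nested in $\Theta$, and $x\in\C\Theta\cap Y_\Sigma$. The main case is when $x$ is a curve in $\link{\C S}{\ov\Theta}$ not in $\Sat(\Sigma)$. Then $\rho^{[\Theta]}_{[\Sigma]}(x)=p(x)\sim\phi_\Sigma^{-1}(\rho^{A_x}_{U_\Sigma})$ by the Proposition. Also $\phi_\Theta(x)=\rho^{A_x}_{U_\Theta}$, and $\rho^{U_\Theta}_{U_\Sigma}(\rho^{A_x}_{U_\Theta})\sim\rho^{A_x}_{U_\Sigma}$, since projection to a subsurface factors coarsely through projection to a larger one.

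If $x$ is a cone point $p\in L_w$, I would replace it by its apex $w$ at bounded cost on both sides. On the combinatorial side this uses $\W$-adjacency in $\duaug{X}{\W}$. On the surface side it uses that $\phi_\Theta$ is coarsely Lipschitz.

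When $\rho^{[\Theta]}_{[\Sigma]}(x)$ is empty, or $\phi_\Theta(x)$ is near $\rho^{U_\Sigma}_{U_\Theta}$, I expect the statement to be read coarsely or vacuously.

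\textbf{Main obstacle.} I expect the hardest step to be the uniform boundedness of $p(\Sat(\Theta)\cap Y_\Sigma)$, in particular when $\Theta$ has cone type and $\Sigma$ is its ambient domain. There I would follow the bounded geodesic image arguments used in the proof of \Cref{prop:different_proj_in_converse_are_equal}.
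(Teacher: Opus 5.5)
Your reduction matches the paper's. In (1) you pick an apex $v\in\Sat(\Theta)\cap Y_\Sigma$, apply \Cref{prop:different_proj_in_converse_are_equal} to $p(v)$, and compare $\rho^{A_v}_{U_\Sigma}$ with $\rho^{U_\Theta}_{U_\Sigma}$. In (2) you pass from $x$ to the apex of its cone and apply the Proposition again.

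What fails is your plan for the step you call the main obstacle, and it fails exactly in the case you single out. The claim that $\Sat(\Theta)\cap Y_\Sigma$ is covered by boundedly many cones over the curves of a simplex of $\C S$ is false. By \Cref{lem:decomposition_of_links}, a simplex has link $L_u$ exactly when three things hold: its support is a pants decomposition containing $u$, it meets $\Cone{u}$ in $\{u\}$, and it meets every other cone of its support in an edge. Hence $\Sat(\Delta_u)=\{u\}\cup\bigcup_{w\in\link{\C S}{u}}\Cone{w}$. Take $S=S_5$ and $\Sigma=\emptyset$, so that $Y_\Sigma=\duaug{X}{\W}$. Then this set contains the cones over all curves of a four-holed sphere, infinitely many and pairwise intersecting.

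You do not need any such argument. Uniform boundedness of $\rho^{[\Theta]}_{[\Sigma]}=p(\Sat(\Theta)\cap Y_\Sigma)$ is part of the HHS structure that \cite[Theorem 1.18]{BHMS} gives $(X,\W)$. The paper combines it with $p(v)\subseteq\rho^{[\Theta]}_{[\Sigma]}$ to get $\rho^{[\Theta]}_{[\Sigma]}\sim p(v)$ for a single apex $v$, and nothing more is needed.

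There are also smaller inaccuracies.
\begin{itemize}
\item In (1), a curve $v\in\ov\Theta$ need not lie in $\partial U_\Theta$; it can sit between two pants components of $S-\ov\Theta$. So your sentence about both sets lying in the projection of $\partial U_\Theta$ is not literally right. The correct reason is that $v$ is disjoint from $\partial U_\Theta$, and both meet $U_\Sigma$, so their projections to $U_\Sigma$ are uniformly close. The paper cites \cite[Lemma 1.5]{DHS} for this.
\item In (2), the only degenerate case is when one of the two sides is empty. The claim is then trivial, because each side is uniformly bounded. Your hedge about $\phi_\Theta(x)$ lying near $\rho^{U_\Sigma}_{U_\Theta}$ is unnecessary, since the Proposition applies to every apex in $Y_\Sigma$.
\item Replacing $x$ by its apex does fail when $\Sigma=\Delta_u$ and $x\in L_u$, because $u\in\Sat(\Delta_u)$. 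However, in that case the surface side is empty, so it falls under the degenerate case above.
\item Coarse Lipschitzness of $\phi_\Theta$ does not let you move $x$ to its apex on the surface side, because the downward projection $\rho^{U_\Theta}_{U_\Sigma}$ is not coarsely Lipschitz. What is actually used is that $\phi_\Theta$ treats a cone point like its apex $w$. This gives $\rho^{U_\Theta}_{U_\Sigma}(\phi_\Theta(x))\sim\rho^{A_w}_{U_\Sigma}$ directly.
\end{itemize}
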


    \begin{proof}
    \eqref{item:consistency1} Recall that $\rho^{[\Theta]}_{[\Sigma]}=p(\Sat(\Theta)\cap Y_\Sigma)$. If $\Theta=\Delta_v$ for some $v\in \C S^{(0)}$, then $v\in \Sat(\Theta)\cap Y_\Sigma$, and since $\rho^{[\Theta]}_{[\Sigma]}$ is uniformly bounded it must uniformly coarsely coincide with $p(v)$; then $ p(v)\sim \phi_\Sigma^{-1}\left(\rho^{A_v}_{U_\Sigma}\right)$ by \Cref{prop:different_proj_in_converse_are_equal}, and we are done since $A_v=U_{\Delta_v}$. 
    
    If instead $\Theta$ is of blowup type, let $v\in \ov\Theta$ be such that the projection $\rho^{A_v}_{U_\Sigma}$ is well-defined: such a vertex exists since every subsurface in $\frakS$ which is disjoint from $A_w$ for all $w\in \ov\Theta$ must be nested in $U_\Theta$. Then, since $w\in \Sat(\Theta)\cap Y_\Sigma$, we again have that $\rho^{[\Theta]}_{[\Sigma]}\sim p(v)\sim \phi_\Sigma^{-1}\left(\rho^{A_v}_{U_\Sigma}\right)$. On the other hand, since $A_v$ and $U_\Theta$ are not orthogonal, $\rho^{A_v}_{U_\Sigma}\sim \rho^{U_\Theta}_{U_\Sigma}$ by e.g. \cite[Lemma 1.5]{DHS}. 
\par\medskip
    \eqref{item:consistency2} If one of $\rho^{U_\Theta}_{U_\Sigma}(\phi_\Theta(x))$ and $\rho^{[\Theta]}_{[\Sigma]}(x)$ is empty, then then the diameter of their union is the maximum of the diameters, which is uniformly bounded, and there is nothing to prove. We therefore assume that both subsets are non-empty. Recall that $\rho^{[\Theta]}_{[\Sigma]}(x)$ is non-empty precisely when $x\in \C{\Theta}\cap Y_\Sigma$, and in this case $\rho^{[\Theta]}_{[\Sigma]}(x)=p(x)$. Furthermore, let $v\in \C S^{(0)}$ be such that $x\in \Cone{v}$. Then $\rho^{U_\Theta}_{U_\Sigma}(\phi_\Theta(x))$ is non-empty precisely when $A_v$ intersects $U_\Sigma$, or equivalently if $v\in Y_\Sigma$, and in this case $\rho^{U_\Theta}_{U_\Sigma}(\phi_\Theta(x))\sim \rho^{A_v}_{U_\Sigma}$. Finally, $p(x)\sim p(v)$, as $v$ and $x$ are adjacent in $Y_\Sigma$, and in turn $p(v)\sim \phi_\Sigma^{-1}\left(\rho^{A_v}_{U_\Sigma}\right)$ by \Cref{prop:different_proj_in_converse_are_equal}. Combining the above yields that 
    \begin{align*}\rho^{[\Theta]}_{[\Sigma]}(x)=p(x)\sim p(v) \sim \phi_\Sigma^{-1}\left(\rho^{A_v}_{U_\Sigma}\right)\sim \phi_\Sigma^{-1}\left(\rho^{U_\Theta}_{U_\Sigma}(\phi_\Theta(x))\right). &\qedhere
    \end{align*}
    \end{proof}

\noindent We are now ready to prove that the cusped space for a mapping class group coincides with its Teichm\"{u}ller space. The core idea is that, since the two spaces admit HHS structures with the same domains and coordinate spaces, and since relative projections between domains in the two structure coarsely coincide by \Cref{cor:consistency_is_the_same}, the two spaces must essentially coincide, and indeed the Realization theorem \cite[Theorem 3.1]{HHS_II} produces an explicit quasi-isometry.
\begin{prop} Let $S$ be a finite-type surface, and let $\MCG(S)$ be its mapping class group.
        The cusped space for $\MCG(S)$ is $\MCG(S)$-equivariantly quasi-isometric to the corresponding Teichm\"{u}ller space, equipped with the Teichm\"{u}ller metric.
    \end{prop}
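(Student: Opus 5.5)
We compare two HHS structures with the same index set and then invoke the uniqueness/realisation machinery. On one side, $\Teich{S}$ with the Teichm\"uller metric is an HHS by Rafi's distance formula and Durham's augmented markings \cite{Durham_augmented_markings}. Its domains are the subsurfaces of $S$. The coordinate spaces are $\C{U}$ for non-annular $U$, and for an annulus $A_v$ the combinatorial horoball $\Hor{\C{A_v}}$; Durham's augmented markings realise exactly this as the hyperbolic plane $\mathbb H^2$ quasi-isometric to the horoball. Projections are subsurface projections, and for annuli the pair (twist, length) is recorded as a point of the horoball. On the other side, \Cref{thm:teich_is_hhs} shows that $(\hat X,\hat\W)$ is a squid HHS. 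Its domains are $\frakS$, which by \Cref{lem:links_in_cusped} coincide with those of $(X,\W)$, and hence correspond to subsurfaces via $[\Sigma]\mapsto U_\Sigma$ (bounded-type classes have bounded coordinate spaces and can be discarded). By \Cref{claim:hyp3}, its cone-type coordinate spaces are $\Hor{\C{\Delta_v}}$, which $\phi_{\Delta_v}$ identifies with $\Hor{\C{A_v}}$ (a uniform quasi-isometry of bases induces one of horoballs). By \Cref{claim:hyp2}, its blowup-type coordinate spaces are $2$-quasi-isometric to $\C{\down\Sigma}$, hence to $\C{U_\Sigma}$ via $\phi_\Sigma$. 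All these identifications are $\MCG(S)$-equivariant.

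Next I check that the relative projections match. For non-minimal targets, \Cref{cor:consistency_is_the_same}, transported through the quasi-isometries of \Cref{claim:hyp2}, shows that $\rho^{[\Theta]}_{[\Sigma]}$ in $(\hat X,\hat\W)$ uniformly coarsely agrees with $\rho^{U_\Theta}_{U_\Sigma}$; this uses that these relative projections are computed in $\hat X$ by the same formula and land at depth zero. For cone-type targets, \Cref{eq:dec_of_Ydelta} shows that $\rho^{[\Theta]}_{[\Delta_v]}$ lies in the base $\C{\Delta_v}$, and again matches $\rho^{U_\Theta}_{A_v}$ by \Cref{cor:consistency_is_the_same}. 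In Teichm\"uller space the relative projections to annuli also lie at the base of the horoball, since they are defined via the annular projection of the curve. Downward projections $\rho^{[\Theta]}_{[\Sigma]}$ for $[\Sigma]\propnest[\Theta]$ coincide by \Cref{cor:consistency_is_the_same}\eqref{item:consistency2}.

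Define $\Phi\colon\hat\W\to\Teich{S}$ as follows. For $\Theta\in\hat\W^{(0)}$, the tuple $(\phi(\pi_{[\Sigma]}(\Theta)))_{[\Sigma]}$ is consistent in the Teichm\"uller structure, since consistency is stated in terms of the relative projections just matched, up to uniform error. The Realisation theorem \cite[Theorem 3.1]{HHS_II} then provides a point $\Phi(\Theta)$ with those coordinates, up to uniform error. The reverse map is built symmetrically. Equivariance up to bounded error is immediate, since every ingredient is equivariant. By the distance formula (uniqueness) in both HHSs, with the same threshold, $\Phi$ is a quasi-isometry, and the reverse map is a coarse inverse.

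The main obstacle is the annular coordinate. One must verify that the Teichm\"uller projection to $\Hor{\C{A_v}}$ (twist together with $\log$ of inverse hyperbolic length of $v$) agrees coarsely with $\pi_{[\Delta_v]}$ on $\hat\W$. Equivalently, the depth of the $v$-coordinate of a maximal simplex $\Theta$ must correspond to the length of $v$. I would handle this by passing through Durham's augmented marking complex: a vertex $p^n$ in the cone over $v$ plays the role of a transversal to $v$ at length roughly $2^{-n}$. I would check that the cusp-type edges of \Cref{defn:cusped_space} correspond to Durham's twist and length moves, and that $\W$-type edges correspond to flip moves performed at depth zero. Granting this identification at the level of the annular coordinate, one could even bypass realisation by exhibiting $\hat\W$ as quasi-isometric to Durham's complex directly.
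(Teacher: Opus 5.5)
Your first three paragraphs are correct and follow the paper's proof. You identify domains and coordinate spaces via the $\phi_\Sigma$ and the induced horoball maps, and use \Cref{cor:consistency_is_the_same} together with the fact that relative projections in $(\hat X,\hat\W)$ coarsely coincide with those of $(X,\W)$ to see that transported tuples are consistent. You then build the map and its coarse inverse by realisation and conclude with the uniqueness axiom.

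The ``main obstacle'' in your last paragraph does not need to be addressed. Since $\Phi$ is defined by realisation, its annular coordinates coarsely agree with the transported $\pi_{[\Delta_v]}$ by construction. The length--depth correspondence is already built into the horoball coordinate spaces of the Teichm\"uller HHS structure from \cite{HHS_I}. So the comparison with Durham's complex is only an optional alternative route, as discussed in \Cref{rem:augmented_markings}.
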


\begin{proof}
    By \cite[Theorem G]{HHS_I}, the HHS structure for $\Teich{S}$ with the Teichm\"{u}ller metric has the same domains and coordinate spaces as that for $\MCG(S)$, except that each annular curve graph is replaced by the combinatorial horoball over it. Relative projections are again defined using subsurface projections, so they are the same as for $\MCG(S)$.
    
    Now let $(X,\W)$ be the combinatorial HHG structure for $\MCG(S)$, and let $(\hat X, \hat \W)$ be the associated cusped space. All projections between domains in $\hat \W$ are defined using the coarse closest point projections $Y_{\Sigma}\to\hatC{\Sigma}$, which away from $\hatC{\Sigma}$ restrict to the projections $Y_{\down{\Sigma}}\to\C{\down{\Sigma}}$, as explained in the proof of \Cref{prop:W_is_relHHS}. In particular, relative projections between domains in $(\hat X, \hat \W)$ coincide with the corresponding relative projections in $(X, \W)$ when restricted to $X$.

    We now define an equivariant quasi-isometry $\hat \W\to \Teich{S}$. For every $w\in \hat W$, let $b_w\coloneq (\pi_{[\Sigma]}(w))\in \prod \hatC{\Sigma}$ be the tuple of its coordinates, where $[\Sigma]$ varies among all simplices not of bounded type. The tuple $b_w$ is \emph{consistent}, which roughly means that it satisfies certain inequalities relating coordinates in $b_w$ and relative projections between domains (see \cite[Definition 1.17]{HHS_II}). Now let $\phi(b_w)\coloneq (\hat \phi_\Sigma(\pi_{[\Sigma]}(w)))\in \prod \hatC{U_\Sigma}$, where $\hatC{U_\Sigma}$ is the coordinate space for $U_\Sigma$ in the structure for $\Teich{S}$ and $\hat\phi_\Sigma\colon\hatC{\Sigma}\to \hatC{U_\Sigma}$ is the quasi-isometry induced by $\phi_\Sigma\colon \C{\Sigma}\to \C{U_\Sigma}$. Notice that $\hat\phi_\Sigma$ is $\Stab{\MCG(S)}{[\Sigma]}$-equivariant since $\phi_\Sigma$ is; hence the whole map $\phi$ is equivariant with respect to the action of $\MCG(S)$. Now \Cref{cor:consistency_is_the_same} implies that $\phi(b_w)$ is again consistent, because relative projections in the combinatorial structure and in the original structure coarsely coincide. In turn, the Realization Theorem for consistent tuples \cite[Theorem 3.1]{HHS_II} produces a point $f(w)\in \Teich{S}$ whose coordinates roughly coincide with $\phi(b_w)$; furthermore $f(w)$ is coarsely unique, meaning that every point with the same properties is within uniformly bounded Teichm\"{u}ller distance from $f(w)$. 
    
    We just defined a $\MCG(S)$-equivariant coarse map $f\colon \hat \W\to \Teich{S}$, and we claim that is coarsely Lipschitz. To see this, let $w,w'\in \hat W$ span an edge. Since coordinate projections are coarsely Lipschitz, and since each $\hat\phi_\Sigma$ is a quasi-isometry, we have that $\hat \phi_\Sigma(\pi_{[\Sigma]}(w))\sim \hat \phi_\Sigma(\pi_{[\Sigma]}(w'))$ for all $[\Sigma]$. Then the uniqueness axiom \cite[Definition 1.1.(9)]{HHS_II} implies that $f(w)\sim f(w')$, as their coordinates are uniformly close.

    The same arguments yield a coarsely Lipschitz, coarse map $g\colon \Teich{S}\to \hat W$, defined by mapping each point $x\in \Teich{S}$ to a realisation point for the tuple $\phi_\Sigma^{-1}(\pi_{U_\Sigma}(x))\in \prod \hatC{\Sigma}$. It is then clear that $f$ and $g$ are quasi-inverses, thus proving that $f$ is a quasi-isometry.
\end{proof}

\begin{rem}[Comparison to the augmented marking complex]\label{rem:augmented_markings}
    In the case of the mapping class group of a surface $S$, our cusped space is nearly identical to the augmented marking complex $AM(S)$, which is a simplicial complex quasi-isometric to the Teichm\"{u}ller space of $S$ constructed by Durham in \cite[Section 3.2]{Durham_augmented_markings}. A vertex $a$ of $AM(S)$ is a \emph{complete clean marking}, as in \cite{Masur_Minsky_2}, together with a choice of a non-negative integer for every curve of the base, which roughly records how short the curve is in the hyperbolic metric associated to $a$. This data roughly corresponds to a maximal simplex $\Sigma(a)\in \hat \W$, whose underlying simplex $\down{\Sigma(a)}$ is the marking and whose depths are given by the length data. Furthermore, edges of $AM(S)$ come in two types: those coming from flip and twist moves of the underlying markings, analogous to the elementary moves in the marking complex for the mapping class group; and those corresponding to increasing the length data of a curve by one, thus moving deeper in the associated horoball. This reflects the same dichotomy from \Cref{defn:cusped_space} between edges of $\W$-type and of cusp-type. 

    Therefore, to show that our cusped space is quasi-isometric to the Teichm\"{u}ller space, we could have made the above analogy precise by proving that the coarse map $\Sigma(\cdot)\colon AM(S)\to \hat \W$ is a quasi-isometry. However, we preferred to argue as above, both to avoid the technicalities arising for the fact that vertices of $\hat \W$ also include non-clean markings, and in order to develop the far more general tool of \Cref{prop:different_proj_in_converse_are_equal}.
\end{rem}

\section{Controlling torsion}\label{sec:control_torsion}
The main result of this Section is \Cref{thm:torsion-of_quot}, which roughly states that, if a group $G$ has a quotient $\widetilde G$ which is a relative \squid{} HHG, then every finite subgroup $\widetilde F\le \widetilde G$ either comes from a finite subgroup of $G$ or permutes a collection of orthogonal minimal domains. Crucially, this dichotomy will follow from the fact that the $\widetilde F$-action on the cusped space for $\widetilde G$ has a uniformly bounded orbit, which either lies in a coarsely embedded copy of $\widetilde G$ or inside a product of horoballs. We first recall a definition.
\begin{defn}
    Let $G$ be a group and $S$ a generating set. The \emph{injectivity radius} of a quotient $\pi\colon G\twoheadrightarrow \widetilde{G}$ with respect to $S$, denoted $\inj{\pi}{S}$, is the maximum radius of a ball in $\Cay{G}{S}$ around the identity which injects inside $\Cay{\widetilde{G}}{\pi(S)}$. 
\end{defn}

\begin{thm}\label{thm:torsion-of_quot} For every $n,\delta,K\ge 0$ there exists $\rho=\rho(n,\delta,K)\ge 0$ such that the following holds.
Let $G$ be a group, $S$ a finite generating set for $G$, $\pi\colon G\twoheadrightarrow \widetilde{G}$ a quotient, and $\widetilde{F}\le \widetilde{G}$ a finite subgroup. Suppose that:
\begin{enumerate}
    \item $\widetilde{G}$ has a relative \squid{} HHG structure $(X,\W,n,\delta)$;
    \item Some orbit map $\Cay{\widetilde{G}}{\pi(S)}\to \W$ is a $K$-quasi-isometry;
    \item $\inj{\pi}{S}\ge \rho$.
\end{enumerate}
Then either $\widetilde F$ fixes a non-empty simplex of $\ov X$ setwise, or is the isomorphic image under $\pi$ of a finite subgroup $F\le G$ of order at most $|S|^{\rho+1}$.
\end{thm}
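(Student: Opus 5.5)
The plan is to let $\widetilde F$ act on the cusped space $(\hat X,\hat\W)$ of $(X,\W)$. By \Cref{thm:teich_is_hhs}\ref{item_G-action_on_teich} the $\widetilde G$-action extends to it, and by \Cref{thm:teich_is_hhs}\ref{item_Teich_is_combhhs} it is a squid HHS with constants $(n,\delta')$, where $\delta'$ depends only on $\delta$. By \cite[Theorem 1.18]{BHMS} and \Cref{rem:uniform_CHHS}, $\hat\W$ is then an HHS whose constants and functions depend only on $n,\delta$. The results of \cite{HHP:coarse} show that such an HHS is coarsely injective, with constants depending only on the HHS data, and that a finite group acting on a coarsely injective space has an orbit of diameter bounded by a constant $R=R(n,\delta)$. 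So I would fix a maximal simplex $\Theta\in\hat\W^{(0)}$ whose $\widetilde F$-orbit has $\hat\W$-diameter at most $R$.

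Next I split according to $\depth{\Theta}$, using \Cref{rem:depth_diff}.

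\emph{Deep case}, $\depth{\Theta}>R$. Every $f\Theta$ lies within $R$ of $\Theta$, so none of them can be joined to $\W$ by a path of length $\le R$ unless its depth is positive. I would strengthen this to the claim that the set $\ov\Theta_{>0}$ of apices $v\in\ov\Theta$ with $m_v>0$ is nonempty and, after a suitable choice, $\widetilde F$-invariant. Along a path of length $\le R$ starting at $\Theta$, a vertex $v$ with $m_v>R$ can never leave the simplex. Indeed, edges of $\W$-type preserve $\Theta^+$, and an edge of cusp-type changes only one coordinate inside $L_v\times\N$, so it never changes the apex. Hence the set of apices $v$ with $m_v>R$ is a nonempty simplex of $\ov X$ contained in $\ov{f\Theta}$ for every $f$. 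Since the action preserves depths, $f$ maps this set for $\Theta$ onto the corresponding set for $f\Theta$. To make the sets for $\Theta$ and $f\Theta$ literally equal, I would use a threshold: take the apices with depth $>R$ in $\Theta$, and note that along each path such an apex keeps depth $>0$ and stays present. Thus the simplex $\sigma=\{v\in\ov\Theta: m_v>2R\}$ is contained in $\ov{f\Theta}$ for all $f$. Applying this with $f^{-1}$ and comparing depth thresholds shows that $f\sigma$ lies in the set of apices of $\Theta$ with depth $>R$. The cleanest fix is to choose the threshold $t\in\{R,2R,\dots,(n+2)R\}$ at which no apex of $\Theta$ has depth in $(t-R,t+R]$. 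Such a $t$ exists by pigeonhole, since $|\ov\Theta|\le n+1$ by \Cref{rem:dimension_of_ovX}. Then $\sigma_t=\{v:m_v>t\}$ is $\widetilde F$-invariant. So in this case $\widetilde F$ fixes a nonempty simplex of $\ov X$ setwise. I would take $R$ large and make the deep threshold $(n+3)R$.

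\emph{Shallow case}, $\depth{\Theta}\le (n+3)R$. Pushing all depths to $0$ costs at most $(n+1)(n+3)R$ cusp-type edges, so we get a point $\Theta_0\in\W$ with $\hat\W$-orbit diameter at most $R'=R'(n,\delta)$. By \Cref{prop_coarseemb} the $\W$-diameter of the orbit is at most $R'2^{R'}$. Via the $K$-quasi-isometry $\Cay{\widetilde G}{\pi(S)}\to\W$ we obtain $\widetilde x\in\widetilde G$ with $\dist(\widetilde x,\widetilde f\widetilde x)\le D(n,\delta,K)$ for all $\widetilde f\in\widetilde F$. Conjugating, which we may do since the conclusion is conjugation-invariant and fixed simplices are carried along, each element of $\widetilde F$ has word length at most $D$. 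Now set $\rho=2D+2$, say. Every $\widetilde f$ has a unique lift $f$ in the ball of radius $D$. Lifts of products agree with products of lifts, because the relation $\widetilde f\widetilde g=\widetilde h$ is a loop of length $\le 3D<\rho$ and hence lifts. So the lifts form a subgroup $F$ mapped isomorphically onto $\widetilde F$, with $|F|\le|B(D)|\le|S|^{\rho+1}$.

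I expect the main obstacle to be the deep case: making the invariant simplex genuinely $\widetilde F$-invariant, which is where the threshold pigeonhole is needed, and checking that $\W$-type edges cannot alter the deep apices. This relies on the requirement $\Theta^+=\Xi^+$ in \Cref{defn:cusped_space}. A secondary point is citing \cite{HHP:coarse} with the correct uniformity.
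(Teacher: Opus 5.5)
Your argument is correct, and its skeleton is the paper's. Coarse injectivity of $\hat\W$ gives a bounded $\widetilde F$-orbit, you split according to depth, and your shallow case is exactly the paper's chain: push depths to $0$, apply \Cref{prop_coarseemb}, use the $K$-quasi-isometry, conjugate into a ball, and lift via the injectivity radius.

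The genuine difference is the deep case. The paper works in coordinate spaces. Coarse Lipschitzness of projections bounds $\diam_{\hatC{\Delta_v}}(\widetilde F\cdot\Theta)$ by some $E_2$. If some $u\in\widetilde f\,\ov\Theta$ were not adjacent to an apex $v$ of depth $m_v>E_2$, then $\widetilde f\Theta$ would project into the base $\C{\Delta_v}$ while $\Theta$ projects to $p^{m_v}$. Hence the orbit $\widetilde F\cdot\{v: m_v>E_2\}$ is a clique. You instead track apices along a $\hat\W$-path directly from \Cref{defn:cusped_space}: $\W$-type edges preserve $\Theta^+$, and cusp-type edges change one depth by at most $1$. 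This is more elementary. It needs no projections and no separation of horoballs inside $Y_{\Delta_v}$, and the threshold is the orbit diameter itself. The paper's version is phrased in the HHS language it needs anyway, and it matches the coarse picture of well-separated horoballs.

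Two remarks.

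First, your pigeonhole step is unnecessary. You already showed that $\sigma=\{v\in\ov\Theta: m_v>R\}$ lies in $\ov{f^{-1}\Theta}=f^{-1}\ov\Theta$ for every $f\in\widetilde F$. So $\widetilde F\cdot\sigma\subseteq\ov\Theta$ is itself a nonempty $\widetilde F$-invariant simplex; this is the same orbit trick the paper uses. As written, the count is also off. The intervals $(t-R,t+R]$ for $t=R,\dots,(n+2)R$ overlap, and $n+1$ depths can meet all of them. Only $(t-R,t]$ actually needs to be avoided, and those intervals are disjoint.

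Second, $3D<2D+2$ fails once $D\ge 2$. Either take $\rho\ge 3D$, or argue as the paper does: $fg$ and the lift $h$ of $\widetilde f\widetilde g$ both lie in the ball of radius $2D$, so $\rho\ge 2D$ suffices.
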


\noindent Setting $G=\widetilde{G}$ in the above Theorem yields:
\begin{cor}\label{cor:Nielsen_realisation} For every $n,\delta,K\ge 0$ there exists $N\ge 0$ such that the following happens.
Let $G$ have a relative \squid{} HHG structure $(X,\W,n,\delta)$. Suppose that, for some finite generating set $S$ of $G$, some orbit map $\Cay{G}{S}\to\W$ is a $K$-quasi-isometry. Then every finite subgroup of $G$ of order greater than $|S|^N$ must fix a non-empty simplex of $\ov X$ setwise. 
\end{cor}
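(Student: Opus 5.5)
This is a direct specialisation of \Cref{thm:torsion-of_quot}. Take $\widetilde G=G$ and let $\pi=\mathrm{id}_G$. Then hypotheses (1) and (2) of \Cref{thm:torsion-of_quot} are exactly the hypotheses of the corollary, with the same constants $n,\delta,K$.

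Hypothesis (3) holds for free. The identity map is injective on all of $\Cay{G}{S}$, so every ball about the identity injects into $\Cay{G}{\pi(S)}=\Cay{G}{S}$. Hence $\inj{\pi}{S}=\infty\ge\rho$, whatever $\rho=\rho(n,\delta,K)$ is.

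We therefore set $N\coloneq\rho(n,\delta,K)+1$, which depends only on $n,\delta,K$. Let $F\le G$ be a finite subgroup. \Cref{thm:torsion-of_quot}, applied with $\widetilde F=F$, gives one of two outcomes.
\begin{itemize}
\item $F$ fixes a non-empty simplex of $\ov X$ setwise.
\item $F=\pi(F')=F'$ for some finite $F'\le G$ with $|F'|\le |S|^{\rho+1}=|S|^N$.
\end{itemize}
If $|F|>|S|^N$, the second outcome is impossible, so $F$ must fix a non-empty simplex of $\ov X$ setwise, as required.

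There is no real obstacle here. The only point to check is that the injectivity-radius hypothesis is read as a supremum that may equal $\infty$, which is vacuously fine for the identity quotient. If one insists that the injectivity radius be a finite number, one can note instead that balls of every finite radius inject, so the condition $\inj{\pi}{S}\ge\rho$ holds for every finite $\rho$, and this is all that \Cref{thm:torsion-of_quot} uses.
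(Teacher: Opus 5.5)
Your proposal is correct and follows the paper's route exactly: the paper also obtains this corollary by applying \Cref{thm:torsion-of_quot} with $\widetilde G=G$ and $\pi$ the identity. For the identity quotient the injectivity-radius hypothesis holds trivially, and your choice $N=\rho+1$ matches the bound $|S|^{\rho+1}$ in the theorem's second alternative.
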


\begin{proof}[Proof of \Cref{thm:torsion-of_quot}] 
We shall progressively define constants $E_0,\ldots, E_4$, all depending on $n$, $\delta$, and $K$ only, and eventually set $\rho=2E_4$.

Let $(\hat X,\hat \W)$ be the cusped space for $(X,\W)$, which is a \squid{} HHS with constants $n,\delta'=\delta'(\delta)$ by \Cref{prop:teich_is_comb}. By \cite[Theorem A]{HHP:coarse}, there exists $E_0\ge 0$ such that $\hat \W$ is $E_0$-\emph{coarsely injective}. We will not need the full definition of this property; it will suffice to know that, by combining \cite[Proposition 1.1]{HHP:coarse} and \cite[Theorem 1.2]{lang}, every finite group acting by isometries on an $E_0$-coarsely injective space has an orbit of diameter at most $E_1$, where $E_1$ only depends on $E_0$. We stress that the construction in \cite{HHP:coarse} ultimately depends only on the constants and functions appearing in the definition of a hierarchically hyperbolic space \cite[Definition 1.1]{HHS_II}; in turn, for the \squid{} HHS $(\hat X,\hat \W,n,\delta')$, the latter are defined in terms of $n$ and $\delta'$, as we pointed out in \Cref{rem:uniform_CHHS}. This proves that $E_0$, and in turn $E_1$, only depend on $n$ and $\delta$.

Now let $\widetilde{F}\le \widetilde{G}$ be a finite subgroup, which has an orbit of diameter at most $E_1$ in $\hat \W$ by coarse injectivity. The idea we shall expand upon below is that either this orbit is far from $\W$, and therefore has ``deep" projections to some cusps supported on a simplex of $\ov X$, or $\widetilde{F}$ has a uniformly bounded orbit in $\W$, hence in $G$, and therefore its cardinality is uniformly bounded. 

Let $\Theta=(v,p^{m_v})_{v\in \ov \Theta}\in \hat \W^{(0)}$ be a maximal simplex such that $\diam_{\hat \W}(\widetilde{F}\cdot \Theta)\le E_1$. Since projections to coordinate spaces in $\hat \W$ are uniformly coarsely Lipschitz, there exists $E_2$, depending on $n$, $\delta$, and $E_1$, such that for every simplex $\Sigma$ of $\hat X$, 
\begin{equation}\label{eq:def_of_e2}
    \diam_{\hatC{\Sigma}}(\widetilde{F}\cdot \Theta)\le E_2. 
\end{equation}
Now let 
$$\mathcal V=\widetilde{F}\cdot \{v\in \ov\Theta^{(0)}\mid m_v\ge E_2+1\},$$
that is, the orbit of the supports where the depth of $\Theta$ is greater than $E_2$. We claim that any two $u,v\in \mathcal V$ are adjacent in $\ov X$. Indeed, up to the action of $\widetilde{F}$, assume that $v\in \ov\Theta^{(0)}$ and $u\in \widetilde{f}\cdot \ov\Theta^{(0)}$ for some $\widetilde{f}\in \widetilde{F}$. If $u$ and $v$ were not adjacent, then the projection of $\widetilde{f}\cdot \Theta$ to $\hatC{\Delta_v}$ would be contained in $\C{\Delta_v}$, and therefore $$\dist_{\hatC{\Delta_v}}(\widetilde{f}\cdot \Theta,\Theta)= \dist_{\hatC{\Delta_v}}(\C{\Delta_v},p^{m_v})=m_v\ge E_2+1,$$
contradicting the definition of $E_2$ in \Cref{eq:def_of_e2}.

The above shows that $\mathcal V$ spans a simplex  of $\ov X$, which is preserved by $\widetilde{F}$. If $\mathcal V\neq \emptyset$ we are done, so suppose otherwise. We will prove that $\widetilde{F}$ has uniformly bounded cardinality, by showing that it has a uniformly bounded orbit in $\Cay{\widetilde{G}}{\pi(S)}$. To see this, first notice that the cardinality of $\ov\Theta$ is at most the dimension of $\ov X$, which is bounded by $n$ by \Cref{rem:dimension_of_ovX}. Hence, by how we defined $\hat \W$-edges in \Cref{defn:cusped_space}, we see that
$$\dist_{\hat \W}(\Theta,\down{\Theta})\le \sum_{v\in \ov\Theta^{(0)}} m_v\le n E_2,$$
where we used that each $m_v$ is at most $E_2$ since $\mathcal V$ is trivial. This proves that the $\widetilde{F}$-orbit of $\down{\Theta}$ has diameter at most $E_1+2nE_2$ in $\hat \W$; in turn, by \Cref{prop_coarseemb} the diameter of the same orbit in $\W$ is at most $E_3\coloneq (E_1+2nE_2)2^{E_1+2nE_2}$. Since some orbit map $\Cay{\widetilde{G}}{\pi(S)}\to \W$ is a $K$-quasi-isometry, there exists a constant $E_4$ depending on $K$ and $E_3$ (and so ultimately on $K$, $n$, and $\delta$) and some $\widetilde{h}\in \widetilde{G}$ such that $$\diam_{\Cay{\widetilde{G}}{\pi(S)}}(\widetilde{F}\cdot \widetilde{h})\le E_4.$$
Since $\widetilde{G}$ acts by isometries on its Cayley graph, this implies that the conjugate $\widetilde{F}'\coloneq \widetilde{h}^{-1}\widetilde{F}\widetilde{h}$ is contained in the ball of radius $E_4$ around the identity of $\widetilde{G}$. 

Now let $\rho=2E_4$, so that the ball $B$ of radius $2E_4$ around the identity of $G$ injects inside $\widetilde{G}$; in particular, $\pi(B)$ contains $\widetilde{F}'$. Let $F'\coloneq \pi^{-1}(\widetilde{F}')\cap B$, so that $\pi$ restricts to a bijection between $F'$ and $\widetilde{F}'$. Notice that $F'$ is contained in the ball $B'$ of radius $E_4$ around the identity, so $|F'|\le|B'|\le |B|\le |S|^{\rho+1}$. Moreover $F'$ is a subgroup: for every $f,g\in F'$, the product $fg$ still belongs to $B$, since
$$\dist_{\Cay{G}{S}}(fg,1)\le \dist_{\Cay{G}{S}}(f,1)+\dist_{\Cay{G}{S}}(g,1)\le 2E_4=\rho.$$
Hence $fg$ is the unique preimage of $\pi(f)\pi(g)\in \widetilde{F}'$ inside $B$, and must therefore belong to $F'$. Summing up, we proved that $\widetilde{F}'$ is the isomorphic image under $\pi$ of a subgroup of $G$ of order at most $|S|^{\rho+1}$; in turn, since $\widetilde{F}'= \widetilde{h}^{-1}\widetilde{F}\widetilde{h}$, for any $h\in G$ such that $\pi(h)=\widetilde{h}$ the subgroup $F\coloneq hF'h^{-1}$ projects bijectively to $\widetilde{F}$ and has order at most $|S|^{\rho+1}$, as required.
\end{proof}

\noindent We record an immediate application of \Cref{thm:torsion-of_quot} to relatively hyperbolic groups, which recovers \cite[Lemma 4.3]{dahmaniguirardel}.

\begin{cor}\label{cor:df_of_relhyp:torsion} Let $G$ be a finitely generated group which is hyperbolic relative to a finite collection $\mathcal P=\{P_1,\ldots,P_k\}$ of finitely generated peripheral subgroups. For every $i=1\ldots, k$ let $H_i\unlhd P_i$ be a normal subgroup, and let $\mathcal N=\langle \langle H_i\rangle\rangle_{i=1,\ldots, k}$. There exist a finite subset $\mathcal F\subset G$ and a constant $M\ge 0$, both depending only on $G$ and $\mathcal P$, such that, if each $H_i$ intersects $\mathcal F$ trivially, then any finite subgroup of $G/\mathcal N$ is either:
\begin{itemize}
    \item the isomorphic image of a subgroup of $G$ of order at most $M$;
    \item or is conjugated inside the image of a peripheral.
\end{itemize}
\end{cor}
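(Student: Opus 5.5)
The plan is to apply \Cref{thm:torsion-of_quot} with $\widetilde G=G/\mathcal N$, so the work is to give $\widetilde G$ a relative \squid{} HHG structure with constants $n,\delta,K$ independent of the choice of the $H_i$. The natural candidate comes from the Dehn filling theorem for relatively hyperbolic groups (Groves--Manning, Osin): there is a finite set $\mathcal F\subset G$ such that, whenever each $H_i$ avoids $\mathcal F$, the quotient $\widetilde G$ is hyperbolic relative to the images $\widetilde P_i=P_i/H_i$, and each $P_i/H_i$ injects into $\widetilde G$. Moreover, by enlarging $\mathcal F$ to contain the ball of radius $\rho$ in $\Cay{G}{S}$, the injectivity radius $\inj{\pi}{S}$ is at least $\rho$, once $\rho$ is fixed below.

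We then give $\widetilde G$ the structure from \Cref{lem:relhyp_is_squid}: the support graph $\ov X$ is discrete, with vertices the cosets of the $\widetilde P_i$; $L_{g\widetilde P}=g\widetilde P$; and $\W$ is $\Cay{\widetilde G}{\pi(S)}$. With this choice the orbit map to $\W$ is an isometry, so $K=1$, and the complexity $n$ is uniform because $\ov X$ is discrete. The remaining issue is a uniform $\delta$.

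The main obstacle is exactly this uniformity. Hyperbolicity of the coned-off Cayley graph of $\widetilde G$ and the quasi-isometric embedding of each $\C{\Delta_{g\widetilde P}}$ into $Y_{\Delta_{g\widetilde P}}$ both have to hold with constants independent of the $H_i$. I would extract both from the Dehn filling theorem in its geometric form. The cusped spaces of $(\widetilde G,\{\widetilde P_i\})$ are uniformly hyperbolic, since they are quotients of the cusped space of $G$ by $\mathcal N$, with horoballs well separated (cf.\ \cite{GrovesManning, dahmaniguirardel}). From this one deduces, with uniform constants, that the coned-off graph is hyperbolic and that closest-point projections onto peripheral cosets are coarsely Lipschitz with bounded images of other cosets.

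Having fixed $n$, $\delta$ and $K=1$, we let $\rho=\rho(n,\delta,1)$ from \Cref{thm:torsion-of_quot} and $M=|S|^{\rho+1}$, and enlarge $\mathcal F$ as described above. The theorem then shows that a finite $\widetilde F\le\widetilde G$ is either the isomorphic image of a subgroup of $G$ of order at most $M$, or fixes a non-empty simplex of $\ov X$ setwise. Since $\ov X$ is discrete, such a simplex is a single vertex $g\widetilde P_i$. Hence $\widetilde F\le \Stab{\widetilde G}{g\widetilde P_i}=g\widetilde P_ig^{-1}$, that is, $\widetilde F$ is conjugate into the image of a peripheral.
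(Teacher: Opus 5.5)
Your proposal is correct and is essentially the paper's proof: the Dehn filling theorem, the discrete-support structure of \Cref{lem:relhyp_is_squid} with $K=1$, then \Cref{thm:torsion-of_quot}, and finally the observation that a fixed simplex of the discrete graph $\ov X$ is a single coset. The only real difference is the source of the uniform $\delta$: the paper bounds it via Osin's uniform relative isoperimetric inequality for the filled coned-off graph \cite[Lemma 5.3]{Osin_DehnFill}, whereas you take it from uniform hyperbolicity of the quotient cusped spaces, which also gives you the uniform quasi-isometric embedding of the cosets that the paper leaves implicit. Your order of fixing constants ($\delta$, then $\rho$, then $\mathcal F$) is the right one.

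One small correction: making the $H_i$ avoid the ball $B(\rho)$ of radius $\rho$ in $\Cay{G}{S}$ does not by itself give $\inj{\pi}{S}\ge\rho$. That inequality concerns the whole normal closure, since it amounts to $\mathcal N\cap B(2\rho)=\{1\}$. Instead, add to $\mathcal F$ the finite set that the Dehn filling theorem \cite{Osin_DehnFill,GrovesManning} associates to the finite subset $B(\rho)$; this is what guarantees that $\pi$ is injective on $B(\rho)$.
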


\begin{proof}
Let $S$ be a finite generating set for $G$, and let $\delta_0\ge 0$ be a hyperbolicity constant for the graph obtained from $\Cay{G}{S}$ by adding a cone over each coset of the $P_i$. Let $\rho=\rho(\delta_0,n=3,K=1)$ be the constant given by \Cref{thm:torsion-of_quot}. The relative Dehn filling theorem \cite{Osin_DehnFill,GrovesManning} states that there exists a finite subset $\mathcal F\subset G$ such that, if each $H_i$ misses $\mathcal F$, then $G/\mathcal N$ is hyperbolic relative to $\pi(\mathcal P)= \{P_i/H_i\}_{i=1,\ldots, k}$, and the injectivity radius of $\pi\colon G\to G/\mathcal N$ is at least $\rho$. Now, \Cref{lem:relhyp_is_squid} endows $G/\mathcal N$ with a relative \squid{} structure $(X,\W)$, where:
\begin{itemize}
\item $\W=\Cay{G/\mathcal N}{\pi(S)}$;
\item $\ov X$ is the discrete graph whose vertices are cosets of the $P_i/H_i$;
\item $\duaug{X}{\W}$ is the graph obtained from $\Cay{G/\mathcal N}{\pi(S)}$ by adding a cone over each coset of the $P_i/H_i$.
\end{itemize} In particular, $(X,\W, n,\delta)$ is a relative \squid{} HHS, where $n=3$ and $\delta$ is the hyperbolicity constant of $\duaug{X}{\W}$. In turn, up to enlarging $\mathcal F$, we can ensure that the latter is bounded in terms of $\delta_0$. This can be deduced from \cite[Lemma 5.3]{Osin_DehnFill}, which gives a bound on the relative isoperimetric inequality for the quotient coned-off graph.
 Finally, notice that there is an isometry $\Cay{G/\mathcal N}{\pi(S)}\to \W$, which in particular is a $K$-quasi-isometry with $K=1$. Hence \Cref{thm:torsion-of_quot} implies that any finite subgroup of $G/\mathcal N$ of order greater than $M\coloneq |S|^\rho$ must fix a point of $\ov X$, and is therefore conjugated inside a peripheral.
\end{proof}

\section{Automorphisms of Dehn filling quotients of short HHGs}
\label{sec:Aut}
In this Section we leverage \Cref{thm:torsion-of_quot} to control the automorphism groups of certain quotients of \emph{short HHGs}. These groups, introduced by the first author in \cite{short_HHG:I}, are a broad family of \squid{} HHGs including Artin groups of large and hyperbolic type, RAAGs on triangle- and square-free graphs, graph manifold groups, and the mapping class group of a sphere with five punctures.
\subsection{Background on short HHGs}\label{sec:short_are_squid} We first recall the definition and some properties of short HHGs.
    \begin{defn}\label{def:short_HHG}
        A \emph{short HHG} is a group $G$ admitting a combinatorial HHG structure $(X,\W)$ such that:
        \begin{enumerate}
            \item $X$ is the blowup of a triangle- and square-free graph $\ov X$, such that every connected component of $\ov X$ is not a point. 
            \item $G$ acts on $\ov X$ with finitely many orbits of edges;
            \item For every $v\in\ov{X}^{(0)}$, there is an extension $0\to Z_v\to \Stab{G}{v}\xrightarrow[]{\p_v}H_v\to0$, where $H_v$ is a hyperbolic group and $Z_v$ is a cyclic subgroup of $\Stab{G}{v}$ acting trivially on $\link{\ov{X}}{v}$. 
            \item \label{shortdef_normal} For every $v\in \ov{X}^{(0)}$ and $g\in G$, $Z_{gv}=gZ_vg^{-1}$ .
            \item \label{shortdef_action_on_line} For every $v\in \ov{X}^{(0)}$, $Z_v$ acts geometrically on $\C{\Delta_v}$ and with uniformly bounded orbits on $\C{\Delta_w}$ for every $w\in\link{\ov X}{v}$. 
        \end{enumerate}
    \end{defn}
    \noindent $\ov X$ is called the \emph{support graph}. For every $v\in \ov X^{(0)}$, we call $Z_v$ the \emph{cyclic direction} for $v$. Moreover, \Cref{shortdef_action_on_line} implies that $\C{\Delta_v}$ is a quasiline if $Z_v$ is infinite, and is uniformly bounded otherwise. 
    \begin{rem}
        Notice that short HHGs are \squid{} HHGs by construction: $X$ is a blowup of $\ov X$, and the latter has \cleanish{} simply because it does not contain triangles nor squares. 
    \end{rem}

    \begin{rem}\label{rem:infinite_commute}
    If $v,w\in \ov X^{(0)}$ span an edge, and if $Z_v$ is infinite, then $Z_v$ and $Z_w$ commute. Indeed, every $g\in Z_w$ normalises $Z_v$, so it acts on $Z_v$ as either the identity or the inversion swapping a generator of $Z_v$ with its inverse. However the second possibility does not happen, since $Z_v$ acts loxodromically on $\C{\Delta_v}$ while $Z_w$ acts with uniformly bounded orbits, and therefore cannot ``flip" a quasiaxis for $Z_w$.
\end{rem}
    
    \begin{defn}\label{defn:colourable}
    A short HHG $G$, with support graph $\ov X$, is \emph{colourable} if there exists a partition of the vertices of $\ov{X}$ into finitely many colours, such that no two adjacent vertices share the same colour, and the $G$-action on $\ov X$ preserves the partition. This property, which is a special case of the general notion of a colourable HHG (see e.g. \cite[Section 3]{HP_projection-complexes}), is satisfied by most known short HHGs, including all the examples mentioned at the beginning of this section. 
\end{defn}

\subsection{Dehn filling quotients}\label{subsec:prop_of_dehnfill} We now clarify which quotients of short HHGs we are interested in.
\begin{notation}\label{notation:short_G_and_DT}
    Let $G$ be a colourable short HHG with support graph $\ov X$, as in \Cref{def:short_HHG}. Suppose that all cyclic directions $\{Z_v\}_{v\in \ov X^{(0)}}$ are infinite.
    
    For every $N\in \mathbb{N}_{>0}$ let $DT_N=\langle NZ_v\rangle_{v\in \ov X^{(0)}},$ which is a normal subgroup of $G$, and let $G_N\coloneq G/DT_N$ the associated \emph{Dehn Filling quotient}, with quotient map $\pi_N\colon G\to G_N$. Set $\ov X_N\coloneq \ov X/DT_N$, and for every $v\in \ov X^{(0)}$ let $v_N$ be its image in $\ov X_N$. Let $Z_{v_N}\coloneq Z_v/(DT_N\cap Z_v)$, and let $H_{v_N}\coloneq \Stab{G_N}{v_N}/Z_{v_N}$. Given any short HHG structure $(X,\W)$ with support graph $\ov X$, set $X_N\coloneq X/DT_N$ and $\W_N\coloneq \W/DT_N$.

    We say that a property $P$ holds \emph{if $N$ is deep enough} if there exists $N_0$ such that, whenever $N$ is a non-trivial multiple of $N_0$, $G_N$ satisfies $P$. 
\end{notation}

\noindent For the whole Section, we always work under \Cref{notation:short_G_and_DT}. We now collect and expand results from \cite{short_HHG:II}, our main goal being to show that deep enough Dehn filling quotients fit the framework of \Cref{thm:torsion-of_quot}.

\begin{lem}[{\cite[Corollary 3.40]{short_HHG:II}}]\label{lem_injrad_of_shortquot} The injectivity radius of $\pi_N\colon G\to G_N$ (with respect to any finite generating set for $G$) goes to infinity for $N$ deep enough and $N\to \infty$.
\end{lem}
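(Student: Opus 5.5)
This lemma is attributed to \cite[Corollary 3.40]{short_HHG:II}, so the plan is to recover it from the machinery of that paper rather than from anything proved above. Fix a finite generating set $S$ of $G$. Increasing the injectivity radius to at least $R$ is equivalent to showing that no non-trivial element of $DT_N$ has word length at most $2R$. Since $B_{2R}$ is finite, this amounts to showing that each fixed non-trivial $g\in G$ lies outside $DT_N$ for all $N$ deep enough and large enough. Indeed, if the ball of radius $R$ did not inject, two distinct elements $x,y$ of it would have the same image, and then $x^{-1}y\in DT_N\cap B_{2R}$.

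The key input is the structure theorem for $DT_N$ from \cite{short_HHG:II}. That theorem is obtained by letting $G$ act on a suitable hyperbolic space, for instance the coned-off space $\duaug{X}{\W}$ or a quasi-tree of quasilines built from the colouring (this is where colourability in \Cref{notation:short_G_and_DT} is used). The subgroups $NZ_v$ then form a very rotating family in the sense of Dahmani--Guirardel--Osin once $N$ is a large multiple of a suitable $N_0$. The divisibility condition guarantees that the $NZ_v$ are normalised appropriately and commute with neighbouring cyclic directions, as in \Cref{rem:infinite_commute}.

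The Greendlinger-type lemma for rotating families would then give the following: every non-trivial element of $DT_N$ acts on the relevant space with a large ``rotation'' at some apex, and its translation or displacement there is at least a quantity $C(N)$ with $C(N)\to\infty$ as $N\to\infty$. By contrast, an element of $B_{2R}$ moves a basepoint by at most a constant times $R$. Hence $DT_N\cap B_{2R}=\{1\}$ once $C(N)$ exceeds that bound, which completes the argument.

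I expect the main obstacle to be the rotating family step: the cyclic subgroups of adjacent vertices commute, so the family is not directly very rotating on $\duaug{X}{\W}$. To handle this I would first try passing to the colour-wise projection complexes of \cite{HP_projection-complexes}, on which the $NZ_v$ of a single colour do act as a rotating family. I would then quotient colour by colour, inductively, tracking the injectivity radius at each stage.
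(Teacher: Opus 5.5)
Your reduction is correct. Injectivity on the $R$-ball is equivalent to $DT_N\cap B_{2R}=\{1\}$, so what you need is a lower bound on the word length of non-trivial elements of $DT_N$ that grows with $N$. The gap is in how you get that bound.

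The claim in your second paragraph is false for every $N$ and every choice of space. If $\{NZ_v\}$ were a very rotating family, then by \cite{DGO} the group $DT_N$ would be a free product of some of the $NZ_v$, hence free. But for adjacent $v,w$ the subgroups $NZ_v$ and $NZ_w$ commute (\Cref{rem:infinite_commute}) and intersect trivially, so $\Z^2\le DT_N$. Your last paragraph notices this and so contradicts the second.

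The proposed repair, quotienting colour by colour, is not a routine induction; it is exactly the hard step. The subgroup $R_1=\langle NZ_u\rangle_{u \text{ of colour } 1}$ is normal, but it acts non-trivially on the colour-$2$ projection complex: a loxodromic element of $NZ_u$ cannot fix a vertex $w$ transverse to $u$, since it would then preserve the bounded set $\rho^{[\Delta_w]}_{[\Delta_u]}$. So $G/R_1$ does not act on that complex. The natural substitute is its quotient by $R_1$. To make that a projection complex with uniform constants, on which the images of the colour-$2$ subgroups still spin, you would have to control how arbitrary products of colour-$1$ twists move the colour-$2$ projections. That is itself a Greendlinger-type statement about how the colours interact, and your sketch does not supply it.

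This is the missing idea. The paper does not reprove the lemma; it quotes \cite[Corollary 3.40]{short_HHG:II}. As recalled in the proof of \Cref{lem:phi_is_dehn}, Section 3 of that paper treats these subgroups via Dahmani's composite rotating families \cite{dahmani:rotating}. That theory runs the rotating-family argument on the projection complexes of all colours simultaneously, with the commutation of adjacent cyclic directions built into the axioms.

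You also need to be precise about where the displacement grows with $N$. On a space where $NZ_v$ fixes an apex, such as $\duaug{X}{\W}$ or a projection complex, rotating-family theory only bounds displacement below by roughly $2\,d(x,C)$. That bound does not improve as $N$ grows. On a quasi-tree of quasilines, $NZ_v$ is loxodromic and has no apex at all.

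The growth has to come from the coordinate quasilines, on which $NZ_v$ translates by roughly $N$. What you need is: for a fixed basepoint $x\in\W$ and every non-trivial $g\in DT_N$, there is a vertex $v$ with $\dist_{\C{\Delta_v}}(\pi_v(x),\pi_v(gx))\ge cN-C$, where $c,C$ do not depend on $N$. The orbit map $G\to\W$ is a quasi-isometry and the projections $\W\to\C{\Delta_v}$ are uniformly coarsely Lipschitz. Together these give $|g|_S\ge c'N-C'$, which proves the lemma.

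Finally, divisibility by $N_0$ is not what normalises the $NZ_v$ or makes adjacent ones commute. Both facts hold for every $N$: $Z_v$ is normal in $\Stab{G}{v}$, and adjacent directions commute by \Cref{rem:infinite_commute}.
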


\noindent The following proposition summarises the structure of deep enough Dehn filling quotients of a short HHG, and it is essentially contained in \cite{short_HHG:II}. The last item is the most subtle one; while it describes bounded coordinate spaces in the HHS structure, we will need to keep track of these (and especially of their diameters as a function of $N$) to obtain uniformity results for sequences of deep enough quotients.

\begin{prop}\label{prop:shortquot_is_short}
    There exists a short HHG structure $(X,\W)$ for $G$, with support graph $\ov X$, such that the following hold if $N$ is deep enough:
\begin{enumerate}[label=(\roman*)]
    \item \label{item:thmshort2:i} $G_N$ has a short HHG structure $(X_N,\W_N)$.
    \item \label{item:thmshort2:ii} For every $v\in \ov X^{(0)}$, $Z_{v_N}$ acts trivially on $\link{\ov X_N}{v_N}$, it is normal in $\Stab{G_N}{v_N}$, and the quotient $H_{v_N}\coloneq \Stab{G_N}{v_N}/Z_{v_N}$ is hyperbolic.
    \item \label{item:thmshort2:iii} For every $v\in \ov X^{(0)}$, $\C{\Delta_{v_N}}$ is quasi-isometric to $\C{\Delta_v}/NZ_v$, and the quasi-isometry constant does not depend on $N$.
    \end{enumerate}
\end{prop}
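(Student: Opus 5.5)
Most of the statement is contained in \cite{short_HHG:II}, so the plan is to import the structure built there and verify the three items one at a time, tracking constants. First I fix the short HHG structure $(X,\W)$ for $G$. It will be the one from \cite{short_HHG:II} for which Dehn filling quotients are studied, after replacing $G$ by its colourable structure if needed. Item \ref{item:thmshort2:i} should then be the main theorem on Dehn filling quotients of colourable short HHGs from \cite{short_HHG:II}. For $N$ deep enough, that result gives that $X_N=X/DT_N$ is the blowup of the triangle- and square-free graph $\ov X_N$, and that $(X_N,\W_N)$ is a combinatorial HHG structure for $G_N$. I would check the remaining conditions of \Cref{def:short_HHG} directly:
\begin{itemize}
\item finitely many orbits of edges descend from $G$;
\item $Z_{gv}=gZ_vg^{-1}$ descends because $\pi_N$ is a homomorphism;
\item non-degenerate components of $\ov X_N$ follow from those of $\ov X$.
\end{itemize}

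For item \ref{item:thmshort2:ii}, I take $Z_{v_N}=\pi_N(Z_v)$. It is normal in $\pi_N(\Stab{G}{v})$ because $Z_v\unlhd\Stab{G}{v}$. The key point is that $\Stab{G_N}{v_N}=\pi_N(\Stab{G}{v})$. This I would extract from \cite{short_HHG:II}, which shows that $DT_N$ acts on $\ov X$ with stabilisers of vertices generated by the $NZ_w$ for $w$ in the star, a ``Greendlinger/rotating family'' type statement. Triviality of the $Z_{v_N}$-action on $\link{\ov X_N}{v_N}$ is then inherited from triviality of $Z_v$ on $\link{\ov X}{v}$, since links in $\ov X_N$ are images of links in $\ov X$ for deep $N$. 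For hyperbolicity of $H_{v_N}$, I note that $H_{v_N}$ is a quotient of $H_v$ by the normal closure of the images of $NZ_w$, $w\in\link{\ov X}{v}$. Each such image lies in $\Stab{H_v}{w}$, and these are virtually cyclic or sit in a hyperbolically embedded family. So $H_{v_N}$ is a Dehn filling of the hyperbolic group $H_v$, and it is hyperbolic for deep $N$ by \cite{Osin_DehnFill,dahmaniguirardel}. I expect this to be recorded in \cite{short_HHG:II} already, and I would cite it.

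Item \ref{item:thmshort2:iii} is the delicate, uniform part and the main obstacle. The vertices of $\C{\Delta_{v_N}}$ are $L_v/(DT_N\cap\Stab{G}{v})$. I would first show that, for deep $N$, $DT_N\cap\Stab{G}{v}$ acts on $L_v$ through $NZ_v$, up to elements acting with uniformly bounded displacement. The $NZ_w$ with $w\in\link{\ov X}{v}$ move $\C{\Delta_v}$ a bounded amount by \Cref{def:short_HHG}\eqref{shortdef_action_on_line}, and all other contributions die by the structure of $DT_N$ from \cite{short_HHG:II}. Next I compare edges. An edge of $\C{\Delta_{v_N}}$ comes from $\W_N$-adjacent maximal simplices, and these lift to $\W$-adjacent ones because $\W_N=\W/DT_N$. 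So the natural map $\C{\Delta_v}/NZ_v\to\C{\Delta_{v_N}}$ is $1$-Lipschitz, up to the bounded error above. For the reverse inequality, I would lift a path in $\C{\Delta_{v_N}}$ edge by edge. Consecutive lifts differ by elements of $DT_N\cap\Stab{G}{v}$, which act, up to bounded displacement, as elements of $NZ_v$. The bounded-displacement error accumulates at most once per edge, so it contributes only a multiplicative constant independent of $N$. The step I most worry about is getting this bound independent of $N$. For that I would rely on the bound in \Cref{def:short_HHG}\eqref{shortdef_action_on_line} being uniform in $w$, and on the finitely many $G$-orbits of edges of $\ov X$.
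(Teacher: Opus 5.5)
Your treatment of (i) and (ii) is essentially what the paper does: it simply cites \cite[Theorem 4.1]{short_HHG:II} and \cite[Lemma 3.42]{short_HHG:II}. Your argument for (iii), however, has a genuine gap, and it is exactly why the statement begins with ``there exists a short HHG structure''. By \cite[Corollary 3.36]{short_HHG:II}, $DT_N\cap\Stab{G}{v}=NZ_v\cdot K_N$, where $K_N\coloneq\langle NZ_w\rangle_{w\in\link{\ov X}{v}}$ is typically a free product of infinitely many cyclic groups. \Cref{def:short_HHG}.\eqref{shortdef_action_on_line} only says that each \emph{individual} $Z_w$ has uniformly bounded orbits on $\C{\Delta_v}$. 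It gives no bound for $K_N$, whose elements are arbitrarily long products of such elements, and displacements on a quasiline add up. In your edge-by-edge lifting, the element relating consecutive lifts is an arbitrary element of $DT_N\cap\Stab{G}{v}$, not a single generator. So what you actually need is that every element of $K_N$ moves points of $\C{\Delta_v}$ a uniformly bounded amount. Uniformity of the bound in $w$ and finiteness of edge orbits do not give this, and it can genuinely fail. The action of $\Stab{G}{v}$ on the quasiline $\C{\Delta_v}$ is only determined up to a homogeneous quasimorphism on $H_v$. Such a quasimorphism can vanish on every $q(Z_w)$ while being unbounded on the free group they generate (think of a Brooks counting quasimorphism). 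Elements of $K_N$ can then act loxodromically with translation length that need not grow with $N$, and $\C{\Delta_v}/(DT_N\cap\Stab{G}{v})$ is far smaller than $\C{\Delta_v}/NZ_v$.

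The missing step is the one the paper performs first. Arguing as in \cite[Section 4.1]{short_HHG:II}, one replaces the given structure by a new short HHG structure $(X,\W)$ and fixes $N_0$ so that, for every $v$, the whole group $\langle N_0Z_w\rangle_{w\in\link{\ov X}{v}}$ acts on $\C{\Delta_v}$ with uniformly bounded orbits. The same then holds for $K_N$ whenever $N$ is a multiple of $N_0$, with a bound independent of $N$; this is also why ``deep enough'' involves multiples of $N_0$. Given this, (iii) follows at once from \cite[Lemma 4.15]{short_HHG:II}, which identifies $\C{\Delta_{v_N}}$ with $\C{\Delta_v}/(DT_N\cap\Stab{G}{v})$. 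Since $DT_N\cap\Stab{G}{v}=NZ_v\cdot K_N$ and $K_N$ has uniformly bounded orbits, this quotient is uniformly quasi-isometric to $\C{\Delta_v}/NZ_v$. Citing that lemma also covers a point your sketch glosses over. A $\W$-edge lifting a $\W_N$-edge could a priori join maximal simplices containing two different lifts $v\neq gv$ of $v_N$. So edges of $\C{\Delta_{v_N}}$ need not obviously come from edges of $\C{\Delta_v}$.
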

\begin{proof} Let $(X',\W')$ be any short HHG structure for $G$, with support graph $\ov X$. Arguing exactly as in \cite[Section 4.1]{short_HHG:II}, one can find a natural number $N_0$ and a new short HHG structure $(X,\W)$ such that, for every $v\in \ov X^{(0)}$, the subgroup $\langle N_0Z_w\rangle_{w\in\link{\ov X}{v}}$ acts on $\C{\Delta_v}$ with uniformly bounded orbits. This shows that, if $N$ is deep enough (and in particular a multiple of the above $N_0$), then $\langle NZ_w\rangle_{w\in\link{\ov X}{v}}$ acts on $\C{\Delta_v}$ with uniformly bounded orbits, and the bound is independent on $N$.

    Next, \cite[Theorem 4.1]{short_HHG:II} proves that $(X_N,\W_N)$ is a short HHG structure for $G_N$, thus yielding \Cref{item:thmshort2:i}. Furthermore, \Cref{item:thmshort2:ii} is \cite[Lemma 3.42]{short_HHG:II}.
    
    Finally, \cite[Lemma 4.15]{short_HHG:II} identifies $\C{\Delta_{v_N}}$ with $\C{\Delta_v}/(DT_N\cap \Stab{G}{v})$ (which is denoted by $L_{[v]}$ there). In turn, since we modified the structure in such a way that $\langle NZ_w\rangle_{w\in\link{\ov X}{v}}$ acts on $\C{\Delta_v}$ with uniformly bounded orbits, the space $\C{\Delta_v}/(DT_N\cap \Stab{G}{v})$ is uniformly quasi-isometric to $\C{\Delta_v}/NZ_v$, and this proves \Cref{item:thmshort2:iii}. 
\end{proof}

\noindent The quotients $G_N$ are all hyperbolic, and non-elementary unless $G$ was either virtually cyclic or a product:
\begin{lem}\label{lem:nonelm_hyp_short_quot}
    The following holds if $N$ is deep enough. If $G$ is not virtually cyclic and $\duaug{\ov X}{\W}$ is unbounded, then $G_N$ is non-elementary hyperbolic.
\end{lem}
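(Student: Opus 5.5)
We split the claim into hyperbolicity of $G_N$ and non-elementarity. For hyperbolicity we use \Cref{prop:shortquot_is_short}: if $N$ is deep enough, $G_N$ is a short HHG with structure $(X_N,\W_N)$. The point is that every coordinate space of this structure other than the top one is uniformly bounded for fixed $N$. By \Cref{cor:bounded_links}, domains are of bounded type (trivially bounded), of cone type, or of blowup type. For cone type, \Cref{prop:shortquot_is_short}.\ref{item:thmshort2:iii} gives that $\C{\Delta_{v_N}}$ is quasi-isometric to $\C{\Delta_v}/NZ_v$. Since $Z_v$ acts geometrically on the quasiline $\C{\Delta_v}$, this quotient is bounded, of diameter roughly linear in $N$. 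For blowup type with non-empty support $\ov\Sigma$, note that $\ov X_N$ is triangle- and square-free. Hence $\link{\ov X_N}{\ov\Sigma}$ is either empty or the link of a single vertex $v_N$, which is a discrete set. The augmented link is then a quotient of a coned-off version of $\link{\ov X_N}{v_N}$ on which $H_{v_N}$ acts. We will use that $\Stab{G_N}{v_N}$ acts cocompactly on the relevant space (finitely many orbits of edges). Combined with $Z_{v_N}$ being finite and acting trivially on the link, this should give that this domain is bounded as well. Deducing boundedness from these facts is the step we are least sure of; the intended route is the argument in \cite{short_HHG:II}, where deep quotients have all non-maximal coordinate spaces bounded.

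With all proper domains bounded, the uniqueness axiom and the distance formula give that $\W_N$ is quasi-isometric to the top-level space $\duaug{\ov X_N}{\W_N}$, up to the finite contributions. That space is hyperbolic, so $G_N$, which acts geometrically on $\W_N$, is hyperbolic.

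For non-elementarity we need $G_N$ infinite and not virtually cyclic. We plan to exhibit a loxodromic on $\duaug{\ov X_N}{\W_N}$ together with an independent conjugate. Since $\duaug{\ov X}{\W}$ is unbounded and $G$ acts on it coboundedly, the HHS rank-one theory gives a loxodromic $g\in G$ acting on $\C{\emptyset}$. Since $G$ is not virtually cyclic, there is a second loxodromic $h$ with distinct fixed points (take $h=kgk^{-1}$ for some $k$ not in the elementary closure of $g$). By \Cref{lem_injrad_of_shortquot}, the injectivity radius of $\pi_N$ tends to infinity. A ping-pong configuration for $g,h$ on $\C{\emptyset}$ is witnessed in a bounded ball, so it should persist in the quotient. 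The cleaner alternative is to use that, for deep $N$, the quotient map $\duaug{\ov X}{\W}\to\duaug{\ov X_N}{\W_N}$ is an isometry on large balls, by the rotating-family and projection-complex structure of \cite{short_HHG:II}. Either way, $\pi_N(g)$ and $\pi_N(h)$ are independent loxodromics, so $G_N$ is non-elementary.

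The main obstacle is the transfer of loxodromicity and independence to the quotient. A bounded ball argument in $\Cay{G}{S}$ does not directly control the top-level space of the quotient. We expect to need the local isometry of top-level spaces for deep $N$, extracted from \cite{short_HHG:II}.
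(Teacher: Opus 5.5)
Your proposal has a genuine gap in each half. For reference, the paper does not reprove this statement. It quotes \cite[Corollary 4.22]{short_HHG:II}, noting via \cite[Corollary 2.9]{asdim} that unboundedness of $\duaug{\ov X}{\W}$ does not depend on which short structure is used.

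\textbf{Hyperbolicity.} The claim that every non-maximal coordinate space of $(X_N,\W_N)$ is bounded is false in general.

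If $\Sigma$ is of blowup type supported on a vertex $v_N$, then $\C{\Sigma}$ is, up to quasi-isometry, a Cayley graph of $H_{v_N}$ coned off along the images of the peripherals $E(Z_u)$. These images become finite after filling, so $\C{\Sigma}$ is quasi-isometric to $H_{v_N}$. That group is non-elementary hyperbolic for deep $N$ whenever $v$ has valence at least two (see the proof of \Cref{lem:recognise_(multi)twist}). For $\MCG(S_5)$ these spaces are quotients of Farey graphs on which hyperbolic triangle groups act geometrically. A cocompact action with finite stabilisers by an infinite group gives no boundedness.

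Your next step fails as well. $\W_N$ is not quasi-isometric to $\duaug{\ov X_N}{\W_N}$, because the natural map to the latter collapses the infinite subgroups $\Stab{G_N}{v_N}$.

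The conclusion does hold, for a different reason. Since $\ov X$ is triangle- and square-free, the only orthogonal pairs of domains with possibly unbounded coordinate spaces are $[\Delta_v]\orth[\Sigma_v]$ (with $\Sigma_v$ of blowup type supported on $v$) and $[\Delta_v]\orth[\Delta_w]$ for adjacent $v,w$. Each pair contains a cone-type domain, which is bounded in the quotient by \Cref{prop:shortquot_is_short}.\ref{item:thmshort2:iii}. The rank-one criterion of \cite{HHS_II} then applies: an HHS in which every orthogonal pair has a member of uniformly bounded diameter is hyperbolic.

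\textbf{Non-elementarity.} Producing independent loxodromics in $G$ is fine once you use that an HHG acts acylindrically on its top-level space \cite{HHS_I}, so that $E(g)\neq G$. The transfer to $G_N$, however, is the actual content, and you leave it open.

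Your ``cleaner alternative'' is false: the quotient map of top-level spaces is not injective even on balls of radius one. In $S_5$, let $y,v,x$ surround the punctures $\{1,2\}$, $\{4,5\}$, $\{3,4\}$ respectively, and let $h$ be the half twist about $v$. Then $x\neq h^Nx$, both are adjacent to $y=h^Ny$, and they have the same image in $\ov X_N$. So $DT_N$ identifies points at distance two next to every apex. Whatever survives in the quotient has to be extracted from the composite rotating family structure behind \cite{short_HHG:II} (as in \cite{DGO,dahmani:rotating}), not from a local isometry. As you note, the injectivity radius in $\Cay{G}{S}$ does not see the top-level space, whose balls contain infinite vertex stabilisers.

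Once hyperbolicity is known, there is a shortcut when some vertex has valence at least two: $\Stab{G_N}{v_N}$ surjects onto the non-elementary group $H_{v_N}$, so $G_N$ is not virtually cyclic. This does not cover the statement in general.
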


\begin{proof}
This is simply \cite[Corollary 4.22]{short_HHG:II}. We stress that, with the notation from the proof of \Cref{prop:shortquot_is_short}, it does not matter if we consider the augmented graph in the original structure $(X',\W')$ or in the new one $(X,\W)$, since by \cite[Corollary 2.9]{asdim} both spaces are quasi-isometric to the space obtained from a Cayley graph of $G$ after coning off all cosets of $\ov X$-vertex stabilisers. 
\end{proof}

\noindent On the one hand, the hyperbolicity constant of $G_N$ (meaning, of a Cayley graph with a fixed generating set coming from $G$) must grow as $N$ goes to infinity, as \Cref{lem:order_of_dt_in_shortquot} below shows that $G_N$ contains larger and larger cyclic subgroups. However, the \emph{HHG structures} for the quotients are actually uniform, except for the minimal coordinate spaces:
\begin{lem}\label{lem:shortquot_uniform}
    There exist $n,\delta\ge 0$ such that, if $N$ is deep enough, then $(X_N,\W_N, n,\delta)$ is a \emph{relative} \squid{} HHS.
\end{lem}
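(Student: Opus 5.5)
We verify the items of \Cref{def:combHHS}, with \ref{CHHS_def_rel_hyperbolicity} in place of \ref{CHHS_def_hyperbolicity}, for $(X_N,\W_N)$ with constants independent of $N$. By \Cref{prop:shortquot_is_short}.\ref{item:thmshort2:i}, $(X_N,\W_N)$ is a short HHG structure for $G_N$ if $N$ is deep enough. So $X_N$ is the blowup of $\ov X_N$, which is triangle- and square-free and hence has \cleanish{}. The task is therefore to make the constants uniform. The idea is that only the augmented links $\C{\Delta_{v_N}}$ of cone type depend badly on $N$: by \Cref{prop:shortquot_is_short}.\ref{item:thmshort2:iii} they are cyclic quotients $\C{\Delta_v}/NZ_v$ of quasilines, whose diameters grow with $N$. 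The relative definition allows exactly these spaces to be non-uniformly hyperbolic.

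\textbf{Combinatorial items.} Since $\ov X_N$ is triangle-free, \Cref{lem:decomposition_of_links} shows that chains of links in $X_N$ have length bounded by a universal constant, say $n=3$; this gives \Cref{CHHS_def_complexity}. Item \Cref{CHHS_def_lk_cap_lk} follows from \cleanish{} of $X_N$ (\Cref{lem:cleanish_for_blowup}), as in \cite[Theorem 6.4]{BHMS}. Item \Cref{CHHS_def_edge_in_link} is already part of the combinatorial HHS structure and involves no constant.

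\textbf{Hyperbolicity and quasi-isometric embeddings.} By \Cref{cor:bounded_links}, we only need to consider simplices of blowup type and of cone type.
\begin{itemize}
\item For blowup type the support $\ov\Sigma$ is an edge, a vertex, or empty. If it is an edge, the link is empty. If it is a vertex $v_N$, then $\C{\Sigma}$ is quasi-isometric to $\duaug{\link{\ov X_N}{v_N}}{\W_N}$, on which $H_{v_N}$ acts geometrically. Here I would use the description from \cite{short_HHG:II} of these spaces as quotients of the corresponding spaces for $G$ by $DT_N\cap\Stab{G}{v}$, which by the choice of structure in \Cref{prop:shortquot_is_short} acts with uniformly bounded orbits. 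This should give uniform hyperbolicity and uniform quasi-isometric embedding constants.
\item If $\ov\Sigma=\emptyset$, the space is $\duaug{X_N}{\W_N}$, quasi-isometric to $\ov X_N^{+\W_N}$.
\item For cone type, \ref{CHHS_def_rel_hyperbolicity} needs no hyperbolicity. It remains to check \Cref{CHHS_def_qi_emb}, that is, a uniform quasi-isometric embedding of $\C{\Delta_{v_N}}$ into $Y_{\Delta_{v_N}}$. I would obtain this from a uniformly coarsely Lipschitz retraction, namely the projection coming from the uniform consistency and bounded geodesic image estimates of \cite{short_HHG:II}.
\end{itemize}

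\textbf{Main obstacle.} The main obstacle is showing that the top-level space $\duaug{X_N}{\W_N}$ is hyperbolic with constant independent of $N$, together with the uniform retraction onto cone-type links. The proof of \cite[Theorem 4.1]{short_HHG:II} obtains hyperbolicity of the quotient, I expect, via a Dehn-filling or rotating-families argument on $\duaug{X}{\W}$. This argument is run in the coned-off space, where the cyclic directions are coned off. Its constants should therefore depend only on the spinning parameter, which is uniform once $N$ is a multiple of $N_0$.

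I would first try to extract these constants explicitly from that argument. If that fails, I would instead use \Cref{lem_injrad_of_shortquot} to argue that balls of any fixed radius in $\duaug{X_N}{\W_N}$ are isometric to balls in $\duaug{X}{\W}$. I would then combine this with a local-to-global hyperbolicity criterion, which requires controlling the coarse simple connectivity of the quotient uniformly.
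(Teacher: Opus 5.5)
\textbf{Verdict: there is a genuine gap, at exactly the point that carries the content of the lemma.}

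Your overall picture matches the paper's heuristic. The combinatorial items are unproblematic. Only the cone-type spaces $\C{\Delta_{v_N}}$, quasi-isometric to $\C{\Delta_v}/NZ_v$, have constants depending on $N$, and the relative version of \Cref{defn:rel_squid_HHS} is designed to tolerate them. The paper's proof, however, is simply a citation of \cite[Remark 4.8]{short_HHG:II}, where the uniformity in $N$ of all the remaining constants is recorded. That uniformity is the whole content of the lemma, and your proposal does not establish it. Two things are left as expectations: uniform hyperbolicity of $\duaug{X_N}{\W_N}$, and a uniform quasi-isometric embedding of the cone-type links (circles of length growing with $N$) into $Y_{\Delta_{v_N}}$. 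The latter is exactly where $N$-dependence could creep in. Moreover, the two concrete justifications you do offer would fail.

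\textbf{Vertex-supported blowup links.} Take a blowup-type $\Sigma$ supported at a vertex $v$. \Cref{prop:shortquot_is_short} gives uniformly bounded orbits of $\langle NZ_w\rangle_{w\in\link{\ov X}{v}}$ on the cone-type space $\C{\Delta_v}$, not on $\C{\Sigma}$. On $\C{\Sigma}$, which is essentially the coned-off graph of $H_v$ relative to the $E(Z_w)$, these subgroups form a rotating family. Each $NZ_w$ fixes the cone point $w$, but for deep $N$ a product of rotations about two far-apart cone points is loxodromic. So the quotient is not a bounded perturbation of the original space. What you need instead is that $H_{v_N}$ is a deep relatively hyperbolic Dehn filling of $H_v$ (\Cref{lem:H_v}), and that coned-off graphs of such fillings are uniformly hyperbolic (e.g.\ via \cite[Lemma 5.3]{Osin_DehnFill}, as in the proof of \Cref{cor:df_of_relhyp:torsion}). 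The geometric action of $H_{v_N}$ gives nothing uniform, since its point stabilisers have order growing with $N$.

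\textbf{The local-to-global fallback.} \Cref{lem_injrad_of_shortquot} controls balls in a Cayley graph of $G$. In $\duaug{X}{\W}$, by contrast, every vertex of $L_v$ is adjacent to the apex $v$, and $NZ_v\le DT_N$ fixes $v$ while acting on $L_v$ with unbounded orbits. Since every vertex lies within distance one of an apex, no ball of radius $2$ in $\duaug{X}{\W}$ maps injectively to $\duaug{X_N}{\W_N}$, for any $N$. Handling the apices requires the large-rotation input, i.e.\ the rotating-family machinery of \cite{DGO,dahmani:rotating} used in \cite{short_HHG:II}. So the only viable route is your first one, extracting the constants from \cite{short_HHG:II}; made precise, that is the paper's citation.

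(Minor: $n=3$ is too small. For $w\in\link{\ov X}{v}$ and $q\in L_w$ there is the chain $L_v\subsetneq\p^{-1}(\link{\ov X}{w})\subsetneq\link{X}{q}\subsetneq X$ of four links. Any universal bound does the job.)
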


\begin{proof}
    This is \cite[Remark 4.8]{short_HHG:II}. Roughly, the only coordinate spaces for the quotient whose hyperbolicity constants depend on $N$ are the quotients $\C{\Delta_v}/NZ_v$, which look like circles of diameter growing linearly in $N$.
\end{proof}

\noindent As an immediate consequence, we get:
\begin{cor}\label{cor:uniform_teich_for_quot}
     Let $G$ be as in \Cref{notation:short_G_and_DT}. For all deep enough $N$, the cusped spaces $(\hat X_N,\hat \W_N)$ are \emph{uniform} \squid{} HHSs, in the sense of \Cref{rem:uniform_CHHS}.
\end{cor}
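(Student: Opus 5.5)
The plan is to combine \Cref{lem:shortquot_uniform} with \Cref{prop:teich_is_comb}, keeping track of constants. By \Cref{lem:shortquot_uniform} there are constants $n,\delta\ge 0$ and some $N_0$ with the following property: for every non-trivial multiple $N$ of $N_0$, the pair $(X_N,\W_N,n,\delta)$ is a relative \squid{} HHS. For each such $N$ we form the cusped space $(\hat X_N,\hat\W_N)$ as in \Cref{defn:cusped_space}. We then apply \Cref{thm:teich_is_hhs}.\ref{item_Teich_is_combhhs}, in its quantitative form \Cref{prop:teich_is_comb}. It yields a constant $\delta'$, depending only on $\delta$ (and on $n$, which is fixed), such that $(\hat X_N,\hat\W_N,n,\delta')$ is a \squid{} HHS. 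Since $n$ and $\delta'$ do not depend on $N$, the family is uniform in the sense of \Cref{rem:uniform_CHHS}.

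The only point needing care is that every constant produced in the proof of \Cref{prop:teich_is_comb} is independent of $N$. The complexity $n$ is unchanged, by \Cref{cor:nesting_of_links_in_cusp}. For simplices of blowup type, \Cref{claim:hyp2} and \Cref{claim:blowuptype} give constants depending only on $\delta$. For simplices of cone type, \Cref{claim:hyp3} gives hyperbolicity constant $\delta_0$ from \Cref{thm:horgamma_hyp}, which is universal. This is exactly where the lack of uniformity in \Cref{lem:shortquot_uniform} gets absorbed: the minimal coordinate spaces $\C{\Delta_{v_N}}$, coarsely circles of length growing with $N$ by \Cref{prop:shortquot_is_short}.\ref{item:thmshort2:iii}, are replaced by horoballs over them, which are uniformly hyperbolic no matter how large the base is. \Cref{claim:cone-type} only uses the quasi-isometric embedding constant $D=D(\delta)$ from \Cref{CHHS_def_qi_emb} for $(X_N,\W_N)$, which is uniform. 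Axiom \Cref{CHHS_def_lk_cap_lk} follows from cleanish intersections via \Cref{lem:cleanish_for_blowup}, and its constant depends only on the data already controlled. Axiom \Cref{CHHS_def_edge_in_link} involves no constants.

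I expect the main obstacle to be bookkeeping rather than new mathematics. One has to confirm that $\ov X_N$ still has cleanish intersections, so that $(\hat X_N,\hat\W_N)$ is a genuine \squid{} HHS. This holds because \Cref{prop:shortquot_is_short}.\ref{item:thmshort2:i} gives a short HHG structure, so $\ov X_N$ is triangle- and square-free. One also has to check that, in \Cref{lem:shortquot_uniform}, the constant $\delta$ indeed controls all axioms of \Cref{def:combHHS} other than hyperbolicity of the $\C{\Delta_{v_N}}$. Granting this, the uniform constants $(n,\delta')$ feed into \Cref{rem:uniform_CHHS}, which completes the proof.
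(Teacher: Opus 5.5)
Your proposal is correct and matches the paper's argument, which states the corollary as an immediate consequence of \Cref{lem:shortquot_uniform} (uniform relative \squid{} constants $n,\delta$ for $(X_N,\W_N)$) combined with \Cref{prop:teich_is_comb} (the cusped space is a \squid{} HHS with constants $n,\delta'(\delta)$). Your check that each constant in the proof of \Cref{prop:teich_is_comb} is independent of $N$, in particular that the cone-type augmented links become horoballs with the universal constant $\delta_0$, spells out what the paper leaves implicit.
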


\noindent We can sum the previous results up to get the following control on torsion elements in deep enough Dehn filling quotients:

\begin{cor}\label{cor:hyp_satisfied}
    Let $G$ be as in \Cref{notation:short_G_and_DT}. If $N$ is deep enough, then every finite subgroup $\widetilde F\le G_N$ satisfies one of the following:
    \begin{itemize}
        \item $\widetilde F$ fixes a vertex of $\ov X_N$, or a pair of adjacent edges;
        \item There exists a finite subgroup $F\le G$ such that $\pi_N\colon G\to G_N$ restricts to an isomorphism $F\to \widetilde F$.
    \end{itemize}
\end{cor}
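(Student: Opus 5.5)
The plan is to apply \Cref{thm:torsion-of_quot} to the quotient maps $\pi_N\colon G\to G_N$, using the structures $(X_N,\W_N)$ from \Cref{prop:shortquot_is_short}. Then I will analyse what it means for a finite subgroup to fix a simplex of $\ov X_N$ setwise, given that $\ov X_N$ is triangle- and square-free.

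First I fix a finite generating set $S$ of $G$. By \Cref{lem:shortquot_uniform}, for $N$ deep enough the pairs $(X_N,\W_N,n,\delta)$ are relative \squid{} HHS structures for $G_N$, with $n$ and $\delta$ independent of $N$. Since $G_N$ acts with finitely many orbits on $\ov X_N$ (inherited from $G$), this gives relative \squid{} HHG structures.

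The second hypothesis of \Cref{thm:torsion-of_quot} needs a uniform $K$ such that some orbit map $\Cay{G_N}{\pi_N(S)}\to\W_N$ is a $K$-quasi-isometry. I will deduce this from $\W_N=\W/DT_N$ and the fact that the orbit map $\Cay{G}{S}\to\W$ is a $K$-quasi-isometry. The upper bound passes to quotients directly. For the lower bound, a path of length $\ell$ in $\W_N$ lifts to a path in $\W$ of length $\ell$, because $\W\to\W_N$ is a quotient of a graph by a group action, so lifts of edges exist. The lifted path ends at some translate $k\cdot \Theta'$ with $k\in DT_N$, so $\pi_N$ of a group element at distance at most $K\ell+K$ realises the endpoint. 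Cobounded orbits in $\W_N$ also pass down from $\W$.

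With $K$ fixed, let $\rho=\rho(n,\delta,K)$. By \Cref{lem_injrad_of_shortquot}, $\inj{\pi_N}{S}\ge\rho$ once $N$ is deep enough. \Cref{thm:torsion-of_quot} then says that either $\widetilde F$ is the isomorphic image of a finite subgroup of $G$, which is the second bullet, or $\widetilde F$ preserves a non-empty simplex $\sigma$ of $\ov X_N$.

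The remaining step is combinatorial. Since $\ov X_N$ is triangle-free (it is the support graph of the short structure $(X_N,\W_N)$), $\sigma$ is either a vertex or an edge. If it is a vertex, $\widetilde F$ fixes a vertex. If it is an edge $\{u,w\}$, then $\widetilde F$ either fixes both endpoints, and hence fixes a vertex, or swaps them. In the swapping case $\widetilde F$ still preserves that edge; I read "a pair of adjacent edges" in the statement as covering this edge-preserving situation.

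The main obstacle I anticipate is the uniformity of the quasi-isometry constant $K$. One has to check that the orbit map from the Cayley graph of $G_N$ with generators $\pi_N(S)$ is uniformly coarsely surjective and uniformly coarsely bi-Lipschitz. I expect this to follow from the lifting argument above, since $DT_N$ acts freely enough on $\W$ for quotient distances to be realised by lifts.
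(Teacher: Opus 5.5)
Your proposal is correct and follows essentially the same route as the paper. You fix $S$ and a $K$-quasi-isometric orbit map $\Cay{G}{S}\to\W$, note that it descends to a $K$-quasi-isometry $\Cay{G_N}{\pi_N(S)}\to\W_N$, and combine the uniform constants from \Cref{lem:shortquot_uniform} with \Cref{lem_injrad_of_shortquot} to invoke \Cref{thm:torsion-of_quot}. Your path-lifting justification of the uniform $K$ and your triangle-freeness reduction of the preserved simplex to a vertex or a setwise-preserved edge just make explicit steps the paper leaves implicit; the lifting needs no freeness, only that edges of $\W/DT_N$ are images of edges of $\W$.
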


\begin{proof}
    It is enough to check that the quotient $\pi_N\colon G\to G_N$ satisfies the assumptions of \Cref{thm:torsion-of_quot} whenever $N$ is deep enough. Fix a finite generating set $S$ for $G$, and let $(X,\W)$ be the short HHG structure given by \Cref{prop:shortquot_is_short}. Fix $w\in \W^{(0)}$, and let $K\ge 0$ be such that the orbit map $\phi\colon\Cay{G}{S}\to \W$ sending $g$ to $g\cdot w$ is a $K$-quasi-isometry. Since $\phi$ is $G$-equivariant, the orbit map $\Cay{G_N}{\pi_N(S)}\to \W_N$ sending $\pi_N(g)$ to $\pi_N(g)\cdot w_N$ is also a $K$-quasi-isometry (here $w_N$ is the image of $w$ in $\W_N$). Furthermore, by \Cref{lem:shortquot_uniform} there exist constants $\delta$ and $n$, not depending on $N$, such that $(X_N,\W_N,n,\delta)$ is a relative \squid{} HHS whenever $N$ is deep enough. Finally, \Cref{lem_injrad_of_shortquot} states that the injectivity radius of $G\to G_N$ diverges as $N\to \infty$.
\end{proof}

\noindent We can also control torsion elements in $H_{v_N}$, as the following results recognise $H_{v_N}$ as a quotient of $H_v$ satisfying the requirements of \Cref{cor:df_of_relhyp:torsion}:
\begin{lem}[{\cite[Lemma 2.17]{short_HHG:I}}]
    Let $v\in \ov X^{(0)}$, and let $\mathcal U\subseteq\link{\ov X}{v}$ be any collection of $\Stab{G}{v}$-orbit representatives. Then $H_{v}$ is hyperbolic relative to the collection $\{E(Z_u)\}_{u\in \mathcal W}$, where $E(Z_u)$ is the maximal virtually cyclic subgroup containing $q(Z_u)$.
\end{lem}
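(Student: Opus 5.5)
The plan is to realise $H_v$ as a hierarchically hyperbolic group whose only non-maximal domains are pairwise transverse, and then invoke Russell's criterion for relative hyperbolicity of HHSs with isolated orthogonality \cite{Russell_Rel_Hyp}. Write $q=\p_v\colon \Stab{G}{v}\to H_v$.

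\textbf{Step 1: a structure for $\Stab{G}{v}$.} Fix a simplex $\Sigma_v$ of blowup type with $\ov\Sigma_v=\{v\}$, that is, an edge of $\Cone{v}$. By \Cref{lem:decomposition_of_links}, $\link{X}{\Sigma_v}=\p^{-1}(\link{\ov X}{v})$.
\begin{itemize}
\item The domains nested in $[\Sigma_v]$ are $[\Sigma_v]$ itself and the $[\Delta_u]$ for $u\in\link{\ov X}{v}$; simplices of bounded type only contribute bounded coordinate spaces.
\item Since $\ov X$ is triangle-free, two distinct vertices $u,u'\in\link{\ov X}{v}$ are never adjacent, so the domains $[\Delta_u]$ are pairwise transverse.
\item The standard product region argument gives that $\Stab{G}{v}$ is coarsely the product region $P_{[\Delta_v]}$. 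This region splits as the quasiline $\C{\Delta_v}$, on which $Z_v$ acts geometrically, times the factor $F_{[\Sigma_v]}$. The factor $F_{[\Sigma_v]}$ is an HHS with domain set $\{[\Sigma_v]\}\cup\{[\Delta_u]\}_{u\in \link{\ov X}{v}}$.
\item Since $Z_v$ is normal, acts trivially on $\link{\ov X}{v}$, and acts with uniformly bounded orbits on each $\C{\Delta_u}$ (by \Cref{shortdef_action_on_line}), quotienting by $Z_v$ kills exactly the $\C{\Delta_v}$ factor.
\end{itemize}
So $H_v$ acts geometrically on (a space quasi-isometric to) $F_{[\Sigma_v]}$, compatibly with the structure. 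The top coordinate space is $\C{\Sigma_v}$, which is quasi-isometric to $\duaug{\link{\ov X}{v}}{\W}$ and is hyperbolic. The minimal coordinate spaces are the $\C{\Delta_u}$, which are quasilines or bounded sets.

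\textbf{Step 2: relative hyperbolicity.} In this HHG structure for $H_v$ no two domains are orthogonal. Russell's criterion therefore applies and shows that $H_v$ is hyperbolic relative to the product regions of the minimal domains $[\Delta_u]$. Those domains with bounded $\C{\Delta_u}$ can be absorbed into the top level, or kept as finite peripherals. The product region for $[\Delta_u]$ is coarsely the $H_v$-stabiliser of $u$, namely $q(\Stab{G}{v}\cap\Stab{G}{u})$. There are finitely many conjugacy classes of these because $G$ has finitely many orbits of edges of $\ov X$, so choosing orbit representatives $\mathcal U$ suffices.

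\textbf{Step 3: identifying the peripherals.} By \Cref{rem:infinite_commute}, $Z_u$ and $Z_v$ commute. The edge stabiliser $\Stab{G}{v}\cap\Stab{G}{u}$ acts geometrically on the product $\C{\Delta_u}\times\C{\Delta_v}$, so it is virtually $Z_u\times Z_v$. Its image under $q$ is therefore virtually cyclic and contains $q(Z_u)$. Peripheral subgroups of a relatively hyperbolic group are almost malnormal, so an infinite peripheral is contained in, and hence equal to, the maximal virtually cyclic subgroup $E(Z_u)$ containing $q(Z_u)$.

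\textbf{Main obstacle.} I expect Step 1 to be the main difficulty: making rigorous that $\Stab{G}{v}/Z_v$ inherits an honest HHG structure with exactly these domains, and uniform constants, from the product-region decomposition. If this proves awkward, I would instead verify relative hyperbolicity directly, following Bowditch. The coned-off Cayley graph of $H_v$ is quasi-isometric to $\C{\Sigma_v}$, hence hyperbolic. Bounded coset penetration would then follow from the bounded geodesic image axiom for the nesting $[\Delta_u]\propnest[\Sigma_v]$, together with the fact that the $[\Delta_u]$ are pairwise transverse.
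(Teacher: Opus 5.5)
The paper imports this lemma from \cite{short_HHG:I} without proof, so I am judging your argument on its own terms. It has a genuine gap, and it is the one you flag yourself. Step~1 never produces an HHG structure on $H_v$, yet Russell's theorem for groups needs one, with $H_v$ permuting the domains and acting by isometries on the coordinate spaces. This is not routine. $H_v=\Stab{G}{v}/Z_v$ does not act on $\C{\Sigma_v}$ or on the $\C{\Delta_u}$: their vertex sets contain the cone bases $L_u$, on which $Z_v$ is only assumed to act with bounded orbits, not trivially. The structure would therefore have to be rebuilt from quotient spaces, with every axiom re-verified equivariantly. Without it, you have at best a quasi-isometry $H_v\to F_{[\Sigma_v]}$. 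To get relative hyperbolicity of the \emph{group} $H_v$ with respect to the specific subgroups $E(Z_u)$, you would also need this map to send each coset of $E(Z_u)$ uniformly close to the corresponding quasiline. With virtually cyclic peripherals that identification is where all the content lies: a hyperbolic group is hyperbolic relative to any finite almost malnormal family of maximal virtually cyclic subgroups. Your Bowditch/BCP fallback is only a sketch of the same difficulty.

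The idea you are missing is that item (3) of \Cref{def:short_HHG} already says $H_v$ is hyperbolic. By Bowditch's criterion \cite{Bow-relhyp}, a hyperbolic group is hyperbolic relative to any finite almost malnormal collection of quasiconvex subgroups, so only three things need checking.

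First, $\mathcal U$ is finite, since there are finitely many $G$-orbits of edges.

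Second, $q(Z_u)$ is infinite. Indeed $Z_u$ is infinite by \Cref{notation:short_G_and_DT}, and $Z_u\cap Z_v=\{1\}$, because non-trivial elements of $Z_v$ have unbounded orbits on $\C{\Delta_v}$ while $Z_u$ has bounded orbits there. So each $E(Z_u)$ is well defined, quasiconvex and almost malnormal.

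Third, for $u\neq u'$ in $\mathcal U$, almost malnormality amounts to $E(Z_u)$ and $E(Z_{u'})$ being non-conjugate, since two maximal virtually cyclic subgroups sharing an infinite-order element coincide. Suppose $q(h)$ conjugates one onto the other, with $h\in\Stab{G}{v}$.
\begin{enumerate}
\item Since $hZ_uh^{-1}=Z_{hu}$, the groups $q(Z_{hu})$ and $q(Z_{u'})$ are infinite subgroups of one virtually cyclic group. Hence some $a\in Z_{hu}$ equals $bz$ with $b\in Z_{u'}$ of infinite order and $z\in Z_v$.
\item This $a$ fixes $hu$, and it fixes $u'$ because $z$ acts trivially on $\link{\ov X}{v}$.
\item If $hu\neq u'$, these vertices of $\link{\ov X}{v}$ are non-adjacent since $\ov X$ is triangle-free, so $[\Delta_{hu}]\pitchfork[\Delta_{u'}]$. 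Then $a$ preserves the bounded set $\rho^{[\Delta_{hu}]}_{[\Delta_{u'}]}$, hence has bounded orbits on $\C{\Delta_{u'}}$.
\item On the other hand, $b$ has unbounded orbits on $\C{\Delta_{u'}}$. Also $z$ commutes with $b$ by \Cref{rem:infinite_commute} and has bounded orbits there by \Cref{def:short_HHG}\eqref{shortdef_action_on_line}. So $a^k=b^kz^k$ has unbounded orbits, a contradiction.
\end{enumerate}
Thus $hu=u'$, against the choice of $\mathcal U$. This short argument replaces your Steps 1--2 entirely. It also makes Step 3 unnecessary, since the peripherals are the $E(Z_u)$ from the start.
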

\begin{lem}[{\cite[Lemma 3.42]{short_HHG:II}}]
\label{lem:H_v}  
The following holds if $N$ is deep enough. Let $v\in \ov X^{(0)}$, and let $q\colon \Stab{G}{v}\to H_{v}$ be the quotient map. Then $H_{v_N}$ is the quotient of $H_v$ by $\langle Nq(Z_u)\rangle_{u\in \link{\ov X}{v}}$.
\end{lem}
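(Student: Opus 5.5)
The plan is to compute both sides directly from the definitions $H_v=\Stab{G}{v}/Z_v$ and $H_{v_N}=\Stab{G_N}{v_N}/Z_{v_N}$. This reduces the statement to an identification of the vertex stabiliser $DT_N\cap\Stab{G}{v}$, which is where the geometry enters.

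\emph{Step 1: stabilisers lift.} I first check that $\pi_N(\Stab{G}{v})=\Stab{G_N}{v_N}$. The inclusion $\subseteq$ is clear. Conversely, if $\pi_N(g)$ fixes $v_N$, then $gv=hv$ for some $h\in DT_N$. Hence $h^{-1}g\in\Stab{G}{v}$ and $\pi_N(g)=\pi_N(h^{-1}g)$. So the composition $\Stab{G}{v}\to\Stab{G_N}{v_N}\to H_{v_N}$ is surjective.

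\emph{Step 2: compute the kernel.} An element $g\in\Stab{G}{v}$ maps trivially to $H_{v_N}$ exactly when $\pi_N(g)\in Z_{v_N}=\pi_N(Z_v)$, that is, when $g\in Z_v\cdot(DT_N\cap\Stab{G}{v})$. Therefore
$$H_{v_N}\cong H_v/q\left(DT_N\cap \Stab{G}{v}\right),$$
and it remains to show that $q(DT_N\cap\Stab{G}{v})$ equals the normal closure $\langle\langle Nq(Z_u)\rangle\rangle_{u\in\link{\ov X}{v}}$ in $H_v$.

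\emph{Step 3: the easy inclusion.} For $u\in\link{\ov X}{v}$, the group $Z_u$ acts trivially on $\link{\ov X}{u}\ni v$, so $NZ_u\le DT_N\cap\Stab{G}{v}$. The latter is normal in $\Stab{G}{v}$, so its image under $q$ contains the required normal closure.

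\emph{Step 4: the hard inclusion.} It suffices to prove, for $N$ deep enough, that
$$DT_N\cap\Stab{G}{v}=\left\langle NZ_w\right\rangle_{w\in\CStar{v}},$$
since $q(NZ_v)$ is trivial; the reverse inclusion was Step 3. This is the main obstacle: it is a Greendlinger-type statement saying that nothing in $DT_N$ fixes $v$ except what is forced by the star of $v$. I would obtain it from the rotating-family/windmill structure used in \cite{short_HHG:II}.
\begin{itemize}
\item Colourability lets one build, for each colour, a projection-complex-type hyperbolic space on which the subgroups $NZ_w$, for $N$ deep enough, form a very rotating family. This uses that $Z_w$ acts loxodromically on $\C{\Delta_w}$ and with bounded orbits on neighbouring coordinate spaces, by \Cref{def:short_HHG}.\ref{shortdef_action_on_line} and the proof of \Cref{prop:shortquot_is_short}.
\item The Dahmani--Guirardel--Osin structure theorem then writes the normal closure as a free product of conjugates of rotation groups. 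From this one reads off that an element fixing $v$ must lie in the subgroup generated by rotation groups whose apices lie in the star of $v$.
\item One iterates over the finitely many colours, as in the inductive construction of \cite{short_HHG:II}.
\end{itemize}
Applying $q$ to the displayed equality then concludes the proof.
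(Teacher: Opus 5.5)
Your argument is correct and is the natural unpacking of the cited \cite[Lemma 3.42]{short_HHG:II}. Steps 1--3 correctly give $H_{v_N}\cong H_v/q\left(DT_N\cap\Stab{G}{v}\right)$ with $q\left(DT_N\cap\Stab{G}{v}\right)\supseteq\langle Nq(Z_u)\rangle_{u\in\link{\ov X}{v}}$. The identity in Step 4 is exactly \cite[Corollary 3.36]{short_HHG:II}, which this paper records as the first assertion of \Cref{lem:order_of_dt_in_shortquot}, and you should cite it rather than your sketch. The sketch is imprecise: $DT_N$ contains $\langle NZ_v,NZ_w\rangle\cong\Z^2$ for adjacent $v,w$, so combining the colours needs Dahmani's composite rotating families (used in \cite[Section 3]{short_HHG:II}), not an iteration of the DGO free-product theorem.
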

\noindent Combining \Cref{lem:H_v} with \Cref{cor:df_of_relhyp:torsion} we get:
\begin{cor}\label{cor:torsion_of_hv}
    The following holds if $N$ is deep enough. For every $v\in \ov X^{(0)}$, every finite subgroup of $H_{v_N}$ either comes from a finite subgroup of $H_v$, or belongs to the image of $E(Z_w)$ for some $w\in \link{\ov X}{v}$, and therefore fixes $w_N$.
\end{cor}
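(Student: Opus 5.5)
The plan is to combine \Cref{lem:H_v} with \Cref{cor:df_of_relhyp:torsion}, applied to the relatively hyperbolic group $H_v$ with peripherals $\{E(Z_u)\}_{u\in\mathcal U}$, for $\mathcal U\subseteq\link{\ov X}{v}$ a set of $\Stab{G}{v}$-orbit representatives.

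\textbf{Reduction to finitely many vertices.} Since $G$ acts on $\ov X$ with finitely many orbits of edges, and every component of $\ov X$ has an edge, there are finitely many $G$-orbits of vertices. Conjugating by an element $g\in G$ maps $\Stab{G}{v}$, $Z_v$, $H_v$ and $H_{v_N}$ to the corresponding objects at $gv$, by \Cref{shortdef_normal}. Hence it suffices to treat a finite set of orbit representatives $v$, and then take $N_0$ to be a common multiple of the finitely many thresholds obtained.

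\textbf{The relatively hyperbolic setup at a fixed $v$.} Fix such a $v$. The set $\mathcal U$ is finite, because $\link{\ov X}{v}$ has finitely many $\Stab{G}{v}$-orbits (again from finiteness of edge orbits). Set $P_u=E(Z_u)$. Since $Z_u$ is infinite, $q(Z_u)$ should have finite index in $P_u$, where $q\colon\Stab{G}{v}\to H_v$ is the quotient map; I would justify this via the geometric action of $Z_u$ on the quasiline $\C{\Delta_u}$. Define $H_u\coloneq\langle\langle Nq(Z_u)\rangle\rangle_{P_u}$, which is a normal subgroup of $P_u$. Its normal closure in $H_v$ is the normal closure of all $Nq(Z_{u'})$ with $u'\in\link{\ov X}{v}$, using $Z_{gu}=gZ_ug^{-1}$ for $g\in\Stab{G}{v}$. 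By \Cref{lem:H_v}, the resulting filling quotient of $H_v$ is therefore exactly $H_{v_N}$.

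\textbf{Avoiding the finite set $\mathcal F$.} This is the main obstacle. \Cref{cor:df_of_relhyp:torsion} provides a finite set $\mathcal F\subset H_v$, and I must ensure $H_u\cap\mathcal F=\emptyset$ for all sufficiently deep $N$. Since $P_u$ is virtually cyclic, I would first replace $N$ by a multiple of a fixed $N_1$ so that $N_1q(Z_u)$ is central in $P_u$ (or at least normal, with trivial action on it). Then $H_u=Nq(Z_u)$ is an infinite cyclic group consisting of elements of word length growing linearly in $N$, and so it avoids any fixed finite set once $N$ is large. In particular, this also rules out nontrivial torsion inside $H_u$ for deep $N$.

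\textbf{Conclusion.} \Cref{cor:df_of_relhyp:torsion} then says that each finite subgroup of $H_{v_N}$ is either the isomorphic image of a finite subgroup of $H_v$, or is conjugate into the image of some $P_u$. In the latter case, it lies in the image of $E(Z_w)$ with $w$ a $\Stab{G}{v}$-translate of $u$, and therefore lies in $\link{\ov X}{v}$. It remains to show such a subgroup fixes $w_N$. The group $E(Z_w)$ is contained in the stabiliser of $w$: it commensurates $q(Z_w)$, so its preimage preserves the unique vertex whose cyclic direction is commensurable with $Z_w$. Passing to the quotient, the image of $E(Z_w)$ fixes $w_N$, which gives the claim.
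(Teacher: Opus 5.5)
Your proposal is correct and follows the paper's argument: the paper gets this corollary in one line by combining \Cref{lem:H_v} with \Cref{cor:df_of_relhyp:torsion}, and you fill in the details it leaves implicit (passing to multiples of a fixed $N_1$ so that $Nq(Z_u)$ is normal in $E(Z_u)$ and misses $\mathcal F$, matching the filling kernel with the one in \Cref{lem:H_v}, and handling conjugates of peripherals). The only loosely stated step is the last one: what you actually obtain is that $q(Z_{gw})$ and $q(Z_w)$ are commensurable in $H_v$, i.e.\ only modulo $Z_v$; to conclude $gw=w$, note that $z_w^bz_v^c$ with $b\neq 0$ acts loxodromically on $\C{\Delta_w}$, whereas an element of $Z_{gw}$ stabilising both $w$ and $gw\neq w$ (which are transverse, since $\ov X$ is triangle-free) preserves the bounded set $\rho^{[\Delta_{gw}]}_{[\Delta_w]}$.
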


\subsubsection{Cyclic directions for the quotient}
For later purposes, we gather here a few more lemmas about Dehn filling quotients, again working in the setting of \Cref{notation:short_G_and_DT}. The first lemma describes the image of cyclic directions in the quotient. 

\begin{lem}\label{lem:order_of_dt_in_shortquot} The following holds if $N$ is deep enough. For every $v\in \ov X^{(0)}$, $DT_N\cap \Stab{G}{v}=\langle NZ_w\rangle_{w\in \CStar{v}}$ and $DT_N\cap Z_v=NZ_v$. In particular, $Z_{v_N}\cong \Z/N\Z$.
\end{lem}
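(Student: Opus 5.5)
The inclusion $\langle NZ_w\rangle_{w\in \CStar{v}}\subseteq DT_N\cap \Stab{G}{v}$ is immediate. Each $NZ_w$ lies in $DT_N$. Moreover $Z_v$ stabilises $v$, since it is normal in $\Stab{G}{v}$. For $w\in\link{\ov X}{v}$, $Z_w$ acts trivially on $\link{\ov X}{w}\ni v$, so it also stabilises $v$. Set $K\coloneq\langle NZ_w\rangle_{w\in \CStar{v}}$. The plan for the reverse inclusion is to first prove the second assertion, $DT_N\cap Z_v=NZ_v$, and then deduce the first from the description of $H_{v_N}$ in \Cref{lem:H_v}. The isomorphism $Z_{v_N}\cong\Z/N\Z$ is then a restatement of $DT_N\cap Z_v=NZ_v$, since $Z_v$ is infinite cyclic.

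\textbf{Step 1: $DT_N\cap Z_v=NZ_v$.} Let $z$ generate $Z_v$ and suppose $z^k\in DT_N$ with $0<k<N$. By \Cref{lem_injrad_of_shortquot}, for $N$ deep enough the injectivity radius of $\pi_N$ exceeds any prescribed constant $C$, so $z^k\notin DT_N$ when $k\le C/|z|_S$. The same holds for $N-C/|z|_S\le k<N$, using $z^{k-N}$. For intermediate $k$, I would use \Cref{prop:shortquot_is_short}\ref{item:thmshort2:iii}. Following its proof, $\C{\Delta_{v_N}}$ is $\C{\Delta_v}/(DT_N\cap\Stab{G}{v})$, and this is uniformly quasi-isometric to the ``circle'' $\C{\Delta_v}/NZ_v$, whose length is comparable to $N$. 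An element of $DT_N\cap \Stab{G}{v}$ acts trivially on this quotient. However, $z^k$ translates the quasiline $\C{\Delta_v}$ by roughly $k\tau$, where $\tau>0$ is the stable translation length of $z$ (by \Cref{def:short_HHG}\ref{shortdef_action_on_line}). Hence $z^k$ moves points of the circle $\C{\Delta_v}/NZ_v$ by an amount comparable to $\min(k,N-k)$. Through the uniform quasi-isometry, this is a displacement exceeding the quasi-isometry constants once $C$ is large. This contradicts $z^k$ acting trivially.

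\textbf{Step 2: $DT_N\cap\Stab{G}{v}\subseteq K$.} Let $g\in DT_N\cap\Stab{G}{v}$. Its image in $\Stab{G_N}{v_N}$ is trivial, so its image in $H_{v_N}$ is trivial. We use that $\Stab{G_N}{v_N}$ is the image of $\Stab{G}{v}$: if $\pi_N(h)v_N=v_N$, then $hv=dv$ for some $d\in DT_N$, and we replace $h$ by $d^{-1}h$. Hence the map $\Stab{G}{v}\to H_{v_N}$ factors as $\Stab{G}{v}\xrightarrow{q}H_v\to H_{v_N}$. By \Cref{lem:H_v}, $q(g)$ then lies in $\langle Nq(Z_u)\rangle_{u\in\link{\ov X}{v}}=q(K)$. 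Therefore $g=kz$ with $k\in K$ and $z\in Z_v=\ker q$. Since $k\in K\subseteq DT_N$, we get $z=k^{-1}g\in DT_N\cap Z_v=NZ_v\subseteq K$ by Step 1, so $g\in K$.

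\textbf{Main obstacle.} The delicate point is Step 1, in the middle range of $k$. One must check that the identification of $\C{\Delta_{v_N}}$ with $\C{\Delta_v}/(DT_N\cap\Stab{G}{v})$, as in \cite[Lemma 4.15]{short_HHG:II}, is compatible with the action of $\Stab{G}{v}$, so that elements of $DT_N$ genuinely act trivially on it. One must also check that the quasi-isometry constants in \Cref{prop:shortquot_is_short} are independent of $N$; otherwise the displacement estimate against a circle of length about $N$ gives nothing.
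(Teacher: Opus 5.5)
\textbf{There is a gap in Step 1, and it makes the proposal circular.} The statement of \Cref{prop:shortquot_is_short}\ref{item:thmshort2:iii} only gives \emph{some} quasi-isometry with $N$-independent constants. Such a map cannot detect whether $z^k$ acts trivially, because it need not intertwine the $z^k$-actions. It cannot even fix the index of $DT_N\cap Z_v$ in $Z_v$ up to a bounded factor: circles of length $N$ and $N/2$ are $(2,0)$-quasi-isometric by rescaling, so nothing excludes $z^{N/2}\in DT_N$.

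What your displacement argument actually needs is that the \emph{natural} projection $\C{\Delta_v}/NZ_v\to\C{\Delta_v}/(DT_N\cap\Stab{G}{v})$ is a uniform quasi-isometry. Equivalently, every element of $DT_N\cap\Stab{G}{v}$ must move each point of $\C{\Delta_v}$ to within bounded distance of its $NZ_v$-orbit. In the proof of \Cref{prop:shortquot_is_short} this is obtained only by combining two facts: the bounded orbits of $\langle NZ_w\rangle_{w\in\link{\ov X}{v}}$ on $\C{\Delta_v}$, and the equality $DT_N\cap\Stab{G}{v}=\langle NZ_w\rangle_{w\in\CStar{v}}$. The latter is exactly your first assertion. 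So the ``compatibility'' you list as the main obstacle is not a routine check, and Step 2 derives the first assertion from something that already presupposes it.

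Nor can \Cref{lem:H_v} do this job on its own. By itself it only gives $DT_N\cap\Stab{G}{v}=\langle NZ_w\rangle_{w\in\link{\ov X}{v}}\cdot(DT_N\cap Z_v)$, which is what your Step 2 really shows. That leaves the size of $DT_N\cap Z_v$, which is the whole content, undetermined.

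\textbf{How to fix it.} The missing input is a genuine structural fact about $DT_N$, proved in \cite{short_HHG:II} using composite rotating families. The paper simply cites it as \cite[Corollary 3.36]{short_HHG:II}, and you should do the same; Step 2 then becomes unnecessary.

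With the first assertion in hand, your Step 1 is a legitimate alternative route to $DT_N\cap Z_v=NZ_v$. The paper instead writes $g\in DT_N\cap Z_v$ as $hg'$ with $h\in NZ_v$ and $g'\in\langle NZ_w\rangle_{w\in\link{\ov X}{v}}\cap Z_v$. It then rules out $g'\neq 1$ using the free-product decomposition of \cite[Corollary 3.37]{short_HHG:II} and a centre argument.

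Your geometric idea avoids Corollary 3.37, and can be shortened further. A non-trivial $g'\in Z_v$ has unbounded orbits on $\C{\Delta_v}$, since $Z_v$ acts properly there. On the other hand, $\langle NZ_w\rangle_{w\in\link{\ov X}{v}}$ has bounded orbits on $\C{\Delta_v}$ for the structure of \Cref{prop:shortquot_is_short}. Hence $g'=1$, with no displacement estimate or injectivity-radius argument needed.
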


\begin{proof}
    The first conclusion is \cite[Corollary 3.36]{short_HHG:II}. For the second equality let $g\in DT_N\cap Z_v$. Since $Z_v\le \Stab{G}{v}$, by the first conclusion $g\in \langle NZ_w\rangle_{w\in \CStar{v}}$; moreover $NZ_v$ is normal in $\Stab{G}{v}$, so we can write $g=hg'$ where $h\in NZ_v$ and $g'\in  \langle NZ_w\rangle_{w\in \link{\ov X}{v}}\cap Z_v$. If $g'$ is trivial we are done, so suppose otherwise towards a contradiction. By \cite[Corollary 3.37]{short_HHG:II} we have $$\langle NZ_w\rangle_{w\in \link{\ov X}{v}}=\bigast_{u\in J}NZ_u,$$
    where $J$ is a certain (possibly infinite) subset of $\link{\ov X}{v}$. If $J$ consists of a single vertex $w$ then $g'\in NZ_w\cap Z_v$, and therefore this intersection is non-trivial. This is a contradiction, because the two cyclic directions have different actions on the quasiline $\C{\Delta_v}$ by \Cref{def:short_HHG}.\eqref{shortdef_action_on_line}. Thus suppose that $J$ contains more than a vertex, and therefore $\langle NZ_w\rangle_{w\in \link{\ov X}{v}}$ is a non-abelian free group. Since $Z_v$ is normal in $\Stab{G}{v}$, the square power of $g'$ lies in the centre of $\langle NZ_w\rangle_{w\in \link{\ov X}{v}}$, which is trivial; in turn, since a non-abelian free group has no torsion, $g'$ itself must be trivial, against our initial assumption.
\end{proof}

\subsubsection{Commuting elements}
Next, we prove that commuting elements of $G_N$ cannot have large displacements on transverse lines: this is an instance of a more general principle that holds for all HHG. For the following statement, the \emph{minimum displacement} of an isometry $g$ of a metric space $S$ is defined as $\inf_{x\in S}\dist_S(x,gx)$.
\begin{lem}\label{lem:mindispl_and_commutation}
    There exists $\widetilde{E}> 0$, only depending on $G$, such that the following holds if $N$ is deep enough. For $i=1,2$ let $v_N^i\in \ov X_N^{(0)}$ and $g_i\in \Stab{G_N}{v^i_N}$. Suppose that the minimum displacement of $g_i$ on $\C{\Delta^i}$ is at least $\widetilde{E}$. If $g_1$ and $g_2$ commute then $v^1_N\in \CStar{v^2_N}$.
\end{lem}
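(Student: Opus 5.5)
We argue by contradiction: assume $v^1_N\notin\CStar{v^2_N}$, so $v^1_N\neq v^2_N$ and they are not adjacent in $\ov X_N$. Then the minimal domains $[\Delta_{v^1_N}]$ and $[\Delta_{v^2_N}]$ are transverse in $(X_N,\W_N)$. Indeed, two $\nest$-minimal domains are either equal or not nested. Orthogonality would require $L_{v^2_N}\subseteq\link{X_N}{L_{v^1_N}}=\CStar{v^1_N}$ (by \Cref{lem:decomposition_of_links}), which forces adjacency of the apices. We will work in the relative \squid{} HHS $(X_N,\W_N,n,\delta)$ with uniform constants, given by \Cref{lem:shortquot_uniform}. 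Since the consistency and bounded-geodesic-image constants are uniform in $N$ (\Cref{rem:uniform_CHHS}), the threshold $\widetilde E$ will be chosen in terms of these uniform constants only.

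Fix a point $w\in\W_N$ and consider the consistency inequality for the transverse pair $U=[\Delta_{v^1_N}]$, $V=[\Delta_{v^2_N}]$: with a uniform constant $\kappa$, for every $x\in\W_N$ either $\dist_U(x,\rho^V_U)\le\kappa$ or $\dist_V(x,\rho^U_V)\le\kappa$. Next we use the equivariance relations. Since $g_1$ commutes with $g_2$ and $g_2$ stabilises $v^2_N$, the element $g_1$ also stabilises $g_2 v^1_N$, and similarly $g_2$ stabilises $g_1v^2_N$. To keep the argument clean, we first replace each $g_i$ by a power $g_i^k$ with $k$ uniformly bounded, chosen so that $g_1^k$ fixes $v^2_N$ and $g_2^k$ fixes $v^1_N$.

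Here is how to get such a power. The element $g_1$ permutes the $\langle g_1,g_2\rangle$-orbit structure. Alternatively, we can argue directly: $g_1$ preserves $\link{X_N}{\cdot}$ data and commutes with $g_2$, so it maps the fixed set of $g_2$ in $\ov X_N$ to itself. The element $g_2$ acts on $\C{\Delta_{v^2_N}}$ with large minimum displacement. Then the set of vertices $u$ fixed by $g_2$ with $\C{\Delta_u}$ carrying large displacement is contained in $\CStar{v^2_N}$; this comes from \Cref{def:short_HHG}.\eqref{shortdef_action_on_line} and the structure of $\Stab{G_N}{v_N}$ from \Cref{prop:shortquot_is_short}\ref{item:thmshort2:ii}. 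That set is preserved by $g_1$ and, since $\ov X_N$ is triangle- and square-free, has at most two such vertices. Hence $g_1^2$ fixes $v^2_N$. Displacement bounds pass to $g_i^2$ up to doubling $\widetilde E$ (minimum displacement on a quasiline-like space is coarsely additive for powers of loxodromic-type elements; alternatively we can work directly with $\widetilde E$ replaced by $2\widetilde E$).

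Now we derive the contradiction. Since $h\coloneq g_1^2$ fixes $v^2_N$, it preserves $V$, hence coarsely fixes $\rho^V_U$ by equivariance of relative projections. Similarly, $k\coloneq g_2^2$ coarsely fixes $\rho^U_V$. Pick $x\in\W_N$ with $\pi_U(x)$ uniformly close to $\rho^V_U$; this is possible by partial realisation. Then $\dist_U(kx,\rho^V_U)$ is also small, because $k$ preserves $U$ and $\rho^V_U$ coarsely. Applying $h$, which commutes with $k$, we have $\dist_U(hx,x)\ge\widetilde E$. Hence at least one of $x$ and $hx$ is far from $\rho^V_U$ in $U$, while $hx$ is close to $h\rho^V_U\approx\rho^V_U$ — a contradiction once $\widetilde E>2\kappa+$(uniform error), since $\pi_U(hx)=h\pi_U(x)$ is uniformly close to $h\rho^V_U$. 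In other words, $h$ moves the point $\pi_U(x)$ near its coarse fixed set $\rho^V_U$ by at least $\widetilde E$, which is impossible because $\rho^V_U$ has uniformly bounded diameter.

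This shows that the real content is the following: an element fixing $V$ and preserving $U$, for $U\transverse V$, has uniformly bounded minimum displacement on $\C U$. The main obstacle is therefore the reduction step, namely showing that a bounded power of $g_1$ fixes $v^2_N$ and vice versa. I would first try the argument above, using commutation together with triangle/square-freeness; if that fails, I would instead use the relations $Z$-normality from \Cref{def:short_HHG}.\eqref{shortdef_normal} and \Cref{cor:hyp_satisfied} to control the elements involved.
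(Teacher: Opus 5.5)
\textbf{There is a genuine gap in your reduction step.} You pass from $g_1$ to $g_1^2$ and claim that large minimum displacement survives ``up to doubling $\widetilde E$'', citing quasiline-like behaviour. In $G_N$ the minimal coordinate spaces are not quasilines. By \Cref{prop:shortquot_is_short}\ref{item:thmshort2:iii}, $\C{\Delta_{v_N}}$ is uniformly quasi-isometric to the coarse circle $\C{\Delta_v}/NZ_v$, and $Z_{v_N}\cong\Z/N\Z$. For instance, for even $N$ the element $\pi_N(z_v^{N/2})$ acts as a half-rotation: its minimum displacement is of order $N$, yet its square is trivial. So no threshold on $g_1$ controls the displacement of $g_1^2$. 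Your final contradiction uses exactly the displacement of $h=g_1^2$ on $\C{U}$. The argument therefore fails precisely in the case the squaring was meant to handle, namely when $g_1$ swaps $v^2_N$ with a neighbour $u'$.

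A second, smaller problem is the justification of your ``at most two vertices'' claim. It does not follow from \Cref{def:short_HHG}.\eqref{shortdef_action_on_line}, which concerns $G$ and adjacent vertices. The correct reason is the equivariance you use at the end: an element fixing two non-adjacent vertices $u,u'$ fixes the bounded set $\rho^{[\Delta_{u'}]}_{[\Delta_u]}$ setwise, so its minimum displacement on $\C{\Delta_u}$ is at most $E$. This gives pairwise adjacency of the relevant fixed vertices of $g_2$, which is what triangle-freeness actually needs. Containment in $\CStar{v^2_N}$ alone bounds nothing.

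\textbf{How to repair it.} Your strategy works without passing to powers. Suppose $g_1$ swaps $v^2_N$ and $u'$, and set $U'=[\Delta_{u'}]$. Since $g_1$ fixes $v^1_N$, we get $v^1_N\notin\CStar{u'}$, so $U$ is transverse to $U'$ and $g_1\rho^V_U=\rho^{U'}_U$. Because $V\orth U'$, partial realisation gives a point whose $U$-projection is uniformly close to both $\rho^V_U$ and $\rho^{U'}_U$. Hence $g_1$ moves $\rho^V_U$ by a bounded amount, which is a contradiction.

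\textbf{The paper's route.} The paper avoids fixed-set analysis entirely with a ping-pong argument. Let $E$ be the uniform Behrstock and partial-realisation constant, set $\widetilde E=4E$, and take $p$ with $\pi_U(p)$ near $\rho^V_U$ and $\pi_V(p)$ near $\rho^U_V$.
\begin{enumerate}
\item $g_1p$ is $2E$-far from $\rho^V_U$ in $U$, so Behrstock puts it $E$-close to $\rho^U_V$ in $V$.
\item Hence $g_2g_1p$ is $2E$-far from $\rho^U_V$ in $V$.
\item Symmetrically, $g_1g_2p$ is $2E$-far from $\rho^V_U$ in $U$.
\item Since $g_1g_2=g_2g_1$, this single point violates Behrstock.
\end{enumerate}
This needs no powers, no control of fixed vertices and no triangle-freeness. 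It applies to commuting elements stabilising any two transverse domains of an HHG.
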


\begin{proof}
Towards a contradiction, assume that $v^1_N$ and $v^2_N$ are not adjacent, so that they yield transverse domains $[\Delta_{v^1_N}]$ and $[\Delta_{v^2_N}]$. Let $r^1_2\coloneq \rho^{[\Delta_{v^1_N}]}_{[\Delta_{v^2_N}]}\subset \C{\Delta_{v^2_N}}$ and $r^2_1\coloneq \rho^{[\Delta_{v^2_N}]}_{[\Delta_{v^1_N}]}\subset \C{\Delta_{v^1_N}}$ be the relative projections between these two transverse domains. Since $(X_N,\W_N,n,\delta)$ is a relative \squid{} HHS, there exists a constant $E=E(n,\delta)>0$, not depending on $N$, such that the following hold:
\begin{itemize}
    \item Both $r^1_2$ and $r^2_1$ have diameter at most $E$ (this is \cite[Definition 1.14.(1)]{HHS_II}).
    \item For every $q\in \W_N$, $\min\left\{\dist_{\C{\Delta_{v^1_N}}}(q,r^2_1), \dist_{\C{\Delta_{v^1_N}}}(q,r^1_2)\right\}\le E$ (this is the Behrstock inequality, \cite[Definition 1.14.(4)]{HHS_II}).
    \item There exists $p\in \W_N$ such that $\max\left\{\dist_{\C{\Delta_{v^1_N}}}(p,r^2_1), \dist_{\C{\Delta_{v^1_N}}}(p,r^1_2)\right\}\le E$ (this follows from the partial realisation axiom \cite[Definition 1.14.(8)]{HHS_II}, applied to the pair $([\Delta_{v^1_N}],r^2_1)$).
\end{itemize}
Now set $\widetilde E=4E$, and let $g_1,g_2\in G_N$ be as in the statement. Notice first that, for $p$ as in the third bullet,
$$\dist_{\C{\Delta_{v^1_N}}}(r^2_1,g_1p)\ge \dist_{\C{\Delta_{v^1_N}}}(p,g_1 p)-\dist_{\C{\Delta_{v^1_N}}}(p,r^2_1)-\diam(r^2_1)\ge 4E-E-E=2E,$$
so the Behrstock inequality yields that $\dist_{\C{\Delta_{v^2_N}}}(r^1_2,g_1 p)\le E$. If we now consider the action of $g_2 $ on $\C{\Delta_{v^2_N}}$, the same reasoning yields 
$$\dist_{\C{\Delta_{v^2_N}}}(r^1_2,g_2 g_1 p)\ge \dist_{\C{\Delta_{v^2_N}}}(g_1 p,g_2 g_1 p)-\dist_{\C{\Delta_{v^2_N}}}(g_1 p,r^1_2)-\diam(r^1_2)\ge 2E.$$
However $g_2 g_1 p=g_1 g_2 p$, so if in the above argument we swap the roles of $v^1_N$ and $v^2_N$, and of $g_1 $ and $g_2 $, we get that $$\dist_{\C{\Delta_{v^1_N}}}(r^2_1,g_2 g_1 p)=\dist_{\C{\Delta_{v^1_N}}}(r^2_1,g_1 g_2 p)\ge 2E,$$ contradicting the Behrstock inequality for $q=g_2 g_1 p$.
\end{proof}

\begin{cor}\label{cor:commuting_multitwist}
    The following hold if $N$ is deep enough. For $i=1,2$ let $\{v^i,w^i\}$ be edges of $\ov X$.
    \begin{enumerate}
        \item \label{item_comm_mult1} If there exist commuting elements $g_1\in Z_{v^1_N}$ and $g_2\in Z_{v^2_N}$, then $v^1_N\in \CStar{v^2_N}$.
        \item \label{item_comm_mult2} If there exist commuting elements $g_1\in \langle Z_{v^1_N}, Z_{w^1_N}\rangle-(Z_{v^1_N}\cup Z_{w^1_N})$ and $g_2\in \langle Z_{v^2_N}, Z_{w^2_N}\rangle-(Z_{v^2_N}\cup Z_{w^2_N})$, then $\{v^1_N,w^1_N\}=\{v^2_N,w^2_N\}$.
    \end{enumerate}
\end{cor}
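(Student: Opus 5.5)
Both parts reduce to \Cref{lem:mindispl_and_commutation}. The extra input is that nontrivial elements of the relevant cyclic directions, or of the subgroups generated by two adjacent cyclic directions, have minimum displacement at least $\widetilde E$ on the corresponding minimal coordinate spaces.

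\textbf{Part \eqref{item_comm_mult1}.} By \Cref{lem:order_of_dt_in_shortquot}, $Z_{v_N}\cong \Z/N\Z$. By \Cref{prop:shortquot_is_short}.\ref{item:thmshort2:iii}, $\C{\Delta_{v_N}}$ is uniformly quasi-isometric to $\C{\Delta_v}/NZ_v$, a ``circle'' of length roughly linear in $N$ on which $Z_{v_N}$ acts by rotations. The catch is that a nontrivial $g\in Z_{v_N}$ of small rotation displacement, e.g.\ the image of a generator, does not have displacement $\ge\widetilde E$. So we cannot apply the lemma to $g_1,g_2$ directly; we replace them by suitable powers. Since $g_1$ is nontrivial (else nothing to prove; I assume nontrivial elements are intended), some power $g_1^{k}$ is a nontrivial element of $Z_{v^1_N}$ whose translation on the quasi-circle is bounded below by a fixed fraction of $N$. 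Concretely, if $g_1$ is the image of $z^a$ with $0<a<N$, then $\langle g_1\rangle$ contains an element of the form $z^{b}$ with $b\approx N/2$ up to $N/|\langle g_1\rangle|$. If $\langle g_1\rangle$ is small, we instead use a small-order element directly: it rotates by roughly $N/\mathrm{ord}$, which is already large.

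More cleanly, every nontrivial element of $Z_{v_N}$ has some power whose rotation amount $c$ satisfies $N/3\le c\le 2N/3$. For such a power, displacement on $\C{\Delta_v}/NZ_v$ is at least a uniform multiple of $N$, hence at least $\widetilde E$ for $N$ deep enough, using the uniform quasi-isometry constants. Powers of commuting elements still commute, so \Cref{lem:mindispl_and_commutation} gives $v^1_N\in\CStar{v^2_N}$.

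\textbf{Part \eqref{item_comm_mult2}.} Since $v^1,w^1$ are adjacent, $Z_{v^1}$ and $Z_{w^1}$ commute by \Cref{rem:infinite_commute}, so $\langle Z_{v^1_N},Z_{w^1_N}\rangle$ is a quotient of $\Z^2$. Presumably it is $(\Z/N)^2$, which follows from \Cref{lem:order_of_dt_in_shortquot} applied at $v$ and at $w$. Write $g_1=a b$ with $a\in Z_{v^1_N}$, $b\in Z_{w^1_N}$, both nontrivial. We need to know how $g_1$ acts on $\C{\Delta_{v^1_N}}$. Here $b$ moves points a uniformly bounded amount by \Cref{def:short_HHG}.\eqref{shortdef_action_on_line}, passed to the quotient via the choice of structure in \Cref{prop:shortquot_is_short}, so $g_1$ acts on $\C{\Delta_{v^1_N}}$ like $a$ up to bounded error. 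Symmetrically, on $\C{\Delta_{w^1_N}}$ it acts like $b$.

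We would like a power $g_1^k=a^kb^k$ with both $a^k$ and $b^k$ rotating by $\ge\widetilde E+C$. This is the main obstacle: simultaneous largeness of the rotations of $a^k$ and $b^k$. I would try the following. If $a$ has rotation $\alpha/N$ and $b$ has rotation $\beta/N$ in $\Z/N$, then by a pigeonhole (Dirichlet-type) argument on the pair $(k\alpha,k\beta)$ modulo $N$ one finds $k$ with both rotations in $[N/4,3N/4]$, unless one of $a^k,b^k$ is trivial. If that fails, pass instead to powers where one factor vanishes, which puts us back in the situation of part \eqref{item_comm_mult1}.

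Do the same for $g_2$, taking care to use powers compatible with commutation; any powers of $g_1,g_2$ commute. Applying \Cref{lem:mindispl_and_commutation} to all four pairings, with $v^1_N$ or $w^1_N$ against $v^2_N$ or $w^2_N$, shows that each of $v^1_N,w^1_N$ lies in the closed star of each of $v^2_N,w^2_N$. Since $\ov X_N$ is triangle- and square-free (it is the support graph of the short structure from \Cref{prop:shortquot_is_short}.\ref{item:thmshort2:i}), the only possibility is $\{v^1_N,w^1_N\}=\{v^2_N,w^2_N\}$. Otherwise we would obtain a triangle or a square, e.g.\ $v^1,w^1,v^2,w^2$ forming a $4$-cycle, or a vertex adjacent to both ends of an edge.
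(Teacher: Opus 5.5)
Your overall architecture matches the paper's. You pass to powers whose minimum displacement on the relevant minimal coordinate spaces exceeds $\widetilde E$, apply \Cref{lem:mindispl_and_commutation}, and finish with triangle-freeness of $\ov X_N$. Part \eqref{item_comm_mult1} is fine. Your claim that every nonzero class in $\Z/N\Z$ has a multiple in $[N/3,2N/3]$ is correct: if $d=\gcd(a,N)\le N/3$ the multiples of $d$ meet that interval, and otherwise $d=N/2$. It plays the role of the paper's observation that, for $N>4M$, every nontrivial element of $Z_{u_N}$ has a power outside $\{\pi_N(z_u^k)\}_{|k|\le M}$.

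The genuine problem is the Dirichlet-type step in part \eqref{item_comm_mult2}: it is false. Take $N=5m$, $a=\pi_N(z_{v^1}^{m})$ and $b=\pi_N(z_{w^1}^{2m})$. For $k=1,2,3,4$ the exponent pairs $(km,2km)$ modulo $N$ are $(m,2m),(2m,4m),(3m,m),(4m,3m)$. Since $m<N/4$ and $4m>3N/4$, none of them lies in $[N/4,3N/4]^2$. For $k\equiv 0$ modulo $5$ both factors die at once, so no power kills exactly one factor and your fallback never applies. As written, you have not produced the powers you need.

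The repair is easy, and in fact you do not need simultaneous largeness at all. \Cref{lem:mindispl_and_commutation} involves one vertex for each element, and you apply it separately to each of the four pairings. For the pairing $(x,y)$ with $x\in\{v^1_N,w^1_N\}$ and $y\in\{v^2_N,w^2_N\}$, proceed as follows:
\begin{enumerate}
\item Choose a power $g_1^k$ whose $Z_x$-factor has exponent in $[N/3,2N/3]$, using your part \eqref{item_comm_mult1} argument. The other factor only adds the uniformly bounded error you already identified.
\item Choose a power of $g_2$ in the same way for $y$.
\item These two powers commute, so the lemma applies to them.
\end{enumerate}
This is simpler than the paper, which insists on a single power of each $g_i$ with large displacement on both $\C{\Delta_{v^i_N}}$ and $\C{\Delta_{w^i_N}}$.

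If you prefer to keep simultaneity, ask only that both exponents lie outside $[-M,M]$, for a constant $M$ independent of $N$, rather than in $[N/4,3N/4]$. In the example above, $k=1$ already works. This weaker statement does hold once $N$ is large compared to $M$. It is what the paper establishes, by a case analysis on $g=\pi_N(z_v^rz_w^s)$ assuming $N>4M^3$.
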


\begin{proof} For every $u\in \ov X^{(0)}$, $Z_u$ acts $R_u$-coboundedly on $\C{\Delta_u}$ for some $R_u\ge 0$, and since there are finitely many $G$-orbits of vertices in $\ov X$ there is a uniform bound $R$ on all $R_u$. 
Furthermore, by \Cref{prop:shortquot_is_short}.\ref{item:thmshort2:iii}, each $\C{\Delta_{u_N}}$ is uniformly quasi-isometric to $\C{\Delta_{u}}/NZ_u$, so $Z_{u_N}$ acts on $\C{\Delta_{u_N}}$ $R'$-coboundedly for some constant $R'$ depending on $R$ but not on $N$. The same \Cref{prop:shortquot_is_short}.\ref{item:thmshort2:iii} implies that the diameter of $\C{\Delta_{u_N}}$ grows linearly in $N$. Since neither $R'$ nor the constant $\widetilde{E}$ from \Cref{lem:mindispl_and_commutation} depend on $N$, we can find a constant $M$, again not depending on $N$, such that if $g\in Z_{u_N}$ has minimum displacement less than $\widetilde{E}$ on $\C{\Delta_{u_N}}$, then $g=\pi_N(z_u^k)$ for some $|k|\le M$, where $z_u$ generates $Z_u$. 

Now, if $N>4M$, every $g\in Z_{u_N}$ has a power outside $\{\pi_N(z_u^k)\}_{k=-M,\ldots,M}$. This shows that, for $N$ deep enough, any two elements $g_1,g_2$ as in \Cref{item_comm_mult1} admit powers with minimum displacement at least $\widetilde{E}$ on the corresponding coordinate spaces, and now we can conclude by \Cref{lem:mindispl_and_commutation}. 

The argument for \Cref{item_comm_mult2} is similar: given $v^i$, $w^i$, and $g_i$ as in the statement, it is enough to show that, for $i=1,2$, $g_i$ has a power acting with minimum displacement at least $\widetilde{E}$ on both $\C{\Delta_{v^i_N}}$ and $\C{\Delta_{w^i_N}}$, provided that $N$ is deep enough that $N>4M^3$. For ease of notation we drop the indices and denote $v^1=v$, $w^1=w$, and $g_1=g$ (the argument for $i=2$ is identical). Suppose that $g=\pi_N(z_v^r z_w^s)$, for some $-N/2<r,s<N/2$ and $r,s\neq 0$. If $g$ already lies outside $\{\pi_N(z_v^k z_w^h)\}_{k,h=-M,\ldots,M}$ then it acts with minimum displacement at least $\widetilde{E}$ on both $\C{\Delta_{v_N}}$ and $\C{\Delta_{w_N}}$, and we conclude as above. Otherwise suppose that $|r|\le M$, and look at $g^M=\pi_N(z_v^{rM} z_w^{sM})$, so that $M<|rM\mod N|\le M^2$. There are now three sub-cases to consider. If $|sM\mod N|>M$ we are done. If $0<|sM\mod N|\le M$ then $g^{M^2}=\pi_N(z_v^{rM^2} z_w^{sM^2})$ lies outside $\{\pi_N(z_v^k z_w^h)\}_{k,h=-M,\ldots,M}$ by our choice of $N$. Finally, if $sM=0\mod N$ then $|s|\ge 4M^2>M$, so $g^{M+1}=\pi_N(z_v^{r(M+1)} z_w^{s})$ lies outside $\{\pi_N(z_v^k z_w^h)\}_{k,h=-M,\ldots,M}$.
\end{proof}

\noindent From \Cref{cor:commuting_multitwist} we can also extract that cyclic directions can be algebraically distinguished from products of adjacent cyclic directions:
\begin{cor}\label{lem:recognise_(multi)twist} The following holds if $N$ is deep enough. Let $v,w$ be $\ov X$-adjacent vertices of valence greater than one, and let $x\in \langle Z_{v_N}, Z_{w_N}\rangle$. Then $x\in Z_{v_N}\cup  Z_{w_N}$ if and only if its centraliser $C_{G_N}(x)$ is infinite.
\end{cor}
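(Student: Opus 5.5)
We prove the two implications separately, using \Cref{cor:commuting_multitwist} for the forward direction and the structure of vertex stabilisers for the converse.

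\emph{Elements of $Z_{v_N}\cup Z_{w_N}$ have infinite centraliser.} Suppose $x\in Z_{v_N}$; the case of $Z_{w_N}$ is symmetric. By \Cref{def:short_HHG} and \Cref{prop:shortquot_is_short}.\ref{item:thmshort2:ii}, $Z_{v_N}$ is normal in $\Stab{G_N}{v_N}$ and acts trivially on $\link{\ov X_N}{v_N}$. For every $u\in\link{\ov X}{v}$, the group $Z_{u_N}$ normalises the finite cyclic group $Z_{v_N}$. By \Cref{rem:infinite_commute}, $Z_u$ commutes with $Z_v$ already in $G$, so $Z_{u_N}$ commutes with $x$. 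It remains to see that the subgroup generated by the $Z_{u_N}$, $u\in\link{\ov X}{v}$, is infinite. Since $v$ has valence greater than one, there are distinct neighbours $u,u'$ of $v$. Since $\ov X$ is triangle-free, $u$ and $u'$ are not adjacent, so $u_N,u'_N$ are not adjacent for $N$ deep enough. By \Cref{cor:commuting_multitwist}.\eqref{item_comm_mult1}, generators of $Z_{u_N}$ and $Z_{u'_N}$ then do not commute. Heuristically, by \Cref{cor:torsion_of_hv}, $\langle Z_{u_N},Z_{u'_N}\rangle$ maps in $H_{v_N}$ to a Dehn-filled copy of a non-elementary subgroup, and so it is infinite. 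A more robust alternative is to take a single generator $z$ of $Z_{u_N}$ and some $g\in\Stab{G_N}{v_N}$ with $gu_N\neq u_N$. Then the infinitely many conjugates of $Z_{u_N}$ under $\langle z, gzg^{-1}\rangle$ all lie in the centraliser. Here we use that $H_{v_N}$ is non-elementary hyperbolic, which should follow from the relative Dehn filling of $H_v$ described in \Cref{lem:H_v}. Hence $C_{G_N}(x)\supseteq\langle Z_{u_N}\rangle_{u\in\link{\ov X}{v}}$ is infinite.

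\emph{Elements of $\langle Z_{v_N},Z_{w_N}\rangle\setminus(Z_{v_N}\cup Z_{w_N})$ have finite centraliser.} Let $x$ be such an element. Then $x$ is a genuine multitwist-type element and has finite order, since $\langle Z_{v_N},Z_{w_N}\rangle\cong(\Z/N\Z)^2$ is abelian by \Cref{rem:infinite_commute} and \Cref{lem:order_of_dt_in_shortquot}. Let $c\in C_{G_N}(x)$. Conjugating gives $cxc^{-1}=x\in\langle Z_{c v_N},Z_{c w_N}\rangle\setminus(Z_{cv_N}\cup Z_{cw_N})$. Applying \Cref{cor:commuting_multitwist}.\eqref{item_comm_mult2} to $g_1=g_2=x$ yields $\{cv_N,cw_N\}=\{v_N,w_N\}$. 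Therefore $C_{G_N}(x)$ stabilises the edge $e=\{v_N,w_N\}$, and a subgroup of index at most two fixes both $v_N$ and $w_N$.

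It remains to show that the pointwise stabiliser $P=\Stab{G_N}{v_N}\cap\Stab{G_N}{w_N}$ is virtually $Z_{v_N}\times Z_{w_N}$, hence finite. Its image in $H_{v_N}$ lies in the stabiliser of $w_N$. By \Cref{cor:torsion_of_hv} and the peripheral structure of \Cref{lem:H_v}, this stabiliser is the image of $E(Z_w)$ modulo $NZ_w$. That image is finite, since $E(Z_w)$ is virtually $Z_w$. Hence $P$ is an extension of a finite group by the finite group $Z_{v_N}$, so $P$ is finite and so is $C_{G_N}(x)$.

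The main obstacle is this last identification, namely that the edge stabiliser in the quotient is finite. This uses the fact that in $H_{v_N}$ the stabiliser of the neighbour $w_N$ is exactly the filled peripheral $E(Z_w)/\langle Nq(Z_w)\rangle$. I would extract it from the relative Dehn filling theorem applied to $H_v$, which is hyperbolic relative to the $E(Z_u)$, together with \Cref{lem:H_v}. The subtle point is to make sure that no extra elements of $H_{v_N}$ fix $w_N$ beyond the image of the peripheral.
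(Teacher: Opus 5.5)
\paragraph{The forward direction has a genuine gap.} You never actually show that $C_{G_N}(x)$ is infinite for $x\in Z_{v_N}$.

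Your first argument is, as you say, heuristic. It would also need $u_N\neq u'_N$ in $\ov X_N$, not just non-adjacency of $u,u'$ in $\ov X$.

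Your ``robust alternative'' claims that $\langle z,gzg^{-1}\rangle$ is infinite because $H_{v_N}$ is non-elementary hyperbolic. This does not follow: two non-commuting torsion elements of a hyperbolic group can generate a finite subgroup. You also do not justify the existence of $g\in\Stab{G_N}{v_N}$ with $gu_N\neq u_N$. Moreover, you only assert that $H_{v_N}$ is non-elementary. Since it is a quotient of $H_v$, you must first prove that $H_v$ is non-elementary, and this is exactly where the valence hypothesis is used.

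\paragraph{The missing observation.} You do not need the $Z_{u_N}$ at all. $Z_{v_N}$ is a normal cyclic subgroup of $\Stab{G_N}{v_N}$, and conjugation acts on it through $\{\pm1\}$, as it already does on $Z_v$. Hence $C_{G_N}(x)$ contains a subgroup of index at most two in $\Stab{G_N}{v_N}$. By the extension $0\to Z_{v_N}\to\Stab{G_N}{v_N}\to H_{v_N}\to 0$, it then suffices to show that $H_{v_N}$ is infinite.

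To do this, pick $u\in\link{\ov X}{v}-\{w\}$. Suppose $H_v$ were virtually cyclic. Then the images of $Z_u$ and $Z_w$ would be commensurable, so $Z_u$ would meet $\langle Z_v,Z_w\rangle$ non-trivially. But $\langle Z_v,Z_w\rangle$ fixes $w$. Every non-trivial element of $Z_u$ acts loxodromically on $\C{\Delta_u}$, so it cannot fix $w$, whose projection to $\C{\Delta_u}$ is bounded because $u,w$ are distinct and non-adjacent (triangle-freeness). So $H_v$ is non-elementary. Then \Cref{lem:H_v} together with \cite[Corollary 1.6]{Osin_DehnFill} makes $H_{v_N}$ non-elementary hyperbolic, hence infinite, for $N$ deep enough.

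\paragraph{The converse direction.} Here you follow the paper: conjugating by $c\in C_{G_N}(x)$ and applying \Cref{cor:commuting_multitwist}.\eqref{item_comm_mult2} shows that $c$ preserves $\{v_N,w_N\}$.

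The step you flag as the main obstacle is finiteness of $\Stab{G_N}{v_N}\cap\Stab{G_N}{w_N}$, and you leave it open. The paper does not derive this from relative Dehn filling. It quotes \cite[Corollary 2.17]{short_HHG:II}: this pointwise edge stabiliser virtually coincides with $\langle Z_{v_N},Z_{w_N}\rangle\cong Z_{v_N}\times Z_{w_N}$.

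Your route through the stabiliser of $w_N$ in $H_{v_N}$ could be made to work. However, it first requires identifying $\link{\ov X_N}{v_N}$ and $\Stab{G_N}{v_N}$ with the corresponding quotients of $\link{\ov X}{v}$ and $\Stab{G}{v}$. That structural input is precisely what the citation supplies.
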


\begin{proof} We first notice that, under the above assumption, $H_v$ is non-elementary hyperbolic. To see this, let $u\in \link{\ov X}{v}-\{w\}$, which exists as $v$ has valence greater than one. If $H_v$ was virtually cyclic then $Z_u$ would intersect $\langle Z_v,Z_w\rangle$ non-trivially. This is impossible since $\langle Z_v,Z_w\rangle$ fixes $w$ while no element in $Z_u$ does: indeed, every element in $Z_u$ acts loxodromically on $\C{\Delta_u}$, and therefore cannot fix the projection of $w$ to $\C{\Delta_u}$. 

Moving to the proof of the Lemma, we first suppose without loss of generality that $x\in Z_{v_N}$. Then $C_{G_N}(x)$ contains the index-two subgroup of $\Stab{G_N}{v_N}$ centralising $Z_{v_N}$, so it is enough to prove that $\Stab{G_N}{v_N}$ is infinite. In turn, the latter fits into an extension 
$$0\to Z_{v_N}\to \Stab{G_N}{v_N}\to H_{v_N}\to 0,$$
so we are left to prove that $H_{v_N}$ is infinite. \Cref{lem:H_v} describes $H_{v_N}$ as a relative hyperbolic Dehn filling of the non-elementary hyperbolic group $H_v$; hence, by \cite[Corollary 1.6]{Osin_DehnFill}, if $N$ is deep enough then $H_{v_N}$ is non-elementary hyperbolic, and in particular infinite. 

For the converse implication, let $x\in \langle Z_{v_N}, Z_{w_N}\rangle-(Z_{v_N}\cup Z_{w_N})$, and let $g\in C_{G_N}(x)$. Then $x=gxg^{-1}\in \langle Z_{g(v_N)}, Z_{g(w_N)}\rangle-(Z_{g(v_N)}\cup Z_{g(w_N)})$, by how cyclic directions behave under conjugation; thus \Cref{cor:commuting_multitwist}.\eqref{item_comm_mult2} implies that $g$ fixes $\{v_N,w_N\}$ setwise. This proves that $C_{G_N}(x)$ is virtually contained in $\Stab{G_N}{v_N}\cap\Stab{G_N}{w_N}$, which in turn virtually coincides with the finite subgroup $\langle Z_{v_N}, Z_{w_N}\rangle\cong Z_{v_N}\times Z_{w_N} $ by \cite[Corollary 2.17]{short_HHG:II}.
\end{proof}

\subsection{Extracting combinatorial data from automorphisms}
We now prove that group automorphisms of $G_N$ induce simplicial automorphisms of $\ov X_N$:
\begin{thm}\label{thm:extract}
    Let $G$ be a colourable short HHG as in \Cref{notation:short_G_and_DT}. Suppose further that $\ov X$ has no valence-one vertices. There exists an integer $M$, only depending on $G$, such that the following holds if $N$ is deep enough. Every group automorphism $\phi\in\Aut{G_N}$ induces a simplicial automorphism $\phi_{\#}\in \Aut{\ov X_N}$ such that, for every $v\in \ov X^{(0)}$, $\phi_{\#}(v_N)$ is the unique vertex of $\ov X_N$ satisfying $$\phi(MZ_{v_N})=MZ_{\phi_{\#}(v_N)}.$$
\end{thm}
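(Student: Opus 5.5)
The plan is to characterise the subgroups $MZ_{v_N}$ purely algebraically inside $G_N$, so that any automorphism must permute them; the map $\phi_\#$ is then read off from this permutation, and adjacency is checked via commutation. The ingredients are the torsion control of \Cref{cor:hyp_satisfied} and \Cref{cor:torsion_of_hv}, the fact that $Z_{v_N}\cong\Z/N\Z$ (\Cref{lem:order_of_dt_in_shortquot}), and the centraliser criterion of \Cref{lem:recognise_(multi)twist}.

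First I choose $M$. Let $M_0$ bound the orders of finite subgroups of $G$ and of the $H_v$; there are finitely many orbits, and the groups are hyperbolic or HHG, so torsion is bounded. Let $M$ be a multiple of $M_0!$, and take $N$ to be a deep enough multiple of a large multiple of $M$. Call $x\in G_N$ \emph{special} if $x$ has finite order strictly greater than some threshold $T$, with $M_0<T\ll N/M$, and $x$ has infinite centraliser.

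The main step, and the one I expect to be hardest, is to show that every special element has a power $x^{M'}$, with $M'$ dividing $M$, that is a nontrivial element of some $Z_{v_N}$; moreover, $MZ_{v_N}$ should be exactly the set of $M$-th powers of elements of $\langle x\rangle$ for suitable special $x$. Here is the argument I would try. By \Cref{cor:hyp_satisfied}, since $\langle x\rangle$ is larger than any lift from $G$, it fixes a vertex $v_N$ or an edge; after squaring we may assume it fixes an edge pointwise, or fixes a vertex. In the vertex case, project to $H_{v_N}$. By \Cref{cor:torsion_of_hv} the image either has order at most $M_0$, or fixes some $w_N\in\link{\ov X_N}{v_N}$. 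In the first case $x^{M_0!}\in Z_{v_N}$ and we are done. In the second case $x$ lies in the edge stabiliser of $\{v_N,w_N\}$, which is virtually $Z_{v_N}\times Z_{w_N}$ by \cite[Corollary 2.17]{short_HHG:II}; hence a bounded power of $x$ lies in $\langle Z_{v_N},Z_{w_N}\rangle$. Since the centraliser of $x$ is infinite, \Cref{lem:recognise_(multi)twist} forces that power into $Z_{v_N}\cup Z_{w_N}$. One must also check that the bounded power is not trivial, which is where the threshold $T>M_0$ (squared) enters, and that the index bounds are uniform in $N$, which follows from uniformity across the finitely many orbits.

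With that in hand, I define $\phi_\#$. The subgroups $MZ_{v_N}$ are the maximal subgroups obtained as above; they are cyclic of order $N/M$, and distinct vertices give subgroups with trivial intersection (by \Cref{cor:commuting_multitwist} together with the edge-stabiliser structure). Since $\phi$ preserves specialness, it sends each $MZ_{v_N}$ onto some $MZ_{u_N}$, and the vertex $u_N$ is unique. I set $\phi_\#(v_N)=u_N$; applying the same construction to $\phi^{-1}$ shows this is a bijection. Finally, adjacency is preserved. If $v_N$ and $w_N$ are adjacent, then $Z_{v_N}$ and $Z_{w_N}$ commute by \Cref{rem:infinite_commute}, so their images commute, and \Cref{cor:commuting_multitwist}.\eqref{item_comm_mult1} then gives $\phi_\#(v_N)\in\CStar{\phi_\#(w_N)}$; distinctness holds by injectivity. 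The converse follows by applying the same argument to $\phi^{-1}$. Hence $\phi_\#$ is a simplicial automorphism of $\ov X_N$.
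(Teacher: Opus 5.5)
Your proposal is essentially the paper's proof. \Cref{cor:hyp_satisfied} and \Cref{cor:torsion_of_hv} put a uniform power of $\phi(z)$ into some $\langle Z_{v'_N},Z_{w'_N}\rangle$; the centraliser criterion \Cref{lem:recognise_(multi)twist} then pushes it into a single cyclic direction, and that lemma is where the no-valence-one hypothesis enters, including to make generators of $Z_{v_N}$ special in the first place; commutation together with \Cref{cor:commuting_multitwist} gives uniqueness and adjacency. Drop the intrinsic characterisation of the $MZ_{v_N}$ as ``the maximal subgroups $\langle x^M\rangle$ with $x$ special'': your main step does not justify it (a special $x$ with $x^3=z^2$, $z$ a generator of $Z_{u_N}$, would give $\langle x^M\rangle=\langle z^{2M/3}\rangle\neq MZ_{u_N}$), and it is not needed. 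Instead, as in the paper, apply the main step to $\phi(z)$ for $z$ a generator of $Z_{v_N}$, so that $\phi(z)^M\in Z_{u_N}\cong\Z/N\Z$ has order exactly $N/M$ and hence generates $MZ_{u_N}$.
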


\begin{proof} Let $\phi$ be an automorphism of $G_N$. For every $v\in \ov X^{(0)}$, $Z_{v_N}\coloneq Z_v/(DT_N\cap Z_v)$ is isomorphic to $\Z/N\Z$ by \Cref{lem:order_of_dt_in_shortquot}, and therefore $\phi(Z_{v_N})\cong \Z/N\Z$ since $\phi$ is an automorphism. Let $g$ generate $\phi(Z_{v_N})$. Our next goal is to invoke \Cref{cor:hyp_satisfied} to show that a uniform power of $g$ fixes an edge of $\ov X_N$; later we will choose $\phi_{\#}(v_N)$ as one of the endpoints of such edge. 

    \begin{claim}\label{claim:extracting_edge} There exists $M\in \N$, only depending on $G$, such that the following holds for deep enough $N$. There exist $\ov X$-adjacent vertices $v',w'$ such that $g^{M}\in \langle Z_{v'_N}, Z_{w'_N}\rangle$.
    \end{claim}

    \begin{claimproof}[Proof of \Cref{claim:extracting_edge}]
    Firstly, recall that a hierarchically hyperbolic group contains finitely many conjugacy classes of finite subgroups, by \cite[Theorem G]{HHP:coarse}. In particular, we can choose $N$ deep enough that $G$ contains no subgroup isomorphic to $\Z/N\Z$. Hence, by \Cref{cor:hyp_satisfied} there exists a non-empty simplex $\ov\Sigma$ of $\ov X$ which is preserved by $\phi(Z_{v_N})$. Notice that $\ov X_N$ is triangle-free, and therefore $\ov \Sigma$ is either a single vertex or an edge; either way, there exists $v'\in \ov X^{(0)}$ such that $g^2\in \Stab{G_N}{v'_N}$. 

Now let $q\colon \Stab{G_N}{v'_N}\to H_{v'_N}\coloneq \Stab{G_N}{v'_N}/Z_{v'_N}$ be the quotient map. By \Cref{cor:torsion_of_hv} there exists a constant $M'$ such that either $q(g^2)$ has order at most $M'$, or $q(g^2)\in E(Z_{w'})/NZ_{w'}$ for some $w'\in \link{\ov X}{v'}$. In the former case $g^{2M'}\in Z_{v'_N}$; in turn, since $M'$ only depends on the isomorphism type of $H_{v'}$, of which there are finitely many since $G$ acts cofinitely on the vertices of $\ov X$, there exists a power of $g$, only depending on $G$, which belongs to $Z_{v'_N}$. In the latter case, since $Z_{w'}$ has finite-index in $E(Z_{w'})$, a uniform power of $q(g^2)$ must belong to $q(Z_{w'})$, and again we can choose this power to not depend on either $v'$ or $w'$ as there are finitely many $G$-orbits of edges in $\ov X$. 
\end{claimproof}
\noindent Now, since $\ov X$ has no valence-one vertices, \Cref{lem:recognise_(multi)twist} yields that the centraliser of any element in $Z_{v_N}$ is infinite, and the same must be true for $\phi(Z_{v_N})$ as $\phi$ is an isomorphism. In particular $g^M$ has infinite centraliser, and again  \Cref{lem:recognise_(multi)twist} implies that $g^{M}\in Z_{\phi_{\#}(v_N)}$ where $\phi_{\#}(v_N)$ is either $v'_N$ or $w'_N$. Notice moreover that, since $M$ does not depend on $N$, we can choose $N$ to be a multiple of $M$. Then $\langle g^M\rangle=\phi(MZ_v)$ is a subgroup of $Z_{\phi_{\#}(v_N)}\cong \Z/N\Z$ of order $N/M$, and it must therefore coincide with $MZ_{\phi_{\#}(v_N)}$. 

We also point out that $\phi_{\#}(v_N)$ is unique. Indeed, suppose towards a contradiction that $g^{M}\in Z_{v''_N}$ for some $v''_N\neq \phi_{\#}(v_N)$, so that $Z_{\phi_{\#}(v_N)}\cap Z_{v''_N}$ have a non-trivial intersection. \Cref{cor:commuting_multitwist}.\eqref{item_comm_mult1} would then imply that $v''_N\in\link{\ov X_N}{ \phi_{\#}(v_N)}$; but then $Z_{v''_N}$ would act geometrically on $\C{\Delta_{v''_N}}$ and with uniformly bounded orbits on $\C{\Delta_{\phi_{\#}(v_N)}}$, contradicting the existence of a non-trivial intersection.

We are left to prove the following:
\begin{claim}\label{claim:graphiso}
    Whenever $N$ is deep enough, $\phi_{\#}\colon \ov X\to \ov X$ is a graph isomorphism.
\end{claim}
\begin{claimproof}[Proof of \Cref{claim:graphiso}] It is enough to prove that $\phi_{\#}$ maps edges to edges, since then the same argument applied to $\phi^{-1}$ will produce an inverse fof $\phi_{\#}$. Hence let $v_N, w_N$ be $\ov X_N$-adjacent. By  \Cref{rem:infinite_commute}, the subgroups $MZ_{v_N}$ and $MZ_{w_N}$ commute, and therefore $MZ_{\phi_\#(v_N)}$ and $MZ_{\phi_\#(w_N)}$ must commute as well. Now \Cref{cor:commuting_multitwist} shows that $\phi_\#(v_N)\in\CStar{\phi_\#(w_N)} $; moreover, since cyclic directions have trivial intersection, we get that $\phi_\#(v_N),\phi_\#(w_N)$ are $\ov X_N$-adjacent, as required.
\end{claimproof}
\noindent The proof of \Cref{thm:extract} is now complete.
\end{proof}

\noindent We now apply the above theorem to the \emph{extended mapping class group} of a five-punctured sphere (where we allow orientation-reversing mapping classes) to prove an analogue of a celebrated theorem of Ivanov \cite{Ivanov:autC}:
\begin{cor}\label{cor:autmcgs5}
    Let $G=\MCG^\pm(S_5)$ be the extended mapping class group of a five-holed sphere. If $N$ is deep enough, then $\Aut{G_N}=G_N$.
\end{cor}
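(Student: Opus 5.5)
The plan is to combine \Cref{thm:extract} with the Ivanov-type rigidity for quotients of curve graphs from \cite{rigidity_mcg_mod_dt}.

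First we check that $G=\MCG^\pm(S_5)$ fits \Cref{notation:short_G_and_DT} and \Cref{thm:extract}. It is a colourable short HHG with support graph $\ov X=\C{S_5}$ (the Petersen-like curve graph, which is triangle- and square-free), and the cyclic directions $Z_v$ are generated by the Dehn twists $T_v$, so $DT_N$ is the normal closure of $N$-th powers of twists. Every vertex of $\C{S_5}$ has infinite valence, so there are no valence-one vertices.

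Given $\phi\in\Aut{G_N}$, \Cref{thm:extract} produces $\phi_\#\in\Aut{\ov X_N}$ with $\phi(MZ_{v_N})=MZ_{\phi_\#(v_N)}$, where $\ov X_N=\C{S_5}/DT_N$. By the analogue of Ivanov's theorem in \cite{rigidity_mcg_mod_dt}, for $N$ deep enough every automorphism of $\C{S_5}/DT_N$ is induced by some $h\in G_N$, meaning $\phi_\#(v_N)=h\cdot v_N$ for all $v$. Set $\psi=c_h^{-1}\circ\phi$, where $c_h$ is conjugation. Then $\psi_\#=\mathrm{id}$, so $\psi(MZ_{v_N})=MZ_{v_N}$ for every $v$; that is, $\psi$ sends $\pi_N(T_v^M)$ to $\pi_N(T_v^{Mk_v})$ for some unit-type exponent $k_v$.

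Next we show $k_v$ is constant. Conjugation-equivariance, $\psi(gT_vg^{-1})=\psi(g)\psi(T_v)\psi(g)^{-1}$, shows that $k_{gv}$ depends only on how $\psi(g)$ moves things. Using that $\psi_\#=\mathrm{id}$ and $\psi(g)\cdot v_N=$ ($\psi$ applied to the stabiliser data) $g\cdot v_N$, we get that $\psi(g)g^{-1}$ fixes every vertex of $\ov X_N$. For $N$ deep enough, the kernel of the action of $G_N$ on $\ov X_N$ should be trivial (or at least finite central, and trivial for $\MCG^\pm(S_5)$, which has trivial centre); this should follow from \Cref{cor:hyp_satisfied} together with the fact that an element fixing all vertices fixes many disjoint-type pairs. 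Hence $\psi(g)=g$ on a generating set, so $\psi=\mathrm{id}$ and $\phi=c_h$. Injectivity of $G_N\to\Aut{G_N}$ is centrelessness of $G_N$, which again follows since the centre fixes all of $\ov X_N$ (it commutes with all $MZ_{v_N}$, so by \Cref{cor:commuting_multitwist} it preserves every vertex) and then is trivial by the same kernel argument.

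The main obstacle is the step showing $\psi(g)g^{-1}$ acts trivially on $\ov X_N$ and that the kernel of $G_N\curvearrowright\ov X_N$ is trivial. I would first try to prove $\psi(g)\cdot v_N=g\cdot v_N$ directly from $\psi(gMZ_vg^{-1})=\psi(g)MZ_{v_N}\psi(g)^{-1}=MZ_{\psi(g)v_N}$, combined with the uniqueness in \Cref{thm:extract}. For the kernel, I would use that a kernel element commutes, up to finite index, with all twists, and that \Cref{lem:mindispl_and_commutation} then forces it into $\bigcap_v\Stab{G_N}{v_N}$, which is finite and hence handled by \Cref{cor:hyp_satisfied} and the corresponding fact in $\MCG^\pm(S_5)$.
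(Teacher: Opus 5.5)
Your architecture matches the paper's proof. You use \Cref{thm:extract} to get $\phi_\#$, then the Ivanov-type theorem of \cite{rigidity_mcg_mod_dt} to realise $\phi_\#$ by some $h\in G_N$. You then use Ivanov's conjugation trick: comparing $\psi(MZ_{gv_N})=MZ_{gv_N}$ with $\psi(g)\,MZ_{v_N}\,\psi(g)^{-1}=MZ_{\psi(g)v_N}$, and using that cyclic directions of distinct vertices intersect trivially, gives $\psi(g)v_N=gv_N$ for all $v$. The paragraph about exponents $k_v$ is announced but never carried out, and it is not needed.

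The gap is exactly the step you flag. You need the action $G_N\curvearrowright\ov X_N$ to be faithful twice: to upgrade ``$\psi(g)$ and $g$ act identically on $\ov X_N$'' to $\psi(g)=g$, and to show that the centre of $G_N$, which you correctly show acts trivially on $\ov X_N$, is trivial. Your proposed argument for faithfulness does not work. A kernel element fixes every vertex of $\ov X_N$, so it falls under the first alternative of \Cref{cor:hyp_satisfied}, which then says nothing about it. Knowing that the kernel lies in the finite group $\bigcap_v\Stab{G_N}{v_N}$ also does not let you transfer faithfulness from $\MCG^\pm(S_5)\curvearrowright\C{S_5}$ to the quotient. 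You would still have to show that no nontrivial element in the image of the pointwise stabiliser of an edge acts trivially on $\ov X_N$. That needs genuinely new input about how $DT_N$ acts on curves, and nothing you cite supplies it.

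No such argument is needed. The result you invoke, \cite[Theorem 5.11]{rigidity_mcg_mod_dt}, states that the map $G_N\to\Aut{\ov X_N}$ induced by the action is an isomorphism, not merely a surjection. The paper uses surjectivity to produce $h$, exactly as you do. It uses injectivity twice: to conclude $\phi(f)=hfh^{-1}$ in $G_N$ from equality of the actions on $\ov X_N$, and to deduce that an element commuting with the images of all half Dehn twists (hence acting trivially on $\ov X_N$) is trivial. With that citation in place, your proof goes through and coincides with the paper's.

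A minor point: in the short HHG structure on $\MCG^\pm(S_5)$ that the paper uses, the cyclic directions are generated by half Dehn twists, not full Dehn twists.
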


\begin{proof}
    We first check that $G$ fits the framework of \Cref{thm:extract}. Indeed, in \cite[Section 2.3.1]{short_HHG:I} the first author argued that $G$ admits a colourable short HHG structure, where:
    \begin{itemize}
    \item the support graph $\ov X$ is the curve graph, which has no valence-one vertices;
    \item cyclic directions are the infinite subgroups generated by half Dehn twists. 
    \end{itemize}
    Now let $G_N\to \Aut{G_N}$ be the natural map, sending every element to the corresponding inner automorphism, and we want to show that this map is an isomorphism. For surjectivity, let $\phi\in \Aut{G_N}$ be an automorphism, and let $\phi_{\#}\in \Aut{\ov X_N}$ be the associated graph automorphism from \Cref{thm:extract}. By \cite[Theorem 5.11]{rigidity_mcg_mod_dt}, the map $G_N\to \Aut{X_N}$ induced by the action is an isomorphism. In particular there exists $g\in G_N$ such that $\phi_{\#}$ coincides with the action of $g$ on $\ov X_N$. This implies that, for every $v\in \ov X^{(0)}$, $$\phi(MZ_{v_N})=MZ_{g(v_N)}.$$
    Arguing as in \cite[Section 3]{Ivanov:autC}, we will now show that $\phi$ coincides with the conjugation by $g$. Indeed, for every $f\in G$,
    $$\phi(MfZ_{v_N}f^{-1})=\phi(MZ_{f(v_N)})=MZ_{gf(v_N)},$$
    where we used that $fZ_{v_N}f^{-1}=Z_{f(v_N)}$ by how half Dehn twists behave under conjugation. However
    $$\phi(MfZ_{v_N}f^{-1})=\phi(f)\phi(MZ_{v_N})\phi(f)^{-1}=MZ_{\phi(f)g(v_N)}.$$
    Since the cyclic directions of two distinct vertices of $\ov X_N$ intersect trivially, the above computation shows that $gf(v_N)=\phi(f)g(v_N)$ for all $v\in \ov X^{(0)}$, or in other words $\phi(f)=gfg^{-1}$ as automorphisms of $\ov X_N$. Then again \cite[Theorem 5.11]{rigidity_mcg_mod_dt} shows that $\phi(f)$ must coincide with $gfg^{-1}$ for all $f\in G_N$, thus proving that $\phi$ is the conjugation by $g$.

    We are left to prove injectivity of the map $G_N\to \Aut{G_N}$, or equivalently that $G_N$ has no centre. To see this, notice that, if $g\in G_N$ and the image of any half Dehn twist commute, then the above arguments show that $g$ acts trivially on $\ov X_N$, and again \cite[Theorem 5.11]{rigidity_mcg_mod_dt} implies that $g$ must be trivial.
\end{proof}

\begin{rem}\label{rem:out_is_finite} We can rephrase the above Theorem by saying that the outer automorphism group of $G_N$ is trivial. However, the fact that $\text{Out}(G_N)$ is \emph{finite} could already be extracted from results in the literature. Indeed, by \cite[Theorem 5.8.(2)]{dfdt}, $G_N$ is non-elementary hyperbolic, so classical work of Paulin and Bestvina-Feighn, among others \cite{Paulin, Bestvina-Feighn}, proves that $G_N$ has finite outer automorphism group unless it admits a splitting over virtually cyclic subgroups. The latter does not happen because mapping class groups have Serre's property (FA) by \cite{FA_MCG}, and this property passes to quotients. 
\end{rem}

\section{Distinguishing braid group and spheres}
\label{sec:spheres}
We conclude the paper by proving that the four strand braid group (resp. the mapping class group of a sphere with five punctures) can be recognised from the braid groups on more strands (resp. the mapping class groups of spheres with more punctures) by its non-elementary hyperbolic quotients. This will prove the two parts of \Cref{thmintro:distinguish}.

We first set up some notation and recall some well-known facts, referring to \cite{FM_primer} for all the details. 

\textbf{The groups}. For $n\ge 4$, let $S_n$ be a sphere with $k$ punctures, let $\MCG(S_n)$ be its usual mapping class group (so we do not allow orientation-reversing homeomorphisms), and let $\MCG(S_n,*)$ be the finite-index subgroup of $\MCG(S_n)$ fixing a given puncture. Also, let $B_n$ be the braid group on $n$ strands, which is the mapping class group of a $n$-punctured disk $D_n$. The two mapping class groups are related via the \emph{capping homomorphism}, giving a short exact sequence 
\begin{equation}
	0\to \langle T\rangle \to B_n\to \MCG(S_{n+1},*)\to 0
\end{equation}
where $T$ is the Dehn twist around the boundary of $D_n$; in turn, forgetting the puncture gives a surjection $\MCG(S_{n+1},*)\to \MCG(S_n)$, called the \emph{forgetful map}.

\textbf{Generators}.  $B_n$ is generated by the half Dehn twists $\tau=\tau_1,\ldots \tau_{n-1}=\tau'$ on the curves $\gamma_1,\ldots, \gamma_{n-1}$ from the left of \Cref{fig:gen_of_bn}. Furthermore, a (slightly redundant) generating set for the \emph{pure braid group} $PB_n$, which is the finite-index subgroup of $B_n$ fixing all punctures, is given by the Dehn twists around the curves 
$\{\eta^\pm_{i,j}\}_{1\le i<j\le n}$ from the right of \Cref{fig:gen_of_bn} (notice that $\gamma_{i}=\eta^+_{i,i+1}=\eta^-_{i,i+1}$ for all $i=1,\ldots, n-1$). Any two half Dehn twists are conjugate inside $B_n$. 

    \begin{figure}[htp]
        \centering
        \includegraphics[width=\linewidth]{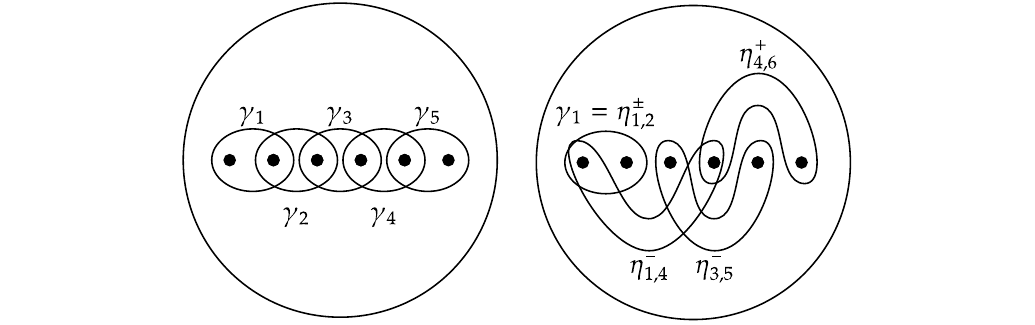}
        \caption{On the left, the half Dehn twists around the curves $\gamma_i$ generate the braid group. On the right, the pure braid group is  generated by the Dehn twists around the curves $\eta^\pm_{i,j}$ for $1\le i<j\le n$ (here we only drew some). $\eta^+_{i,j}$ passes above the punctures between the $i$th and the $j$th, while $\eta^-_{i,j}$ passes  below them.}
        \label{fig:gen_of_bn}
    \end{figure}

\textbf{Abelian quotients}.  The abelianisation of $B_n$ is infinite cyclic, generated by the image of $\tau$, and $T$ maps to $n(n-1)$ inside the abelianisation. As a consequence, the abelianisations of both $\MCG(S_{n+1},*)$ and $\MCG(S_n)$ are finite cyclic.

\textbf{Torsion}.  Finite-order elements in $\MCG(S_5)$ are topologically conjugate to rotations, fixing at most two punctures and cyclically permuting the others, and therefore have order at most $5$ (see e.g. \cite[Section 7.1.1]{FM_primer}). Furthermore, the subgroup fixing a pair of disjoint curves is generated by the half Dehn twists around those two curves.

The following is the main technical statement of the section:
\begin{thm}\label{thm:ubertheorem}
    Let $n\ge 6$ (resp. $n=5$). Then there exists a non-elementary hyperbolic quotient $Q$ of $\MCG(S_5)$ (resp. $\MCG(S_5,*)$) such that any homomorphism $B_n\to Q$ has virtually cyclic image.
\end{thm}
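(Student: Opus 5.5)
We take $Q$ to be $G_N=\MCG(S_5)/DT_N$, or the image of $\MCG(S_5,*)$ in it when $n=5$; note that $\MCG(S_5,*)$ has finite index in $\MCG(S_5)$, so its image has finite index in $G_N$. Here $N$ is deep enough, and we additionally choose $N$ coprime to a fixed integer depending only on $n$, as explained below. By \Cref{lem:nonelm_hyp_short_quot} (or \cite{dfdt}) and the short HHG structure on $\MCG^\pm(S_5)$, $Q$ is non-elementary hyperbolic. Let $\phi\colon B_n\to Q$ be a homomorphism. The idea is to show that $\phi$ must kill a half Dehn twist up to a bounded power, so that the image is controlled by the abelianisation.

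\textbf{Step 1.} We use that $B_n$ with $n\ge 5$ contains many commuting conjugates of $\tau=\tau_1$: for instance $\tau_1,\tau_3$ commute and are conjugate, and so are $\tau_1$ and $\tau_4$. We also use that $\tau_1\tau_3^{-1}$ lies in the commutator subgroup and has large normal closure. Consider $a=\phi(\tau_1)$ and $b=\phi(\tau_3)$, which are conjugate and commute in the hyperbolic group $Q$. There are two cases.

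\emph{Case (i): $a$ has infinite order.} Then $b$ lies in the maximal elementary subgroup $E(a)$, and $b$ is conjugate to $a$. Iterating over the chain $\tau_1,\tau_3,\tau_5,\dots$ and across braid relations, we aim to show that all $\phi(\tau_i)$ lie in a common virtually cyclic subgroup. The braid relation $\tau_1\tau_2\tau_1=\tau_2\tau_1\tau_2$ forces $\phi(\tau_2)$ to be conjugate to $\phi(\tau_1)$ by an element normalising the relevant cyclic group. We want to conclude that the image is virtually cyclic. The standard argument is that $\tau_i$ and $\tau_{i+2}$ commute, and that $\tau_2$ commutes with $\tau_4$, which in turn commutes with $\tau_1$. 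Combining these, all $\phi(\tau_i)$ share a common elementary closure.

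\emph{Case (ii): $a$ has finite order.} Then \Cref{cor:hyp_satisfied} (via \Cref{corintro:torsion-in-quotients}) applies to $\langle a\rangle$ and to the finite abelian subgroup $\langle a,b\rangle$. Either this subgroup lifts isomorphically to a finite subgroup of $\MCG(S_5)$, so its order is at most $5$ and it is cyclic of prime-ish order; or it fixes a vertex or an edge of $\ov X_N$, that is, it is a (multi)twist-type element of order dividing $N$ up to the bounded index from \Cref{cor:torsion_of_hv}.

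\textbf{Step 2.} In case (ii) we kill the twist possibility using the relation between $\tau_1$ and the abelianisation. Since $n\ge5$, $\tau_1\tau_3^{-1}$ normally generates a subgroup containing the commutator subgroup $[B_n,B_n]$, which is perfect for $n\ge5$. If $a$ is, up to a uniform power $M$, a power of a twist generator $z_{v_N}$, then commutation with the conjugate $b$ and \Cref{cor:commuting_multitwist} force the twisted vertices of all the $\phi(\tau_i)$ to span a simplex in the triangle-free graph $\ov X_N$. Using the chain $\tau_1,\tau_3,\tau_5$ (or $\tau_1,\tau_4$ together with $\tau_2$ when $n=5$), this forces $\phi(\tau_1^M)=\phi(\tau_3^M)$ or a similar coincidence. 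It follows that $\phi$ maps $[B_n,B_n]^{M'}$-type normal subgroups into the finite group $\langle Z_{v_N},Z_{w_N}\rangle$, so the image of $[B_n,B_n]$ is finite and the image of $\phi$ is virtually cyclic. In the lifting alternative, $a$ has order at most $5$. Then the perfect group $[B_n,B_n]$, which is normally generated by $\tau_1\tau_2^{-1}$, maps to a group generated by conjugates of an element of order at most $10$. Here we use that torsion elements of $\MCG(S_n)$ have huge normal closure: the quotient of $B_n$ by $\langle\langle\tau^{k}\rangle\rangle$ for $k\le 10$ is finite (Coxeter) for $n\ge 6$. This gives finite image modulo the cyclic abelianisation, hence virtually cyclic image. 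For $n=5$, the passage to $\MCG(S_5,*)$ rules out the remaining rotation orders, because the relevant order-$5$ rotations fix no puncture.

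\textbf{Main obstacle.} The main obstacle is Step 2's twist case. Its difficulty is showing that commuting conjugates of $\phi(\tau_1)$ supported on a triangle-free support graph force a collapse, while keeping all constants ($M$ from \Cref{thm:extract}, the bound from \Cref{cor:torsion_of_hv}) independent of $N$. We would first try the explicit chain of commuting and braid relations among $\tau_1,\dots,\tau_{n-1}$, and check that no configuration embeds in an edge of $\ov X_N$ with distinct twist directions.
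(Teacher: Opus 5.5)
Your case (i) is fine and is the paper's argument: commuting infinite-order elements of a hyperbolic group have the same maximal elementary subgroup, and the commutation graph of $\tau_1,\dots,\tau_{n-1}$ is connected for $n\ge5$. In case (ii), however, the lifting alternative rests on a false statement. Coxeter's theorem says $B_n/\langle\langle\tau^k\rangle\rangle$ is finite if and only if $\frac1n+\frac1k>\frac12$, so for $n\ge6$ this holds only for $k\le2$, not $k\le10$. Also, nothing in your argument bounds the order of $\phi(\tau_1\tau_2^{-1})$, since $\phi(\tau_1)$ and $\phi(\tau_2)$ do not commute. The correct argument is the one you half-state and then abandon; this is \Cref{lem:no_fix}. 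Split according to whether $\phi(\tau)$ itself preserves a simplex of $\ov X_N$. If it does not, then neither does $\langle\phi(\tau_i),\phi(\tau_j)\rangle$ for $|i-j|\ge2$. By \Cref{cor:hyp_satisfied} this group then lifts to a finite abelian, hence cyclic, subgroup of $\MCG(S_5)$. Since $\phi(\tau_i)$ and $\phi(\tau_j)$ are conjugate, they have the same order and so generate the same cyclic group. Chaining over commuting pairs gives $\phi(H)=\langle\phi(\tau)\rangle$.

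The real gap is the twist alternative, which you flag as the main obstacle but do not resolve. Knowing that the powers $\phi(\tau_i)^M$ lie in cyclic directions of vertices spanning a simplex does not place any normal subgroup of $B_n$ inside $\langle Z_{v_N},Z_{w_N}\rangle$. The conjugate $\phi(h)\phi(\tau)^M\phi(h)^{-1}$ lies in $Z_{\phi(h)v_N}$, so your claim presupposes that $\phi(H)$ stabilises the edge, which is what must be proved. Moreover, for $M\ge4$ the normal closure of $\tau^M$ has infinite index in $B_n$ (Coxeter again), so information about $\phi(\tau^M)$ alone cannot bound $\phi(H)$; passing to $M$-th powers also discards the braid relation.

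The paper splits this case three ways.
\begin{itemize}
\item If $\phi(\tau)$ preserves an edge without lying in a single cyclic direction, $\phi(\tau)^2$ is a genuine multitwist, and \Cref{cor:commuting_multitwist}(2) is rigid: commuting conjugates must be multitwists on the same edge. Hence the squares of the half twists about the curves $\eta^\pm_{i,j}$, which generate $PB_n$, land in the finite group $\langle Z_{v_N},Z_{w_N}\rangle$ (\Cref{lem:fix_edge}).
\item If $\phi(\tau)$ fixes a vertex but no edge, that vertex is its unique fixed point. Commuting conjugates therefore share it, and one concludes inside $H_{v_N}$ via \Cref{cor:torsion_of_hv} (\Cref{lem:fix_vertex}).
\item The hard case is $\phi(\tau)\in Z_{v_N}$, where commutation only yields adjacency, which is not rigid. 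Here the paper bounds the order of $\phi(T)$ and uses the Chen--Kordek--Margalit result that small powers of an $n$-th root of $T$ normally generate a subgroup containing $[B_n,B_n]$. It chooses $N=M((n-1)^2)!\,(5n)!$ \emph{highly divisible}, so that bounded-order elements of $Z_{u_N}\cong\Z/N\Z$ lie in $N'Z_{u_N}$; this forces $\phi(\tau)\in MZ_{v_N}$. Finally, the composite rotating family filtration of $\langle MZ_{u_N}\rangle_{u_N}$, together with property (FA) for $\phi(H)$, gives $\phi(H)\le MZ_{v_N}$ (\Cref{lem:phi_is_dehn}).
\end{itemize}
Your unexplained requirement that $N$ be coprime to a fixed integer points the opposite way. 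It may even conflict with $N$ being a multiple of the $N_0$ needed for depth.
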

\noindent Before proving \Cref{thm:ubertheorem}, we show that it implies both parts of \Cref{thmintro:distinguish}.

\begin{cor}\label{cor:uber_for_braids}
    For every $n\ge 5$, there exists a non-elementary hyperbolic quotient $Q'$ of $B_4$ such that any homomorphism $B_n\to Q'$ has virtually cyclic image.
\end{cor}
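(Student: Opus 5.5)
We deduce \Cref{cor:uber_for_braids} from \Cref{thm:ubertheorem} by pulling back quotients along the natural maps relating $B_4$ to $\MCG(S_5)$. The capping homomorphism gives a surjection $B_4\twoheadrightarrow \MCG(S_5,*)$. Moreover, $\MCG(S_5,*)$ is a finite-index subgroup of $\MCG(S_5)$.

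\textbf{Case $n=5$.} We apply \Cref{thm:ubertheorem} with $n=5$ to obtain a non-elementary hyperbolic quotient $Q$ of $\MCG(S_5,*)$. Precomposing with the capping map, $Q$ becomes a quotient of $B_4$, and we set $Q'=Q$. The conclusion about homomorphisms $B_5\to Q'$ is then exactly the statement of \Cref{thm:ubertheorem}.

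\textbf{Case $n\ge 6$.} Let $Q$ be the quotient of $\MCG(S_5)$ given by \Cref{thm:ubertheorem}, with quotient map $q\colon \MCG(S_5)\to Q$. We take $Q'\coloneq q(\MCG(S_5,*))$, which is a quotient of $B_4$ via the capping map.

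Since $\MCG(S_5,*)$ has finite index in $\MCG(S_5)$, its image $Q'$ has finite index in $Q$. A finite-index subgroup of a non-elementary hyperbolic group is again non-elementary hyperbolic, so $Q'$ qualifies.

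Now let $\psi\colon B_n\to Q'$ be any homomorphism. Composing with the inclusion $Q'\hookrightarrow Q$ gives a homomorphism $B_n\to Q$, whose image is virtually cyclic by \Cref{thm:ubertheorem}. This image is precisely $\psi(B_n)$, so $\psi$ has virtually cyclic image.

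The only points needing care are the following.
\begin{itemize}
\item The capping map $B_4\to \MCG(S_5,*)$ is surjective, as recalled in the short exact sequence above.
\item The passage from $Q$ to the finite-index subgroup $Q'$ preserves being non-elementary hyperbolic: hyperbolicity is a quasi-isometry invariant, and non-elementarity passes to finite-index subgroups because a finite-index subgroup of a group that is not virtually cyclic is not virtually cyclic.
\end{itemize}
All the real difficulty is therefore contained in \Cref{thm:ubertheorem} itself.
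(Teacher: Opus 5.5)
Your proposal is correct and follows essentially the same route as the paper: take the quotient $Q$ from \Cref{thm:ubertheorem}, set $Q'=Q$ when $n=5$ and otherwise let $Q'$ be the finite-index image of $\MCG(S_5,*)$ in $Q$, which is a quotient of $B_4$ via the capping map. Then observe that any homomorphism $B_n\to Q'$ is also a homomorphism into $Q$, so its image is virtually cyclic.
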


\begin{proof} Let $Q$ be the quotient of $\MCG(S_5)$ from \Cref{thm:ubertheorem} (resp. of $\MCG(S_5,*)$ if $n=5$), and let $Q'$ be the image of $\MCG(S_5,*)$ inside $Q$ (resp. let $Q'=Q$ if $n=5$). Since $Q'$ has finite index in $Q$, it is also non-elementary hyperbolic, and it is a quotient of $\MCG(S_5,*)$, hence of $B_4$ via the capping homomorphism. Furthermore, every map $B_n\to Q'$ is in particular a map $B_n\to Q$, and must therefore have virtually cyclic image.
\end{proof}

\begin{cor}\label{cor:uber_for_spheres}
    For every $n\ge 6$, there exists a non-elementary hyperbolic quotient $Q$ of $\MCG(S_5)$ such that any homomorphism $\MCG(S_n)\to Q$ has finite image.
\end{cor}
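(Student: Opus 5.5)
Take $Q$ to be the group given by \Cref{thm:ubertheorem} for this $n\ge 6$, so $Q$ is a non-elementary hyperbolic quotient of $\MCG(S_5)$ such that every homomorphism $B_n\to Q$ has virtually cyclic image. Let $\psi\colon\MCG(S_n)\to Q$ be any homomorphism. We will first show that the image of $\psi$ is virtually cyclic, and then that it is in fact finite.

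For the first step, compose with the capping homomorphism $B_{n-1}\to\MCG(S_n,*)$ and the inclusion $\MCG(S_n,*)\le\MCG(S_n)$. This gives a homomorphism from $B_{n-1}$ whose image $\psi(\MCG(S_n,*))$ has finite index in $\psi(\MCG(S_n))$. But $n-1\ge 5$, not $n$, so I would not apply \Cref{thm:ubertheorem} with index $n-1$ (that would change $Q$). Instead I would use the forgetful surjection $\MCG(S_{n+1},*)\to\MCG(S_n)$ together with capping $B_n\twoheadrightarrow\MCG(S_{n+1},*)$. The composite $B_n\to\MCG(S_n)$ is surjective, so $\psi$ precomposed with it has image $\psi(\MCG(S_n))$. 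By \Cref{thm:ubertheorem}, this image is virtually cyclic. This is the step where the correct choice of $Q$, matching the given $n$, matters.

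For the second step, let $V=\psi(\MCG(S_n))$ be virtually cyclic. If $V$ is infinite, it has a finite-index normal subgroup isomorphic to $\Z$, and $V$ surjects onto either $\Z$ or the infinite dihedral group $D_\infty$. We rule out both cases using known facts about $\MCG(S_n)$. Its abelianisation is finite cyclic, as recalled above, so it admits no surjection onto $\Z$. A surjection onto $D_\infty=\Z/2*\Z/2$ would give a nontrivial action on a tree without a global fixed point, which contradicts property (FA) for mapping class groups \cite{FA_MCG}. Alternatively, $D_\infty$ has abelianisation $(\Z/2)^2$, which is not cyclic, while $\MCG(S_n)$ has cyclic abelianisation; this is already a contradiction.

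To make this airtight, I would argue as follows. Any infinite virtually cyclic group maps onto $\Z$ or $D_\infty$ with finite kernel. In the $D_\infty$ case, the composite $\MCG(S_n)\to D_\infty$ would be surjective, and further quotienting gives a surjection onto $(\Z/2)^2$, contradicting cyclicity of the abelianisation. The $\Z$ case contradicts finiteness of the abelianisation. Hence $V$ is finite. I expect the main subtlety to be the first step, namely checking that the composite $B_n\to\MCG(S_n)$ is indeed onto; this holds because capping and forgetting are both surjective.
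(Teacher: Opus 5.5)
Your proposal is correct and matches the paper's proof: you precompose with the surjection $B_n\to\MCG(S_{n+1},*)\to\MCG(S_n)$ (capping, then forgetting) to get virtually cyclic image from \Cref{thm:ubertheorem}, and then rule out surjections onto $\Z$ and $D_\infty$ using that the abelianisation of $\MCG(S_n)$ is finite cyclic. The abandoned detour through $B_{n-1}$ and the alternative appeal to property (FA) are not needed, since the abelianisation argument alone suffices, exactly as in the paper.
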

\begin{proof}
Let $Q$ be the quotient of $\MCG(S_5)$ such that any homomorphism $B_n\to Q$ has virtually cyclic image, and let $\phi\colon \MCG(S_n)\to Q$ be any homomorphism. Composing the capping homomorphism $B_n\to \MCG(S_{n+1},*)$, the forgetful map $\MCG(S_{n+1},*)\to \MCG(S_n)$, and $\phi$ yields a homomorphism $B_n\to Q$, so $\phi$ must have virtually cyclic image. 

By \cite[Lemma 3.2]{Macpherson_virtcyclic}, an infinite virtually cyclic group surjects onto either $\Z$ or the infinite dihedral group $D_\infty\cong \Z/2*\Z/2$, so it is enough to exclude that $\MCG(S_n)$ surjects onto these. Towards this, recall that the abelianisation of $\MCG(S_n)$ is finite cyclic; therefore $\MCG(S_n)$ does not surject neither onto $\Z$ nor onto $D_\infty$ (which maps onto the non-cyclic Abelian group $\Z/2\times \Z/2$).
\end{proof}

\noindent We will prove both statements of \Cref{thm:ubertheorem} at the same time, highlighting the adjustments needed for the case $n=5$ in parentheses. For the sake of light notation, let $H=B_n$, let $G=\MCG(S_5)$ (resp. $\MCG(S_5,*)$ if $n=5$), and let $\ov X$ be the curve graph of $S_5$. The arguments from \cite[Section 2.3.1]{short_HHG:I} show that $G$ admits a colourable short HHG structure, with support graph $\ov X$ and cyclic directions generated by half Dehn twists. Fix a positive integer $M$, and let 
\begin{equation}\label{eq:def_of_N_and_N'}
    N'=M((n-1)^2)!\mbox{ and }N=N'(5n)!
\end{equation}
We require that $M$ is deep enough that the quotients $G_N$, $G_{N'}$, and $G_M$ all satisfy the properties from \Cref{subsec:prop_of_dehnfill}.

We will argue that $Q=G_N$, which is non-elementary hyperbolic by \Cref{lem:nonelm_hyp_short_quot}, is the desired quotient. Consider a homomorphism $\phi\colon H\to G_N$. We shall consider various cases depending on the image of $\tau$, each case corresponding to one of the lemmas below.

    \begin{lem}\label{claim:finite_order_hdt}
        If $\phi(\tau)$ has infinite order then $\phi(H)$ is virtually cyclic.
    \end{lem}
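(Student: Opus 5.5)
We use the braid relations together with the fact that in a hyperbolic group the centraliser of an infinite-order element is virtually cyclic. The half Dehn twists $\tau_1,\dots,\tau_{n-1}$ generating $H=B_n$ are pairwise conjugate, since any two half Dehn twists are conjugate in $B_n$. So if $\phi(\tau)$ has infinite order, every $\phi(\tau_i)$ has infinite order in the hyperbolic group $G_N$. The plan is to show that all $\phi(\tau_i)$ lie in a single maximal virtually cyclic subgroup $E=E(\phi(\tau_1))$ of $G_N$; this forces $\phi(H)=\langle \phi(\tau_i)\rangle\le E$.

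The key input is the following standard fact: in a hyperbolic group $\Gamma$, every infinite-order element $g$ lies in a unique maximal elementary subgroup $E(g)$, namely the stabiliser of $\{g^{\pm\infty}\}$. Moreover, every $h$ commuting with $g$ lies in $E(g)$, and if $h$ also has infinite order then $E(h)=E(g)$. We then use the commutation relations. For $|i-j|\ge 2$, $\tau_i$ and $\tau_j$ commute, so $E(\phi(\tau_i))=E(\phi(\tau_j))$. Since $n\ge 5$, for any adjacent pair $i,i+1$ there is an index $j$ with $|j-i|\ge2$ and $|j-(i+1)|\ge2$. Indeed, if $i\ge 3$ take $j=1$, and otherwise take $j=n-1$, which works as $n-1\ge i+3$ when $i\le 2$ and $n\ge5$. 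Hence $\phi(\tau_i)$ and $\phi(\tau_{i+1})$ both commute with $\phi(\tau_j)$, so both lie in $E(\phi(\tau_j))$. Since all three have infinite order, their maximal elementary subgroups coincide.

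Chaining these equalities gives a single subgroup $E$ with $E(\phi(\tau_i))=E$ for every $i$, so $\phi(H)\le E$, which is virtually cyclic. There is no real obstacle here. The only points to check are that $\phi(H)$ is indeed generated by the $\phi(\tau_i)$, and that $G_N$ is hyperbolic, which is \Cref{lem:nonelm_hyp_short_quot} together with the choice of $M$. The coverage of adjacent pairs by a common commuting generator is exactly where $n\ge5$ is used. As a fallback, one could avoid the commuting-index trick and instead use the braid relation $\tau_i\tau_{i+1}\tau_i=\tau_{i+1}\tau_i\tau_{i+1}$ directly. That relation shows $\tau_i$ and $\tau_{i+1}$ are conjugate via $\tau_i\tau_{i+1}$, and the element $(\tau_i\tau_{i+1})^3$ is central in $\langle\tau_i,\tau_{i+1}\rangle$. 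This route requires more care when the image of that central element is torsion, so the commuting argument above is preferable.
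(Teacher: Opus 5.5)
Your approach is the same as the paper's: place everything in the maximal virtually cyclic subgroup $E$ of the hyperbolic group $G_N$ containing $\phi(\tau)$, use that $E$ contains the centraliser of each of its infinite-order elements, and propagate along commuting generators. However, the combinatorial step you use to link the generators is false in a case the theorem needs.

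You claim that every adjacent pair $\tau_i,\tau_{i+1}$ has a common $\tau_j$ with $|j-i|\ge 2$ and $|j-(i+1)|\ge 2$. This fails for $n=5$ and $i=2$. Your choice $j=n-1$ requires $n-1\ge i+3=5$, i.e. $n\ge 6$. In $B_5$ no index $j\in\{1,2,3,4\}$ is at distance at least $2$ from both $2$ and $3$. The case $n=5$ (with $G=\MCG(S_5,*)$) is part of \Cref{thm:ubertheorem} and is used for \Cref{cor:uber_for_braids}. So your chain along adjacent pairs, as written, breaks between $\{\tau_1,\tau_2\}$ and $\{\tau_3,\tau_4\}$. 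For the same reason, your remark that this covering is exactly where $n\ge 5$ is used is inaccurate.

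The repair uses only what you already proved. It suffices that the graph on $\{1,\dots,n-1\}$ with edges $\{i,j\}$ for $|i-j|\ge 2$ is connected, together with your equalities $E(\phi(\tau_i))=E(\phi(\tau_j))$ for $|i-j|\ge 2$. Connectivity is immediate: $\tau_1$ commutes with every $\tau_i$ for $i\ge 3$, and $\tau_2$ commutes with $\tau_{n-1}$ because $n-1\ge 4$. This is precisely the paper's argument, and it is where $n\ge 5$ genuinely enters. For $n=5$ the chain is $\tau_3,\tau_1,\tau_4,\tau_2$, each commuting with the next.
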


    \begin{proof}
        Let $E\le G_N$ be the maximal virtually cyclic subgroup containing $\phi(\tau)$, which exists by e.g. \cite[Lemma 6.5]{DGO}. $E$ contains the centraliser of $\phi(\tau)$ by \cite[Corollary 6.6]{DGO}; therefore $\phi(\tau_i)\in E$ for all $i=3,\ldots, n-1$, since $\gamma_1$ and $\gamma_i$ are disjoint for $i$ in this range. Notice also that $\phi(\tau')$ is conjugated to $\phi(\tau)$, so it must have infinite order as well; in particular, $E$ is also the maximal virtually cyclic subgroup containing $\phi(\tau')$, and therefore its centraliser. Finally, since there are at least $5$ punctures, $\tau'=\tau_{n-1}$ and $\tau_2$ commute, and the same argument shows that $\phi(\tau_2)\in E$. We thus proved that $\phi$ maps the generating set $\{\tau_1,\ldots, \tau_{n-1}\}$ inside the virtually cyclic group $E$, and therefore $\phi(H)$ is virtually cyclic.
    \end{proof}

    \begin{rem}
        In the argument above we only used that $G_N$ is hyperbolic, so in fact any homomorphism from $H$ to a hyperbolic group has virtually cyclic image provided that $\tau$ has infinite order.
    \end{rem}

    \noindent From now on, we assume that $\phi(\tau)$ has finite order. Towards applying \Cref{cor:hyp_satisfied}, we look at the fixed point set of $\phi(\tau)$ inside $\ov X_N$.

    \begin{lem}\label{lem:no_fix}
        If $\phi(\tau)$ does not fix any simplex of $\ov X_N$ setwise, then $\phi(H)$ is finite.
    \end{lem}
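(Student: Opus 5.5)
Under the hypothesis that $\phi(\tau)$ fixes no simplex of $\ov X_N$ setwise, I would apply \Cref{cor:hyp_satisfied} to the finite cyclic subgroup $\langle\phi(\tau)\rangle\le G_N$. Since the first alternative there is excluded (fixing a vertex, or a pair of adjacent vertices, would mean fixing a simplex setwise), $\phi(\tau)=\pi_N(f)$ for some finite-order $f\in G$, with $\pi_N$ an isomorphism on $\langle f\rangle$. Finite-order elements of $\MCG(S_5)$ are conjugate to rotations and have order at most $5$, so $\phi(\tau)$ has order at most $5$, and in particular $\phi(\tau)^{60}=1$. (For $\MCG(S_5,*)$ the same holds because it is a subgroup.)

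The key step is to use the braid relations to see that the normal closure of a bounded power of $\tau$ in $H=B_n$ has finite-index-controlled quotient. Since all $\tau_i$ are conjugate in $B_n$, $\phi$ kills the normal closure $K$ of $\tau^{k}$ in $B_n$, where $k\le 5$ is the order of $\phi(\tau)$. So $\phi(H)$ is a quotient of $B_n/\langle\langle\tau^k\rangle\rangle$. For $k=1$ this is trivial; for $k=2$ it is the symmetric group $\mathfrak S_n$, which is finite. For $k\in\{3,4,5\}$ Coxeter's theorem says $B_n/\langle\langle\tau^k\rangle\rangle$ is finite only when $1/k+1/n>1/2$, which fails for $n\ge 5$. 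So this route alone does not give finiteness, and I need a different approach.

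The alternative is to exploit $\phi(\tau)$ itself. Since $\phi(\tau)=\pi_N(f)$ with $f$ a rotation, $\phi(\tau)$ acts on $\ov X_N$ as the image of $f$. Commuting generators $\tau_1,\tau_i$ ($i\ge3$) have images commuting with $\phi(\tau)$, and disjoint-pair commutation holds throughout the generating set. I would then argue that the subgroup generated by $\phi(\tau_1),\phi(\tau_3)$ is finite abelian, hence by \Cref{cor:hyp_satisfied} either it fixes a simplex (and so would $\phi(\tau)$ only if equal, which needs care) or it lifts isomorphically to a finite subgroup of $G$ containing $f$. Using the classification of finite subgroups of $\MCG(S_5)$ (cyclic of order at most $5$, dihedral, $A_4$, $A_5$-type), together with the choice of $N$ in \eqref{eq:def_of_N_and_N'}, which is divisible by all relevant small orders, I aim to show that $\phi(\tau_1)=\phi(\tau_3)$. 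Chaining along the braid relations, this forces all $\phi(\tau_i)$ to be equal; then $\tau_1\tau_2\tau_1=\tau_2\tau_1\tau_2$ is automatic, and the image is the cyclic group $\langle\phi(\tau)\rangle$, which is finite.

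The main obstacle is the step identifying $\phi(\tau_1)$ with $\phi(\tau_3)$. The difficulty is showing that two commuting, conjugate finite-order elements of $G_N$ with no fixed simplex must coincide, or at least generate a cyclic group. I would first try to lift the whole finite group $\langle\phi(\tau_1),\phi(\tau_3)\rangle$ via \Cref{cor:hyp_satisfied} and use that abelian finite subgroups of $\MCG(S_5)$ without fixed curves are cyclic rotation groups. Failing that, I would use the structure of the normalizer of a rotation.
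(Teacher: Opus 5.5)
Your eventual plan is the paper's proof. For $|i-j|\ge 2$ the commuting conjugates $\phi(\tau_i),\phi(\tau_j)$ generate a finite abelian subgroup of $G_N$. By \Cref{cor:hyp_satisfied} this subgroup is the isomorphic image of a finite abelian subgroup of $G$, and finite abelian subgroups of $\MCG(S_5)$ are cyclic. Three points need fixing for this to go through cleanly.

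First, the fixed-simplex alternative needs no care. If $\langle\phi(\tau_i),\phi(\tau_j)\rangle$ preserved a simplex setwise, so would its element $\phi(\tau_i)$, which is a conjugate of $\phi(\tau)$.

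Second, your list of finite subgroups is wrong in a way that would break the key step. $A_4$ and $A_5$ do not occur, since no union of their orbits on the sphere has exactly five points. Moreover $A_4$ contains $\Z/2\times\Z/2$, an abelian non-cyclic group. The correct input, which the paper takes from Stukow's classification, is that finite subgroups of $\MCG(S_5)$ embed in $\Z/4$, $D_3$ or $D_5$, and all abelian subgroups of these are cyclic.

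Third, do not aim for $\phi(\tau_1)=\phi(\tau_3)$ directly. The lift only gives a cyclic group containing two elements of equal order (they are conjugate), hence $\langle\phi(\tau_i)\rangle=\langle\phi(\tau_j)\rangle$. The index set $\{1,\dots,n-1\}$ is connected under the relation $|i-j|\ge 2$ once $n\ge 5$, so chaining yields $\phi(H)\le\langle\phi(\tau)\rangle$, which is all you need. Equality of the $\phi(\tau_i)$ then follows a posteriori: $\phi(H)$ is abelian, and all $\tau_i$ have the same image in $H^{ab}\cong\Z$.

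The choice of $N$ plays no role in this lemma, and the Coxeter detour can be dropped. Incidentally, $B_5/\langle\langle\tau^3\rangle\rangle$ is finite, since $\frac13+\frac15>\frac12$. So that route does not fail for every $n\ge 5$ and every order, although it does fail in general.
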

    \begin{proof}
        For every $1\le i<j\le n-1$ such that $j-1>1$, let $F_{ij}\coloneq \langle \phi(\tau_i), \phi(\tau_j)\rangle$, which is a finite Abelian subgroup of $G_N$. Since $\phi(\tau)$, hence any of its conjugate, does not preserve any simplex of $\ov X_N$, \Cref{cor:hyp_satisfied} yields that each $F_{ij}$ is the isomorphic image under $\phi_N$ of a finite subgroup $F'_{ij}\le G$, which must therefore be Abelian. By \cite[Theorem 3]{Stukow_conj_Class_finite_sgr}, each $F'_{ij}$ is isomorphic to a subgroup of either $\Z/4$ or one of the Dihedral groups $D_3\cong \Z/3 \rtimes \Z/2$ and $D_5\cong \Z/5 \rtimes \Z/2$. Notice that Abelian subgroups of these three finite groups are cyclic, so $F_{ij}=\langle\phi(\tau_i)\rangle=\langle \phi(\tau_j)\rangle$ (here we are using that $\phi(\tau_i)$ and $\phi(\tau_j)$ have the same order as they are conjugate). Hence $\phi(H)=\langle F_{ij}\rangle_{1\le i<j\le n-1}=\langle \phi(\tau)\rangle$ is finite, as required. 
    \end{proof}

    \noindent In view of \Cref{lem:no_fix}, we henceforth assume that $\phi(\tau)$ is an element of the setwise stabiliser of a simplex of $\ov X$ setwise. We now describe such an element:
    \begin{lem}\label{lem:simplexstab}
        Let $g\in G_N$ be a finite order element.
        \begin{enumerate}
            \item\label{simplexstab_1} If $g$ fixes an edge $\{v_N,w_N\}$ of $\ov X_N$ setwise, then $g^2\in \langle Z_{v_N}, Z_{w_N}\rangle$.
            \item\label{simplexstab_2} If $g$ fixes a vertex $v_N$ of $\ov X_N$, but does not preserve any edge setwise, then $g^q\in Z_{v_N}$ for some $2\le q\le 3$; furthermore, $g$ fixes no vertex of $\ov X_N$ other than $v_N$.
        \end{enumerate}
    \end{lem}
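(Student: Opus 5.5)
The two parts call for different tools. For \eqref{simplexstab_1} we reduce to the structure of $H_{v_N}$ via \Cref{lem:H_v}. For \eqref{simplexstab_2} we use the classification of torsion in $H_{v_N}$ from \Cref{cor:torsion_of_hv}, together with the explicit description of torsion in $\MCG(S_5)$.

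For \eqref{simplexstab_1}, since $g$ preserves $\{v_N,w_N\}$ setwise, $g^2$ fixes both $v_N$ and $w_N$. So $g^2\in\Stab{G_N}{v_N}$, and $q(g^2)\in H_{v_N}$ fixes $w_N\in\link{\ov X_N}{v_N}$, where $q\colon\Stab{G_N}{v_N}\to H_{v_N}$ is the quotient map. By \Cref{lem:H_v}, $H_{v_N}$ is a relatively hyperbolic Dehn filling of $H_v$ with respect to the peripherals $E(Z_u)$. For $N$ deep enough, the Dehn filling theorem of \cite{Osin_DehnFill,GrovesManning} identifies the stabiliser of $w_N$ in $H_{v_N}$ with the image of $E(Z_w)$ in $H_{v_N}$. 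This is the image of the stabiliser of $w$ in $H_v$, by the description of the link as cosets of peripherals. In $\MCG(S_5)$, and likewise in $\MCG(S_5,*)$, the subgroup fixing the two disjoint curves $v,w$ is generated by the half Dehn twists about them. Hence the stabiliser of $w$ in $H_v$ is $q(Z_w)$, so $E(Z_w)=q(Z_w)$. Therefore $q(g^2)\in q(Z_{w_N})$, which gives $g^2\in\langle Z_{v_N},Z_{w_N}\rangle$.

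For \eqref{simplexstab_2}, $q(g)$ is a finite-order element of $H_{v_N}$. If it lay in the image of some $E(Z_{w})$, then by \Cref{cor:torsion_of_hv} $g$ would fix $w_N$ and hence preserve the edge $\{v_N,w_N\}$, which is excluded. So, again by \Cref{cor:torsion_of_hv}, $q(g)$ is the isomorphic image of a finite-order element of $H_v$ fixing no vertex of $\link{\ov X}{v}$. The group $H_v$ is essentially the mapping class group of the four-holed sphere complementary to $v$, with the pants side frozen. Its finite-order elements that fix no curve have order $2$ or $3$: these are rotations of $S_5$ fixing $v$, and by the classification of torsion they have order at most $5$ and must preserve the pants component. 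If the order were $1$, then $g$ would lie in $Z_{v_N}$, which acts trivially on $\link{\ov X_N}{v_N}$ and so would preserve edges. Hence the order $q$ of $q(g)$ satisfies $2\le q\le 3$, and $g^q\in Z_{v_N}$.

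Uniqueness of the fixed vertex is the step I expect to be the main obstacle. Suppose $g$ also fixes $u_N\neq v_N$. Then $u_N$ is not adjacent to $v_N$, since otherwise $g$ would preserve an edge. Applying the previous paragraph at $u_N$ gives $g^{q'}\in Z_{u_N}$, so $g^{qq'}\in Z_{v_N}\cap Z_{u_N}$. If this element is non-trivial, \Cref{cor:commuting_multitwist}.\eqref{item_comm_mult1} forces $u_N\in\CStar{v_N}$, a contradiction. The remaining case is $g^{qq'}=1$, so $g$ has order dividing $6$ and fixes two non-adjacent vertices. Here I would first apply \Cref{cor:hyp_satisfied} to $\langle g\rangle$. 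If $g$ comes isomorphically from a finite-order element of $G$, the injectivity radius from \Cref{lem_injrad_of_shortquot} should let us lift the two fixed vertices to vertices fixed by that element, up to a bounded ambiguity. A rotation of $S_5$ of order $2$ or $3$ fixing two distinct non-adjacent curves, but no pair of disjoint curves, should then be ruled out by inspection. If instead the lifting fails, I would fall back on the displacement estimate of \Cref{lem:mindispl_and_commutation} applied to suitable powers of $g^q$ and $g^{q'}$.
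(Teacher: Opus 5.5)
The first half of part (2) is essentially the paper's argument: \Cref{cor:torsion_of_hv} plus the torsion of $H_v$. You also dispose of the case $q(g)=1$, which the paper leaves implicit. The genuine gap is the \emph{uniqueness of the fixed vertex}. You only handle the case where some power of $g$ is a non-trivial element of $Z_{v_N}\cap Z_{u_N}$. The case you leave open, for instance $g$ of order $2$ with $q=q'=2$, is exactly where your fallbacks fail:
\begin{itemize}
\item \Cref{cor:hyp_satisfied} gives nothing. Its first alternative already holds for $\langle g\rangle$, since it fixes $v_N$, so you cannot conclude that $g$ lifts to a finite-order element of $G$.
\item The injectivity radius controls a ball in a Cayley graph, not fixed vertices of $\ov X_N$. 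If $g=\pi_N(f)$ and $gu_N=u_N$, you only learn that $fu=du$ for some unknown $d\in DT_N$.
\item \Cref{lem:mindispl_and_commutation} cannot be applied to powers of $g^q$ and $g^{q'}$ when these are trivial.
\end{itemize}

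The missing idea, which is how the paper argues, is to stop looking for powers of $g$ inside $Z_{u_N}$ and use the whole cyclic direction instead. From $gu_N=u_N$ you get $gZ_{u_N}g^{-1}=Z_{u_N}$, so $g^2$ centralises $Z_{u_N}$. In the orientation-preserving groups of \Cref{sec:spheres} even $g$ does: a lift of $g$ can be chosen to fix $u$, and it then commutes with the (half) twist about $u$. A power of a generator of $Z_{u_N}$ has minimal displacement at least $\widetilde E$ on $\C{\Delta_{u_N}}$. So it suffices to find a power of $g$ with minimal displacement at least $\widetilde E$ on $\C{\Delta_{v_N}}$, and then \Cref{lem:mindispl_and_commutation} forces $u_N\in\CStar{v_N}$.

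The paper obtains this displacement from an explicit lift $g=\pi_N(h^sR)$. Here $h$ is the half twist about $v$, and $R\in\Stab{G}{v}$ lifts the rotation $r$ and is a root of $h^2$ or $h^4$. In your problem case, where $g^q=1$, one must additionally rule out that $g$ is the image of a finite-order element of $\Stab{G}{v}$, whose powers all have uniformly bounded displacement. This is excluded by the no-edge hypothesis: no element of order $3$ in $\MCG(S_5)$ fixes $v$, and an involution fixing $v$ swaps two disjoint curves.

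A smaller point on part (1): the relatively hyperbolic Dehn filling theorem does not identify the stabiliser of $w_N$ in $H_{v_N}$ with the image of $E(Z_w)$. That theorem says nothing about how $\link{\ov X_N}{v_N}$ arises from $\link{\ov X}{v}$. The input you actually need is that stabilisers in $G_N$ of simplices of $\ov X_N$ are images of stabilisers of their lifts, which is \cite[Corollary 3.18]{short_HHG:II}. The paper applies this directly to the edge $\{v_N,w_N\}$. The setwise stabiliser of a lift $\{v,w\}$ is generated by the half twists about $v$ and $w$ together with an involution swapping them, so $g^2$ is the image of a multitwist. This avoids passing through $H_{v_N}$ and does not even use that $g$ has finite order.
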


    \begin{proof}
        \eqref{simplexstab_1} As a consequence of \cite[Corollary 3.18]{short_HHG:II}, the setwise stabiliser of $\{v_N,w_N\}$ is the $\pi_N$-image of the setwise stabiliser in $G$ of any edge $\{v,w\}\subset \ov X$ lifting $\{v_N,w_N\}$. This is generated by the half Dehn twists around $v$ and $w$ and by any involution swapping $v$ and $w$, so $g^2$ is the image of a multitwist.

        \eqref{simplexstab_2} Let $v\in \ov X^{(0)}$ be a representative for $v_N$. Since $g$ does not fix any $w_N\in \link{\ov X_N}{v_N}$, \Cref{cor:torsion_of_hv} yields that the image of $g$ inside $H_{v_N}$ comes from a finite-order element $r$ of $H_v$. By inspection of \cite[Section 7.1.1]{FM_primer}, $r$ is either a rotation of angle $\pi$ fixing two punctures, or a rotation of angle $2\pi/3$ fixing one puncture. In turn, depending on the rotation angle, $r$ admits a lift $R\in \Stab{G}{v}$ which is a square root of the Dehn twist $T$ around $v$, or a cubic root of $T$ or $T^2$, respectively. Thus $g=\pi_N(h^s R)$, where $h$ is the half Dehn twist around $v$, whose square is $T$, and $s\in \Z$. This immediately shows that $g^q\in Z_{v_N}$, where $2\le q\le 3$ is the order of $r$. For later purposes we also notice that, since $g=\pi_N(h^s R)$ and $R$ is a root of a small power of $T$, some power of $g^2$ acts on $\C{\Delta_{v_N}}$ with minimum displacement greater than the constant $\widetilde{E}$ from \Cref{lem:mindispl_and_commutation}.

        We now prove that $v_{N}$ is the only fixed point of $g$ in $\ov X_N$. Indeed, by assumption $g$ does not fix any $w_N\in \link{\ov X_N}{v_N}$. Moreover, if $g$ fixes some $u_N\not\in\CStar{v_N}$, then $gZ_{u_N}g^{-1}=Z_{g(u_N)}=Z_{u_N}$; hence the square power of $g$ commutes with $Z_{u_N}$, contradicting \Cref{lem:mindispl_and_commutation}.
    \end{proof}
    
    \noindent We momentarily leave the case where $\phi(\tau)$ lies in a cyclic direction for later, and deal with the others in \Cref{lem:fix_edge} and \Cref{lem:fix_vertex} below.
    \begin{lem}\label{lem:fix_edge}
        Suppose that $\phi(\tau)$ fixes an edge $\{v_N,w_N\}$ of $\ov X_N$ setwise, but does not belong to either $Z_{v_N}$ or $Z_{w_N}$. Then $\phi(H)$ is finite.
    \end{lem}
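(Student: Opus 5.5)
By \Cref{lem:simplexstab}.\eqref{simplexstab_1}, $\phi(\tau)^2\in\langle Z_{v_N},Z_{w_N}\rangle$. The plan is first to show that $\phi(\tau)^2$ is a genuine multitwist. I would then use the rigidity of commuting multitwists from \Cref{cor:commuting_multitwist}.\eqref{item_comm_mult2} to force every $\phi(\tau_i)$ to preserve the same edge $\{v_N,w_N\}$. Once that holds, $\phi(H)$ lies in the setwise stabiliser of this edge, which is finite by \cite[Corollary 2.17]{short_HHG:II}: it is virtually $Z_{v_N}\times Z_{w_N}$, and each factor is $\Z/N\Z$ by \Cref{lem:order_of_dt_in_shortquot}.

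\textbf{Step 1.} I would show that some power $x$ of $\phi(\tau)$ lies in $\langle Z_{v_N},Z_{w_N}\rangle-(Z_{v_N}\cup Z_{w_N})$. Write $\phi(\tau)^2=\pi_N(z_v^rz_w^s)$. If $\phi(\tau)$ itself lies in $\langle Z_{v_N},Z_{w_N}\rangle$ but not in the union, take $x=\phi(\tau)$. Otherwise $\phi(\tau)$ swaps $v_N,w_N$, or fixes both but is not a product of twists. In the swapping case, conjugation by $\phi(\tau)$ exchanges $Z_{v_N}$ and $Z_{w_N}$ (\Cref{def:short_HHG}.\eqref{shortdef_normal}), and it fixes $\phi(\tau)^2$. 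Hence $r=s$, so $\phi(\tau)^2$ lies outside the union unless $r=s=0$. The remaining degenerate cases, namely $\phi(\tau)^2$ trivial or lying in a single $Z_{v_N}$, are the main obstacle. Here I would use that the edge stabiliser is the image of the edge stabiliser in $G$, which is generated by two half twists and an involution. The elements of that group whose square is trivial or a single twist can then be listed explicitly. The point is that such a $\phi(\tau)$ is either in $Z_{v_N}\cup Z_{w_N}$, which is excluded by hypothesis, or it also fixes another simplex. For instance, an involution swapping $v$ and $w$ fixes the curve separating the pair; this would put us in the situation of a different edge or vertex, so the argument should be arranged to pick the edge on which the multitwist is genuine. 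I expect this case analysis, tying the hypothesis ``not in $Z_{v_N}$ or $Z_{w_N}$'' to a nontrivial mixed multitwist via the choice of $N$ in \eqref{eq:def_of_N_and_N'}, to take the most care.

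\textbf{Step 2.} The relevant commutation relations in $B_n$ are that $\tau_j$ commutes with $\tau_1$ for every $j\ge3$, that $\tau_{n-1}$ commutes with $\tau_2$ (as $n\ge5$), and that all $\tau_i$ are conjugate. For $\tau_j$ commuting with $\tau$, the corresponding element $x_j=\phi(\tau_j)^{k}$ is a conjugate of $x$. It therefore lies in $\langle Z_{v^j_N},Z_{w^j_N}\rangle$ minus the union, for the translated edge $\{v^j_N,w^j_N\}$, and it commutes with $x$. \Cref{cor:commuting_multitwist}.\eqref{item_comm_mult2} then gives $\{v^j_N,w^j_N\}=\{v_N,w_N\}$. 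Since $\phi(\tau_j)$ commutes with $x_j$, it permutes the unique edge attached to $x_j$, by the same corollary applied to conjugates. Hence $\phi(\tau_j)$ stabilises $\{v_N,w_N\}$. Chaining this through $\tau_{n-1}$ and then $\tau_2$, as in the proof of \Cref{claim:finite_order_hdt}, shows that every generator of $H$ stabilises $\{v_N,w_N\}$. So $\phi(H)$ is contained in this finite stabiliser, and is therefore finite.
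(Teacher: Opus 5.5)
\textbf{The gap is in Step 1.} It cannot be carried out when $\phi(\tau)$ swaps $v_N$ and $w_N$ and $\phi(\tau)^2=1$. This case really occurs. Let $\iota$ be an involution of $S_5$ exchanging $v$ and $w$. Sending every $\tau_i$ to $\pi_N(\iota)$ is a homomorphism $B_n\to G_N$, since it factors through the abelianisation $\Z$, and it satisfies the hypotheses of the lemma. The only powers of $\phi(\tau)$ are $1$ and $\phi(\tau)$, and neither lies in $\langle Z_{v_N},Z_{w_N}\rangle-(Z_{v_N}\cup Z_{w_N})$. So there is no $x$ to which \Cref{cor:commuting_multitwist}.\eqref{item_comm_mult2} can be applied. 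Your suggested way out does not help either. $\phi(\tau)$ still preserves the edge $\{v_N,w_N\}$, so \Cref{lem:fix_vertex} does not apply. Moving to some other preserved simplex gives no mechanism that produces a mixed multitwist power there.

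\textbf{The missing idea is the short argument the paper opens with.} If $\phi(\tau)^2=1$, then $\phi$ kills every conjugate of $\tau^2$. These include the Dehn twists about the curves $\eta^\pm_{i,j}$, which generate the pure braid group, so $\phi$ factors through the finite group $B_n/PB_n$. With that case split off, your Step 1 needs no further case analysis and no input from the choice of $N$:
\begin{itemize}
\item In the swapping case, a nontrivial square $z_{v_N}^rz_{w_N}^s$ is fixed by conjugation by $\phi(\tau)$. This forces $r\equiv\pm s$, so both exponents are nonzero.
\item In the non-swapping case, the description of the edge stabiliser in the proof of \Cref{lem:simplexstab} puts $\phi(\tau)$ in $\langle Z_{v_N},Z_{w_N}\rangle$. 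So $x=\phi(\tau)$ works by hypothesis.
\end{itemize}
That last choice is slightly more careful than the paper's. The paper always uses $\phi(\tau)^2$ and asserts both exponents are nonzero, which fails for $\phi(\tau)=z_{v_N}^{N/2}z_{w_N}$, whose square lies in $Z_{w_N}$.

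\textbf{Step 2 is sound.} It is a mild variant of the paper's endgame. You chain the commutations $\tau_1\leftrightarrow\tau_{n-1}\leftrightarrow\tau_2$ to put every $\phi(\tau_i)$ into the finite setwise stabiliser of $\{v_N,w_N\}$. The paper instead uses half twists about the curves $\eta^\pm_{i,j}$ to put the images of the Dehn-twist generators of the pure braid group into the finite group $\langle Z_{v_N},Z_{w_N}\rangle$.
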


\begin{proof}
    By inspection of \Cref{lem:simplexstab}.\eqref{simplexstab_1}, the square $\phi(\tau)^2$ belongs to $\langle Z_{v_N},Z_{w_N}\rangle$. If $\phi(\tau^2)$ is trivial then all its conjugates are also trivial. In this case $\phi$ vanishes on the pure braid group, which has finite index in $H$, and therefore $\phi(H)$ is finite.

    Suppose now that $\phi(\tau^2)=z_{v_N}^r z_{w_N}^s$ is non-trivial, and notice that both $r$ and $s$ must be non-zero since we are assuming that $\phi(\tau)\not \in Z_{v_N}\cup Z_{w_N}$. Let $\sigma$ be the half Dehn twist around any of the curves $\eta^\pm_{i,j}$ from \Cref{fig:gen_of_bn}  for $3\le i<j\le n $; the choice of $i$ and $j$ in this range ensures that $\sigma$ and $\tau$ commute. Since $\tau$ and $\sigma$ are conjugate in $H$, there exists $g\in G_N$ such that 
    $$\phi(\sigma)^2=g\phi(\tau)^2g^{-1}=z_{g(v_N)}^r z_{g(w_N)}^s.$$
    By \Cref{cor:commuting_multitwist}.\eqref{item_comm_mult2}, $g$ must permute $\{v_N,w_N\}$, so $\phi(\sigma)^2\in \langle Z_{v_N}, Z_{w_N}\rangle$. Now let $\sigma'$ be the half Dehn twist around any of the curves $\eta^\pm_{i',j'}$ from \Cref{fig:gen_of_bn}, for $1\le i'\le 2$ and $i'<j'\le n $. Since $n\ge 5$, $\sigma'$ commutes with some curve $\sigma$ as above, so the same argument yields that 
    $\phi(\sigma')^2\in \langle Z_{v_N}, Z_{w_N}\rangle$. Summarising, we proved that $\phi$ maps the Dehn twists on the curves from \Cref{fig:gen_of_bn}, which generate the pure braid group, inside the finite subgroup $\langle Z_{v_N}, Z_{w_N}\rangle$, and again this implies that $\phi(H)$ is finite.
\end{proof}

\begin{lem}\label{lem:fix_vertex} 
    Suppose that $\phi(\tau)$ fixes a vertex $v_N$ of $\ov X_N$, but does not preserve any edge setwise. Then $\phi(H)$ is finite.
\end{lem}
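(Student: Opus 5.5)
By \Cref{lem:simplexstab}.\eqref{simplexstab_2}, $v_N$ is the unique vertex of $\ov X_N$ fixed by $\phi(\tau)$. Moreover $\phi(\tau)^q\in Z_{v_N}$ for some $q\in\{2,3\}$, and some power of $\phi(\tau)^2$ has minimum displacement larger than $\widetilde E$ on $\C{\Delta_{v_N}}$. The plan is to show that every standard generator of $H$ is sent into $\Stab{G_N}{v_N}$, and in fact into a finite subgroup of it.

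\textbf{Step 1: commuting generators stay in the stabiliser of $v_N$.} Take any $\tau_i$ with $|i-1|\ge2$, so that $\tau_i$ commutes with $\tau$. Then $\phi(\tau_i)$ commutes with $\phi(\tau)$, and hence permutes the fixed point set of $\phi(\tau)$ in $\ov X_N$. This set is $\{v_N\}$, so $\phi(\tau_i)$ fixes $v_N$. Since $\tau_i$ is conjugate to $\tau$ in $H$, the element $\phi(\tau_i)$ also fixes a unique vertex $u_N$ and has a power $\phi(\tau_i)^{q}\in Z_{u_N}$ with large displacement. Because $\phi(\tau_i)$ fixes $v_N$, uniqueness gives $u_N=v_N$. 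Now run the same argument along the commutation graph of the generators: $\tau_{n-1}$ commutes with $\tau_1$ and with $\tau_2$ because $n\ge5$. Propagating, every $\phi(\tau_j)$ has $v_N$ as its unique fixed vertex. Hence $\phi(H)\le\Stab{G_N}{v_N}$, and each $\phi(\tau_j)^q$ lies in $Z_{v_N}$.

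\textbf{Step 2: pass to $H_{v_N}$ and use torsion control.} Consider the image of $\phi(H)$ in $H_{v_N}=\Stab{G_N}{v_N}/Z_{v_N}$. It is generated by elements $r_j$ of order $q\le3$, all conjugate to one another. Pairs of commuting generators generate finite abelian subgroups of $H_{v_N}$. None of these fixes any $w_N\in\link{\ov X_N}{v_N}$, since $\phi(\tau_j)$ preserves no edge. By \Cref{cor:torsion_of_hv}, each such pair therefore comes isomorphically from a finite abelian subgroup of $H_v$. Here $H_v$ is virtually the mapping class group of a $4$-punctured sphere (more precisely, the relevant quotient of $\Stab{G}{v}$), whose finite subgroups are small: rotations of order $2$ and $3$ and Klein four groups. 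Mimicking the proof of \Cref{lem:no_fix}, I would aim to show that the subgroup generated by all the $r_j$ is contained in a single finite subgroup. Concretely, commuting conjugate elements of order $3$ must generate the same cyclic group, and commuting order-$2$ elements lie in a Klein four group.

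\textbf{Step 3: conclude finiteness.} Step 2 shows that the image of $\phi(H)$ in $H_{v_N}$ is finite, so $\phi(H)$ is virtually contained in the finite cyclic group $Z_{v_N}\cong\Z/N\Z$. Hence $\phi(H)$ is finite.

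I expect Step 2 to be the main obstacle. The order-$2$ case is not immediate: distinct commuting involutions in the Klein four group could a priori generate a larger, possibly infinite, group in $H_{v_N}$ when chained along non-commuting generators $\tau_i,\tau_{i+1}$. To handle this I would first try the braid relation: $\tau_i\tau_{i+1}\tau_i=\tau_{i+1}\tau_i\tau_{i+1}$ means $r_i$ and $r_{i+1}$ are conjugate by $r_ir_{i+1}$. Two distinct commuting involutions cannot satisfy this, since they would have to be equal. Combined with the commuting chain, this should force all $r_j$ to be equal, so that the image is cyclic of order at most $3$. If that fails, the fallback is to use the choice $N=N'(5n)!$ to kill the relevant powers through the divisibility built into \eqref{eq:def_of_N_and_N'}.
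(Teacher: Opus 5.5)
\textbf{Gap in Step 2.} Your Steps 1 and 3 are correct and match the paper. The unique fixed vertex $v_N$ propagates along commuting generators, including the pair $\tau_2,\tau_{n-1}$, so $\phi(H)\le\Stab{G_N}{v_N}$. Finiteness of the image in $H_{v_N}$ then suffices, since $Z_{v_N}$ is finite.

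The gap is in Step 2. It comes from treating $H_v$ as only \emph{virtually} the mapping class group of a four-punctured sphere, which loses exactly the information you need. Collapsing the twice-punctured disc bounded by $v$ identifies $H_v=\Stab{G}{v}/Z_v$ with the mapping class group of the four-punctured sphere \emph{fixing the puncture coming from that disc}. Its pure subgroup is free, hence torsion-free. So every finite subgroup of $H_v$ injects into the symmetric group on the remaining three punctures. In particular, finite abelian subgroups of $H_v$ are cyclic, and there are no Klein four groups at all.

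This is the fact the paper uses. For $i\ge3$, \Cref{cor:torsion_of_hv} shows $\langle r_1,r_i\rangle$ comes isomorphically from a finite abelian, hence cyclic, subgroup of $H_v$. Since $r_1,r_i$ are conjugate they have the same order, so $r_i\in\langle r_1\rangle$. The commuting pair $\tau_2,\tau_{n-1}$ then gives $r_2\in\langle r_{n-1}\rangle=\langle r_1\rangle$. Hence $\phi(H)\le Z_{v_N}\langle\phi(\tau)\rangle$, which is finite. Your order-$3$ claim, that commuting conjugate elements of order $3$ generate the same cyclic group, rests on this same unproved description of the finite subgroups of $H_v$.

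\textbf{Your repair for involutions fails.} The braid relation concerns $r_i,r_{i+1}$, which are not known to commute. The commuting pairs $r_i,r_j$ with $|i-j|\ge2$ satisfy no braid relation. Abstractly, the configuration you fear does occur. Under the standard map from $B_n$ to the symmetric group on $n$ letters, $\tau_i\mapsto(i\;i{+}1)$, the images of $\tau_1$ and $\tau_3$ are distinct commuting involutions, and all braid relations hold. So ``commuting chain plus braid relation'' cannot force all $r_j$ to be equal. The divisibility built into \eqref{eq:def_of_N_and_N'} plays no role here; it is used in \Cref{lem:phi_is_dehn}.

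Your worry about the order-$2$ case could in fact be settled directly. If $q=2$, the image of $H$ in $H_{v_N}$ is a quotient of $B_n/\langle\langle\tau^2\rangle\rangle$, which is the symmetric group on $n$ letters, hence finite. No such shortcut exists for $q=3$: by Coxeter, $B_n/\langle\langle\tau^3\rangle\rangle$ is infinite for $n\ge6$. So the cyclicity of finite abelian subgroups of $H_v$ is the genuinely missing idea.
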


\begin{proof} The proof in this setting is a mixture of the arguments from \Cref{lem:no_fix} and \Cref{lem:fix_edge}, as we now explain. 

Firstly, \Cref{lem:simplexstab}.\eqref{simplexstab_2} gives that $v_N$ is the only vertex of $\ov X_N$ fixed by $\phi(\tau)$. Now, for all $i=3,\ldots, n-1$, $\phi(\tau_{i})$ is a conjugate of $\phi(\tau)$, so it must fix a single vertex of $\ov X_N$ as well. Moreover, since it commutes with $\phi(\tau)$, it acts on its fixed set in $\ov X_N$, and therefore the unique fixed point of $\phi(\tau_i)$ is $v_N$. Hence we can look at the images of $\phi(\tau)$ and $\phi(\tau_i)$ inside the hyperbolic group $H_{v_N}$, call them $y$ and $y_i$, which are conjugate and commute.

We now argue as in \Cref{lem:no_fix}. The subgroup $\langle y,y_i\rangle$ does not fix any vertex $w_N\in \link{\ov X_N}{v_N}$, so by \Cref{cor:torsion_of_hv} it must be the projection of a finite Abelian subgroup $F\le H_v$. In turn, every finite-order element in $H_v$ permutes at least two punctures, so $F$ injects inside the symmetric group $S_3$, and all Abelian subgroups of the latter are cyclic. This shows that $y_i\in \langle y\rangle$, and therefore $\phi(\tau_i)$ belongs to the finite group generated by $z_v$ and any preimage $x$ of $y$ in $\Stab{G_N}{v_N}$. We can now apply the same argument to the commuting pair $\phi(\tau_2)$ and $\phi(\tau_{n-1})$ to get that $\phi(H)=\langle \phi(\tau_i)\rangle_{1\le i\le n-1}$ is contained inside $\langle z_v, x\rangle$, and the latter is finite as it is a finite cyclic extension of a finite cyclic group.
\end{proof}

\noindent The following Lemma is the last case of  \Cref{thm:ubertheorem}.
\begin{lem}\label{lem:phi_is_dehn}
    Suppose that $\phi(\tau)\in Z_{v_N}$, for some vertex $v_N$ of $\ov X_N$. Then $\phi(H)$ is virtually cyclic.
\end{lem}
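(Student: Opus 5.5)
The plan is to exploit the strong commutation constraints from \Cref{cor:commuting_multitwist} to force every generator $\phi(\tau_i)$ into a single cyclic direction, or else to force all of them into a small finite subgroup. Write $\phi(\tau)=z_{v_N}^r$ with $z_{v_N}$ a generator of $Z_{v_N}\cong\Z/N\Z$. If $\phi(\tau)$ is trivial, then so are all the $\phi(\tau_i)$, since they are conjugates of $\phi(\tau)$; hence $\phi(H)$ is trivial. We will therefore assume $\phi(\tau)\neq 1$. Each $\phi(\tau_i)$ is conjugate to $\phi(\tau)$, so $\phi(\tau_i)=z_{v^i_N}^{\pm r}$ for the vertex $v^i_N=g_i(v_N)$.

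The first step is to show that commuting generators lie in cyclic directions of vertices in each other's stars. To apply \Cref{cor:commuting_multitwist}.\eqref{item_comm_mult1} we need $\phi(\tau)$ to have a power of large minimal displacement on $\C{\Delta_{v_N}}$. This is where the choice of $N=N'(5n)!$ with $N'=M((n-1)^2)!$ should enter. Suppose the order of $\phi(\tau)$ is small, meaning that $r$ is divisible by $N/d$ for $d$ bounded in terms of $n$. Then $\phi(\tau)$ lies in $(N/d)Z_{v_N}$, and we instead pass to the quotient $G_N\to G_{N/d}$ or a similar coarser filling. Alternatively, we argue that a subgroup of order at most $(5n)!$ is handled by a finite-image argument: a braid group with generators of order bounded by $(5n)!$ gives a finite image via the pure braid group. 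I expect the bookkeeping here to be the main obstacle. Concretely, we must choose between two cases. In the first, the order of $\phi(\tau)$ is large, at least $N/((n-1)^2)!$-ish. Then the displacement argument from the proof of \Cref{cor:commuting_multitwist} works uniformly. In the second, the order is small. Then $\phi$ factors through the quotient of $H$ by the normal closure of $\tau^k$ for a bounded $k$. We then need a separate argument that such images are finite, for instance by reducing $N$ to $N'$ or $M$ and rerunning the whole case analysis in $G_{N'}$ or $G_M$. This explains why the quotients $G_{N'}$ and $G_M$ are required to be deep enough.

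In the large-order case, \Cref{cor:commuting_multitwist}.\eqref{item_comm_mult1} gives that $v^i_N\in\CStar{v_N}$ for $i\ge 3$. Moreover $v^i_N\neq v_N$ would force two commuting elements of adjacent cyclic directions, and this is allowed. So we need more. Since $\tau_1$ and $\tau_{n-1}$, and also $\tau_{n-1}$ and $\tau_2$, commute, and the curve graph quotient $\ov X_N$ is triangle- and square-free, chaining these relations should constrain the configuration of vertices. We then use the braid relation $\tau_i\tau_{i+1}\tau_i=\tau_{i+1}\tau_i\tau_{i+1}$. If $v^i_N\neq v^{i+1}_N$ and these vertices are adjacent, the two elements commute, and the braid relation forces $\phi(\tau_i)=\phi(\tau_{i+1})$, which contradicts their lying in distinct cyclic directions, since distinct cyclic directions intersect trivially. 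If the vertices are non-adjacent, they must still share a neighbour structure that is incompatible with the squares-free condition, via the chain $v^i_N$, $v^{j}_N$, $v^{i+1}_N$ with $j$ commuting with both. Hence all $v^i_N$ coincide with $v_N$, and $\phi(H)\le Z_{v_N}$, which is finite and in particular virtually cyclic.

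Finally, in the small-order case, rerunning the argument in $G_{N'}$ (respectively $G_M$) and using the lemmas already proved (\Cref{claim:finite_order_hdt}, \Cref{lem:no_fix}, \Cref{lem:fix_edge}, \Cref{lem:fix_vertex}) for the composite map to $G_{N'}$, we conclude that the image there is virtually cyclic. We then lift this conclusion back, using that the kernel of $G_N\to G_{N'}$ restricted to the relevant subgroup is controlled by cyclic directions.
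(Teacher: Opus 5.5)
\textbf{The gap is in the step that forces all the vertices $v^i_N$ to coincide.}

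The braid relation only rules out consecutive vertices that are distinct \emph{and adjacent}. It says nothing when $v^i_N$ and $v^{i+1}_N$ are distinct and non-adjacent, since half twists about curves meeting twice do satisfy the braid relation. A common neighbour $v^j_N$ of two non-adjacent vertices is just a path of length two, which does not contradict square-freeness.

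In fact no argument of this kind can work for $n=5$. Nothing you use distinguishes $G=\MCG(S_5,*)$ from $\MCG(S_5)$, but for $\MCG(S_5)$ the statement is false. The surjection $B_5\twoheadrightarrow\MCG(S_5)$ sends $\tau$ to a half twist, so $\phi(\tau)\in Z_{v_N}$ has order $N$, which is squarely your ``large-order'' case. Yet $\phi(H)=\MCG(S_5)/DT_N$ is non-elementary hyperbolic. Concretely, the curves enclosing $\{1,2\},\{2,3\},\{3,4\},\{4,5\}$ give four distinct vertices forming the path $v^3_N,v^1_N,v^4_N,v^2_N$ in $\ov X_N$, and this satisfies every constraint you derive.

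The paper gets the missing input from torsion:
\begin{enumerate}
\item It first shows that $\phi(T)$ has order at most $5$, since otherwise $\phi(H)$ is already virtually cyclic.
\item It then takes an $n$-th root $x$ of $T$ permuting the punctures cyclically. The image $\phi(x)$ has order between $n$ and $5n$, so it cannot come from torsion in $G$ (orders are at most $5$ in $\MCG(S_5)$, and at most $4$ in $\MCG(S_5,*)$; this is exactly where $n=5$ differs).
\item Combining \Cref{cor:hyp_satisfied}, \Cref{lem:simplexstab}, the choice $N=N'(5n)!$, and the normal-generation lemma of Chen--Kordek--Margalit, this yields $\phi(\tau)\in MZ_{v_N}$.
\item Finally, property (FA) of $\phi(H)$, played against the composite-rotating-family filtration of $\langle MZ_{u}\rangle_{u}$, forces $\phi(H)\le MZ_{v_N}$.
\end{enumerate}

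\textbf{The small-order branch is unnecessary, and its lifting step is unjustified.} \Cref{cor:commuting_multitwist}.\eqref{item_comm_mult1} already applies to any non-trivial commuting elements: once $N>4M$, every non-trivial element of $Z_{u_N}$ has a power with minimum displacement at least $\widetilde E$. Separately, your proposed lift from $G_{N'}$ back to $G_N$ does not go through. The kernel of $G_N\to G_{N'}$ is the infinite group generated by the images of the $N'Z_u$, so a virtually cyclic image modulo it tells you nothing. Controlling precisely this kernel is what the rotating-family and (FA) step does.

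\textbf{For $n\ge 6$ your strategy can be completed, using triangle-freeness rather than square-freeness.} Write $u_i=v^i_N$. Then $u_1,u_3,u_5$ are pairwise equal or adjacent, so two of them coincide. A vertex that is both equal-or-adjacent and equal-or-non-adjacent to the same vertex must equal it. Using this, the coincidence propagates case by case: for instance, $u_1=u_3$ forces $u_4=u_1$, and then $u_2=u_5=u_1$. Induction on $k\ge 6$ then shows all $u_i$ coincide, so $\phi(H)\le Z_{u_1}$. This is genuinely shorter than the paper's argument for $n\ge 6$, but it cannot reach $n=5$.
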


\begin{proof}
This is the most involved case. Firstly, we consider what happens if $\phi(T)$ has large order.
    \begin{claim}\label{claim:phiT}
        If $\phi(T)$ has order greater than $5$, then $\phi(H)$ is virtually cyclic.
    \end{claim}
    \begin{claimproof}[Proof of \Cref{claim:phiT}]
        If $\phi(T)$ has infinite order, then $\phi(H)$ lies in the centraliser of $\phi(T)$, which is virtually cyclic as $G_N$ is hyperbolic.

        Now assume the order of $\phi(T)$ is finite and greater than $5$. Again, torsion elements in $\MCG(S_5)$ have order at most $5$, so \Cref{cor:hyp_satisfied} implies that $\phi(T)$ fixes a simplex of $\ov X$ setwise. In turn, by \Cref{lem:simplexstab}, we find some $1\le k\le 3$ such that $\phi(T^k)$ is the image of either a multitwist or a power of a half Dehn twist. In the former situation, $\phi(H)$, which centralises $\phi(T^k)$, must lie in the centraliser of the image of a multitwist, which is finite by \Cref{lem:recognise_(multi)twist}. 
        
        Suppose instead that $\phi(T^k)\in Z_{w_N}$ for some $w_N\in \ov X_N^{(0)}$, so that $\phi(H)\le \Stab{G_N}{w_N}$. Since $\phi(\tau)\in Z_{v_N}$ and $\phi(T^k)\in Z_{w_N}$ commute, the arguments from \Cref{claim:graphiso} show that $w_N\in \CStar{v_N}$. Now recall that, for every $i=1,\ldots, n-1$, $\phi(\tau_i)$ is conjugated to $\phi(\tau)$ by an element $g_i$ inside $\phi(H)\le \Stab{G_N}{w_N}$, so that $\phi(\tau_i)\in Z_{g_iv_N}$. 
        
        If $v_N=w_N$, then $g_iv_N=w_N$ for all $i$, because each $g_i$ fixes $w_N$. This implies that $\phi(H)\le Z_{w_N}$, which is finite. Suppose instead that $v_N\neq w_N$, so that $g_iv_N\in\link{\ov X_N}{w_N} $ for all $i$, and we shall derive a contradiction. Notice that at least two $g_iv_N$ must be distinct, as otherwise $\phi(H)\le Z_{v_N}$ while $\phi(T^k)\in Z_{w_N}$. This also implies that there exist $i,j$ with $|i-j|>1$ and $g_iv_N\neq g_{j}v_N$, for otherwise, using that there are at least 5 indices, we readily see that all $g_iv_N$ would coincide. Since $|i-j|>1$, $\phi(\tau_i)$ and $\phi(\tau_j)$ commute, so $g_iv_N$ and $g_jv_N$, which do not coincide, must be $\ov X_N$-adjacent by \Cref{cor:commuting_multitwist}.\eqref{item_comm_mult1}. However this would violate the fact that $\ov X_N$ has no triangles, as $g_iv_N$ and $g_jv_N$ belong to $\link{\ov X_N}{w_N}$.
    \end{claimproof}

    In view of \Cref{claim:phiT}, we now assume that $\phi$ vanishes on $T^c$ for some $c\le 5$. Now recall that we set $N'=N/(5n)!$ in \Cref{eq:def_of_N_and_N'}. The next Claim shows that the composition $H\xrightarrow[]{\phi} G_N\xrightarrow[]{\pi'}G_{N'}$ has finite image, where $\pi'\colon G_N\to G_{N'}$ is the natural projection:

    \begin{claim}\label{claim:phi_cap_pi'}
        Let $\mathcal K\le G_N$ be the subgroup generated by the image of all $N'$-th powers of half Dehn Twists. Then $\phi(H)\cap \mathcal K$ has index at most $(n-1)^2$ in $\phi(H)$.
    \end{claim}

    \begin{claimproof}[Proof of \Cref{claim:phi_cap_pi'}] Let $x\in H$ be an $n$-th root of $T$ which cyclically permutes all punctures. By \cite[Lemma 6.2]{ChenKordekMargalit}, for every $1\le i\le n-1$ the subgroup normally generated by $x^k$ contains the commutator subgroup; furthermore, since $x$ is an $n$th root of $T$, it maps to $n-1$ inside the abelianisation of $H$, so the index of $\langle\langle x^k\rangle\rangle$ inside $H$ is at most $(n-1)^2$.

    Now let $t=\phi(x)$, which has order at most $5n$ by \Cref{claim:phiT}. If $t^k$ is trivial for some $1\le i\le n-1$ then $\phi$ vanishes on the finite-index subgroup $\langle \langle t^k\rangle\rangle$, and therefore $\phi(H)$ is finite. Otherwise, suppose that the order of $t$ is at least $n$, and in particular $t$ is not the image of any finite-order element of $G$ (if $n=5$ we use that a finite-order element of $\MCG(S_5,*)$ has order at most $4$). Hence $t$ preserves a simplex of $\ov X_N$ by \Cref{cor:hyp_satisfied}, and by \Cref{lem:simplexstab} there exist $1\le q\le 3$ and an edge $\{u_N,w_N\}$ of $\ov X_N$ such that $t^q\in \langle Z_{u_N},Z_{w_N}\rangle$.

    
    Since $t$ has order at most $5n$, while $Z_{u_N}\cong Z_{w_N}\cong \Z/N\Z$ by \Cref{lem:order_of_dt_in_shortquot}, we must have that $t^q\in \langle N'Z_{u_N}, N'Z_{w_N}\rangle$. This shows that $\mathcal K$ contains the normal closure $\langle\langle t^q\rangle\rangle$, and since $q\le 3<n-1$ the latter has index at most $(n-1)^2$ inside $\phi(H)$.
    \end{claimproof}

    Now, recall that we are working under the assumption that $\phi(\tau)\in Z_{v_N}$, and we want to show that $\phi(H)$ is virtually cyclic. By \Cref{claim:phi_cap_pi'}, $\phi(\tau)^{((n-1)^2)!}\in \mathcal K\cap Z_{v_N}$, and the latter coincides with $N'Z_{v_N}$ by \Cref{lem:order_of_dt_in_shortquot}. Hence $\phi(\tau)\in M Z_{v_N}$, where $M$ was defined as $N'/((n-1)^2)!$ in \Cref{eq:def_of_N_and_N'}. Since all generators of $H$ are conjugate, we therefore get that $\phi(H)\le \langle MZ_{v_N}\rangle_{v_N\in \ov X_N^{(0)}}$. 

    From here, we will show that indeed $\phi(H)\in MZ_{v_N}$, concluding the proof. Towards this, the arguments from \cite[Section 3]{short_HHG:II} describe $\langle MZ_{v_N}\rangle_{v_N\in \ov X_N^{(0)}}$ as the subgroup generated by a \emph{composite rotating family}, a notion introduced by Dahmani in \cite{dahmani:rotating}. We do not need the full power of this machinery: for our purposes, it is enough to know that, for every countable ordinal $\alpha$, there exists a subgroup $N_\alpha\le G_N$ with the following properties:
    \begin{itemize}
        \item If $\alpha=0$ then $N_{\alpha}=MZ_{v_N}$.
        \item If $\alpha=\beta+1$ for some ordinal $\beta$, then $N_{\alpha}$ splits as a graph of groups over $N_{\beta}$;
        \item If $\alpha$ is a limit ordinal, then $N_{\alpha}=\bigcup_{\beta<\alpha} N_\beta$.
        \item $\bigcup_{\alpha} N_\alpha=\langle MZ_{v_N}\rangle_{v_N\in \ov X_N^{(0)}}$.
    \end{itemize}
    Let $\alpha$ be the first ordinal such that $\phi(H)\le N_\alpha$. Notice that $\alpha$ is not a limit ordinal: indeed, if $\phi(H)\le N_{\alpha}=\bigcup_{\beta<\alpha} N_\beta$, then each $\phi(\tau_i)$ belongs to $N_{\beta_i}$ for some $\beta_i<\alpha$, and therefore $\phi(H)\le N_{\max_{i=1,\ldots, n-2} \beta_i}\lneq N_\alpha$, against the definition of $\alpha$. 

    Notice also that $\alpha$ cannot be a successor ordinal either. Indeed, if there is $\beta$ such that $\alpha=\beta+1$, then $\phi(H)$ is contained inside $N_{\beta+1}$ but not $N_{\beta}$, and therefore splits as a graph of groups over $\phi(H)\cap N_{\beta}$. However, $\phi(H)$ is a quotient of $H$ with finite abelianisation, since we are assuming that $\phi$ vanishes on a power of $T$; hence $\phi(H)$ has Serre's property (FA), which forbids any non-trivial splitting (see e.g. \cite[page 6]{FA_MCG} and the arguments therein).

    We must therefore have that $\alpha=0$, so that $\phi(H)\le MZ_{v_N}$, as promised. This concludes the proof of \Cref{lem:phi_is_dehn}, and in turn that of \Cref{thm:ubertheorem}.
\end{proof}

\bibliography{biblio.bib}
\bibliographystyle{alpha}

\end{document}